\documentclass{amsart}

\usepackage[a4paper]{geometry}
\usepackage[shortlabels]{enumitem}

\usepackage{etoolbox}
\makeatletter
\patchcmd{\subsubsection}{\itshape}{\bfseries}{}{}
\makeatother
\usepackage[expansion=false]{microtype}

\usepackage{amssymb}
\usepackage{thmtools}\usepackage{mathabx}

\usepackage{multirow}
\usepackage{graphicx}
\usepackage{subcaption}

\usepackage{url}

\usepackage[noabbrev]{cleveref}

\newcommand{\N}{\mathbb{N}}
\newcommand{\Z}{\mathbb{Z}}
\newcommand{\R}{\mathbb{R}}
\newcommand{\C}{\mathbb{C}}
\renewcommand{\o}{\omega}
\newcommand{\Id}{\mathrm{Id}}
\newcommand{\Hess}{\mathrm{Hess}}
\newcommand{\ld}{\lambda}
\newcommand{\vp}{\varphi}
\newcommand{\g}{\gamma}
\renewcommand{\d}{\nabla}
\newcommand{\w}{\wedge}
\newcommand{\pp}{\partial}
\newcommand{\il}{\langle}
\newcommand{\ir}{\rangle}
\newcommand{\eps}{\varepsilon}
\newcommand{\Ind}{\mathrm{Ind}}
\newcommand{\im}{\text{im}}
\newcommand{\loc}{\mathrm{loc}}
\newcommand{\crit}{\mathrm{Crit}}
\newcommand{\gM}{\widehat{\gamma}}
\newcommand{\gm}{\widecheck{\gamma}}
\newcommand{\fT}{\mathcal{T}}
\newcommand{\ev}{\mathrm{ev}}
\newcommand{\pr}{\mathrm{pr}}

\newtheorem{thm}{Theorem}[section]
\Crefname{thm}{Theorem}{Theorems}
\newtheorem{lmm}[thm]{Lemma}
\Crefname{lmm}{Lemma}{Lemmas}
\newtheorem{prop}[thm]{Proposition}
\newtheorem{rmk}[thm]{Remark}
\newtheorem{cor}[thm]{Corollary}
\newtheorem{note}[thm]{Note}
\newtheorem{remark}[thm]{Remark}
\newtheorem{main}{Main Theorem}

\begin{document}
	\title{Mutations of Local Floer Chain Complexes across Hamiltonian Bifurcations}
	
	\author{Hong-Kwon Jo and Dongho Lee}
	
	\begin{abstract}
    We compute the chain-level change of local Floer complexes across the generic bifurcations of periodic orbits in four-dimensional Hamiltonian systems. Although local Floer homology is invariant across a bifurcation, the underlying chain complex mutates: periodic orbits and their Conley--Zehnder indices change discontinuously, and the Floer differential must change to compensate.
    Two tools make this mutation computable. First, we prove a Floer cascade counting theorem for Floer cylinders between multiply covered orbits of an autonomous system, which is not specific to the bifurcation analysis carried out here.
    Second, for each bifurcation type in Meyer's classification, and for two additional $\mathbb{Z}_2$-symmetric types, we construct a model Hamiltonian whose Floer cylinders are lifts of planar gradient trajectories, which we call the gradient revolutions, reducing the count to finite-dimensional Morse theory and cascade counting. We determine all seven mutations explicitly.
	\end{abstract}
	\maketitle
	\setcounter{tocdepth}{1}
	\tableofcontents
    
	\section{Introduction}\label{Section - Introduction}

In his 1970 paper \cite{Meyer_70}, Meyer classified the generic bifurcations of periodic points of one-parameter families of symplectomorphisms of a two-dimensional disk. Through the Poincar\'{e} return map, this classification translates into the local bifurcation theory of periodic orbits in four-dimensional Hamiltonian systems, as described for instance in \cite[Chapter 8]{Abraham_Marsden_78}. Subsequent works extended this picture to more structured settings, for example to systems with symmetries; see, for example, \cite{Golubitsky_Stewart_Marsden_87,Golubitsky_Stewart_Schaeffer_88}.

On the other hand, Floer theory, introduced by Floer \cite{Floer_89} and now a central tool in symplectic topology and geometry, also studies Hamiltonian periodic orbits, but from a homological viewpoint. In Floer theory, periodic orbits appear as generators of a chain complex, graded by the Conley--Zehnder index, and the differential counts Floer cylinders. Thus Floer homology provides topological constraints on the creation, cancellation, and continuation of periodic orbits.

Although local Floer homology is invariant under suitable perturbations of the Hamiltonian, the chain complex computing it is not fixed across a bifurcation. Across the bifurcation, periodic orbits may be created or annihilated, and their Conley--Zehnder indices may change. Thus the generators and gradings of the local Floer chain complex can undergo a discontinuous change. The invariance of local Floer homology then forces the Floer differential to change accordingly. In this sense, a bifurcation produces a chain-level mutation whose homology remains unchanged.

Recently, the Conley--Zehnder index has become a key tool for detecting bifurcation phenomena, especially in problems arising from celestial mechanics; see, for example, \cite{Aydin_Batkhin_25,Joung_Koh_vanKoert_26}. Along a non-degenerate family of periodic orbits, the Conley--Zehnder index is locally constant, and hence an index jump can occur only when a relevant iterate becomes degenerate, precisely the regime in which bifurcations arise. Moreover, the invariance of local Floer homology can be used to guarantee the existence of new periodic orbits when the index changes. In this direction, \cite{Aydin_Frauenfelder_vanKoert_Koh_Moreno_24} introduced the SFT-Euler characteristic as a symplectic invariant of bifurcations, defined as the alternating count of nearby orbits weighted by the parity of their Conley--Zehnder indices.

While such invariants extract robust numerical information from the nearby orbits, a natural next step is to ask how the underlying chain complex mutates across the bifurcation. More precisely, the local Floer chain complex is a discrete object which may change as the bifurcation parameter crosses a degeneracy, even though its homology is invariant. We call this phenomenon a \textbf{mutation} of the local Floer chain complex.

Making this chain-level mutation explicit is the central aim of this paper. To this end, we construct local models for computing the local Floer chain complex of each bifurcation type. More precisely, we specify the Hamiltonian, the almost complex structure, and the Morse functions on the periodic orbits, which are required to define the relevant Floer complex.

Our first main result is a Floer cascade counting theorem for Morse--Bott Floer homology in the presence of multiple covers. This formula determines the algebraic contribution of a Floer cylinder connecting multiply covered periodic orbits of an autonomous Hamiltonian system. It is not specific to the bifurcation models considered later. Rather, it applies to a pair of simple periodic orbits whose suitable covers are one-periodic and whose relative Conley--Zehnder index is one, under the standard compactness assumptions excluding bubbling and breaking.

\begin{main}[\Cref{Thm - Cylinder Counting}]
Let $\g_\pm$ be two simple non-constant $1/k_\pm$-periodic orbits, so that $\g_\pm^{k_\pm}$ are 1-periodic orbits. Also, let $J$ be a generic almost complex structure.
        Moreover, assume that
        \begin{equation*}
            \mu_{CZ}^\fT(\gamma^{k_+}_+)-\mu_{CZ}^\fT(\gamma^{k_-}_-)=1.
        \end{equation*}
Then, the quotient $\mathcal{M}_0(\g^{k_-}_-,\g^{k_+}_+)/S^1$ corresponding to $\fT$ is a zero-dimensional $\Z_{(k_+,k_-)}$-orbifold, and we have
        \begin{align*}
            \#_2\mathcal{M}(\gM_-^{k_-},\gM_+^{k_+})&\equiv k_-\cdot\#(\mathcal{M}_0(\g^{k_-}_-,\g^{k_+}_+)/S^1)\pmod 2\\
            \#_2\mathcal{M}(\gm_-^{k_-},\gm_+^{k_+})&\equiv k_+\cdot\#(\mathcal{M}_0(\g^{k_-}_-,\g^{k_+}_+)/S^1)\pmod 2,
        \end{align*}
        where $(k_+,k_-)$ is the greatest common divisor of $k_+$ and $k_-$, and the right hand side is an orbifold count.
\end{main}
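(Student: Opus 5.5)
The setting is Morse--Bott Floer homology in the style of Bourgeois--Oancea, with cascades. Each $1$-periodic orbit $\g_\pm^{k_\pm}$ comes in an $S^1$-family, the circle of its reparametrizations. We fix on each such circle a perfect Morse function with maximum $\gM$ and minimum $\gm$. Because $\g_\pm$ is simple and $1/k_\pm$-periodic, the covered orbit $\g_\pm^{k_\pm}$ has isotropy $\Z_{k_\pm}$ under the $S^1$-action by time shift.

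Take $\mathcal{M}_0(\g_-^{k_-},\g_+^{k_+})$ to be the space of Floer cylinders with asymptotics free on the two Morse--Bott circles. For generic $J$ and relative index one, this is a $1$-dimensional manifold with a free $\R$-action. Dividing by $\R$ leaves a $1$-dimensional space carrying the $S^1$-action that rotates the domain, $u(s,t)\mapsto u(s,t+\theta)$. The stabilizer of a cylinder $u$ is contained in $\Z_{k_-}\cap\Z_{k_+}=\Z_{(k_+,k_-)}$, because $u$ converges to $\g_\pm^{k_\pm}$ at the ends and those limits have exactly this isotropy. So the quotient by $S^1$ is a compact $0$-dimensional orbifold with isotropy dividing $(k_+,k_-)$, which gives the first assertion. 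Compactness uses the standing assumptions that exclude bubbling and breaking.

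Next we set up the evaluation maps. Each point of the quotient is an $S^1$-orbit $\mathcal{O}\cong S^1/\Z_m$ of cylinders, with $m\mid (k_+,k_-)$. The asymptotic evaluations
\[
\ev_\pm:\mathcal{O}\to S^1_\pm,\qquad S^1_\pm:=\{\text{reparametrizations of }\g_\pm^{k_\pm}\}\cong \R/(1/k_\pm)\Z,
\]
are $S^1$-equivariant. Hence $\ev_\pm$ is a covering of degree $k_\pm/m$: the domain rotation $\theta\in\R/\Z$ acts on $S^1_\pm$ through a circle whose stabilizer is $\Z_{k_\pm}$.

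Now consider a cascade from $\gM_-^{k_-}$ to $\gM_+^{k_+}$ of index difference $1$. Since the grading of $\gM$ exceeds that of $\gm$ by one, such a cascade consists of a single cylinder with no gradient flow segments. Its negative end must be exactly at the maximum point of $S^1_-$; its positive end lies in the unstable manifold of the maximum, which is just the point itself, but after generic perturbation the condition is only imposed on one side. To decide which side, I will use the standard convention that the upper end is unconstrained in the top-to-top case, as in Bourgeois--Oancea: the index bookkeeping forces exactly one point constraint. Under that convention, each orbit $\mathcal{O}$ contributes $\#\ev_-^{-1}(\text{pt})=k_-/m$ cylinders to $\mathcal{M}(\gM_-^{k_-},\gM_+^{k_+})$. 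Summing over orbits gives
\[
\sum_{\mathcal{O}}\frac{k_-}{m_{\mathcal{O}}}=k_-\cdot\sum_{\mathcal{O}}\frac{1}{|\Gamma_{\mathcal{O}}|}=k_-\cdot\#\bigl(\mathcal{M}_0/S^1\bigr),
\]
where the last sum is the orbifold count. For the bottom-to-bottom case the constraint sits on the positive end instead, and the same computation gives $k_+$. Reducing mod $2$ yields both formulas.

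The main obstacle is the third step: justifying exactly which end carries the point constraint in each of the two cases, and that the constraint is transverse. This comes down to a careful dimension count of the cascade moduli spaces, in which the grading shifts of $\gM$ versus $\gm$ are attributed to one end or the other. I would first check it against the fibered product description $\mathcal{M}_0\times_{\ev}(W^u(p_-)\times W^s(p_+))$. There the stable manifold of the minimum is the whole circle less a point and the unstable manifold of the maximum is likewise open. This shows that in the max--max case the single transversal constraint is the one at the negative end, via $W^s(\max)=\{\max\}$, and in the min--min case it is the one at the positive end, via $W^u(\min)=\{\min\}$, which yields the factors $k_-$ and $k_+$ respectively. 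Mod $2$ counting avoids any orientation issues, including those coming from bad orbits.
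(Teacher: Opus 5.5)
Your architecture matches the paper's: the rotation action on the $1$-dimensional space $\mathcal{M}_0=\widehat{\mathcal{M}}_0/\R$ (it is $\widehat{\mathcal{M}}_0$ that is $2$-dimensional, not $1$-dimensional as you wrote), isotropy contained in $\Z_{(k_+,k_-)}$, and on each orbit $\mathcal{O}\cong S^1/\Z_m$ an equivariant endpoint map with fibres of size $k_\pm/m$. But all of that is symmetric in $\pm$. The content of the theorem is \emph{which} end carries the point constraint, and your proposal does not establish it.

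Three things go wrong at that step. First, the assertion that the positive end ``lies in the unstable manifold of the maximum, which is just the point itself'' is false. Second, no generic perturbation removes a point constraint: with both ends pinned the problem would be overdetermined and generically empty, and invoking a convention does not fill this. Third, your consistency check actually points the wrong way. In $\mathcal{M}_0\times_{\ev}(W^u(p_-)\times W^s(p_+))$ with $W^u(\max)$ and $W^s(\min)$ open (the downward-flow manifolds), the max--max case is cut out by $W^s(\gM_+)=\{\gM_+\}$ at the \emph{positive} end, and the min--min case by $W^u(\gm_-)=\{\gm_-\}$ at the negative end. That yields the two formulas with $k_+$ and $k_-$ interchanged. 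You have paired the fibered product of the upward flow with the stable and unstable manifolds of the downward flow.

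The direction is forced, not conventional. The grading $\mu_{CZ}^\fT(\gM)=\mu_{CZ}^\fT(\g)+1$, together with the cascade definition $c_i'=\nabla f_{\g_i}$, means the Morse flow runs from $\gm$ to $\gM$. This is the same index-increasing direction as the cylinders, so the maximum is a sink and the minimum a source.

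In $\mathcal{M}(\gM_-^{k_-},\gM_+^{k_+})$, the segment $c_0$ emanating from $\gM_-$ is therefore constant, which pins $\lim_{s\to-\infty}u(s,0)=\gM_-(0)$. Meanwhile $c_1$ is the unique upward trajectory from the (generically non-critical) positive end to $\gM_+$. In $\mathcal{M}(\gm_-^{k_-},\gm_+^{k_+})$ it is $c_1$ that is constant, pinning the positive end at $\gm_+$, and $c_0$ is free. So these cascades are not ``a single cylinder with no gradient flow segments'': exactly one non-constant segment occurs, on the free side, and it is what makes that side free.

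The reduction to one cylinder should also be argued as in the paper, not from the $\gM$/$\gm$ shift: any further cylinder would join orbits of non-positive index difference and hence be constant. With these two corrections your fibre count gives $k_-/m$, resp.\ $k_+/m$, cascades per orbit, exactly as in the paper.
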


This theorem provides the bridge between the local dynamics of the return map and the Floer differential. We next clarify the precise meaning of the local models considered in this paper. Let $\bar H_\varepsilon$ be a generic one-parameter family of Hamiltonians undergoing one of the bifurcations considered below. In \Cref{Subsection - Model}, we associate to this bifurcation a model Hamiltonian $H_\varepsilon$, which retains the data relevant for computing the local Floer chain complex. The relation between this model Hamiltonian and a generic Hamiltonian family is discussed in detail at the end of \Cref{Subsection - Model}.

In these bifurcation models, the computation of the local Floer chain complex is reduced to three steps. First, the gradient-flow structure of the generating Hamiltonian defined on a Poincar\'{e} section determines the Floer cylinders between the corresponding autonomous orbits. Second, for autonomous orbits of relative Conley--Zehnder index one, the cascade-counting theorem gives the algebraic contribution of these cylinders directly. Third, for autonomous orbits of relative Conley--Zehnder index two, the contributions of two-step Floer cascades are obtained by combining these cylinders with Morse--Bott gradient trajectories through a combinatorial argument. In this way, the chain-level mutation of each model Hamiltonian becomes explicitly computable.

The second main result is the explicit computation of the resulting local Floer chain complexes for the model Hamiltonians associated with Hamiltonian bifurcations. We carry out the three-step procedure above for the five generic Hamiltonian bifurcations in Meyer's classification: birth-death, period doubling, materialization, phantom kiss, and emission. We also treat two additional bifurcations arising in $\mathbb Z_2$-symmetric involutive systems: pitchfork and double emission.

For clarity, let $H_\eps$ be a one-parameter family of Hamiltonians, and let $\g_0$ denote the \emph{core orbit} at $\eps=0$ whose Floquet multiplier is a primitive $k$-th root of
unity, so that the $k$-fold cover $\g_0^k$ is degenerate and undergoes one of the bifurcations considered in this paper.
Whenever the core orbit admits a continuation along the bifurcation parameter $\eps$, we denote it by $\g_\eps$. If $k>1$, such a continuation exists uniquely because the simple orbit $\g_0$ is non-degenerate. The pitchfork model also has a distinguished continuation of the core orbit. For a sufficiently small $\delta>0$, we write $\g_\pm := \g_{\pm \delta}$ whenever the corresponding continuations are defined.

Except for the birth-death and phantom-kiss, exactly one side of $\eps=0$ contains no nearby periodic orbit other than the continuation of the core orbit. We call this the \emph{unbranched side} of the bifurcation. In the normal forms used in \Cref{Section - Classification of bifurcations,Section - Symmetric}, the unbranched side is chosen to be $\eps<0$.

\begin{main}
For the model Hamiltonians associated with the generic Hamiltonian
bifurcations and the $\Z_2$-symmetric involutive bifurcations considered in
this paper, the mutations of the local Floer chain complexes are explicitly
computed with $\Z_2$-coefficients. The resulting complexes are described in
\Cref{Theorem - FCBD,Theorem - FCPD,Theorem - FCPK,Theorem - FCEM,Theorem - FCPF,Theorem - FCDE}.
Consequently, the local Floer homology at the bifurcating orbit is given as
follows.
\begin{enumerate}
    \item
    For a birth-death $(k=1)$ or a $3$-phantom kiss $(k=3)$,
    \[
        HF_*^\loc(\g_0^k,H_0)=0.
    \]
    \item
    For a period-doubling $(k=2)$, materialization $(k=2)$, $4$-phantom kiss $(k=4)$,
    emission $(k\geq4)$, pitchfork $(k=1$, $\Z_2$-symmetric$)$, or double emission $(k\geq3$ odd, $\Z_2$-symmetric$)$
    \[
        HF_*^\loc(\g_0^k,H_0)
        =
        \begin{cases}
            \Z_2, & *=N,N+1,\\
            0,    & \text{otherwise},
        \end{cases}
    \]
    where
    \[
        N=
        \begin{cases}
        \displaystyle
        \frac{1}{2}
        \bigl(
            \mu_{CZ}(\g_-^4)+\mu_{CZ}(\g_+^4)
        \bigr),
        &
        \text{for the $4$-phantom kiss},\\
                \mu_{CZ}(\g_-^k),
        &
        \text{otherwise}.
        \end{cases}
    \]
    All Conley--Zehnder indices are computed with respect to the common framing near the bifurcating orbit.
\end{enumerate}
\end{main}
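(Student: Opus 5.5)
The plan is to derive the homology statement from the explicit chain complexes of \Cref{Theorem - FCBD,Theorem - FCPD,Theorem - FCPK,Theorem - FCEM,Theorem - FCPF,Theorem - FCDE}. Local Floer homology is invariant under small perturbations, so $HF_*^\loc(\g_0^k,H_0)\cong HF_*^\loc(H_{-\delta})$, and this also equals the homology on the branched side $\eps=+\delta$. The computation will therefore be done on whichever side is simplest, and the other side will serve as a consistency check on the mutation.

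For every type except birth-death and phantom kiss, we work on the unbranched side $\eps<0$. There the only nearby periodic orbit is the continuation $\g_-$, and its $k$-fold cover $\g_-^k$ is non-degenerate as a fixed point of the return map. In the autonomous Morse--Bott setting, however, $\g_-^k$ is a Morse--Bott circle. After choosing a Morse function on the circle, it splits into the two generators $\gm_-^k$ and $\gM_-^k$, in degrees $\mu_{CZ}(\g_-^k)$ and $\mu_{CZ}(\g_-^k)+1$.

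The differential between these two generators counts the two Morse gradient lines on $S^1$, together with cascades. It vanishes mod $2$ when the orbit is good. Where the orbit is bad, namely even covers with odd parity, the count is different, and this is the point that has to be checked carefully.

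I will verify goodness for each type. For period doubling and materialization ($k=2$), the multiplier of $\g_-$ is near $-1$. In the period-doubling case $\g_-$ is elliptic, so $\g_-^2$ is good. For materialization, we must check against the explicit complex in \Cref{Theorem - FCPD}. If $\g_-^2$ were bad, the homology would vanish, contradicting the stated answer. So I will instead read off directly from the theorem that the differential between $\gm_-^k$ and $\gM_-^k$ is zero. This gives $\Z_2$ in degrees $N$ and $N+1$ with $N=\mu_{CZ}(\g_-^k)$. The same argument applies to emission ($k\ge4$), pitchfork, and double emission: on the unbranched side only $\g_-$ survives.

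For birth-death there is no core orbit on one side, since the orbits are annihilated. Hence the complex on that side is empty and the homology is $0$. For the $3$-phantom kiss, the $3$-periodic orbits pass through $\g_0$, and on each side they coexist with $\g_\pm^3$. Here I will use \Cref{Theorem - FCPK}. The Morse--Bott circles of the bifurcating $3$-periodic orbit (counted with covering multiplicity) and $\g_\pm^3$ pair off through differentials whose mod $2$ values come from the cascade counting theorem, \Cref{Thm - Cylinder Counting}. In that count the coefficients $k_\pm$ are $3$ and $1$, both odd, so the cylinders contribute nontrivially, and the complex turns out to be acyclic.

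For the $4$-phantom kiss, the index of $\g^4$ jumps across $\eps=0$, and neither side is unbranched. I will compute the complex on one side from \Cref{Theorem - FCPK} and cancel the acyclic pairs. The surviving generators sit in degrees $N$ and $N+1$, where $N$ is the average of $\mu_{CZ}(\g_-^4)$ and $\mu_{CZ}(\g_+^4)$. Since the two indices differ by $2$, $N$ is an integer. This is the degree at which the $4$-periodic branch orbits lie.

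The main obstacle is the phantom-kiss cases. The cancellation there depends on the parity factors $k_\pm$ in \Cref{Thm - Cylinder Counting} and on the two-step cascade counts in the relative-index-two situations. Both of these are delegated to \Cref{Theorem - FCPK}, and I expect to spend most of the care checking them against each other on the two sides of $\eps=0$.
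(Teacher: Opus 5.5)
Your overall route is the paper's: the Main Theorem follows by reading off the $\eps\neq0$ homology from \Cref{Theorem - FCBD,Theorem - FCPD,Theorem - FCPK,Theorem - FCEM,Theorem - FCPF,Theorem - FCDE} and using the invariance of local Floer homology across the bifurcation. Your specific readings are correct. Birth-death has an empty complex on one side. In the $3$-phantom kiss, $k_+=3$ and $k_-=1$ are both odd, so the complex is acyclic. In the $4$-phantom kiss the surviving classes sit in degrees $-1=\mu_{CZ}(\g_{H,\pm})$ and $0$ relative to $\gm_-^4$, which is $\tfrac12\bigl(\mu_{CZ}(\g_-^4)+\mu_{CZ}(\g_+^4)\bigr)$ and one above it. Working only on the unbranched side is legitimate and bypasses the two-step counts $c(k)$ and $d_i$ entirely. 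In the phantom kiss there is no pair of orbits of relative index two, so only \Cref{Thm - Cylinder Counting} enters there, contrary to your closing worry.

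The step to remove is the good/bad discussion.
\begin{itemize}
\item The paper works over $\Z_2$ throughout. On the unbranched side the only Floer cylinders from $\g_-^k$ to itself are constant (\Cref{Prop - Uniqueness}). Hence $\partial\gM_-^k$ just counts the two Morse trajectories of $f_{\g_-^k}$ and equals $2\gm_-^k=0$, whether or not the orbit is bad. This is exactly the remark for non-degenerate orbits in \Cref{Subsection - MBCC}.
\item Badness matters only for signs over $\Z$. Even there it yields a $\Z/2$ in a single degree, not vanishing homology.
\item Your inference ``if $\g_-^2$ were bad the homology would vanish, contradicting the stated answer'' is circular, since it uses the statement being proved. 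It also implicitly concludes that $\g_-^2$ is good, which is false. In materialization, $\g_-$ is hyperbolic with multipliers near $-1$, i.e.\ negative hyperbolic, so $\g_-^2$ is precisely a bad orbit.
\end{itemize}
Replace that paragraph with the mod $2$ count above. Then period doubling, materialization, emission, pitchfork and double emission are handled uniformly with $N=\mu_{CZ}(\g_-^k)$, and you need not consult the branched-side complexes at all.
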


The paper is organized as follows. In \Cref{Section - Prelim}, we review the essential concepts of Floquet multipliers, the Conley--Zehnder index, and Birkhoff normal forms. \Cref{Section - Classification of bifurcations} revisits Meyer's classification, carefully extracting the Morse indices relevant to our homological setup. \Cref{Section - Symmetric} extends this analysis to involutive systems. In \Cref{Section - LFH}, we define the local Floer chain complex and Morse--Bott cascades, and prove the cascade counting theorem for multiple covers. \Cref{Section - cylinder} is devoted to the construction of the model Hamiltonians and to the proof of regularity and uniqueness for the gradient revolutions. Finally, in \Cref{Section - LFCC}, we present the main computations, explicitly describing the mutations of the local Floer chain complexes for the five generic bifurcations and for the two $\Z_2$-symmetric involutive bifurcations.

\subsubsection*{Acknowledgements} Hong-kwon Jo was supported by National Research Foundation of Korea grants NRF-2020R1A5A1016126 and RS-2023-00211186.
Dongho Lee was supported by the National Research Foundation of Korea (NRF) grant funded by the Korea government (MSIT) (No.2020R1A5A1016126).
The authors are grateful to Jungsoo Kang and Otto van Koert (Seoul National University) for their valuable advice and encouragement.
They would also like to thank Chankyu Joung (Seoul National University) and Sungho Kim (University of Science and Technology of China) for insightful discussions.

\section{Periodic Orbits of Hamiltonian Systems}\label{Section - Prelim}

\subsection{Floquet Multipliers and Bifurcations}\label{Section - Floquet and Bifurcation}
Let $(W^{2n},\o)$ be a symplectic manifold and $H$ a Hamiltonian, which is a smooth function $H:W\to\R$.
The \textbf{Hamiltonian vector field} $X_H$ is defined by $i_{X_H}\o = -dH$.
Since $dH(X_H)=-\o(X_H,X_H)=0$, the function $H$ is constant along its flow, so each energy hypersurface $H^{-1}(c)$ is invariant under $X_H$, and for a regular value $c$ it is a smooth manifold.
We call $c$ an energy level of $H$.

An orbit $\g(t)=Fl^{X_H}_t(p)$ is a \textbf{periodic orbit} if $\g(t+T)=\g(t)$ for some $T>0$.
Let $\g\subset H^{-1}(c)$ be a periodic orbit on a regular energy level.
A \textbf{Poincar\'{e} section} at $p\in\g$ is a codimension $2$ symplectic submanifold $\Sigma^{2n-2}\subset H^{-1}(c)$ transverse to $X_H$.
For $q$ in a small neighborhood $N\subset\Sigma$ of $p$, the \textbf{return time} $\tau(q)=\min\{t>0:Fl^{X_H}_t(q)\in\Sigma\}$ is smooth, and the \textbf{return map}
\[
\Psi:N\to\Sigma,\qquad q\mapsto Fl^{X_H}_{\tau(q)}(q)
\]
is a symplectomorphism \cite{Abraham_Marsden_78, Meyer_Hall_Offin_13}.
Although $\Psi$ need not be defined on all of $\Sigma$, it captures the local dynamics of $X_H$ near $\g$, and we write $\Psi:\Sigma\to\Sigma$ by abuse of notation.

We now restrict to a $4$-dimensional system, so $\Sigma\simeq D^2$ is $2$-dimensional, meeting $\g$ at the origin.
The return map $\Psi$ fixes $0$, and $d\Psi(0)\in Sp(2)$.
Its two eigenvalues, the \textbf{Floquet multipliers} of $\g$, fall into three cases:
\begin{itemize}
    \item $\ld\in S^1$ ($\ld^{-1}=\bar\ld$): $\g$ is \textbf{elliptic};
    \item $\ld\in\R\setminus\{0\}$ ($\ld=\bar\ld$): $\g$ is \textbf{hyperbolic};
    \item $\ld=\pm1$, a pair $\{1,1\}$ or $\{-1,-1\}$ with multiplicity.
\end{itemize}
Ellipticity and hyperbolicity are the \textbf{stability type} of $\g$.
Since the multipliers determine each other, we usually record only one.
If $\ld$ is a multiplier of $\g$, then $\ld^k$ is a multiplier of the $k$-th cover $\g^k$, so all covers share the stability type of $\g$.
We call $\g$ \textbf{non-degenerate} if $\ld\neq1$ and \textbf{degenerate} if $\ld=1$; note that if $\ld$ is a primitive $k$-th root of unity, then $\g$ is non-degenerate but $\g^k$ is degenerate.

Now consider a $1$-parameter family $H_\eps$ for small $\eps$ and a periodic orbit $\g_0$ of $H_0$.
\begin{lmm}[Orbit cylinder]\label{Lemma - Orbit Cylinder}
If $\g_0$ is non-degenerate, there is a smooth family of periodic orbits $\g_\eps$ of $H_\eps$ through $\g_0$, forming
\[
\Gamma = \{(\g_\eps,\eps)\,:\,\g_\eps\text{ is a periodic orbit of }H_\eps\}.
\]
\end{lmm}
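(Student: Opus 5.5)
The plan is to reduce the statement to the implicit function theorem applied to the Poincaré return map, with $\eps$ as a parameter. Fix a point $p\in\g_0$ on the regular energy level $c_0=H_0(\g_0)$ and a small local hypersurface $S\subset W$ through $p$, transverse to $X_{H_0}$ at $p$. Since transversality is an open condition, $S$ stays transverse to $X_{H_\eps}$ near $p$ for all small $\eps$. Each level set $S_{\eps,c}=S\cap H_\eps^{-1}(c)$ is then a $2$-dimensional Poincaré section for $H_\eps$ on the energy level $c$.

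The key steps are as follows.

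\emph{Step 1.} Show that the return time $\tau(q,\eps)$ to $S$ and the map
\[
P(q,\eps)=Fl^{X_{H_\eps}}_{\tau(q,\eps)}(q),
\]
defined for $q\in S$ near $p$ and $\eps$ near $0$, are smooth in $(q,\eps)$. This follows from the implicit function theorem applied to the condition $Fl^{X_{H_\eps}}_t(q)\in S$ near $(q,\eps,t)=(p,0,T_0)$, where $T_0$ is the period of $\g_0$. The derivative in $t$ is nondegenerate by transversality.

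\emph{Step 2.} Use energy conservation: $P$ preserves $H_\eps$, so $P$ restricts to the return maps $\Psi_{\eps,c}:S_{\eps,c}\to S_{\eps,c}$.

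\emph{Step 3.} Trivialize the family of sections. Choose coordinates on $S$ so that $S_{\eps,c}$ is identified with a fixed disk $D^2$ smoothly in $(\eps,c)$. This is possible because $dH_\eps|_S\neq 0$ at $p$; otherwise $X_{H_\eps}$ would be tangent to $S$. We obtain a smooth family $\Psi_{\eps,c}:D^2\to D^2$ with $\Psi_{0,c_0}(0)=0$.

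\emph{Step 4.} Apply the implicit function theorem to $F(x,\eps,c)=\Psi_{\eps,c}(x)-x$. Non-degeneracy of $\g_0$ means $1$ is not a Floquet multiplier, i.e.\ $d\Psi_{0,c_0}(0)-\Id$ is invertible. So there is a unique smooth solution $x(\eps,c)$ near $0$. The orbit through $x(\eps,c)$ is a periodic orbit of $H_\eps$, with period $\tau(x(\eps,c),\eps)$ depending smoothly on $(\eps,c)$.

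\emph{Step 5.} Fix the energy by setting $c=c(\eps)$. Since $H_\eps(p)$ is smooth in $\eps$, the simplest choice is $c(\eps)=c_0$, and we define $\g_\eps$ to be the orbit through $x(\eps,c_0)$. This yields the smooth family $\Gamma$ through $(\g_0,0)$. Local uniqueness on each energy level comes for free from the implicit function theorem.

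The main obstacle is the bookkeeping in Step 3: one must show that the sections $S_{\eps,c}$ vary smoothly and can be simultaneously trivialized, and that invertibility of $d\Psi-\Id$ is computed on the section rather than on the full linearized flow. The full linearized flow always has eigenvalue $1$, with multiplicity two, coming from the flow direction and the energy direction. I would handle this by noting that the symplectic decomposition of $T_pW$ into $\mathrm{span}(X_H,\text{a vector transverse to the level})$ and $T_p\Sigma$ identifies the Floquet multipliers with the eigenvalues of $d\Psi(0)$, exactly as in the definition preceding the lemma.

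An alternative route, if the energy-parametrized version is unwanted, is to apply the implicit function theorem to the period map $(q,T,\eps)\mapsto Fl^{X_{H_\eps}}_T(q)-q$ after quotienting by the flow direction and the energy. This gives the same conclusion, and also shows that $\g_0$ in fact lies in a $2$-parameter family indexed by $(\eps,c)$.
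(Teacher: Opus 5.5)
Your proposal is correct and follows the same route as the paper, whose proof simply applies the implicit function theorem to $\Psi_\varepsilon - \mathrm{Id}$, using that $d\Psi_0(0)-\mathrm{Id}$ is invertible because $\lambda\neq 1$. Your Steps 1--5 only supply the bookkeeping the paper leaves implicit. The main addition is treating the energy $c$ as an extra parameter and then fixing $c=c_0$, which is what makes the family $\gamma_\varepsilon$ well defined.
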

\begin{proof}
Since $\ld\neq1$, the map $d\Psi_\eps-\Id$ is invertible, and the claim follows from the implicit function theorem \cite{Abraham_Marsden_78}.
\end{proof}
We call $\Gamma$ an \textbf{orbit cylinder} centered at $\g_0$, or a \textbf{continuation} of $\g_0$ along the bifurcation parameter $\eps$.
Along $\Gamma$ the multiplier $\ld_\eps$ varies continuously.
When $\ld_\eps=1$ the orbit degenerates and the cylinder may break: the family $\g_\eps$ may vanish or new orbits may be born.
This is a \textbf{bifurcation}, the main subject of this paper.

\subsection{Conley--Zehnder Index}\label{Subsection - CZ index}
The Conley--Zehnder index assigns to a periodic orbit an integer that, in Floer theory, determines the grading of its generator.
We briefly recall the version we use, following \cite{Robbin_Salamon_93}; this index is most naturally phrased for a \emph{framed} orbit, and it is the framing-dependence that we exploit throughout the paper.

Let $\g$ be a $\tau$-periodic orbit of a Hamiltonian $H$ on $(W^{2n+2},\o)$, lying on a regular level $H^{-1}(c)$.
The kernel $\ker(dH)$ carries the symplectic subbundle
\[
\xi^{2n}=\{\, v\in\ker(dH) : \o(v,X_H)=0 \,\}
\]
transverse to $X_H$ along $\g$, on which the linearized flow $d Fl^{X_H}_t|_\xi$ acts symplectically.
A framing (or trivialization) $\fT$ of $\xi$ along $\g$ presents this linearized flow as a path
\[
\psi^{\fT}:[0,\tau]\to Sp(2n),\qquad \psi^T(0)=\Id.
\]

For a path $\psi:[0,\tau]\to Sp(2n)$ with $\psi(0)=\Id$ whose endpoint satisfies $\det(\psi(\tau)-\Id)\neq0$, the \textbf{Robbin--Salamon index} $\mu_{RS}(\psi)\in\Z$ is the symplectic path invariant of \cite{Robbin_Salamon_93}.
It is a homotopy invariant of $\psi$ relative to endpoints, additive under concatenation, and normalized so that it computes the usual Conley--Zehnder index in the non-degenerate case.
We define the \textbf{Conley--Zehnder index} of $\g$ with respect to the framing $\fT$ by
\[
\mu_{CZ}^\fT(\g) = \mu_{RS}(\psi^\fT).
\]

The value $\mu_{CZ}^\fT(\g)$ depends on the homotopy class of the framing $\fT$.
Replacing $\fT$ by a framing that differs by a loop of winding number $N\in\pr_1(Sp(2n))\cong\Z$ shifts the index by a fixed amount,
\[
\mu_{CZ}^{\fT'}(\g) = \mu_{CZ}^\fT(\g) + 2N.
\]
Consequently no absolute integer is canonically attached to a single orbit without extra data.
What is canonical is the \textbf{relative Conley--Zehnder index} of two orbits $\g_0,\g_1$ admitting a common framing $\fT$,
\[
\mu_{CZ}(\g_1,\g_0) := \mu_{CZ}^\fT(\g_1) - \mu_{CZ}^\fT(\g_0),
\]
which is independent of the choice of $\fT$ since the ambiguity $2N$ cancels.
For instance, we can consider two orbits in a single tubular neighborhood, or a family $\g_\eps$ along an orbit cylinder.

\begin{remark}\rm
It is this relative index that we track across a bifurcation. Indeed, in the study of bifurcations, the relevant information is not the absolute value of the Conley--Zehnder index of a single orbit, but how the index changes as the orbit passes through a degeneracy. Along a non-degenerate orbit cylinder, $\mu_{CZ}^{\fT}(\g_\eps)$ is locally constant for any fixed choice of framing $\fT$. Hence an index jump can occur only when the relevant orbit becomes degenerate, precisely the situation in which a bifurcation may occur. Thus in this case, the information carried by the Conley--Zehnder index is naturally relative.

There is also a topological reason for using relative indices in our local models. The symplectic manifolds used in \Cref{Section - cylinder,Section - LFCC} are topologically  $S^1\times \mathbb R^3$, and the periodic orbits under consideration are homotopically non-trivial. In particular, they do not admit capping disks, which means there is no canonical capping trivialization from which one could define an absolute Conley--Zehnder index. For this reason, all index computations below are formulated in terms of relative Conley--Zehnder indices.
\end{remark}

    \subsection{Birkhoff Normal Form}\label{Subsection - BNF}

Let $(W^4,\o)$ be a symplectic manifold, $H:W\to\R$ an autonomous Hamiltonian, and $\g$ a periodic orbit.
We work on the $2$-dimensional Poincar\'e section $\Sigma\simeq D^2$ at the origin, with return map $\Psi:\Sigma\to\Sigma$.

\begin{lmm}\label{Lemma - General Generating Hamiltonian}
Write $\Psi(p)=Lp+\psi(p)$ with $L\in Sp(2)$ and $\psi(0)=d\psi(0)=0$.
If $L$ has a logarithm, then $\Psi$ is the time-$1$ map of a $1$-periodic Hamiltonian.
\end{lmm}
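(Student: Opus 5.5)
The plan is to build the Hamiltonian in two pieces: a time-independent quadratic part that generates the linearization $L$ exactly, and a time-dependent correction that accounts for the nonlinear remainder $\psi$. Both pieces come from a symplectic isotopy joining the identity to $\Psi$ through maps fixing the origin. On the disk $\Sigma\simeq D^2$ every symplectic isotopy is Hamiltonian, since $H^1(D^2)=0$. Once the isotopy is in hand, we reparametrize it in time so that it becomes the time-$1$ flow of a $1$-periodic Hamiltonian.

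First, the linear part. Since $L$ has a logarithm, we can choose $A\in\mathfrak{sp}(2)$ with $e^{A}=L$. Then $A=J_0S$ for a symmetric matrix $S$. The flow of the quadratic Hamiltonian $K(p)=\tfrac12\langle Sp,p\rangle$, with the paper's sign convention $i_{X_K}\o=-dK$, is $t\mapsto e^{tA}$ up to a sign in $S$, and its time-$1$ map is $L$.

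Second, the nonlinear part. Write $\Phi:=L^{-1}\Psi$, so that $\Phi(0)=0$ and $d\Phi(0)=\Id$. We deform $\Phi$ to the identity by the Alexander trick $\Phi_s(p)=s^{-1}\Phi(sp)$ for $s\in(0,1]$, extended by $\Phi_0=\Id$.
\begin{itemize}
\item Each $\Phi_s$ is symplectic, because conjugating by the dilation $p\mapsto sp$ rescales $\o$ by $s^2$ on both sides.
\item The family is smooth in $s$ at $s=0$, because $\Phi(sp)=sp+O(s^2)$.
\item All $\Phi_s$ fix $0$ and are defined on a fixed neighborhood of $0$.
\end{itemize}
Being a symplectic isotopy on a simply connected domain, $\Phi_s$ is generated by a time-dependent Hamiltonian $G_s$. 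To obtain $G_s$, note that the vector field $Y_s=(\pp_s\Phi_s)\circ\Phi_s^{-1}$ is symplectic, so $i_{Y_s}\o$ is closed and hence exact on the disk. We normalize $G_s(0)=0$.

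Third, we concatenate and make the result periodic. Reparametrize time by a smooth monotone map $\beta:[0,1]\to[0,1]$ that is constant near the endpoints. Run the flow of $K$ for $t\in[0,\tfrac12]$ with speed $2\beta'(2t)$, and then run the isotopy $L\circ\Phi_{\beta(2t-1)}$ for $t\in[\tfrac12,1]$. The second segment is generated by $G$ pulled back by the linear symplectomorphism $L$ (in the appropriate convention) and multiplied by the time factor. The resulting Hamiltonian $H_t$ vanishes identically near $t=0$ and $t=1$. It therefore extends smoothly to a $1$-periodic Hamiltonian, and its time-$1$ map is $L\circ\Phi=\Psi$.

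The main obstacle is the smoothness of the Alexander isotopy at $s=0$, together with the fact that the domain is only a neighborhood of $0$. We would handle smoothness via Taylor's formula with integral remainder, writing $s^{-1}\Phi(sp)=p+\int_0^1(d\Phi(\theta sp)-\Id)\,p\,d\theta$, which is smooth in $(s,p)$. For the domain, we would restrict to a small ball, which is mapped into $N$ for all $s\le1$.
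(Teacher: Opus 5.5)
Your proof is correct. It takes a genuinely different route from the paper, which gives no argument of its own and simply cites Meyer--Hall--Offin [Theorem 6.2.1, \S10.6].

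You build the periodic Hamiltonian by hand, in four steps:
\begin{enumerate}
\item an exact quadratic generator $K$ for $L=e^{A}$;
\item the dilation (Alexander) isotopy $\Phi_s(p)=s^{-1}\Phi(sp)$, which joins $\Id$ to $L^{-1}\Psi$ through symplectic maps fixing $0$;
\item exactness of the closed form $i_{Y_s}\o$ on a ball;
\item a time cutoff that makes the concatenated Hamiltonian vanish near $t=0,\tfrac12,1$, so that it extends $1$-periodically.
\end{enumerate}

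What this buys is an honest smooth, rather than formal, local Hamiltonian with transparent structure. Since $\pp_s\Phi_s(p)=s^{-2}\bigl(d\Phi(sp)\,sp-\Phi(sp)\bigr)=O(|p|^2)$ uniformly in $s$, each $G_s$ is $O(|p|^3)$. Hence the quadratic part of your $H_t$ is exactly $b(t)K$ with $\int_0^1 b=1$. The $1$-periodic linear symplectic change $x=e^{(B(t)-t)A}y$, with $B(t)=\int_0^t b$, then makes this quadratic part autonomous. So the Floquet--Lyapunov step the paper invokes immediately after the lemma becomes explicit for your Hamiltonian.

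The citation, by contrast, ties the lemma directly to the normal-form machinery used next, but it leaves the construction as a black box.

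One step deserves a line of justification: you assert $A\in\mathfrak{sp}(2)$ without comment. In dimension two this is automatic. Any real $B$ with $e^{B}=L$ satisfies $e^{\operatorname{tr}B}=\det L=1$, so $\operatorname{tr}B=0$ and $B\in\mathfrak{sl}(2,\R)=\mathfrak{sp}(2)$. In higher dimensions you would have to assume, or prove separately, that the logarithm can be taken in $\mathfrak{sp}(2n)$ rather than merely in $\mathfrak{gl}(2n)$.
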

\begin{proof}
See \cite[Theorem 6.2.1, \S10.6]{Meyer_Hall_Offin_13}.
\end{proof}

Let $G:\Sigma\to\R$ be the time-dependent $1$-periodic Hamiltonian generating $\Psi$.
By the Floquet--Lyapunov theorem \cite[Theorem 3.4.2]{Meyer_Hall_Offin_13}, a symplectic coordinate change makes the quadratic part of $G$ time-independent.
We write it as $G_0(p)=\frac{1}{2} x^TSx$ with $S$ symmetric, so that $L=\exp(JS)$.
The Jordan--Chevalley decomposition
\[
JS = A + N,\qquad [A,N]=0,
\]
splits $JS$ into its \textbf{semisimple part} $A$ (diagonalizable) and nilpotent $N$; correspondingly
$$
L = \exp(A)\exp(N) = RU,
$$
where $R=\exp(A)$ is a rotation or hyperbolic scaling and $U=\exp(N)$ is a shear.

\begin{thm}[Birkhoff Normal Form]\label{Theorem - BNF}
Let $G$ be a $1$-periodic Hamiltonian with quadratic part $\frac{1}{2} p^TSp$, and let $A$ be the semisimple part of $JS$.
Then there is a formal symplectic change of variables $p=p(t,q)=q+\cdots$ such that, writing $G(t,q)=\sum_{m\geq0}G_m(t,q)$ with $G_m$ homogeneous of degree $(m+2)$ and $G_0(t,q)=\tfrac12 q^TSq$,
\[
\pp_t G_m + \{G_m,G_0\}=0,
\quad\text{equivalently}\quad
G_m(t,q) = G_m\left(0,\exp(-At)q\right).
\]
\end{thm}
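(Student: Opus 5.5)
We prove \Cref{Theorem - BNF} by the standard inductive Lie-series (Deprit) normalization, degree by degree. Suppose that $G$ has already been normalized up to degree $m+1$, i.e.\ $G_j$ satisfies the normal form condition for $j<m$. We look for a near-identity symplectic change of variables generated by the time-$1$ flow of a $1$-periodic Hamiltonian $W_m(t,q)$, homogeneous of degree $m+2$. Since the transformation is time-dependent, the transformed Hamiltonian picks up the correction $-\pp_tW_m$. The induced map is $q\mapsto q+O(|q|^{m+1})$, so it does not change $G_j$ for $j<m$. In degree $m+2$ it changes $G_m$ to
\[
\widetilde G_m = G_m + \{G_0,W_m\} - \pp_t W_m
\]
(signs depending on convention), plus terms of higher degree. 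Thus the step reduces to the homological equation
\[
\mathcal L W := \pp_t W + \{W,G_0\}, \qquad \widetilde G_m = G_m - \mathcal L W_m ,
\]
on the space $\mathcal P_{m+2}$ of $1$-periodic homogeneous polynomials of degree $m+2$. We want to choose $W_m$ so that $\widetilde G_m\in\ker \mathcal L_A$, where $\mathcal L_A$ is the semisimple part of $\mathcal L$.

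Next we decompose $\mathcal L$. The operator $\operatorname{ad}_{G_0}=\{\cdot,G_0\}$ acting on degree-$(m+2)$ polynomials is the derivation induced by the linear vector field $JSq$. Because $JS=A+N$ with $[A,N]=0$, it splits as $\mathcal D_A+\mathcal D_N$, where the two pieces commute, $\mathcal D_A$ is semisimple and $\mathcal D_N$ is nilpotent. Expanding $W$ in Fourier modes $e^{2\pi i \ell t}$, the operator $\pp_t$ is also semisimple and commutes with both. Hence $\mathcal L=(\pp_t+\mathcal D_A)+\mathcal D_N$ is a Jordan--Chevalley decomposition on each finite-dimensional Fourier block. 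For the semisimple part $\mathcal L_A=\pp_t+\mathcal D_A$ we have $\mathcal P=\ker\mathcal L_A\oplus\operatorname{im}\mathcal L_A$. Since $\mathcal D_N$ commutes with $\mathcal L_A$, $\mathcal L$ restricted to $\operatorname{im}\mathcal L_A$ is a semisimple invertible operator plus a commuting nilpotent, hence invertible. So we can project $G_m$ onto $\operatorname{im}\mathcal L_A$, solve $\mathcal L W_m$ equal to that component, and be left with $\widetilde G_m\in\ker\mathcal L_A$. That is exactly the condition $\pp_t G_m+\mathcal D_A G_m=0$. Solving this first-order linear PDE along characteristics gives $G_m(t,q)=G_m(0,\exp(-At)q)$. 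This also justifies the theorem's formulation of the condition in terms of $\{G_m,G_0\}$ read with $G_0$ replaced by its semisimple part. Composing the successive transformations gives a formal symplectic change of variables $p=q+\cdots$, which is all that is claimed.

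The main obstacle is the bookkeeping in the induction, not the normal form itself. One must check that the correction introduced at step $m$ does not disturb lower degrees, which holds because brackets raise degree: $\{G_j,W_m\}$ has degree $j+m+2$. Fourier blocks must be handled one at a time, with every operator preserving each block, so that the decomposition argument stays finite-dimensional. One must also be careful that the statement's equation uses $A$ rather than the full $JS$, since in the nilpotent (shear) cases $\ker\mathcal L$ alone would be too small to absorb the non-resonant terms. Convergence is not needed, since the change of variables is only asserted to be formal.
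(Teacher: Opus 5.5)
Your proposal is correct and follows essentially the same route as the source the paper relies on. The paper gives no argument of its own beyond citing Meyer--Hall--Offin, whose proof is this same degree-by-degree Lie-transform normalization: solve the homological equation for $\partial_t+\{\cdot,G_0\}$ via the Jordan--Chevalley splitting $(\partial_t+\mathcal D_A)+\mathcal D_N$, blockwise in Fourier modes. One step you leave implicit is harmless: for $|\ell|$ large the whole block is invertible with inverse $O(1/|\ell|)$, so the blockwise solutions assemble into a smooth periodic $W_m$.

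You are also right to read the condition with the semisimple part $A$ in place of the full quadratic part. When $N\neq 0$, the literal equation $\partial_t G_m+\{G_m,G_0\}=0$ is not equivalent to $G_m(t,q)=G_m(0,\exp(-At)q)$ and cannot be achieved in general; the $\frac{\beta}{3}y^3$ term of the birth-death normal form, where $A=0$, is an example. It is only the $A$-version that the paper actually uses.
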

\begin{proof}
See \cite{Meyer_Hall_Offin_13}.
\end{proof}

Thus the normal form rewrites $G$ as a formal series equivariant under the linear flow $\exp(At)$ of its semisimple part.
The series $G=\sum G_m$ is formal and generically divergent; convergence is addressed later.

\subsubsection*{Degenerate case}
When $L$ has $1$ as an eigenvalue, $L$ is a shear with $1$'s on the diagonal, so $A=0$ and the equivariance condition reduces to $G_m(t,p)=G_m(0,p)$.
The series is then autonomous, giving the following.
\begin{cor}\label{Corollary - Shear case}
If $\g$ has Floquet multiplier $1$, there is a formal series $G=\sum_{m\geq 0} G_m$ of homogeneous polynomials of degree $(m+2)$, an autonomous Hamiltonian generating $\Psi$.
\end{cor}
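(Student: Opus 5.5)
The statement is Corollary~\ref{Corollary - Shear case}. I would deduce it directly from Theorem~\ref{Theorem - BNF}, specializing to the case where the linearization $L=d\Psi(0)$ has eigenvalue $1$.

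First, I need a logarithm of $L$, so that Lemma~\ref{Lemma - General Generating Hamiltonian} applies and gives a $1$-periodic generating Hamiltonian $G$. Since $\det L=1$ and $1$ is an eigenvalue, both eigenvalues equal $1$. Hence $L$ is unipotent: either $L=\Id$, or $L$ is conjugate in $Sp(2)$ to a shear $\begin{pmatrix}1&b\\0&1\end{pmatrix}$. In either case $N:=\log L = L-\Id$ works, because $(L-\Id)^2=0$. This $N$ is nilpotent and lies in $\mathfrak{sp}(2)$, so $N=JS$ for some symmetric $S$. Lemma~\ref{Lemma - General Generating Hamiltonian} then gives $G$. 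By Floquet--Lyapunov we may take its quadratic part to be $G_0=\tfrac12 p^TSp$, which is time-independent with $\exp(JS)=L$.

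Next, I compute the Jordan--Chevalley decomposition of $JS=N$. It is nilpotent, so its semisimple part is $A=0$. One point needs care: Floquet--Lyapunov might return some other logarithm $JS$ of $L$, for example one involving a rotation by $2\pi$. I would avoid this by choosing the logarithm $N$ explicitly as above when constructing $G_0$. Alternatively, I would note that the eigenvalues of $JS$ must lie in $2\pi i\Z$, and fix the choice with eigenvalues $0$.

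Then Theorem~\ref{Theorem - BNF} with $A=0$ gives $G_m(t,q)=G_m(0,\exp(0)q)=G_m(0,q)$ for every $m$. So each homogeneous term is autonomous, and $G=\sum_{m\ge0}G_m(0,\cdot)$ is a formal autonomous series of homogeneous polynomials of degree $m+2$. The normalizing change of variables $p=q+\cdots$ is symplectic. It can be taken to be the time-$1$ (equivalently time-$0$) identification, since it is $1$-periodic in $t$. Therefore the time-$1$ map of the normalized $G$ is conjugate to $\Psi$, and this $G$ generates $\Psi$ in the formal sense, in the new coordinates.

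The main obstacle is the choice of logarithm. I have to make sure the quadratic part produced by Floquet--Lyapunov really has nilpotent $JS$, rather than a semisimple part with eigenvalues $\pm 2\pi i k$. I would handle this as described: pick the principal unipotent logarithm at the outset and carry it through Lemma~\ref{Lemma - General Generating Hamiltonian}.
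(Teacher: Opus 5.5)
Your proof is correct and follows the paper's argument: since $L$ is unipotent, the semisimple part $A$ of $JS$ vanishes, so \Cref{Theorem - BNF} forces every $G_m(t,\cdot)=G_m(0,\cdot)$ to be autonomous. Your extra care in choosing the nilpotent logarithm $L-\Id$ is a legitimate refinement that the paper leaves implicit. It is only needed when $L=\Id$, because a nontrivial shear has no real logarithm other than $L-\Id$.
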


\subsubsection*{Degenerate multiple cover case}
Now suppose $L$ has eigenvalues $\ld=\exp(\pm2\pi i l/k)$ with $(l,k)=1$ and $k\geq2$: for $k=2$, $L$ is a shear with $-1$'s on the diagonal; for $k\geq3$, $L$ is the rotation $R$ by $2\pi l/k$.
Identifying $\Sigma$ with a subset of $\C$, the semisimple flow $\exp(At)$ acts as rotation and induces the natural $\Z_k$-action on $\C$.
For a monomial $e^{ict}z^a\bar z^b$, equivariance under $\exp(At)$ forces
\[
z^a\bar z^b = \exp\!\big(i(2\pi(a-b)l/k+c)t\big)\,z^a\bar z^b,
\]
i.e.\ $a-b\equiv0\pmod k$ and $c=-2\pi(a-b)l/k$.
Hence the invariants are generated by $|z|^2$, $e^{-2\pi l i t}z^k$, and $e^{2\pi l i t}\bar z^k$.
In polar coordinates $z=\rho e^{i\vp}$ these become $\rho^2$, $\rho^k\cos(k\vp-2\pi lt)$, $\rho^k\sin(k\vp-2\pi lt)$.
\begin{cor}\label{Corollary - ROU case}
If $\g$ has Floquet multiplier $\exp(2\pi i l/k)$ with $(l,k)=1$, $k\geq2$, there is a formal series $G=\sum G_m$ of $\Z_k$-symmetric homogeneous polynomials of degree $(m+2)$ with time-dependent coefficients generating $\Psi$, with
\[
G_m\in\R\left[\rho^2,\rho^k\cos(k\vp-2\pi lt),\rho^k\sin(k\vp-2\pi lt)\right].
\]
Note that $G_0$, the second order term, can be taken autonomous.
\end{cor}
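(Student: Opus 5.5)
The plan is to read \Cref{Corollary - ROU case} off \Cref{Theorem - BNF}, specialized to a linear part whose multipliers are the primitive $k$-th roots $\exp(\pm 2\pi i l/k)$. First I fix the logarithm of $L$. For $k\geq 3$, $L$ is conjugate in $Sp(2)$ to the rotation by $2\pi l/k$, so I take $JS=A$ to be the generator of rotation with angular speed $2\pi l$ (with $N=0$). Then $G_0=\tfrac12 q^TSq=\pi l|q|^2$ up to normalization. For $k=2$, $L=-U$ with $U$ a unipotent shear, so I take $A$ to be rotation by $\pi$ per unit time and $N=\log U$, which commutes with $A$. In both cases $L$ has a logarithm, so \Cref{Lemma - General Generating Hamiltonian} gives a $1$-periodic generating Hamiltonian $G$. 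By the Floquet--Lyapunov theorem its quadratic part can be made autonomous, which is the final sentence of the statement.

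Next I apply \Cref{Theorem - BNF}. Each $G_m$ satisfies $G_m(t,q)=G_m(0,\exp(-At)q)$, and $\exp(-At)$ is rotation by $-2\pi l t$. Periodicity in $t$ with period $1$ forces $G_m(0,\cdot)$ to be invariant under rotation by $2\pi l$ composed with the time shift. Concretely, I expand $G_m(t,\cdot)$ in monomials $e^{ict}z^a\bar z^b$ with $a+b=m+2$. Equivariance then forces $c=-2\pi(a-b)l/k$ for the $t$-dependence to match, and $1$-periodicity in $t$ requires $c\in 2\pi\Z$. Since $(l,k)=1$, this gives $a-b\equiv 0\pmod k$. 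The point to check here is where the $k$ enters: the relevant rotation is $A$ normalized so that $\exp(A)=R$ is rotation by $2\pi l/k$. The constraint $c\in2\pi\Z$ then combines with $c$ being determined by the $1/k$-phase rotation, which yields divisibility by $k$. I would redo the bookkeeping carefully so that the angle per period is $2\pi l/k$ and the $z^k$ invariant carries the phase $e^{-2\pi i l t}$.

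Then I use classical invariant theory for $\Z_k$ acting on $\C$. A monomial $z^a\bar z^b$ with $k\mid a-b$ factors as $|z|^{2\min(a,b)}$ times a power of $z^k$ or $\bar z^k$. After attaching the phases, it therefore lies in the algebra generated by $|z|^2$, $e^{-2\pi i l t}z^k$ and $e^{2\pi i l t}\bar z^k$. Taking real parts and passing to polar coordinates turns these generators into $\rho^2$, $\rho^k\cos(k\vp-2\pi lt)$ and $\rho^k\sin(k\vp-2\pi lt)$. The real-valuedness of $G_m$ ensures that only real combinations occur, which gives the stated membership.

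The main obstacle is the $k=2$ case, where $N\neq0$. There the Birkhoff equivariance is only with respect to the semisimple part, and $G_0$ contains the nilpotent shear term, which is not rotation invariant. I would handle it by observing that the theorem only constrains $G_m$ for $m\geq1$ through $A$; $G_0$ is autonomous by Floquet--Lyapunov even though it is not in the invariant ring. I would therefore state the membership for the higher-order terms, and for $k=2$ note that the quadratic invariants $\rho^2$ and $\rho^2\cos(2\vp-2\pi l t)$ already span the admissible quadratic forms after undoing the rotating frame.
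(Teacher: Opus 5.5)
Your derivation for $k\geq3$ is the paper's own. You expand $G_m$ in monomials $e^{ict}z^a\bar z^b$, use the equivariance $G_m(t,q)=G_m(0,\exp(-At)q)$ from \Cref{Theorem - BNF} to get $c=-2\pi(a-b)l/k$, combine this with $c\in2\pi\Z$ and $(l,k)=1$ to get $k\mid a-b$, and read off the generators $|z|^2$, $e^{-2\pi ilt}z^k$, $e^{2\pi ilt}\bar z^k$. You even make explicit the periodicity step that the paper compresses.

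Use the normalization you flag at the end of your second paragraph, not the one you first wrote down. If $A$ rotated with angular speed $2\pi l$, then $\exp(A)=\Id\neq L$, every term $e^{-2\pi il(a-b)t}z^a\bar z^b$ would be $1$-periodic, and no $\Z_k$-constraint would arise. With $\exp(A)$ the rotation by $2\pi l/k$ one gets $G_0=\frac{\pi l}{k}\rho^2$. The $\pi l\rho^2$ you wrote is the quadratic part of $kG$ from \Cref{Subsection - Unwrapped generating Hamiltonian}.

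The $k=2$ part, however, rests on a false step. The generator $A=\pi lJ$ does not commute with $N=\log U$ unless $N=0$. The centralizer of $J$ in the real $2\times2$ matrices is $\R\Id\oplus\R J$, which contains no nonzero nilpotent; only $\exp(A)=-\Id$ commutes with $U$. In fact, for $U\neq\Id$ the matrix $-U$ is not the exponential of any element of $\mathfrak{sp}(2)$, since an exponential of trace $-2$ must equal $-\Id$. So \Cref{Lemma - General Generating Hamiltonian} does not apply.

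Your fallback, that $G_0$ is autonomous by Floquet--Lyapunov, cannot be repaired either. For any $1$-periodic $G$ with autonomous quadratic part $\frac12q^TSq$, the linearization at $0$ of its time-$1$ map is $\exp(JS)\neq-U$, because terms of degree $\geq3$ do not contribute. Thus for $k=2$ it is the autonomy of $G_0$, not the ring membership, that must be given up.

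Your closing remark is the right fix once it becomes the main argument. Take $G_0(t,z)=\frac{\pi l}{2}|z|^2+Q(e^{-At}z)$ with $Q$ the autonomous generator of $U$. This $G_0$ is $1$-periodic since $e^{-A}=-\Id$. It generates $-U$ (substitute $z=e^{At}w$). It lies in the ring because $\rho^2,\rho^2\cos 2\vp,\rho^2\sin 2\vp$ span all quadratic forms.

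The higher-order terms are then normalized in the rotating frame $w$. There the Hamiltonian $K$ is $2$-periodic with nilpotent quadratic part and satisfies $K(t+1,w)=K(t,-w)$. This is the doubled-period Floquet--Lyapunov setting, and the equivariantly chosen autonomous normal form is even in $w$, i.e.\ $2\mid a-b$.

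The paper's own text tacitly makes the same assumption you did: it uses $L=\exp(A)\exp(N)$ also for $k=2$. So this is a shared blind spot. It is harmless for the later use in \Cref{Corollary - Unwrapped Generating Hamiltonian}, where one works with $\Psi^2$ and the linear part $U^2$ is a genuine shear.
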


From now on, we will use a finite-degree truncation of the generating Hamiltonian. The power series expansion of Birkhoff normal form obtained above converges only if the system is integrable, and in generic systems, infinite-order flat terms exist, which may lack $\Z_k$-symmetry. However, the bifurcations we will consider in \Cref{Section - Classification of bifurcations,Section - Symmetric} fundamentally depend only on the critical points of $G_\eps$. Furthermore, the gradient trajectories between these critical points can also be determined by the leading terms, which will be utilized to construct Floer cylinders in \Cref{Subsection - Gradient Revolution}. Therefore, for the purpose of analyzing the bifurcations and the associated Floer-theoretic information, it is sufficient to consider only the finite-degree truncation.

\subsection{The Unwrapped Generating Hamiltonian}\label{Subsection - Unwrapped generating Hamiltonian}

We now set $\ld=\exp(2\pi il/k)$ and seek a generating Hamiltonian for $\Psi^k$, the $k$-th iterate of the return map.
Naively one takes $kG$, which by \Cref{Corollary - ROU case} has the form
$$
kG(\rho,\vp,t) = \pi l\rho^2 + \sum_{m\geq1}G_m,
\quad
G_m\in\R\left[\rho^2,\rho^k\cos(k\vp-2\pi lt),\rho^k\sin(k\vp-2\pi lt)\right].
$$
The time dependence is entirely carried by the phase $k\vp-2\pi lt$, generated by the semisimple part $\pi l\rho^2$.
This suggests passing to the rotating frame in which that phase is constant; the obstruction is that the frame is not well-defined on $\Sigma$, since tracing $\g$ once rotates it by $2\pi l/k$ and a single loop fails to close up.

We therefore pass to the $k$-fold cover on which the frame closes.
Let $W$ be a tubular neighborhood of $\g$, and for the coprime pair $(k,l)$, let
$$
\mathrm{cov}_{k,l}:\tilde{W}_{k,l}\to W
$$
be the $k$-fold cover associated to the rotation by $2\pi l/k$ on $\Sigma$, on which $k$ full loops along $\g$ compose to a $2\pi l$ rotation, which is just the identity.

On $\tilde{W}_{k,l}$, we introduce the rotating frame generated by the semisimple rotation.
The semisimple part is $A=2\pi lJ$, whose flow $e^{At}$ is the rotation by angle $2\pi lt$.
Set
$$
w = e^{-At}z,
\quad\text{equivalently,}\quad
\vp' = \vp - 2\pi lt .
$$
This change of coordinates is well-defined on $\tilde{W}_{k,l}$, where $e^{-At}$ closes up after $k$ periods.
In the frame $(\rho,\vp')$, the time-dependent phase becomes
$$
k\vp - 2\pi lt = k\vp',
$$
so every generator in \Cref{Corollary - ROU case} loses its explicit $t$-dependence: $\rho^2$, $\rho^k\cos k\vp'$, $\rho^k\sin k\vp'$ are all autonomous.
Hence $kG$ is genuinely autonomous in the rotating frame, and the semisimple part $\pi l\rho^2$ has been absorbed into the frame rather than discarded.

Being built from the semisimple flow, the rotating frame is equivariant under the deck group $\Z_k$, so the resulting autonomous generator is $\Z_k$-invariant and descends along $\mathrm{cov}_{k,l}$ to a well-defined Hamiltonian on $W$.
We call this the \textbf{unwrapped generating Hamiltonian}.
Its critical points other than the origin correspond precisely to the $k$-periodic points of $\Psi$.

\begin{cor}[Unwrapped Generating Hamiltonian]\label{Corollary - Unwrapped Generating Hamiltonian}
Let $\g$ have Floquet multiplier $\exp(2\pi i l/k)$ with $(l,k)=1$.
Then there is a formal series $G=\sum_{m\geq0}G_m$ of autonomous $\Z_k$-symmetric homogeneous polynomials of degree $(m+2)$, obtained by lifting to $\tilde{W}_{k,l}$ and passing to the rotating frame of the semisimple flow, which generates $\Psi^k$.
In particular, $G_0$ generates shear for $k=1,2$, $G_0=0$ for $k\geq3$, and
\[
G\in
\begin{cases}
\R[x,y] & k=1,\\
\R[x^2,xy,y^2] & k=2,\\
\R[\rho^2,\rho^k\cos k\vp,\rho^k\sin k\vp] & k\geq3.
\end{cases}
\]
\end{cor}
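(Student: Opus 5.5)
The plan is to split into the three cases $k=1$, $k=2$, $k\geq3$ and, in each, combine the Birkhoff normal form (\Cref{Theorem - BNF}) with the explicit invariant description of \Cref{Corollary - Shear case} and \Cref{Corollary - ROU case}.

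\emph{Case $k=1$.} Here $L$ has eigenvalue $1$. \Cref{Corollary - Shear case} already gives an autonomous formal series $G=\sum G_m$ generating $\Psi$. There is no symmetry constraint, so $G\in\R[x,y]$. Its quadratic part $G_0=\tfrac12 q^TSq$ satisfies $\exp(JS)=L$, a unipotent shear, so $JS$ is nilpotent and $G_0$ generates a shear (possibly trivial).

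\emph{Case $k=2$.} Now $L=-U$ with $U$ a shear. The eigenvalue $-1$ has a real logarithm in the form $A+N$ with $A=\pi J$, so $l=1$. By \Cref{Corollary - ROU case}, $2G$ lies in $\R[\rho^2,\rho^2\cos(2\vp-2\pi t),\rho^2\sin(2\vp-2\pi t)]$ plus the semisimple part $\pi\rho^2$. Passing to the rotating coordinate $w=e^{-At}z$ is well defined on $\tilde W_{2,1}$. This removes $t$, and the resulting generators $\rho^2,\rho^2\cos2\vp',\rho^2\sin2\vp'$ span the even quadratics $x^2,xy,y^2$. Hence $G\in\R[x^2,xy,y^2]$. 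The quadratic part left after absorbing $\pi\rho^2$ into the frame is $\tfrac12 q^TS'q$ with $JS'=N$ nilpotent, so it generates the shear $U$.

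\emph{Case $k\geq3$.} Here $L=R$ is a rotation, so $N=0$. After subtracting the semisimple part $\pi l\rho^2$, which is absorbed by the frame, the quadratic term vanishes: $G_0=0$. The rotating-frame substitution $\vp'=\vp-2\pi lt$ turns each generator of \Cref{Corollary - ROU case} into $\rho^2,\rho^k\cos k\vp',\rho^k\sin k\vp'$, all autonomous.

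In every case we must still check two things.

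First, the flow of the new autonomous Hamiltonian for time $1$ (after rescaling by $k$) composed with the frame rotation equals $\Psi^k$. This follows from the fact that conjugating a Hamiltonian flow by the linear symplectic flow $e^{At}$ changes the Hamiltonian by subtracting the generator of $e^{At}$. After time $k$ in the original scale (time $1$ for $kG$), the frame rotation is $e^{2\pi lJ}=\Id$. So the time-$1$ map of the autonomous $G$ coincides with $\Psi^k$ on the cover.

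Second, the descent to $W$ follows from $\Z_k$-invariance. The polynomials in $\rho^2,\rho^k\cos k\vp',\rho^k\sin k\vp'$ are invariant under rotation by $2\pi/k$, which is the deck action of $\mathrm{cov}_{k,l}$.

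The main obstacle is making the Hamiltonian-conjugation identity precise. One must track that the time-dependent $1$-periodic $G$ generating $\Psi$ iterates to $\Psi^k$ with phase $k\vp-2\pi lt$ consistent over $k$ periods. One must also confirm that the correction term from the moving frame is exactly the semisimple quadratic $\pi l\rho^2$, with no leftover time dependence.

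I would verify this directly on monomials $e^{ict}z^a\bar z^b$ with $a-b\equiv0 \pmod k$, using the equivariance relation $c=-2\pi(a-b)l/k$ from \Cref{Corollary - ROU case}. Throughout, everything is understood at the level of formal series, or of a finite truncation as adopted in the paper.
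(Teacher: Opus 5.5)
Your argument follows the paper's route: take $kG$, pass to the rotating frame $w=e^{-At}z$ of the semisimple flow on $\tilde W_{k,l}$, observe that $\rho^2$, $\rho^k\cos(k\vp-2\pi lt)$ and $\rho^k\sin(k\vp-2\pi lt)$ all become autonomous, and descend by $\Z_k$-invariance. You also check explicitly that the frame rotation after the full $k$ periods is $e^{2\pi lJ}=\Id$, so the time-$1$ map of the autonomous Hamiltonian really is $\Psi^k$. The paper leaves this implicit. Your cases $k=1$ and $k\geq3$ are fine.

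Your case $k=2$, however, rests on a false claim. The paper's own set-up makes the same slip when it says $G_0$ can be taken autonomous with $L=\exp(JS)$. Suppose $L=-U$ with $U\neq\Id$ a genuine shear, which is exactly the transitional situation. Then $L$ has no real logarithm. Indeed, if $X$ is a real $2\times 2$ matrix and $e^X$ has double eigenvalue $-1$, then the eigenvalues of $X$ are $\pm i\pi(2m+1)$. These are distinct, so $X$ is diagonalizable and $e^X=-\Id$. In particular, $\pi J$ commutes with no nonzero nilpotent $N$, and $\exp(\pi J+N)\neq-\exp(N)$.

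So the quadratic part of a $1$-periodic generating Hamiltonian of $\Psi$ cannot be autonomous. Your step ``absorb $\pi\rho^2$, leaving $\tfrac12 q^TS'q$ with $JS'=N$'' therefore has no valid starting point. Nor would the result be autonomous: $N$ does not commute with $e^{\pi Jt}$, so the rotating frame turns it into the time-dependent generator $e^{-\pi Jt}Ne^{\pi Jt}$.

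The fix is to treat the shear exactly like the higher-order terms. Take the quadratic term of $G$ to be $\tfrac{\pi}{2}\rho^2+N_0(e^{-\pi Jt}q)$, where $N_0$ is a quadratic form generating $U$. This term has the right properties:
\begin{itemize}
\item it is $1$-periodic because $N_0$ is even;
\item it lies in the span of $\rho^2$, $\rho^2\cos(2\vp-2\pi t)$, $\rho^2\sin(2\vp-2\pi t)$ allowed by \Cref{Corollary - ROU case};
\item in the rotating frame it becomes the autonomous $N_0(w)$.
\end{itemize}
After that, your argument goes through unchanged.

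A cleaner formulation, uniform in $k$, is the map normal form $\Psi=R\circ\phi^1_F$. Here $R$ is the semisimple part of $L$ and $F\circ R=F$. Then $\Psi^k=R^k\circ\phi^k_F=\phi^1_{kF}$, because $R$ commutes with $\phi^t_F$ and $R^k=\Id$. This needs no logarithm of $L$ at all.
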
        
\section{Classification of the Generic Bifurcations}\label{Section - Classification of bifurcations}

In the 1970s, Meyer classified the generic bifurcations of the periodic points of 2-dimensional symplectomorphisms in \cite{Meyer_70}, which was followed by the classification of the bifurcations of the periodic orbits of Hamiltonian systems in dimension 4, as summarized in \cite{Abraham_Marsden_78} and \cite{Meyer_Hall_Offin_13}.
In this section, we introduce the results of this classification, slightly refined to contain more information related to the Floer chain complex.

In particular, we take the finite degree truncation of the unwrapped generating Hamiltonian constructed in \Cref{Subsection - Unwrapped generating Hamiltonian}. This forces the system to be integrable, but it's enough for the analysis of the bifurcation, as mentioned at the end of \Cref{Subsection - BNF}.

\subsection{Birth-death}\label{Subsection - Birth-death}
We first describe a fixed point $p$ of $\Psi_\eps$ with characteristic multiplier $1$.
Generically, the linear part of $\Psi_0$ is a shear, and we can find a formal autonomous generating Hamiltonian $G_0$ of $\Psi_0$.
The leading terms of the Hamiltonian can be written in the form
\[
G_0(x,y) =\frac{\alpha}{2}x^2 + \frac{\beta}{3}y^3+\ldots.
\]
Note that we fix a shear in $x$-direction.
In this case, the bifurcation on $G_0$ can be applied to any order, so the leading order terms are given by
\[
G_\eps(x,y) = \frac{\alpha}{2}x^2 +\frac{\beta}{3}y^3 +\eps(ax + by) + \ldots.
\]
If $\alpha, \beta$ and $b$ are nonzero, we call the origin an \textbf{extremal fixed point}.

The fixed points of $\Psi_\eps$ correspond to the critical points of $G_\eps$, and we need to solve the equation with initial terms
\[
\begin{aligned}
    \pp_x G_\eps&=\alpha x + \eps a=0, \\
    \pp_y G_\eps&= \beta y^2 + \eps b=0.
\end{aligned}
\]
The initial terms of the solutions, which extend by the implicit function theorem, are
\[
\begin{pmatrix}
    x_\eps\\
    y_\eps
\end{pmatrix}
=
\begin{pmatrix}
-\eps a/\alpha\\
\pm\sqrt{-\eps b/\beta}
\end{pmatrix}.
\]
This implies that there exist $0$ or $2$ solutions, say $p_0$ and $p_1$, depending on the sign of $-\eps b/\beta$.
By taking an appropriate direction for the bifurcation parameter, we may assume that $-\eps b/\beta>0$ for $\eps>0$.
We compute the Hessian of $G_\eps$ at these critical points to analyze the Morse index, which gives
\[
\Hess G_\eps (p_{0,1}) = \begin{pmatrix}
    \alpha&0\\
    0& 2\beta y
\end{pmatrix} = \begin{pmatrix}
    \alpha &0\\
    0&\pm 2\beta \sqrt{-\eps b/\beta}
\end{pmatrix}.
\]
This implies that one of the two critical points, say $p_H$, is a saddle point, while the other, say $p_E$, is a maximum or a minimum.
To summarize, we have the following theorem.

\begin{thm}[\cite{Meyer_70}, Extremal Fixed Point]\label{Theorem - FPbd}
    Let $p$ be an extremal fixed point of $\Psi_\eps$. By an appropriate choice of the bifurcation parameter, there exists a family of elliptic fixed points $p_E$ and a family of hyperbolic fixed points $p_H$ parametrized by $\eps>0$.
    As $\eps\to0$, $p_E$ and $p_H$ converge to $p$.
    In particular, the difference of the Morse indices of the two new fixed points with respect to the generating Hamiltonian is exactly $1$.
\end{thm}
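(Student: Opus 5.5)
The argument formalizes the computation just carried out. I will work with the autonomous generating Hamiltonian $G_\eps$ from \Cref{Corollary - Shear case}, use that fixed points of $\Psi_\eps$ near $p$ are exactly the critical points of $G_\eps$ near the origin, and read off the stability type of each fixed point from the Hessian.

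\emph{Locating the critical points.} I rescale $x=\eps\,\tilde x$ and $y=|\eps|^{1/2}\tilde y$. After dividing $\pp_xG_\eps$ by $\eps$ and $\pp_yG_\eps$ by $|\eps|$, the critical point equations become
\[
\alpha\tilde x+a+O(|\eps|^{1/2})=0,\qquad \beta\tilde y^2+\operatorname{sgn}(\eps)\,b+O(|\eps|^{1/2})=0 .
\]
Here the higher-order terms of $G_\eps$ are controlled by the weights: $x$ has weight $1$, $y$ has weight $1/2$, and $\eps$ has weight $1$. The condition $x\cdot y$-type cross terms needs the normal form to have no $x y$ or $y^2$ term at leading order. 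This holds because the quadratic part generates a shear in the $x$-direction.

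Choose the sign of $\eps$ so that $-\operatorname{sgn}(\eps)b/\beta>0$ for $\eps>0$. The limiting system then has exactly two solutions, $(\tilde x,\tilde y)=(-a/\alpha,\pm\sqrt{-b/\beta})$. Its Jacobian is $\mathrm{diag}(\alpha,\,2\beta\tilde y)$, which is nonsingular because $\alpha,\beta,b\neq0$. The implicit function theorem, applied in the variable $s=|\eps|^{1/2}$, therefore gives two continuous families $p_0(\eps),p_1(\eps)$ tending to $p$.

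For $\eps<0$ the limiting equation $\beta\tilde y^2=-|b|\ldots$ has no real solution, so there are no critical points in a neighborhood of size comparable to the rescaling. To rule out critical points elsewhere in a fixed small neighborhood, I use $|\pp_yG_\eps|\ge c(y^2+|\eps|)$ once $|\pp_xG_\eps|$ is small. This is the step I expect to need the most care, namely uniformity of the neighborhood in $\eps$, and the weighted estimate above handles it.

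\emph{Morse indices and stability.} In the original coordinates the Hessian at $p_{0,1}$ is
\[
\begin{pmatrix}\alpha+O(\eps)&O(|\eps|^{1/2})\\ O(|\eps|^{1/2})&\pm2\beta\sqrt{-\eps b/\beta}+O(\eps)\end{pmatrix}.
\]
Its determinant is $\pm2\alpha\beta\sqrt{-\eps b/\beta}+O(\eps)$, so the two determinants have opposite signs for small $\eps>0$. Call $p_H$ the point with negative determinant; it is a saddle with Morse index $1$. The other point $p_E$ has positive determinant and is an extremum, with Morse index $0$ or $2$ according to the sign of $\alpha$. The Morse indices therefore differ by exactly $1$.

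Finally, $\Psi_\eps$ is the time-$1$ map of $G_\eps$, so $d\Psi_\eps$ at a critical point is $\exp(J\,\Hess G_\eps)$. At a saddle, $J\Hess$ has real eigenvalues $\pm\sqrt{-\det}$, so the fixed point is hyperbolic. At an extremum, $J\Hess$ has eigenvalues $\pm i\sqrt{\det}$ with $\sqrt{\det}=O(|\eps|^{1/4})$, which is small and nonzero; the multipliers lie on $S^1\setminus\{1\}$, so the fixed point is elliptic. This gives the families $p_E,p_H$ for $\eps>0$, both converging to $p$ as $\eps\to0$.
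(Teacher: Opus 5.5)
Your proposal is correct and follows the paper's route: solve the leading-order critical-point equations of $G_\eps$, extend the two solutions by the implicit function theorem, and read off saddle versus extremum from the Hessian. Your additions make explicit what the paper leaves implicit:
\begin{itemize}
\item the weighted rescaling $x=\eps\tilde x$, $y=|\eps|^{1/2}\tilde y$, which is needed because the unrescaled Jacobian is degenerate at $\eps=0$ and $\sqrt{\eps}$ is not smooth;
\item the exclusion of other critical points;
\item the $\exp(J\,\Hess G_\eps)$ link from Morse type to ellipticity or hyperbolicity.
\end{itemize}
The only slip is in your displayed rescaled $x$-equation. A cubic term $c\,xy^2$ in $G_0$ would contribute $c\,\tilde y^2$ at order one, not $O(|\eps|^{1/2})$. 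This merely shifts $\tilde x$: the limiting Jacobian becomes triangular with diagonal entries $\alpha$ and $2\beta\tilde y$, so it stays invertible and the argument is unaffected.
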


We can translate this result into the language of Hamiltonian periodic orbits as follows.

\begin{thm}[Birth-death]\label{Theorem - PObd}
    Let $H_\eps$ be a 1-parameter family of Hamiltonians and $\g_0$ be a periodic orbit of $H_0$ with Floquet multiplier $\ld=1$.
    Assume that a point $p$ in $\g_0$ is an extremal fixed point of the return map. 
    
    Then, by an appropriate choice of the bifurcation parameter, there exists a family of elliptic periodic orbits $\g_E=\g_E(\eps)$ and a family of hyperbolic periodic orbits $\g_H=\g_H(\eps)$ parametrized by $\eps>0$.
    Moreover, $\g_E$ and $\g_H$ both converge to $\g_0$ as $\eps\to0$.
\end{thm}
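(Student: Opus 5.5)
The plan is to reduce \Cref{Theorem - PObd} to \Cref{Theorem - FPbd} through the Poincar\'e return map and the dictionary between fixed points of the return map and periodic orbits near $\g_0$.

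First, fix a point $p\in\g_0$ on a regular level $H_0^{-1}(c_0)$ and choose a Poincar\'e section $\Sigma_0$ at $p$. Since transversality to $X_{H_\eps}$ and regularity of the level are open conditions, for $\eps$ small we can pick a smooth family of sections $\Sigma_\eps\subset H_\eps^{-1}(c_0)$, all identified symplectically with a fixed disk $D$ by a Moser-type argument. We then obtain a smooth family of return maps $\Psi_\eps:D\to D$ with $\Psi_0(0)=0$. The return time $\tau_\eps(q)$ depends smoothly on $(q,\eps)$ by the implicit function theorem applied to the transverse crossing at $\eps=0$, $q=0$.

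Second, by hypothesis, $0$ is an extremal fixed point of $\Psi_0$ for the family $\Psi_\eps$. So \Cref{Theorem - FPbd} applies and gives, after reorienting $\eps$, two families $p_E(\eps)$ and $p_H(\eps)$ for $\eps>0$, converging to $0$. Their linearizations are elliptic and hyperbolic respectively. This follows from the Hessian computation preceding \Cref{Theorem - FPbd}: $\det\Hess G_\eps$ has opposite signs at $p_{0,1}$, and the multipliers of the time-one map of $G_\eps$ are $\exp(\pm\sqrt{-\det \Hess G_\eps})$.

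Third, set $\g_E(\eps)(t)=Fl^{X_{H_\eps}}_t(p_E(\eps))$ for $t\in[0,\tau_\eps(p_E(\eps))]$, and define $\g_H$ analogously. These are closed orbits because their initial points are fixed by $\Psi_\eps$. Their Floquet multipliers are the eigenvalues of $d\Psi_\eps$ at the fixed point, so the stability types carry over. Convergence $\g_E,\g_H\to\g_0$, with periods converging to that of $\g_0$, follows from $p_{E,H}(\eps)\to 0$, from continuity of $\tau_\eps$, and from continuous dependence of the flow on initial data and parameters.

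The main obstacle is a subtlety in the first step. \Cref{Theorem - FPbd} is stated for a family of area-preserving maps of a single fixed disk. We therefore need the identification of $\Sigma_\eps$ with $D$ to be smooth in $\eps$, and the energy must be held fixed, or equivalently varied in a controlled way. Otherwise energy becomes a second parameter and the one-parameter genericity assumption is lost. I would handle this by fixing the energy $c_0$ and absorbing all $\eps$-dependence into $\Psi_\eps$. The extremality hypothesis $\alpha,\beta,b\neq0$ is understood for this induced family, which is exactly how the hypothesis of \Cref{Theorem - PObd} is phrased.
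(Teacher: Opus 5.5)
Your proposal is correct and follows the paper's route. The paper gives no separate argument: it obtains \Cref{Theorem - PObd} directly from \Cref{Theorem - FPbd} through the Poincar\'e return map, with fixed points of $\Psi_\eps$ giving the periodic orbits near $\g_0$ and their stability types read off from $d\Psi_\eps$. Your care in fixing the energy level and choosing a smooth family of sections $\Sigma_\eps\cong D$ spells out what the paper leaves implicit. Since \Cref{Theorem - FPbd} already provides the elliptic/hyperbolic types, you do not need your re-derivation via $\exp(\pm\sqrt{-\det\Hess G_\eps})$, which treats $\Psi_\eps$ as exactly the time-one map of the truncated $G_\eps$.
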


We call this phenomenon a \textbf{birth-death bifurcation}, while \cite{Abraham_Marsden_78} referred to this as \emph{creation and annihilation}.
If $p_E$ is the maximum (resp. the minimum) of $G_\eps$, we call this a \textbf{source} (resp. \textbf{sink}), which focuses on the role of the elliptic fixed point.
\Cref{Figure - birth-death} illustrates the two situations on the Poincar\'{e} section.

\begin{rmk}\rm
    In Morse theory, we use the negative gradient flow of the function, so the maximum point is a source. It's the same for the Floer cylinders.
    However, Morse indices have a negative correlation with the Conley--Zehnder indices, and the Floer differential has the opposite direction with the Morse differential. For this reason, we will draw the positive gradient flows in the pictures, which indicates the contribution to the Floer differential.
\end{rmk}

\begin{note}\label{Note - Colors}\rm
    In \Cref{Figure - birth-death}, we denoted a minimum, saddle point, maximum by green, red, blue dots respectively. This convention is the same for \Cref{Figure - Period doubling,Figure - Materialization,Figure - Phantom kiss,Figure - Emission}.
    Also, in \Cref{Section - LFCC}, we indicated the periodic orbits in the same color with the corresponding critical points in Morse--Bott spectral sequences, in \Cref{Figure - SSBD,Figure - SSPD,Figure - SSPK,Figure - SSEM,Figure - SSPF,Figure - SSDE}.
\end{note}

\begin{figure}[htbp]
  \begin{subfigure}[b]{0.23\textwidth}
    \centering
    \includegraphics[width=\textwidth]{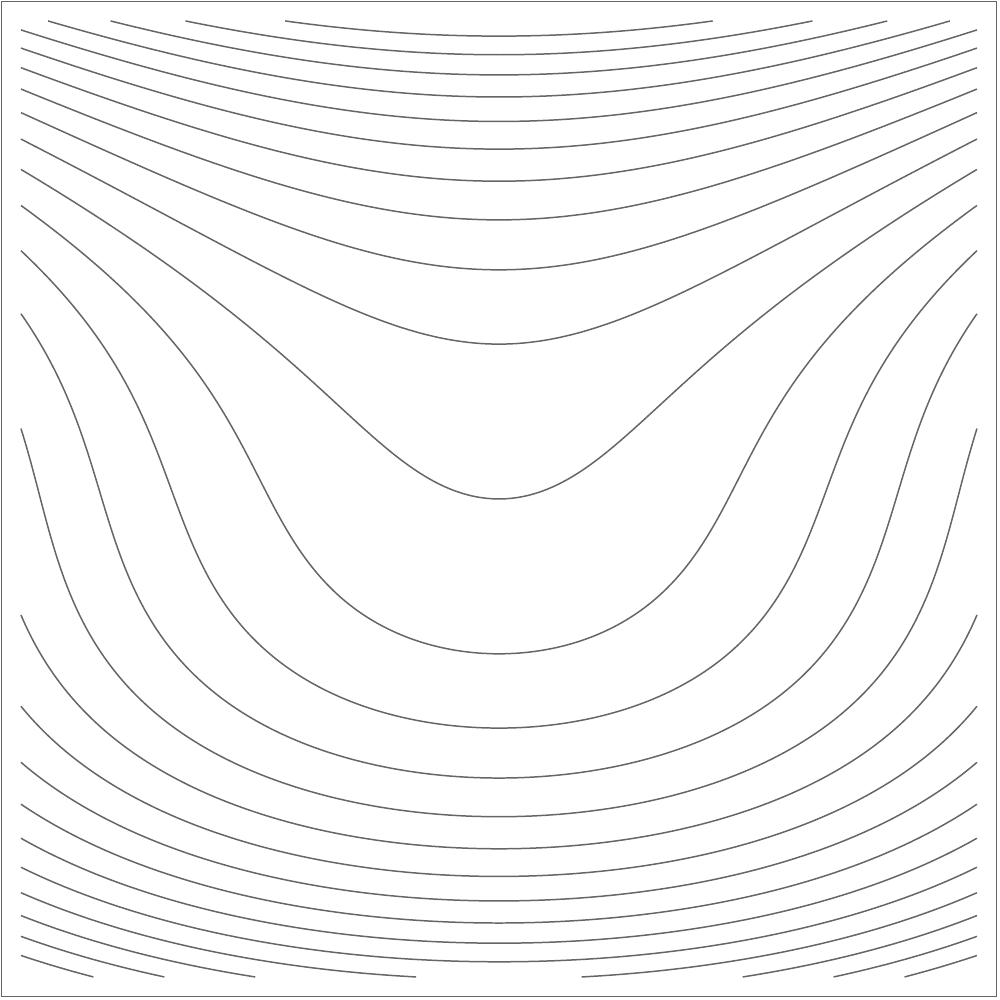}
    \caption{Source, $\eps<0$}
  \end{subfigure}
  \hfill
  \begin{subfigure}[b]{0.23\textwidth}
    \centering
    \includegraphics[width=\textwidth]{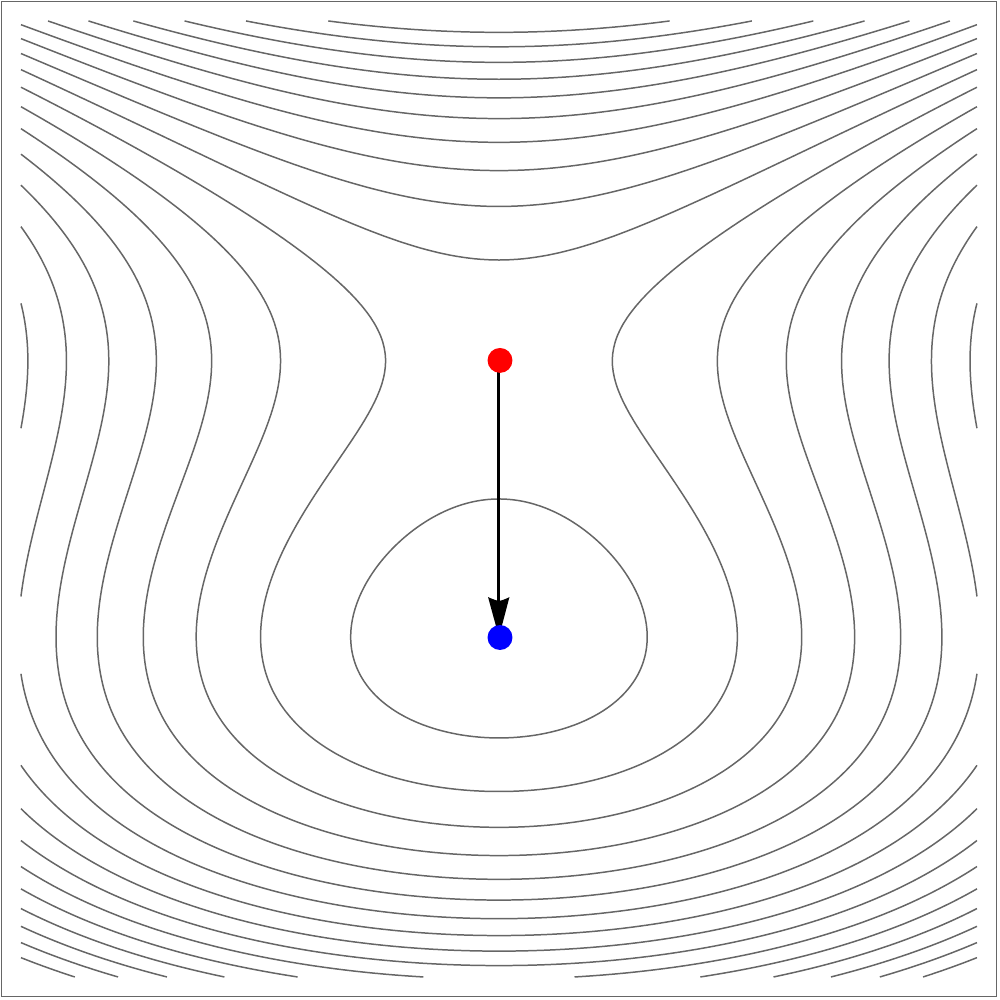}
    \caption{Source, $\eps>0$}
  \end{subfigure}
\hfill
    \begin{subfigure}[b]{0.23\textwidth}
    \centering
    \includegraphics[width=\textwidth]{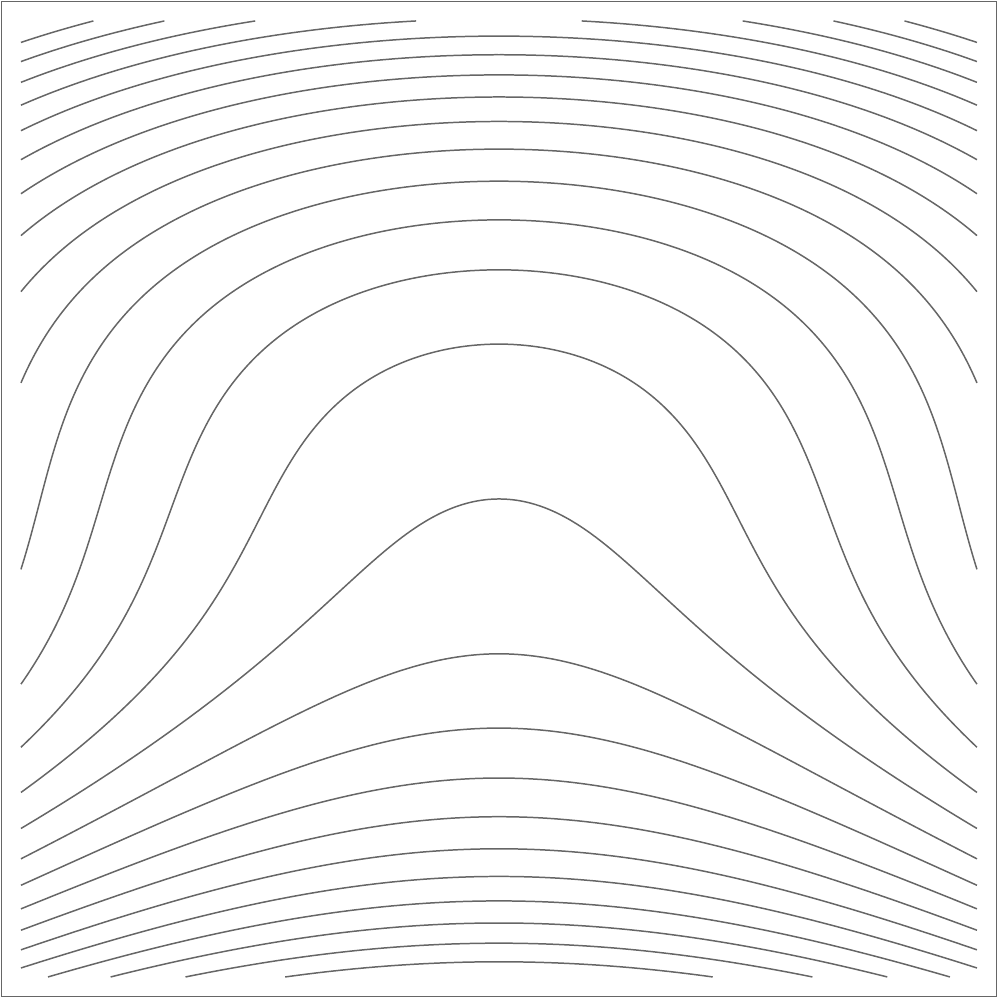}
    \caption{Sink, $\eps<0$}
  \end{subfigure}
  \hfill
  \begin{subfigure}[b]{0.23\textwidth}
    \centering
    \includegraphics[width=\textwidth]{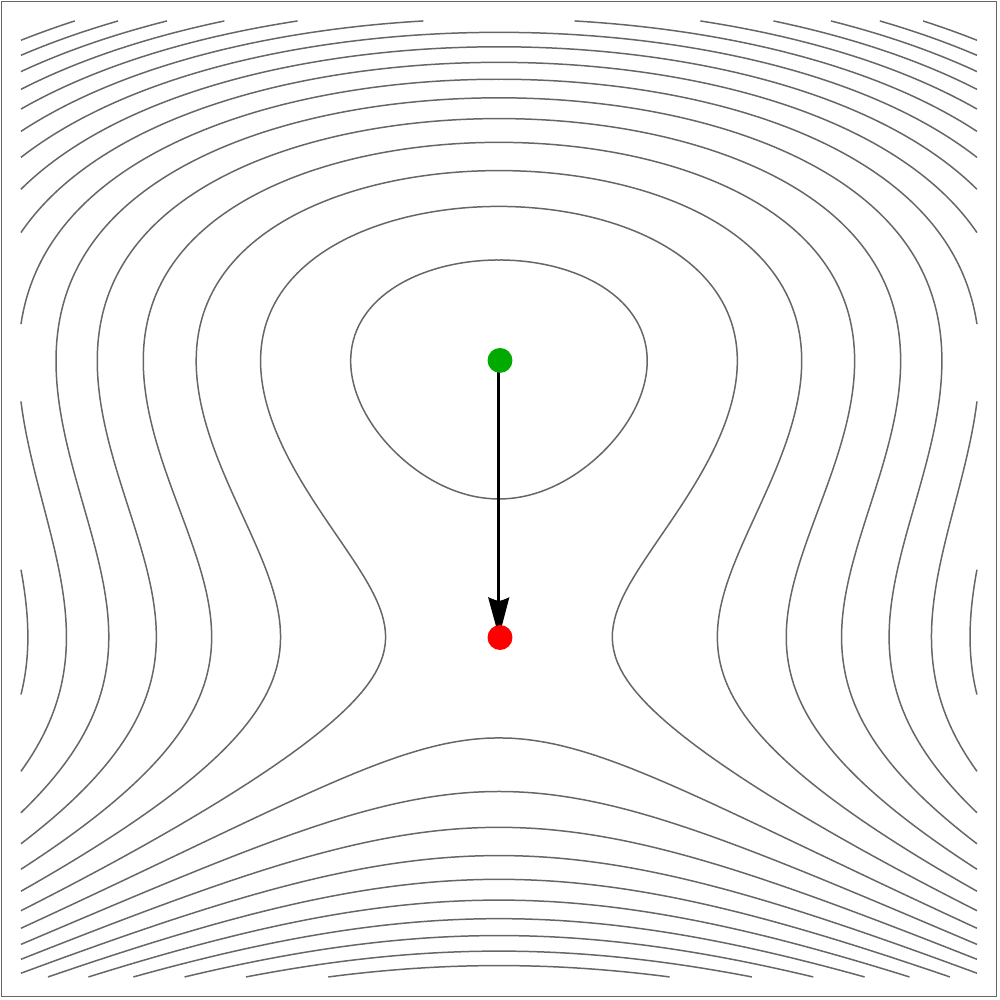}
    \caption{Sink, $\eps>0$}
  \end{subfigure}
  \caption{Level curves of $G_\eps$ and a positive gradient flow of birth-death.}
  \label{Figure - birth-death}
\end{figure}

\subsection{Period Doubling and Materialization}\label{Subsection - Period doubling}

We describe the fixed point with $\ld=-1$, in which case the double cover bifurcates.
The unwrapped generating Hamiltonian of $\Psi_0^2$ consists of $\Z_2$-symmetric polynomials, which can be written as
\[
G_0(x,y) = \frac{\alpha}{2}x^2 + \frac{\beta}{4}y^4+\ldots \in \R[x^2,y^2,xy].
\]

Note that the perturbation $G_\eps$ must also consist of $\Z_2$-symmetric polynomials, since $G_\eps$ is a generating Hamiltonian of $\Psi_\eps^2$.
Generically, we have
\[
G_\eps(x,y) = \frac{\alpha}{2}x^2 + \frac{\beta}{4}y^4 + \eps\frac{a}{2}y^2 + \ldots.
\]
The point $p$ is called a \textbf{transitional fixed point} if $\alpha$, $\beta$, and $a$ are nonzero.
For $\eps\neq0$, the initial terms of the equation $\d G_\eps=0$ are
\[
\begin{aligned}
    \pp_x G_\eps &= \alpha x  =0,\\
    \pp_y G_\eps &= \beta y^3 + \eps a y =0,
\end{aligned}
\]
whose solutions are the origin, denoted by $p_\eps$, and the points with initial terms
\[
p_{1,2}=p_{1,2}(\eps)=
\begin{pmatrix}
    x_\eps\\
    y_\eps
\end{pmatrix}
=\begin{pmatrix}
    0\\
    \pm\sqrt{-\eps a/\beta}
\end{pmatrix}.
\]
Again, by choosing an appropriate direction for the bifurcation parameter, we may assume $\eps a \beta<0$ for $\eps>0$.
Note that $p_1$ and $p_2$ cannot be fixed points of $\Psi_\eps$ for small $\eps$, since their minimal periods are 2 and they form a 2-periodic orbit of $\Psi_\eps$, i.e., $\Psi_\eps(p_{1,2})=p_{2,1}$.

The leading terms of the Hessian of $G_\eps$ are given by
\[
\begin{aligned}
\Hess G_\eps&=
\begin{pmatrix}
    \alpha &0\\
    0&\eps a + 2\beta y^2
\end{pmatrix},\\
\Hess G_\eps(p_\eps)&=\begin{pmatrix}
    \alpha &0\\
    0&\eps a\end{pmatrix},
\\
    \Hess G_\eps(p_{1,2})&=\begin{pmatrix}
    \alpha&0\\
    0&\eps a - 2\beta(\eps a/\beta)
    \end{pmatrix}
    =\begin{pmatrix}
        \alpha&0\\
        0&-\eps a
    \end{pmatrix}.
\end{aligned}
\]
We observe that the stability type of $p_\eps$ changes as $\eps$ passes through $0$, and the stability type, and moreover the Morse index, of $p_{1,2}$ is the same as that of $p_\eps$ for $\eps<0$.

\begin{thm}[\cite{Meyer_70}, Transitional Fixed Point]\label{Theorem - FPpd}
    Let $p$ be a transitional fixed point of $\Psi_\eps$.
    Then there exists a family of fixed points $p_\eps$ of $\Psi_\eps$ such that $p_0=p$, which changes its stability type as $\eps$ passes through $0$.
    
    Moreover, by an appropriate choice of the bifurcation parameter, there exists a family of 2-periodic orbits $\{p_{1,2}\}$ parametrized by $\eps>0$ which converge to $p_0$ as $\eps\to0$, such that exactly one of the following holds:
    \begin{enumerate}
        \item $p_\eps$ is elliptic if $\eps<0$ and hyperbolic if $\eps>0$.
        $p_{1,2}$ are elliptic, and have the same Morse index as $p_\eps$ for $\eps<0$ as critical points of the unwrapped generating Hamiltonian $G_\eps$.
        \item $p_\eps$ is hyperbolic if $\eps<0$ and elliptic if $\eps>0$.
        $p_{1,2}$ are hyperbolic.
    \end{enumerate}
\end{thm}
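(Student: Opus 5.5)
The plan is to read everything off the unwrapped generating Hamiltonian $G_\eps$ of $\Psi_\eps^2$ from \Cref{Corollary - Unwrapped Generating Hamiltonian} with $k=2$, in the truncated normal form $G_\eps=\frac{\alpha}{2}x^2+\frac{\beta}{4}y^4+\eps\frac a2y^2+\ldots$, and then to upgrade the leading-order computation above to genuine statements via the implicit function theorem.

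\textbf{Existence of $p_\eps$.} Since $\Psi_0$ has multiplier $-1$, the map $d\Psi_0(p)-\Id$ is invertible. By the implicit function theorem (exactly as in \Cref{Lemma - Orbit Cylinder}), there is a unique smooth family of fixed points $p_\eps$ of $\Psi_\eps$ with $p_0=p$. In the unwrapped coordinates this fixed point is the $\Z_2$-fixed point, namely the origin, which is a critical point of $G_\eps$ for every $\eps$.

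\textbf{The $2$-periodic orbits.} For the other critical points, I first solve $\pp_xG_\eps=0$ for $x=x(y,\eps)$ by the implicit function theorem, which is possible because $\alpha\neq0$. This reduces the problem to the one-variable $\Z_2$-symmetric function $g_\eps(y)=G_\eps(x(y,\eps),y)=\frac a2\eps y^2+\frac\beta4y^4+\ldots$, which is even in $y$. The nonzero roots of $g_\eps'(y)/y=\eps a+\beta y^2+\ldots$ can then be found by setting $y^2=s$ and applying the implicit function theorem in $(s,\eps)$; the condition $\beta\neq0$ makes this nondegenerate. This produces exactly two roots $\pm y_\eps$ with $y_\eps^2=-\eps a/\beta+o(\eps)$. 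These roots exist only when $\eps a\beta<0$, which we arrange to mean $\eps>0$, and they tend to $0$. The deck involution exchanges them, so $\Psi_\eps(p_1)=p_2$. They are not fixed points, because $p_\eps$ is the unique fixed point near $p$.

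\textbf{Stability and Morse index.} The Hessians computed above hold to leading order, and the error terms are $o(\eps)$ since they come from higher-degree terms evaluated at $|y|\sim\sqrt{|\eps|}$. This gives $\det\Hess G_\eps(p_\eps)\approx\alpha a\eps$ and $\det\Hess G_\eps(p_{1,2})\approx-\alpha a\eps$. For a critical point of a planar Hamiltonian, the time-one map is elliptic exactly when the Hessian is definite, i.e.\ $\det>0$, and hyperbolic when $\det<0$. This uses the fact that the Hessian eigenvalues are small, so the linearized time-one map is near the identity and the sign of the determinant governs the eigenvalues $\exp(\pm\sqrt{-\det})$.

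The stability of $p_\eps$ as a fixed point of $\Psi_\eps$ equals its stability for $\Psi_\eps^2$, since squaring preserves the type when $\ld\approx-1$. Hence $p_\eps$ changes type across $\eps=0$. I then split according to the sign of $\alpha a$, with $\eps>0$ fixed as the branched side:
\begin{enumerate}
\item If $\alpha a<0$, then $p_\eps$ is elliptic for $\eps<0$ and hyperbolic for $\eps>0$. In this case $\det\Hess G_\eps(p_{1,2})>0$, so $p_{1,2}$ are elliptic. The Hessian there is $\mathrm{diag}(\alpha,-\eps a)$ with $\eps>0$, which has the same signs as $\mathrm{diag}(\alpha,\eps a)$ with $\eps<0$. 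This gives equal Morse indices, which is case (1).
\item If $\alpha a>0$, the same computation gives case (2).
\end{enumerate}

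\textbf{Main obstacle.} The main obstacle is justifying that the truncation and higher-order terms do not spoil these signs. This is controlled by the scaling $y\sim\sqrt{|\eps|}$, with the $o(\eps)$ error estimates handled inside the implicit function theorem step in the variable $s=y^2$.
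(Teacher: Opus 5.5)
Your proposal is correct and follows the paper's route. You locate the critical points of the truncated $G_\eps$ (the origin and $p_{1,2}$ with $y^2\approx-\eps a/\beta$), compare the signs of the Hessians there, and split by the sign of $\alpha a$; your Lyapunov--Schmidt reduction in $s=y^2$, the uniqueness of $p_\eps$ from the implicit function theorem, and the $\det$-sign criterion for ellipticity supply rigor the paper leaves implicit. One harmless slip you inherited from the paper's display: $\partial_{yy}G_\eps=\eps a+3\beta y^2$, so at $p_{1,2}$ the leading entry is $-2\eps a$ and $\det\Hess G_\eps(p_{1,2})\approx-2\alpha a\eps$, rather than $-\eps a$; this changes no sign and hence none of your conclusions.
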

Traditionally, we divide this into two cases depending on the change in the stability type of $p_\eps$.
We again translate this into the periodic orbits of the Hamiltonian.
\begin{thm}[Period Doubling and Materialization]\label{Theorem - POpd}
    Let $H_\eps$ be a 1-parameter family of Hamiltonians and $\g$ be a periodic orbit of $H_0$ with Floquet multiplier $\ld=-1$.
    Assume that a point $p$ in $\g$ is a transitional fixed point of the return map $\Psi_\eps$, and let $\g_\eps$ be the orbit cylinder centered at $\g$.
    Then, by an appropriate choice of the bifurcation parameter, one of the following happens:
        \begin{enumerate}
            \item $\g_\eps$ is elliptic if $\eps<0$ and hyperbolic if $\eps>0$.
                 There exists a family of elliptic periodic orbits $\g_E(\eps)$ of $H_\eps$ parametrized by $\eps>0$, which converge to $\g^2$ as $\eps\to0$.
            \item $\g_\eps$ is hyperbolic if $\eps<0$ and elliptic if $\eps>0$.
                There is a family of hyperbolic periodic orbits $\g_H(\eps)$ of $H_\eps$ parametrized by $\eps>0$, which converge to $\g^2$ as $\eps\to0$.
        \end{enumerate}
\end{thm}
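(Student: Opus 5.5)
The plan is to derive the statement directly from \Cref{Theorem - FPpd} by translating statements about fixed and 2-periodic points of the return map $\Psi_\eps$ into statements about periodic orbits of $H_\eps$, as was done for \Cref{Theorem - PObd}.

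First, choose a Poincar\'e section $\Sigma$ at $p\in\g$ in a fixed energy level and let $\Psi_\eps$ be the return maps of $H_\eps$. For small $\eps$ we fix the energy level continuously in $\eps$, for example via the regular value containing the continuation. Since $\ld=-1\neq1$, \Cref{Lemma - Orbit Cylinder} gives the orbit cylinder $\g_\eps$. Its intersection with $\Sigma$ is the fixed point $p_\eps$ of \Cref{Theorem - FPpd}, because both are the unique fixed point of $\Psi_\eps$ near $p$ obtained from the implicit function theorem.

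Second, apply \Cref{Theorem - FPpd}. The stability type of $\g_\eps$ equals that of $p_\eps$ by definition: the Floquet multipliers are the eigenvalues of $d\Psi_\eps(p_\eps)$. The elliptic/hyperbolic dichotomy for $\eps<0$ versus $\eps>0$ is therefore exactly the dichotomy (1)/(2) of \Cref{Theorem - FPpd}. For $\eps>0$ the 2-periodic pair $\{p_1,p_2\}$ with $\Psi_\eps(p_1)=p_2$ sweeps out a single closed orbit $\g_{E}(\eps)$ or $\g_H(\eps)$ of $H_\eps$, passing through $p_1$, then $p_2$, then back to $p_1$. It is one orbit, not two, because $\Psi_\eps$ swaps the two points.

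Third, determine the stability type of this new orbit. Its multipliers are the eigenvalues of $d\Psi_\eps^2(p_1)$. These are read off from the Hessian of the unwrapped generating Hamiltonian $G_\eps$ at $p_{1,2}$: the linearization at a critical point of an autonomous generator is $\exp(J\,\Hess G_\eps)$. When $\det\Hess>0$ (the case $\alpha\cdot(-\eps a)>0$) the point is elliptic, and when $\det\Hess<0$ it is hyperbolic. This matches the elliptic or hyperbolic label in \Cref{Theorem - FPpd}.

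Finally, show convergence to $\g^2$. As $\eps\to0$ we have $p_{1,2}\to p$, and the return times at $p_{1},p_{2}$ converge to the period of $\g$. Hence the new orbit, traversed over its period, converges to the doubly covered orbit $\g^2$ by continuous dependence of the flow on initial data and parameters.

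The main obstacle is the identification of the Morse data of the truncated formal $G_\eps$ with the actual return map $\Psi^2_\eps$, since the Birkhoff series may diverge and contain flat terms. To handle this, I would argue that nondegenerate critical points and the signs of $\det\Hess$ persist under perturbations of sufficiently high order. Concretely, I would apply the implicit function theorem to the fixed-point equation for $\Psi_\eps^2$ after rescaling $y\sim\sqrt{|\eps|}$. In that scaling the leading terms dominate, exactly as the paper asserts at the end of \Cref{Subsection - BNF}.
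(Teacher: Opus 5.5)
Your proposal is correct and takes the same route as the paper: the paper obtains this theorem by translating \Cref{Theorem - FPpd} through the Poincar\'e return map. The continued fixed point $p_\eps$ gives the orbit cylinder $\g_\eps$, and the swapped pair $\{p_1,p_2\}$ sweeps out a single new orbit converging to $\g^2$, whose stability type is read off from $\Hess G_\eps(p_{1,2})$. Your rescaling argument for the truncated normal form is more careful than the paper, which only appeals to the finite-jet remark at the end of \Cref{Subsection - BNF}.
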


\cite{Abraham_Marsden_78} called the first case \textbf{period doubling} (or \emph{subtle doubling}) and the second case \textbf{materialization}.
If there exist incoming (resp. outgoing) gradient flows into the origin after the bifurcation, we call the period doubling or materialization \textbf{attracting} (resp. \textbf{repelling}).
The two situations are described in \Cref{Figure - Period doubling,Figure - Materialization}.

\begin{figure}[htbp]
  \begin{subfigure}[b]{0.23\textwidth}
    \centering
    \includegraphics[width=\textwidth]{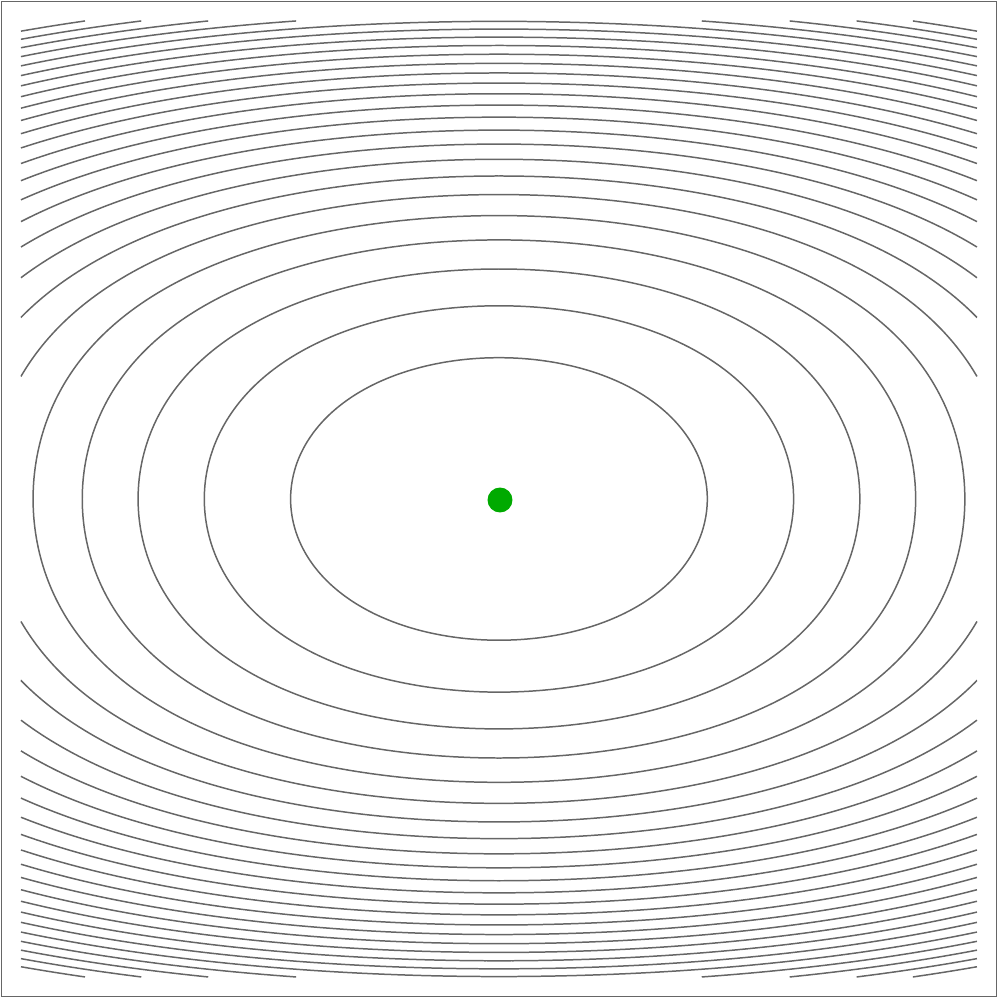}
    \caption{Attracting, $\eps<0$}
  \end{subfigure}
  \hfill
  \begin{subfigure}[b]{0.23\textwidth}
    \centering
    \includegraphics[width=\textwidth]{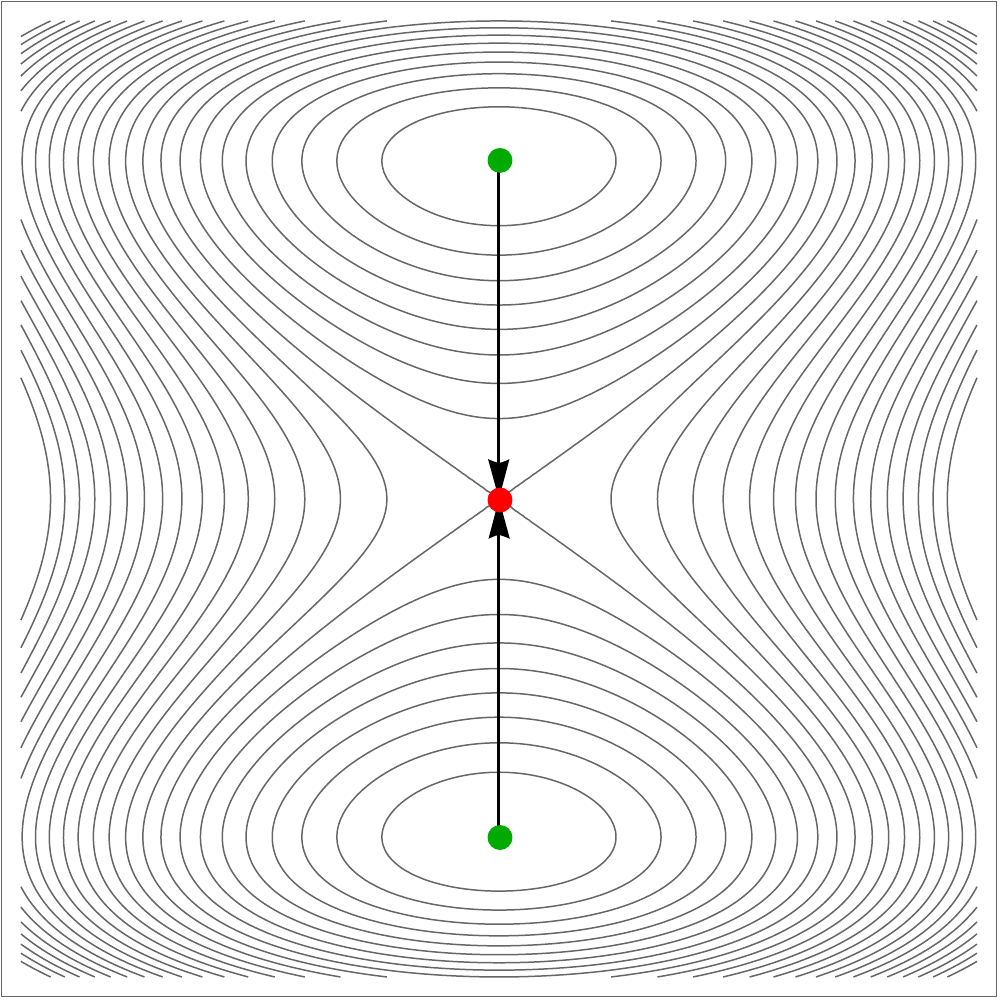}
    \caption{Attracting, $\eps>0$}
  \end{subfigure}
\hfill
    \begin{subfigure}[b]{0.23\textwidth}
    \centering
    \includegraphics[width=\textwidth]{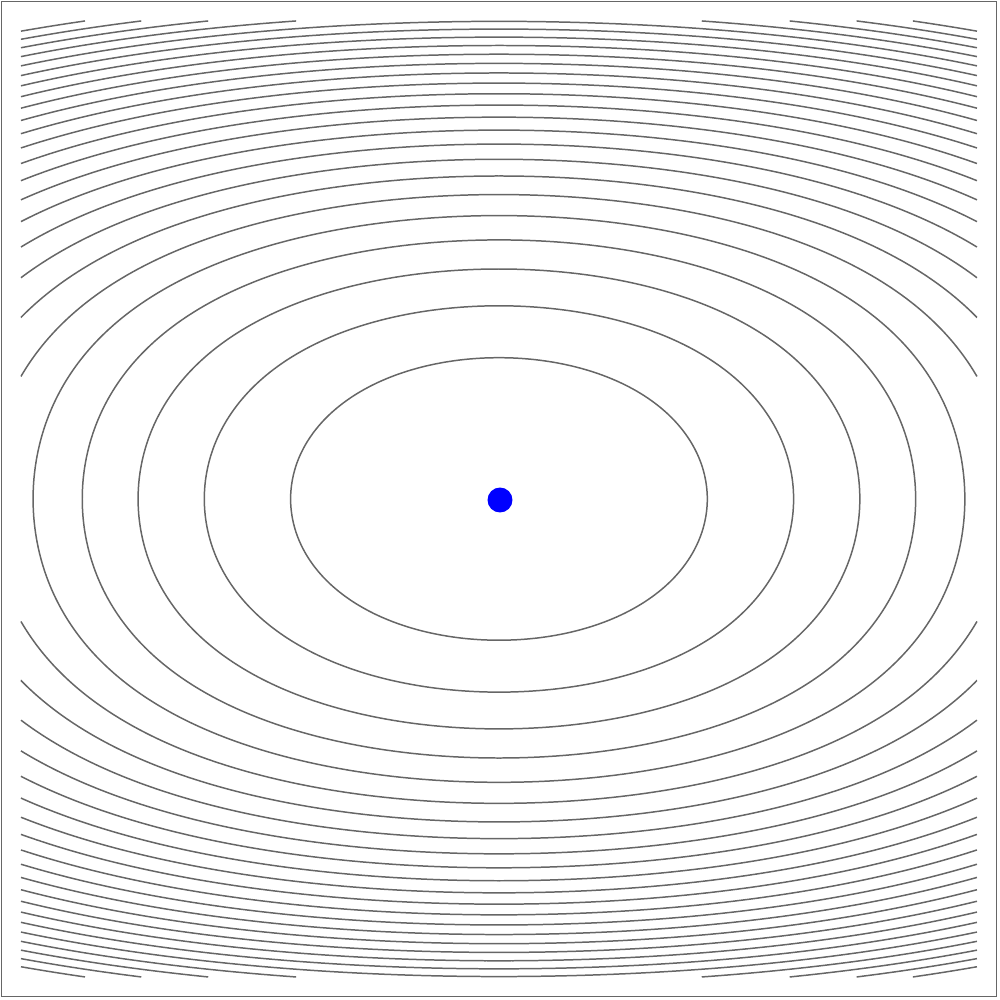}
    \caption{Repelling, $\eps<0$}
  \end{subfigure}
  \hfill
  \begin{subfigure}[b]{0.23\textwidth}
    \centering
    \includegraphics[width=\textwidth]{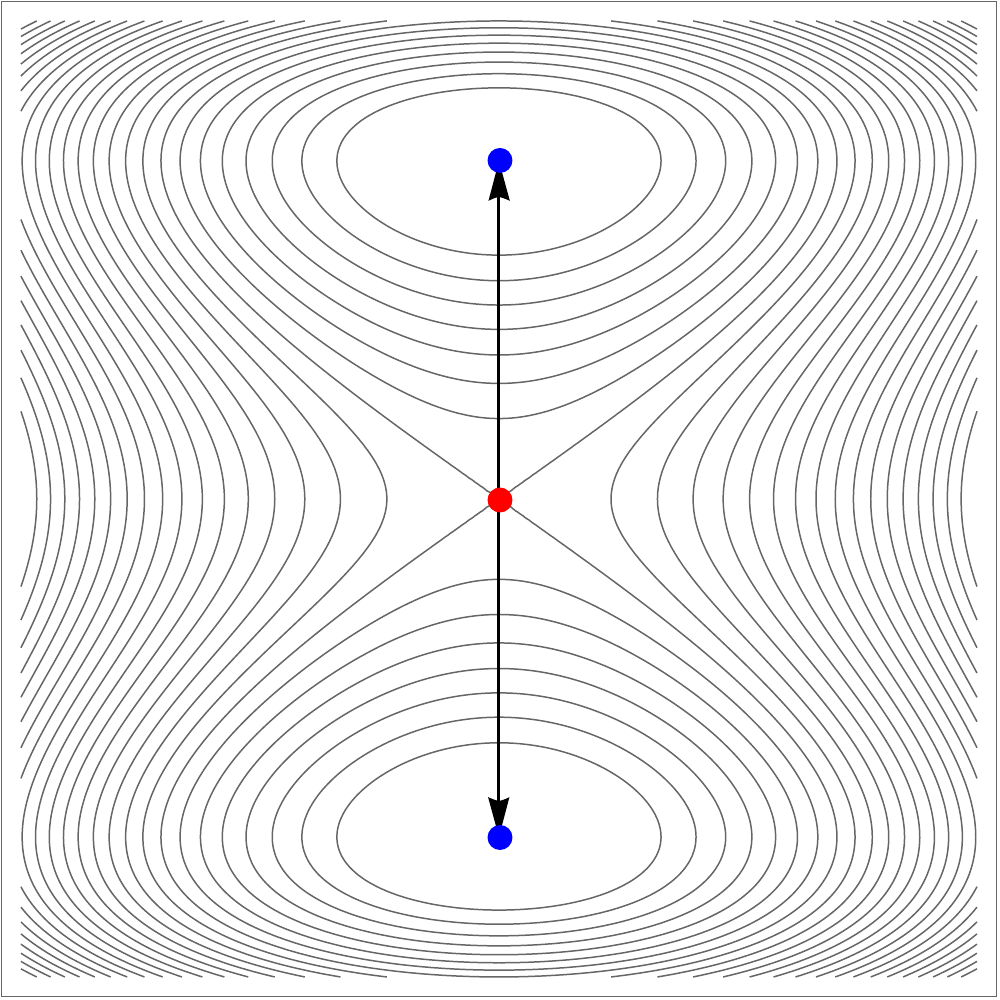}
    \caption{Repelling, $\eps>0$}
  \end{subfigure}
  \caption{Level curves of $G_\eps$ and gradient flows of period doubling.}
  \label{Figure - Period doubling}
\end{figure}

\begin{figure}[htbp]
  \begin{subfigure}[b]{0.23\textwidth}
    \centering
    \includegraphics[width=\textwidth]{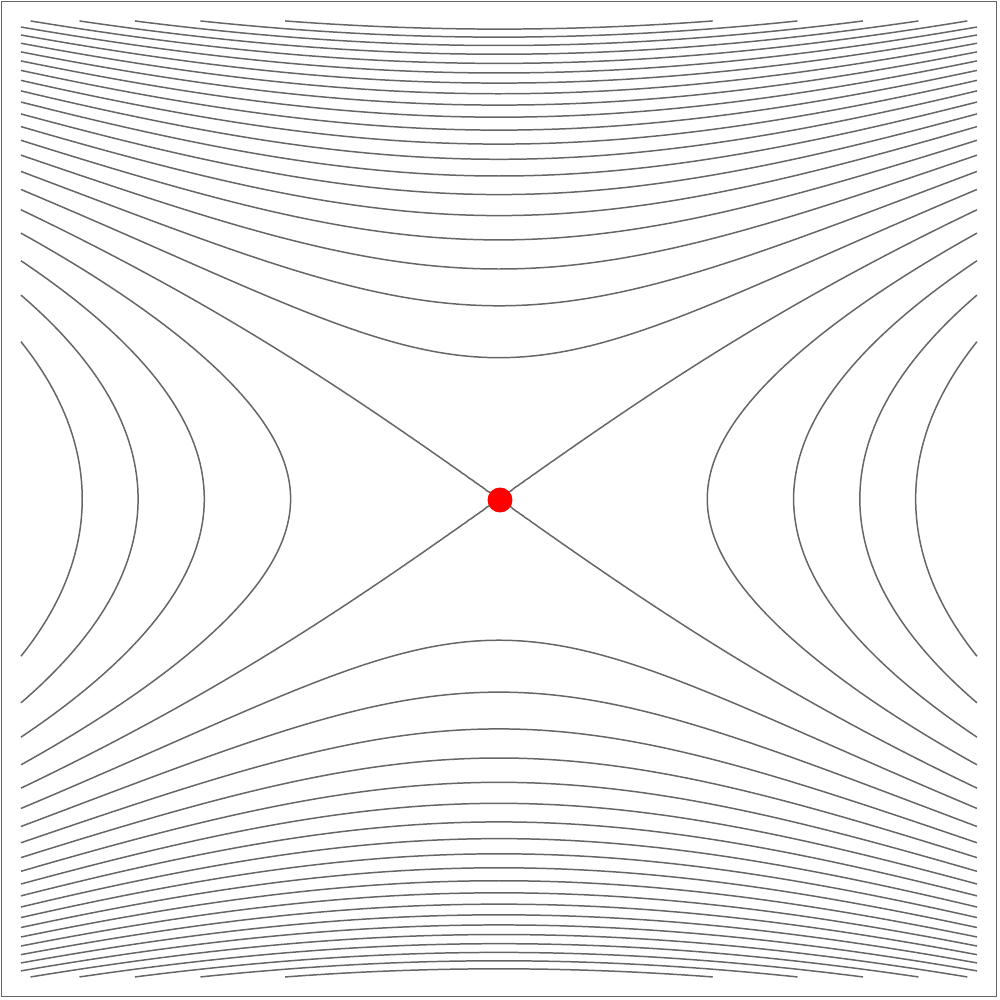}
    \caption{Attracting, $\eps<0$}
  \end{subfigure}
  \hfill
  \begin{subfigure}[b]{0.23\textwidth}
    \centering
    \includegraphics[width=\textwidth]{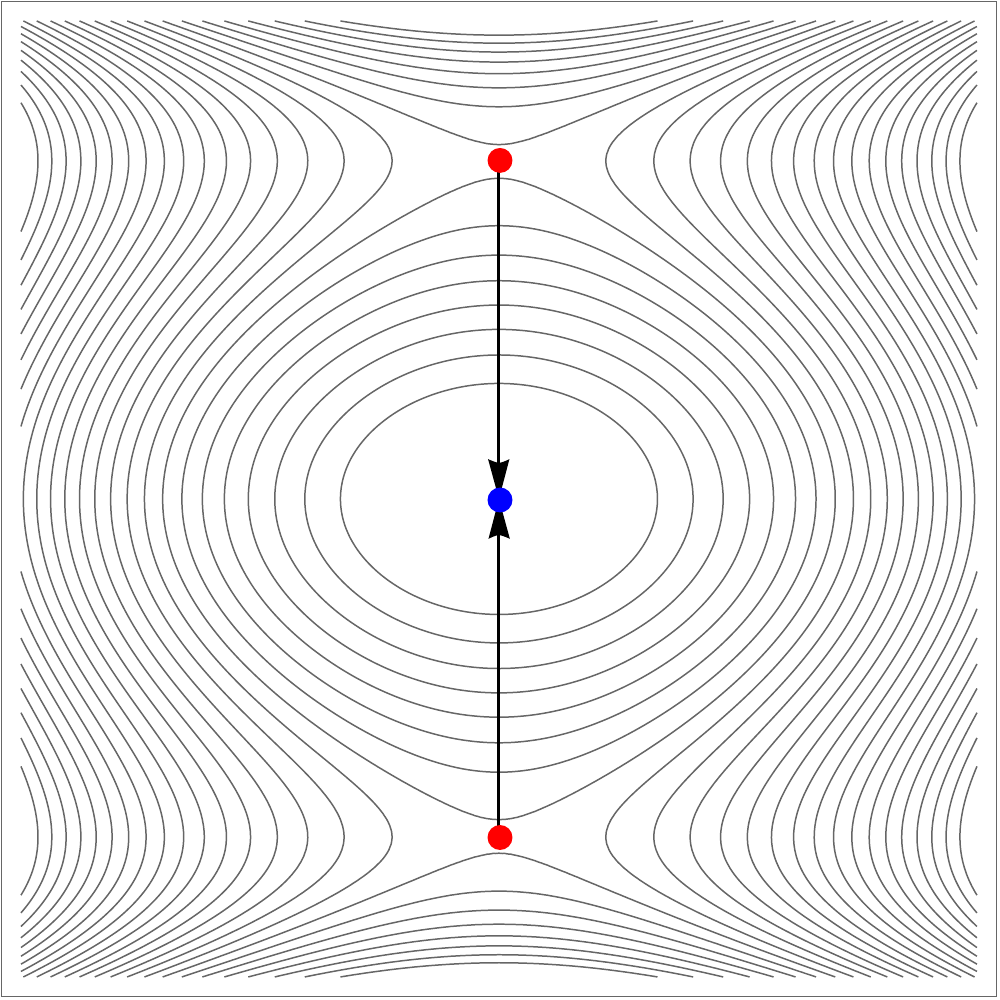}
    \caption{Attracting, $\eps>0$}
  \end{subfigure}
\hfill
    \begin{subfigure}[b]{0.23\textwidth}
    \centering
    \includegraphics[width=\textwidth]{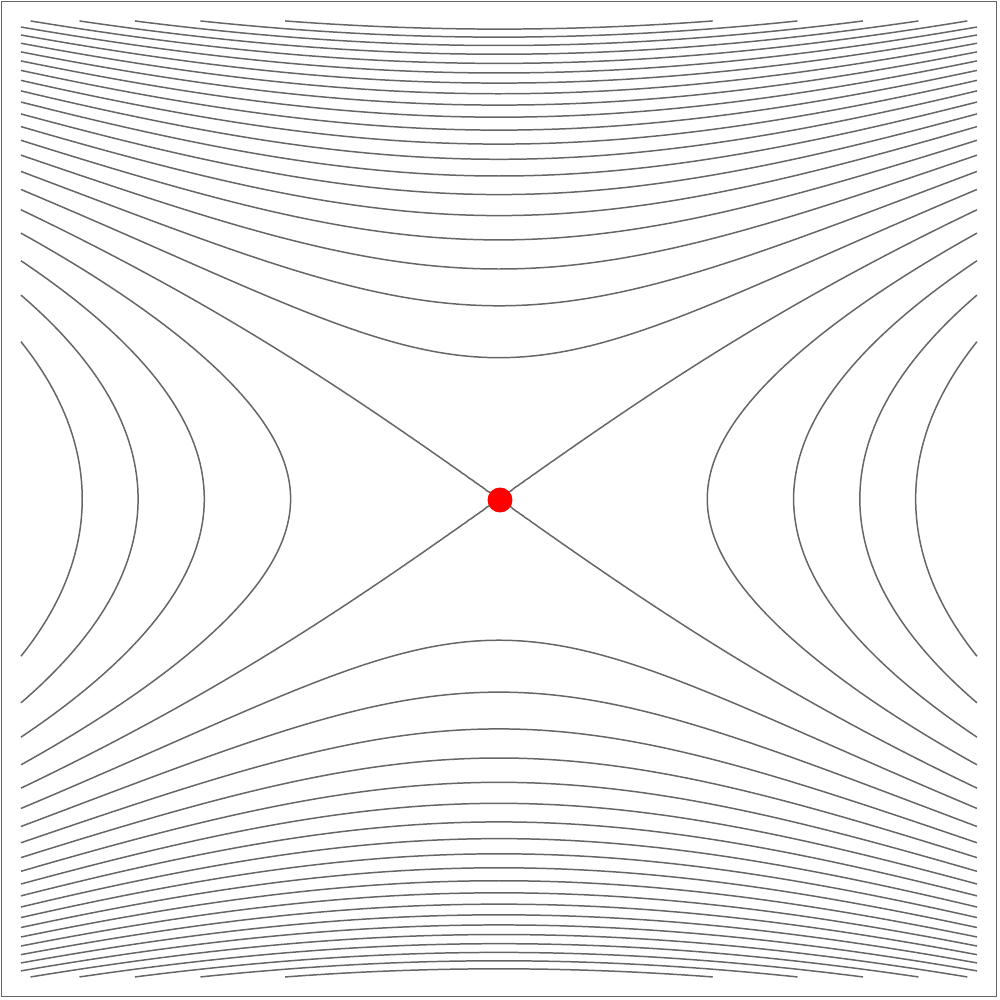}
    \caption{Repelling, $\eps<0$}
  \end{subfigure}
  \hfill
  \begin{subfigure}[b]{0.23\textwidth}
    \centering
    \includegraphics[width=\textwidth]{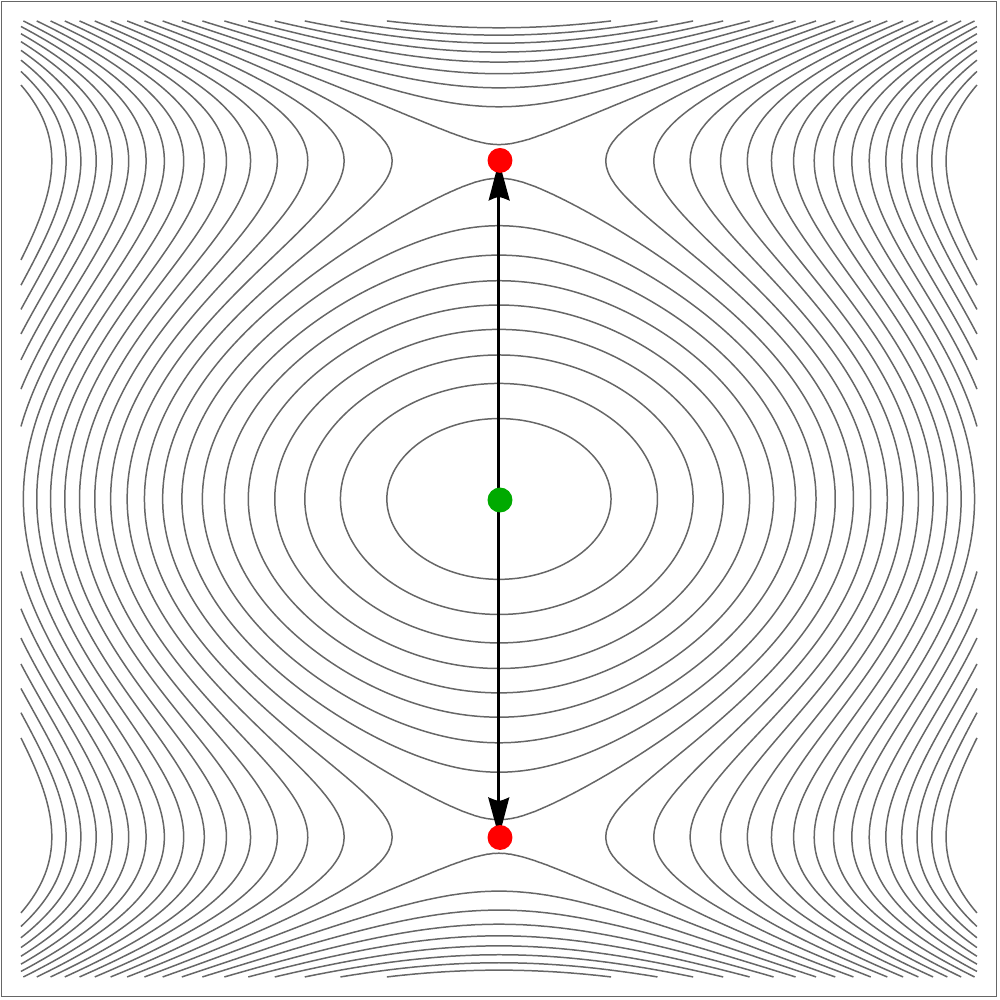}
    \caption{Repelling, $\eps>0$}
  \end{subfigure}
  \caption{Level curves of $G_\eps$ and gradient flows of materialization.}
  \label{Figure - Materialization}
\end{figure}

\subsection{Phantom Kiss and Emission}\label{Subsection - k bifurcation}

We describe the fixed point with a Floquet multiplier of $\exp(2\pi i l/k)$, where $k,l\in\N$ and $(k,l)=1$.
We again consider the $k$-th iteration of the return map, $\Psi_0^k$, which has $0$ as a fixed point, and consider its unwrapped generating Hamiltonian $G_0$. The leading terms of $G_0$ in polar coordinates $(\rho, \vp)$ are
    \[
    G_0(\rho,\vp) = \frac{\alpha}{4} \rho^4 + \frac{\beta}{k}\rho^k \cos k\vp + \ldots.
    \]
Note that the $\rho^k\sin k\vp$ term can be absorbed into $\rho^k\cos k\vp$ by a translation of $\vp$.
For the same reason as in the $k=2$ case, the perturbation terms must be chosen as symmetric polynomials in $\R[\rho^2,\rho^k\cos k\vp, \rho^k\sin k\vp]$.
The perturbed Hamiltonian has the form
    \[
    G_\eps(\rho,\vp) = \frac{\alpha}{k}\rho^k \cos k\vp + \frac{\beta}{4}\rho^4   +\eps\frac{ a}{2} \rho^2 +  \ldots.
    \]
We say that the origin is a \textbf{$k$-bifurcation point} if:
\begin{itemize}
    \item $\alpha$ and $a$ are nonzero for $k=3$,
    \item $\alpha$, $a$, and $\alpha\pm \beta$ are nonzero for $k=4$,
    \item $\alpha$, $a$, and $\beta$ are nonzero for $k\geq 5$.
\end{itemize}

The leading terms of $G_0$ essentially depend on $k$, so we divide the cases.
We first consider $k=3$.
The leading terms of $G_\eps$ are
    \[
    G_\eps(\rho,\vp)=  \frac{\alpha}{3}\rho^3 \cos 3\vp +\eps\frac{ a}{2} \rho^2 + \ldots.
    \]
For any $\eps$, $G_\eps$ has the origin as a critical point, say $p_\eps$.
Moreover, the leading terms of the other critical points should satisfy
    \[
    \begin{aligned}
        \pp_\rho G_\eps&= \alpha \rho^2 \cos 3\vp + \eps a\rho =0,\\
        \pp_\vp G_\eps&= -\alpha \rho^3 \sin 3\vp =0,
    \end{aligned}
    \]
which gives six critical points:
\[
\begin{aligned}
    \rho_j &= -\frac{\eps a}{\alpha \cos 3\vp_j} = \pm \frac{\eps a}{\alpha},\\
    \vp_j&= \frac{j\pi}{3}\text{ for }j=0,1,\ldots,5.
\end{aligned}
\]
Since $\rho_j$ should be positive, there exist three solutions for each sign of $\eps$, depending on the sign of $\cos 3\vp_j$.
Again, these are not fixed points of $\Psi_\eps$, so they form 3-periodic orbits for each sign of $\eps$.

We observe that the origin is an elliptic critical point, and changes from a minimum to a maximum, or vice versa, as $\eps$ passes through $0$, depending on the sign of $a$.
By taking an appropriate direction for the bifurcation parameter, we can assume that it changes from a minimum to a maximum.
Then the solutions with $\cos 3\vp=1$ survive for $\eps<0$, while those with $\cos 3\vp =-1$ survive for $\eps>0$.
We denote each case by $p_\pm^j$.
For the other critical points, we compute the leading terms of the Hessian:
    \[
    \Hess G_\eps = \begin{pmatrix}
        a\eps + 2\alpha \rho \cos 3\vp & -3\alpha \rho^2 \sin 3\vp\\
        -3\alpha \rho^2 \sin 3\vp & -3\alpha \rho^3 \cos 3\vp
    \end{pmatrix}
    =\begin{pmatrix}
        -\eps a&0\\
        0&\mp 2\alpha \rho^3
    \end{pmatrix}.
    \]
By our previous choice of $\eps$, we can see that the $p_\pm^j$'s are all saddle points.
    
\begin{thm}[\cite{Meyer_70}, 3-bifurcation]\label{Theorem - FPpk}
    Let $p$ be a 3-bifurcation point of $\Psi_\eps$.
    Then there exists a family of critical points $p_\eps$ such that $p_0=p$.
    Moreover, there exist two families of hyperbolic 3-periodic orbits $\{p_-^j(\eps)\}$ and $\{p_+^j(\eps)\}$ of $\Psi_\eps$, parametrized by $\eps<0$ and $\eps>0$ respectively, and both converge to $p$ as $\eps\to0$.
    
    Furthermore, if $p_\eps$ is a maximum (resp. minimum) when $\eps<0$, it becomes a minimum (resp. maximum) when $\eps>0$.
\end{thm}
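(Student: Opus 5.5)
The plan is to work with the unwrapped generating Hamiltonian $G_\eps$ of $\Psi_\eps^3$ from \Cref{Corollary - Unwrapped Generating Hamiltonian}. Its critical points other than the origin are exactly the $3$-periodic points of $\Psi_\eps$, so the statement reduces to a critical-point analysis of the truncation
\[
G_\eps(\rho,\vp)=\frac{\alpha}{3}\rho^3\cos 3\vp+\eps\frac a2\rho^2+\ldots,
\]
followed by a persistence argument showing that the higher-order terms do not change the picture.

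\textbf{Step 1: the origin.} Since $G_\eps$ is $\Z_3$-symmetric, its gradient at $0$ is $0$ for every $\eps$, so $p_\eps=0$ is the required family of critical points. This also follows from \Cref{Lemma - Orbit Cylinder}, because the multiplier of $p$ is a primitive cube root of unity and $p$ is therefore a non-degenerate fixed point of $\Psi_0$. The quadratic part of $G_\eps$ at $0$ is $\eps\frac a2\rho^2$ plus $O(\eps^2)$ corrections, because the $\Z_3$-invariant quadratic terms are multiples of $\rho^2$. Hence $\Hess G_\eps(0)=\eps a\,\Id+O(\eps^2)$, which is definite for $\eps\neq0$ and changes sign at $\eps=0$. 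After possibly replacing $\eps$ by $-\eps$, the origin is a minimum for $\eps<0$ and a maximum for $\eps>0$. This proves the final assertion of the statement.

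\textbf{Step 2: the other critical points, by blow-up.} Rescale $\rho=|\eps| r$ and divide by $|\eps|^3$. The equations $\pp_\rho G_\eps=0$ (after dividing by $\rho$) and $\pp_\vp G_\eps=0$ become
\[
\alpha r\cos3\vp+\operatorname{sgn}(\eps)\,a+O(\eps)=0,\qquad \alpha r^3\sin 3\vp+O(\eps)=0.
\]
At $\eps=0^\pm$ the solutions are $\vp_j=j\pi/3$ and $r_j=|a/\alpha|$, subject to the sign condition that $\cos3\vp_j$ and $-\operatorname{sgn}(\eps)a/\alpha$ have the same sign. This selects three of the six angles for each sign of $\eps$.

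The Jacobian of the rescaled system at these solutions is diagonal with nonzero entries, namely $\alpha\cos3\vp_j$ and $3\alpha r^3\cos3\vp_j$. By the implicit function theorem, each solution therefore persists for small $|\eps|$ after the higher-order terms are included. Note that the $\rho^4$ terms contribute $O(\eps)$ after rescaling, and the $\rho^3\sin3\vp$ term has been removed by rotating $\vp$.

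\textbf{Step 3: orbit structure and type of the new points.} The $\Z_3$ deck action, rotation by $2\pi/3$, permutes the three selected points cyclically, and it coincides with $\Psi_\eps$ on them. So they form a single $3$-periodic orbit $\{p_\pm^j\}$ rather than three fixed points. They converge to $p$ because $\rho=O(\eps)$.

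The Hessian computed in the text, to leading order $\operatorname{diag}(-\eps a,\mp2\alpha\rho^3)$, has negative determinant under the chosen orientation of $\eps$, so each $p^j_\pm$ is a saddle. Hyperbolicity of the corresponding fixed point of $\Psi_\eps^3$ then follows: the linearization of $\Psi^3_\eps$ is the time-one map of $J\Hess G_\eps$, whose eigenvalues are real of opposite sign when the Hessian is indefinite. Finally, the absence of any further critical points near $0$ follows from the rescaled equations, which have no other solutions near $r=0$ apart from the origin branch.

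\textbf{Main obstacle.} The delicate part is making Step 2 uniform: showing that the truncation controls all nearby critical points, so that no extra critical points enter from the formal or flat tails. I would handle this with the rescaled equations. On an annulus $r\in[c,C]$ the implicit function theorem applies. Small $r$ is excluded because there the dominant term of $\pp_\rho G_\eps/\rho$ is $\eps a$, which does not vanish. Large $r$ (still with $\rho$ small) is excluded because $\pp_\vp$ forces $\sin3\vp\approx0$, and then $\pp_\rho$ is dominated by $\alpha\rho^2\cos3\vp$, which does not vanish.
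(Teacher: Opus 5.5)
Your proposal is correct and follows the paper's route. Both arguments analyze the critical points and Hessians of the truncated unwrapped generating Hamiltonian $G_\eps=\frac{\alpha}{3}\rho^3\cos3\vp+\eps\frac a2\rho^2+\ldots$. Your blow-up $\rho=|\eps|r$ with the implicit function theorem adds the persistence and no-extra-critical-points arguments that the paper leaves implicit (it cites Meyer). One small slip: on these critical points $\Psi_\eps$ acts as rotation by $2\pi l/3$, not $2\pi/3$. The paper's simpler reasoning avoids this identification altogether: the new points are not fixed by $\Psi_\eps$, since $p_\eps$ is the unique nearby fixed point, and $3$ is prime, so they form a single $3$-periodic orbit.
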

    
\begin{thm}[Phantom Kiss]\label{Theorem - POpk}
    Let $H_\eps$ be a 1-parameter family of Hamiltonians and $\g$ be a periodic orbit of $H_0$ with Floquet multiplier $\ld=\exp(2\pi i l/3)$ where $l=1,2$.
    Assume that a point $p$ in $\g$ is a 3-bifurcation point of the return map, and let $\g_\eps$ be the orbit cylinder centered at $\g$.

    Then there exist two families of hyperbolic orbits $\{\g_{H,-}(\eps)\}$ and $\{\g_{H,+}(\eps)\}$ of $H_\eps$, parametrized by $\eps<0$ and $\eps>0$, and both converge to $\g^3$ as $\eps\to0$.
\end{thm}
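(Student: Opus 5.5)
The plan is to obtain the Phantom Kiss theorem by translating \Cref{Theorem - FPpk} from the Poincar\'e section back to the flow, as was done for \Cref{Theorem - PObd} and \Cref{Theorem - POpd}.

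First, fix a Poincar\'e section $\Sigma$ at $p\in\g$ and consider the return maps $\Psi_\eps$ of $H_\eps$, defined on a fixed small disk for $|\eps|$ small. The Floquet multiplier of $\g$ is $\exp(2\pi i l/3)\neq 1$, so $\g$ is non-degenerate. \Cref{Lemma - Orbit Cylinder} then gives the orbit cylinder $\g_\eps$, whose intersection with $\Sigma$ is the family of fixed points $p_\eps$ in \Cref{Theorem - FPpk}.

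Next, pass to the unwrapped generating Hamiltonian $G_\eps$ of $\Psi_\eps^3$ from \Cref{Corollary - Unwrapped Generating Hamiltonian}. Since $p$ is a $3$-bifurcation point, \Cref{Theorem - FPpk} provides two triples $\{p_\pm^j(\eps)\}_{j=0,1,2}$ of non-origin critical points of $G_\eps$, which are $3$-periodic points of $\Psi_\eps$. They exist for $\eps<0$ and $\eps>0$ respectively and satisfy $\rho_j=O(|\eps|)\to 0$.

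Then take the flow orbit through each triple. Each $p_\pm^j$ has minimal period $3$ under $\Psi_\eps$, because it is not the origin and $p_\eps$ is the only fixed point nearby. Hence the three points of a triple lie on a single periodic orbit $\g_{H,\pm}(\eps)$ of $X_{H_\eps}$, which meets $\Sigma$ three times before closing. Its period is the sum of three return times. By continuity of the return time, this period converges to $3$ times the period of $\g$, so $\g_{H,\pm}(\eps)\to\g^3$ as $\eps\to 0$, in $C^0$ as parametrized loops.

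Finally, establish hyperbolicity. The Floquet multipliers of $\g_{H,\pm}$ are the eigenvalues of $d\Psi_\eps^3$ at $p_\pm^j$. The Hessian computed above, with leading diagonal $(-\eps a,\mp 2\alpha\rho^3)$, has negative determinant, so $p_\pm^j$ is a nondegenerate saddle of $G_\eps$. Since $G_\eps$ generates $\Psi_\eps^3$, the linearization at the critical point is $\exp(J\,\Hess G_\eps)$, which has real eigenvalues $\ld,\ld^{-1}\neq\pm1$.

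The main obstacle is that $G_\eps$ is only a finite truncation of a formal series. Two points need care: that the critical points and their saddle type persist for the actual map $\Psi_\eps^3$, and that the identification $d\Psi^3_\eps=\exp(J\,\Hess G_\eps)$ holds up to higher-order error. I would handle this with a rescaling $\rho=|\eps|r$. In these coordinates the truncated terms contribute $o(1)$ relative to the leading terms, in $C^2$ after normalizing by the leading order. The implicit function theorem then continues each nondegenerate critical point of the model to a genuine $3$-periodic point. The determinant sign is open, so it persists, giving eigenvalues $\ld\neq\pm1$ and hence genuine hyperbolicity.
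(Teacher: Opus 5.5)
Your proposal is correct and follows the paper's route. The paper obtains this theorem simply by translating the $3$-bifurcation result on the Poincar\'e section (the two triples $p^j_\pm$ of saddle critical points of the unwrapped generating Hamiltonian $G_\eps$, which exist for $\eps<0$ and $\eps>0$ respectively) into $3$-periodic points of $\Psi_\eps$, and hence into periodic orbits converging to $\g^3$; your rescaling $\rho=|\eps|r$ to control the truncation, and your explicit check that saddles of $G_\eps$ give real Floquet multipliers different from $\pm1$, supply details that the paper leaves implicit.
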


We call this phenomenon a \textbf{phantom kiss}.
The term \emph{touch-and-go} is also used in much of the literature.
The situation is illustrated in \Cref{Figure - Phantom kiss}.
As with the other bifurcations discussed earlier, the origin can also change from a maximum to a minimum.
However, this can be handled by taking the opposite direction for the bifurcation parameter, so we do not distinguish these cases separately.

\begin{figure}[htbp]
  \begin{subfigure}[b]{0.23\textwidth}
    \centering
    \includegraphics[width=\textwidth]{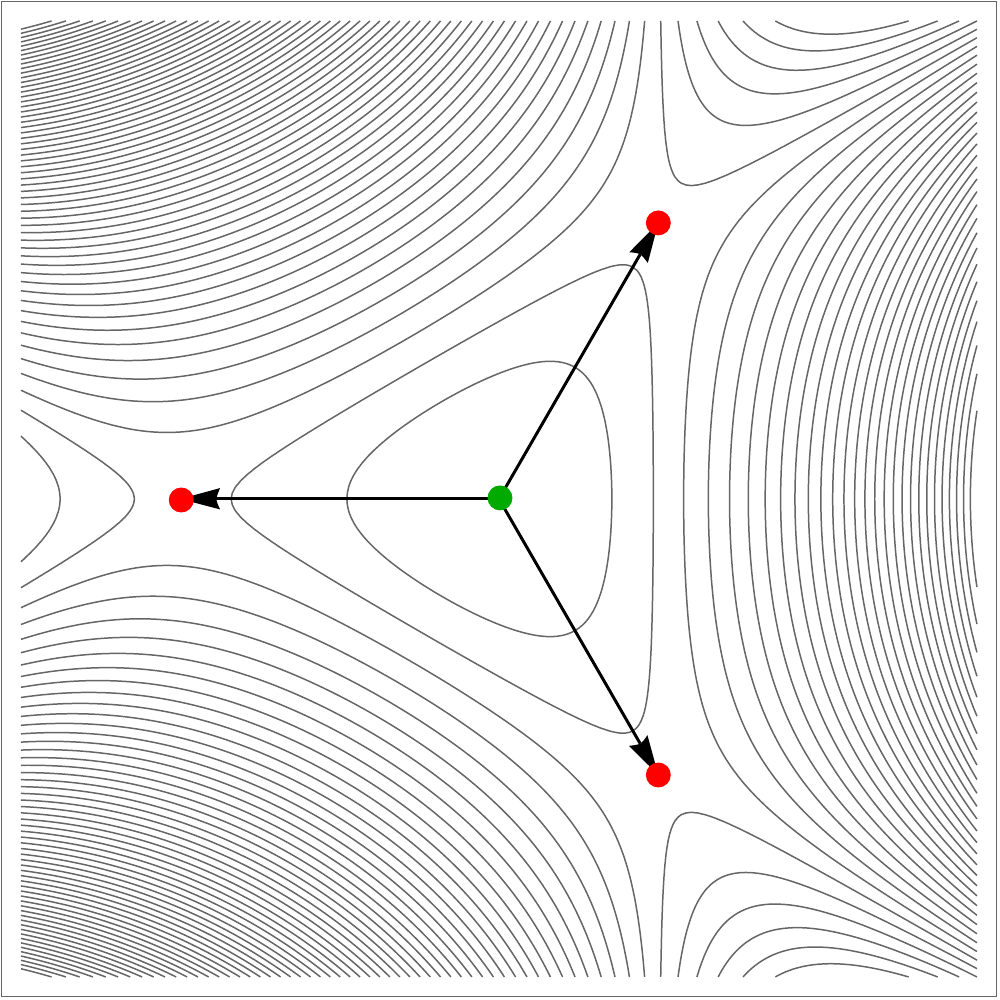}
    \caption{$\eps<0$}
  \end{subfigure}
\hfill  \begin{subfigure}[b]{0.23\textwidth}
    \centering
    \includegraphics[width=\textwidth]{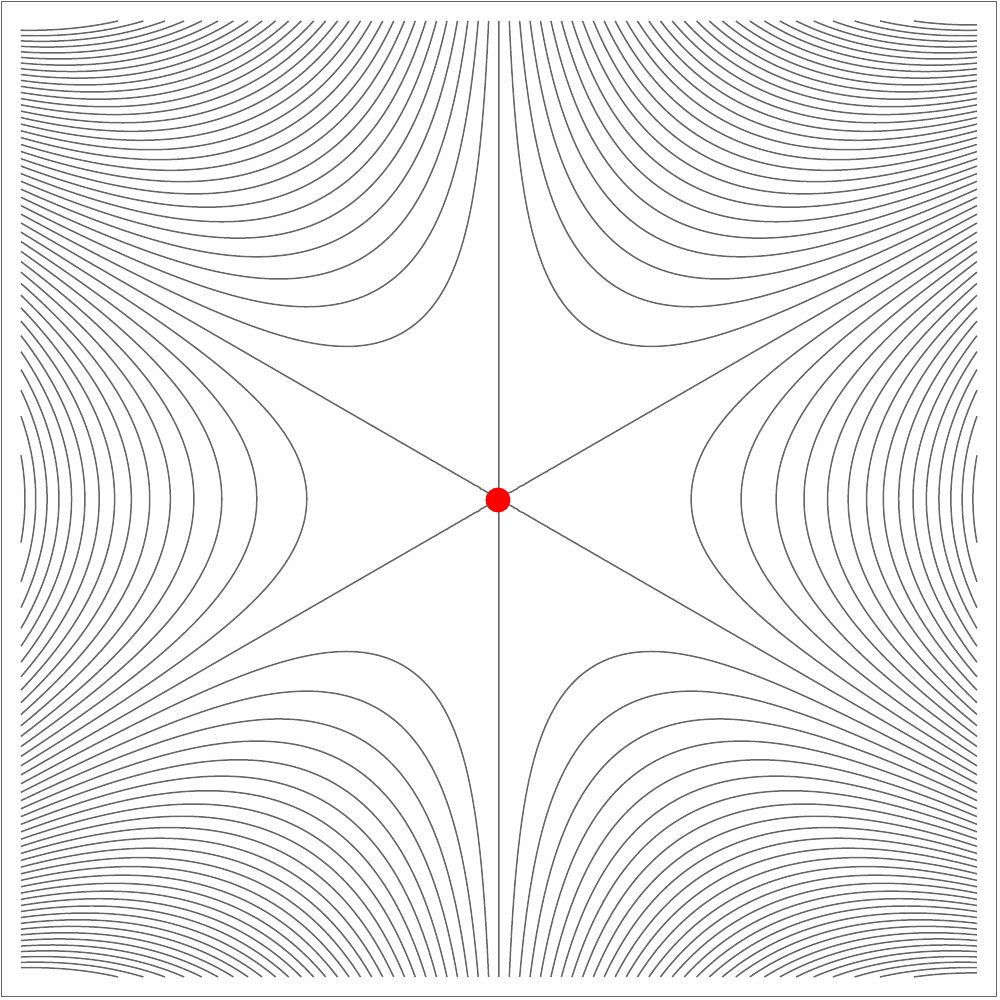}
    \caption{$\eps=0$}
  \end{subfigure}
\hfill  \begin{subfigure}[b]{0.23\textwidth}
    \centering
    \includegraphics[width=\textwidth]{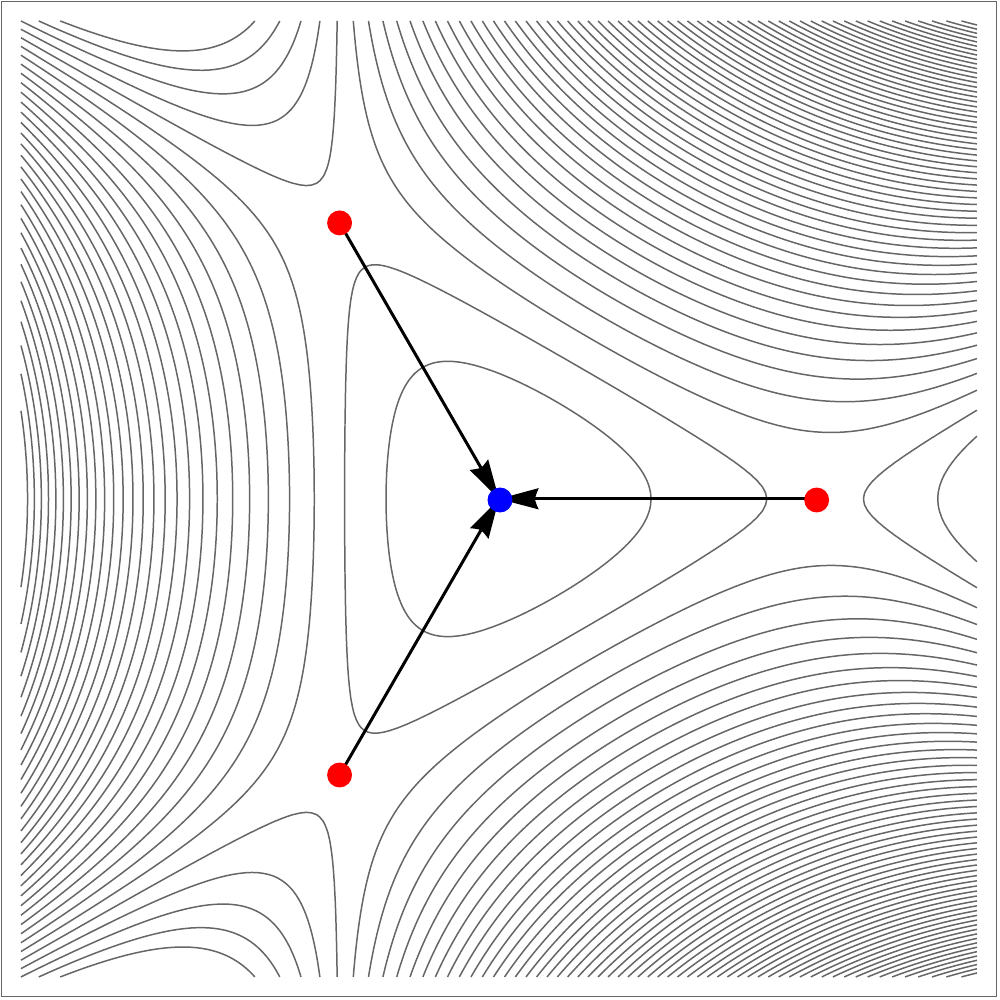}
    \caption{$\eps>0$}
  \end{subfigure}

  \caption{Level curves of $G_\eps$ and gradient flows of 3-phantom kiss.}
  \label{Figure - Phantom kiss}
\end{figure}

Before discussing the $k=4$ case, we discuss the $k\geq5$ cases.
The leading terms of the unwrapped generating Hamiltonian are
    \[
    G_\eps (\rho,\vp) =  \frac{\alpha}{k}\rho^k\cos k\vp+\frac{\beta}{4}\rho^4 +  \eps\frac{ a}{2}\rho^2 + \ldots.
    \]
Again, the origin is always a critical point.
The conditions for the other critical points are
    \[
    \begin{aligned}
    \pp_\rho G_\eps &= \eps a \rho + \beta \rho^3=0,\\
    \pp_\vp G_\eps &=\alpha \rho^{k-1}\sin k\vp =0,
    \end{aligned}
    \]
whose solutions are
    \[
    \begin{aligned}
        \rho^2&=-\frac{\eps a}{\beta},\\
        \vp_j& = \frac{j\pi}{k}\text{ for }j=0,1,\ldots,2k-1.
    \end{aligned}
    \]
Unlike the $k=3$ case, only one sign of $\eps$ is available for these solutions.
We choose a direction for $\eps$ such that $-\eps a \beta>0$ for $\eps>0$, so the solutions exist for $\eps>0$.

The origin transforms from a minimum to a maximum, or vice versa, as $\eps$ passes through $0$, depending on the sign of $a$.
For the other critical points, we compute the leading terms of the Hessian:
    \[
    \Hess G_\eps = \begin{pmatrix}
        \eps a + 3\beta \rho^2 &0\\
        0& -k\alpha \rho^{k-1} \cos k\vp
    \end{pmatrix}
    =
    \begin{pmatrix}
        -2\eps a&0\\
        0&\mp k\alpha \rho^{k-1}
    \end{pmatrix},
    \]
depending on the sign of $\cos k\vp$.
We divide the cases into two:
\begin{itemize}
    \item If $a>0$, $p_\eps$ changes from a maximum to a minimum.
    For $\eps>0$, $-2\eps a<0$, so there are $k$ saddle points and $k$ maxima.
    \item If $a<0$, $p_\eps$ changes from a minimum to a maximum.
    For $\eps>0$, $-2\eps a>0$, so there are $k$ saddle points and $k$ minima.
\end{itemize}
In both cases, we denote the saddle points by $p_H^j$ and the elliptic points by $p_E^j$.
To summarize, we have the final class of the bifurcation.

\begin{thm}[\cite{Meyer_70}, $k$-bifurcation]\label{Theorem - FPem}
    Let $p$ be a $k$-bifurcation point of $\Psi_\eps$ for $k\geq5$.
    Then there exists a family of fixed points $p_\eps$ of $\Psi_\eps$ such that $p_0=p$.
    Also, with an appropriate choice of the bifurcation parameter, there exists a family of hyperbolic $k$-periodic orbits $\{p_H^j(\eps)\}$ and a family of elliptic $k$-periodic orbits $\{p_E^j(\eps)\}$, parametrized by $\eps>0$, which converge to $p$ as $\eps\to0$.
    
    Moreover, as critical points of the unwrapped generating Hamiltonian of $\Psi_\eps^k$, $p_\eps$ is a minimum (resp. maximum) if $\eps<0$ and a maximum (resp. minimum) if $\eps>0$, and the elliptic periodic points $p_E^j(\eps)$ are minima (resp. maxima) for $\eps>0$.
\end{thm}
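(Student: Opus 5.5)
The argument mirrors the cases $k=1,2,3$: locate the critical points of the truncated unwrapped generating Hamiltonian $G_\eps$ of \Cref{Corollary - Unwrapped Generating Hamiltonian}, use the implicit function theorem to show they persist for the full (formal, truncated) series, and read off stability and Morse type from the Hessian. The family $p_\eps$ comes first. Since $k\geq5$, the multiplier $\exp(2\pi i l/k)\neq1$, so $p$ is a non-degenerate fixed point of $\Psi_0$. \Cref{Lemma - Orbit Cylinder} (applied to the return map) then gives a smooth family of fixed points $p_\eps$ of $\Psi_\eps$ with $p_0=p$. In the unwrapped coordinates this is the origin, which is a critical point of $G_\eps$ for every $\eps$ by $\Z_k$-symmetry. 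Near the origin the Hessian is $\eps a\,\Id$ plus higher-order terms, because $k\geq5$ makes the $\rho^k$ term contribute nothing at order two and $\rho^4$ has vanishing Hessian at $0$. Hence the origin is a minimum for $\eps a>0$ and a maximum for $\eps a<0$, and it switches type across $\eps=0$.

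Next come the nontrivial critical points. I would rescale $\rho=\sqrt{|\eps|}\,r$ and divide by $|\eps|^{2}$, so that
\[
G_\eps/\eps^2=\pm\tfrac a2 r^2+\tfrac\beta4 r^4+O(|\eps|^{(k-4)/2}).
\]
The radial equation $\pm a+\beta r^2=0$ has a nondegenerate positive root $r_*$ only when $-\eps a\beta>0$, and we orient $\eps$ so that this holds for $\eps>0$. For $\eps<0$ there are no critical points except the origin, which settles the unbranched side.

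On the circle $r=r_*$, the angular dependence comes only from $\alpha\rho^k\cos k\vp/k$, which is smaller by the factor $|\eps|^{(k-4)/2}$. Its angular critical points $\vp_j=j\pi/k$ are nondegenerate because $\alpha\neq0$. A two-scale implicit function argument then gives exactly $2k$ critical points near $(r_*,\vp_j)$. The argument solves the radial equation first at leading order and the angular one at order $|\eps|^{(k-4)/2}$; equivalently, the Hessian is diagonal to leading order with entries of different orders, as in the displayed computation. Higher terms of the series are $\Z_k$-invariant, so they cannot shift the angles out of the $\Z_k$-orbit structure, and they only perturb the entries by lower order.

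The Hessian entries $-2\eps a$ and $\mp k\alpha\rho^{k-1}$ then give two $\Z_k$-orbits: one of $k$ saddles (hyperbolic) and one of $k$ extrema. The extrema are minima if $a<0$ and maxima if $a>0$, matching the type of $p_\eps$ at $\eps>0$ as claimed. Each orbit consists of points of minimal period $k$, since they are not fixed by the rotation and $(k,l)=1$. Finally, every critical point tends to the origin as $\eps\to0$ because $\rho\sim\sqrt{|\eps|}$.

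The main obstacle is the two-scale nondegeneracy. The angular Hessian is only of size $|\eps|^{(k-2)/2}$ relative to the radial size $|\eps|$, so the truncation error and the off-diagonal terms must be shown to be negligible at that smaller scale. I would handle this by the $\Z_k$-invariant reduction: write $G_\eps=F(\rho^2,\eps)+\rho^k\Phi(\rho,\vp,\eps)$, with $\Phi$ built from $\cos k\vp$ and $\sin k\vp$. Apply the implicit function theorem in $\rho$ for each fixed $\vp$ to obtain $\rho(\vp,\eps)$, then study the reduced function of $\vp$, whose leading term is $\alpha\rho^k\cos k\vp/k$ with $\alpha\neq0$.
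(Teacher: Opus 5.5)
This is essentially the paper's argument. The paper also takes $p_\eps$ to be the origin and solves the leading-order equations to get $\rho^2=-\eps a/\beta$, $\vp_j=j\pi/k$. It then orients $\eps$ so that $-\eps a\beta>0$ for $\eps>0$, reads off types from the Hessians $\eps a\,\Id$ at the origin and $\mathrm{diag}(-2\eps a,\mp k\alpha\rho^{k-1})$ on the ring, and splits into the cases $a>0$ and $a<0$.

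Your rescaling $\rho=\sqrt{|\eps|}\,r$ and the reduction $G_\eps=F(\rho^2,\eps)+\rho^k\Phi$ go further than the paper. Solving $\pp_\rho G_\eps=0$ first and then using $\frac{d}{d\vp}G_\eps(\rho(\vp,\eps),\vp)=\rho^k\pp_\vp\Phi$ is a sound way to justify the persistence and two-scale nondegeneracy that the paper only asserts from the leading terms. One small correction there: your remainder $O(|\eps|^{(k-4)/2})$ is understated for $k\geq7$. In that range the radial terms $\rho^6$, $\eps\rho^4$, $\eps^2\rho^2$ give $O(|\eps|)$ after rescaling. This is harmless, since those terms are $\vp$-independent and your later decomposition handles them correctly.

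One sentence, however, is backwards and contradicts the statement you are proving: the elliptic points do \emph{not} match the type of $p_\eps$ at $\eps>0$. By your own criteria, for $\eps>0$ the origin is a minimum if and only if $\eps a>0$, i.e.\ $a>0$. The ring extrema, whose radial Hessian entry is $-2\eps a$, are minima if and only if $a<0$. So the $p_E^j$ always have the type opposite to $p_\eps$ for $\eps>0$, namely the type $p_\eps$ had on the unbranched side $\eps<0$. That is exactly the theorem's assertion: $p_\eps$ is a minimum for $\eps<0$ and a maximum for $\eps>0$, and the $p_E^j$ are minima.

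The radial profile $\eps\frac a2\rho^2+\frac\beta4\rho^4$ shows this directly. Once the origin flips, the radial function must turn back on the ring to recover the unchanged behaviour for $\rho\gg\sqrt{|\eps|}$. Hence the ring points are radial extrema of the old type. Your individual computations are right; only the concluding comparison needs to be reversed.
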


\begin{thm}[Emission]\label{Theorem - POem}
Let $H_\eps$ be a 1-parameter family of Hamiltonians and $\g$ be a periodic orbit of $H_0$ with Floquet multiplier $\ld=\exp(2\pi i l/k)$ where $(k,l)=1$ and $k\geq 5$.
Assume that a point $p$ in $\g$ is a $k$-bifurcation point of the return map, and let $\g_\eps$ be the orbit cylinder centered at $\g$.

Then, with an appropriate choice of the bifurcation parameter, there exists a family of hyperbolic periodic orbits $\{\g_H(\eps)\}$ and a family of elliptic periodic orbits $\{\g_E(\eps)\}$ of $H_\eps$, both parametrized by $\eps>0$, which converge to $\g^k$ as $\eps\to0$.
\end{thm}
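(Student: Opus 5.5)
The plan is to derive \Cref{Theorem - POem} from \Cref{Theorem - FPem}, using the dictionary between periodic orbits near $\g$ and periodic points of the return map on a Poincar\'e section. This is the same dictionary already used to pass from \Cref{Theorem - FPbd} to \Cref{Theorem - PObd}, and from \Cref{Theorem - FPpk} to \Cref{Theorem - POpk}.

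First, pick $p\in\g$ and a Poincar\'e section $\Sigma_\eps\subset H_\eps^{-1}(c_\eps)$ through the continuation $\g_\eps$, given by \Cref{Lemma - Orbit Cylinder}. The continuation exists because $\g$ itself is non-degenerate: its multiplier $\exp(2\pi i l/k)$ with $k\geq5$ is not $1$. The sections and return maps $\Psi_\eps$ then depend smoothly on $\eps$, after choosing the energy levels $c_\eps$ smoothly, for instance $c_\eps=H_\eps(\g_\eps)$. Identify $\Sigma_\eps$ with a fixed disk via a smooth family of symplectic charts. In these charts $p_\eps=\g_\eps\cap\Sigma_\eps$ is a fixed point of $\Psi_\eps$, and by hypothesis $p=p_0$ is a $k$-bifurcation point.

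Next, apply \Cref{Theorem - FPem}. It gives, for $\eps>0$ after choosing the direction of the parameter, a $k$-periodic orbit $\{p_H^j(\eps)\}_{j}$ of hyperbolic points and a $k$-periodic orbit $\{p_E^j(\eps)\}_{j}$ of elliptic points, both converging to $p$. Flowing a point $p_H^j(\eps)$ by $X_{H_\eps}$ returns to $\Sigma_\eps$ successively at the points of its $\Psi_\eps$-orbit. It closes up after $k$ returns, so it traces a periodic orbit $\g_H(\eps)$ whose period is the sum of $k$ return times. Since $p_H^j\neq p_\eps$ and $k$ is the minimal period of these points, this orbit is distinct from $\g_\eps$ and its multiple covers. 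Its Floquet multipliers are the eigenvalues of $d\Psi_\eps^k$ at $p_H^j$, which are real because $p_H^j$ is a saddle of the generating Hamiltonian of $\Psi_\eps^k$. Hence $\g_H(\eps)$ is hyperbolic, and in the same way $\g_E(\eps)$ is elliptic.

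Finally, the convergence statement. As $\eps\to0^+$ the points $p^j(\eps)\to p$. By continuous dependence of the flow and of the return time $\tau$ on initial data and on $\eps$, each return segment converges to $\g$, traversed once over its period. Concatenating $k$ such segments, the orbits converge in $C^0$, and even $C^\infty$ as parametrized loops, to $\g^k$.

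The step needing most care is the stability-type transfer. The elliptic/hyperbolic classification in \Cref{Theorem - FPem} comes from the Hessian of the unwrapped generating Hamiltonian $G_\eps$, which lives on the $k$-fold cover in a rotating frame. One must check that a saddle or extremum of $G_\eps$ yields real or unit-circle eigenvalues of $d\Psi_\eps^k$, that is, of the linearized time-one map of the autonomous flow of $G_\eps$. For an autonomous planar Hamiltonian this holds: $d\Psi^k=\exp(J\,\Hess G_\eps)$, and $J\,\Hess$ has real eigenvalues exactly when $\det\Hess<0$. The Hessian is nondegenerate at order $\rho^{k-1}$, which dominates the flat remainder of the Birkhoff normal form. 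The rotating-frame change of coordinates is a $t$-dependent rotation that closes up after $k$ periods, so it conjugates $d\Psi_\eps^k$ and preserves its eigenvalues.
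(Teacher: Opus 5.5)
Your proposal is correct and follows the same route as the paper. The paper gives no separate proof: it obtains this theorem by translating Meyer's periodic-point result \Cref{Theorem - FPem} through the Poincar\'e return map, exactly as for birth-death and the phantom kiss. Your write-up supplies details the paper leaves implicit: fixing the energy level $c_\eps=H_\eps(\g_\eps)$, concatenating $k$ return segments, and reading the multipliers off $d\Psi_\eps^k$. The stability types are already part of \Cref{Theorem - FPem}, so your final paragraph is a consistency check rather than a required step.
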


We call this phenomenon an \textbf{emission}.
\cite{Abraham_Marsden_78} called the opposite direction an \emph{absorption}.
As in the period doubling and materialization cases, we distinguish the two possible situations by the existence of incoming or outgoing Morse trajectories at the origin after the bifurcation, calling each case \textbf{attracting} or \textbf{repelling}.

\begin{figure}[htbp]

    \begin{subfigure}[b]{0.23\textwidth}
    \centering
    \includegraphics[width=\textwidth]{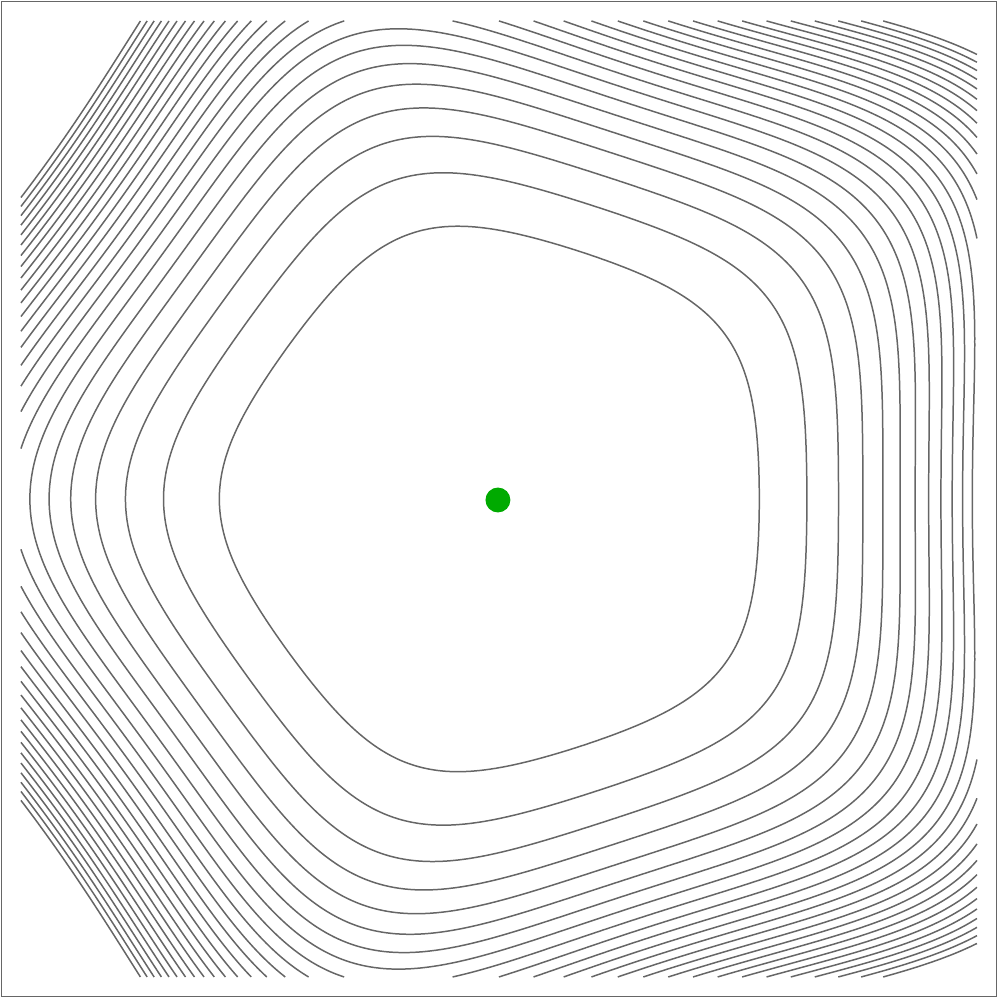}
    \caption{Attracting, $\eps<0$}
  \end{subfigure}
\hfill
\begin{subfigure}[b]{0.23\textwidth}
    \centering
    \includegraphics[width=\textwidth]{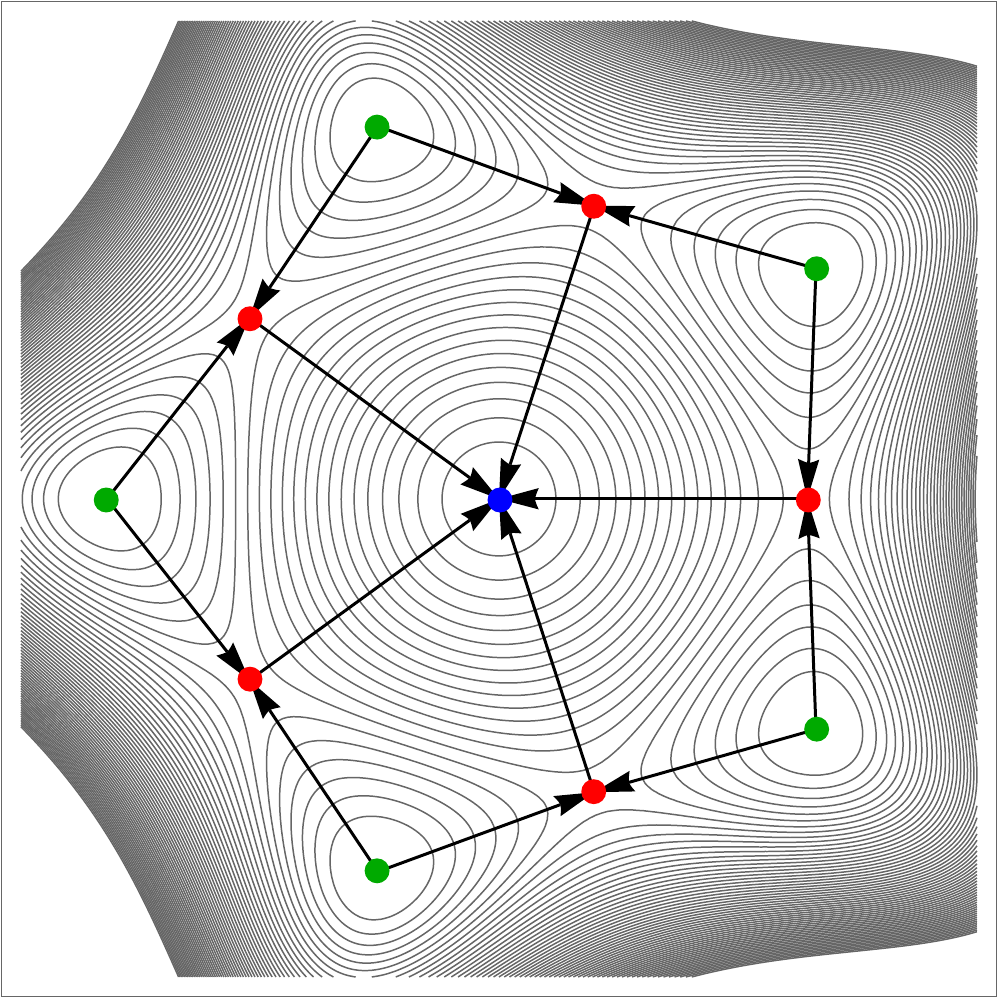}
    \caption{Attracting, $\eps>0$}
    \end{subfigure}
    \hfill
  \begin{subfigure}[b]{0.23\textwidth}
    \centering
    \includegraphics[width=\textwidth]{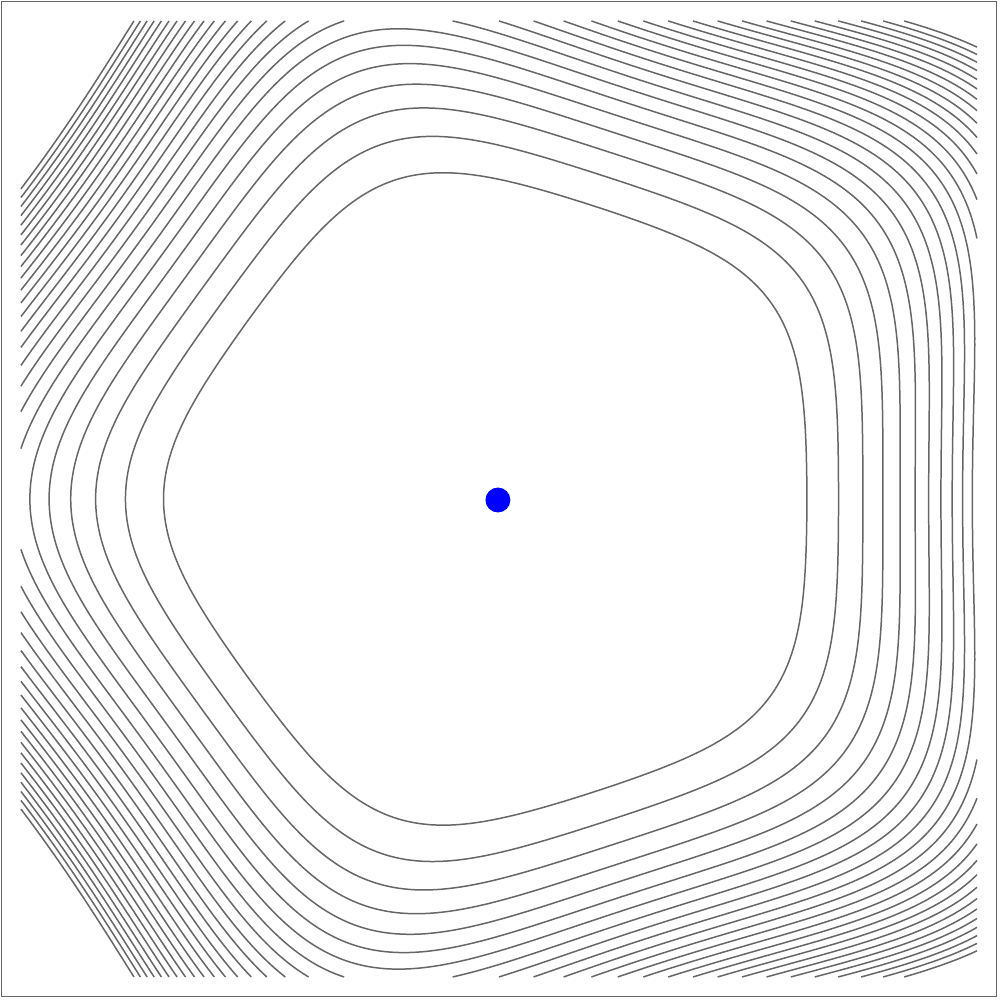}
    \caption{Repelling, $\eps<0$}
  \end{subfigure}
 \hfill
  \begin{subfigure}[b]{0.23\textwidth}
    \centering
    \includegraphics[width=\textwidth]{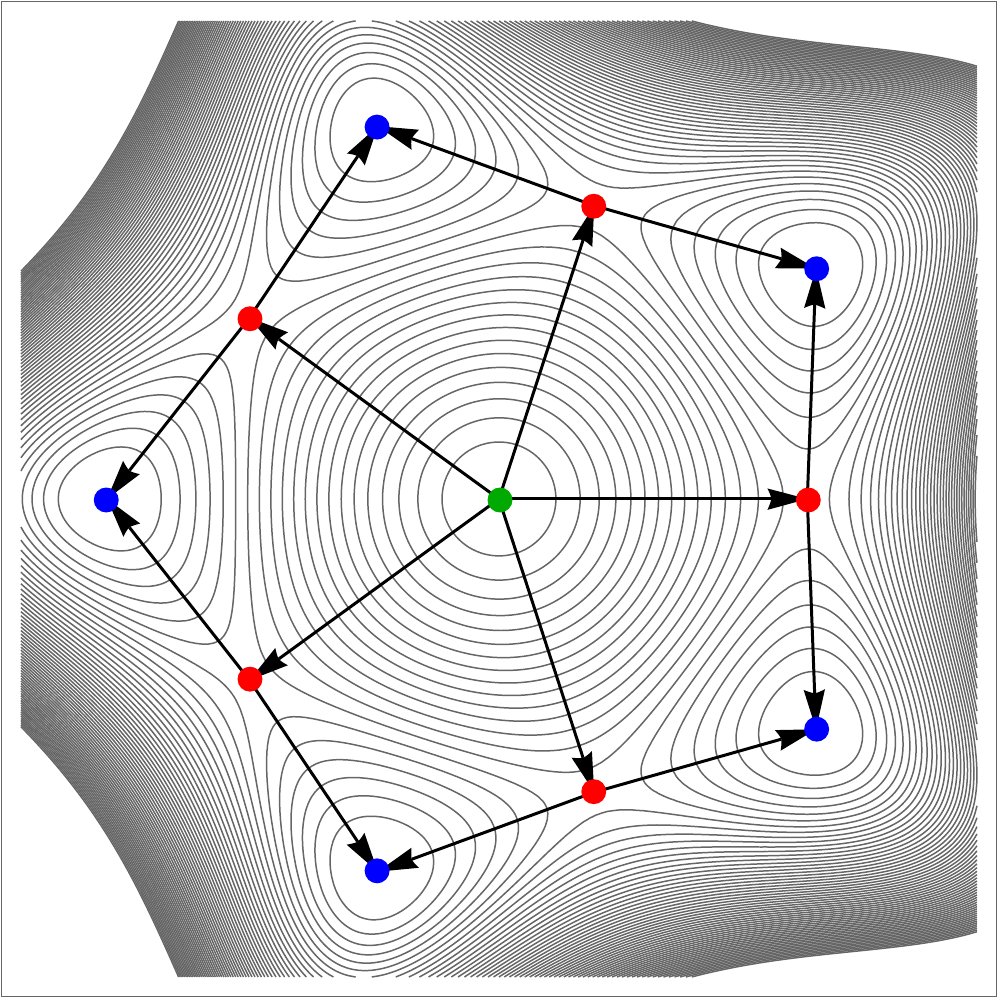}
    \caption{Repelling, $\eps>0$}
  \end{subfigure}
  \caption{Level curves of $G_\eps$ and gradient flows of 5-emission.}
  \label{Figure - Emission}
\end{figure}

The final case is $k=4$.
The leading terms of the unwrapped generating Hamiltonian are
    \[
    G_\eps(\rho,\vp) =  \left(\frac{\alpha}{4}\cos 4\vp + \frac{\beta}{4}\right)\rho^4 + \eps\frac{a}{2}\rho^2 + \ldots.
    \]
In this case, the absolute values of $\alpha$ and $\beta$ determine the bifurcation structure.
If $|\alpha|>|\beta|$, the $\rho^4 \cos 4\vp$ term is dominant and the phantom kiss occurs.
If $|\alpha|<|\beta|$, the $\rho^4$ term is dominant and the emission occurs.
One can see this explicitly by solving the $\rho$-part of $\d G_\eps=0$,
    \[
    \rho^2 = -\frac{\eps a}{\alpha \cos 4\vp + \beta},
    \]
which implies that the allowed sign of $\eps$ is different for the 4 solutions depending on the absolute values of $\alpha$ and $\beta$.
The remaining discussion is analogous to the previous arguments.

\subsection{Summary and Morse Trajectories}

\begin{table}[htbp]
	\renewcommand{\arraystretch}{1.5} 
	\centering
	\small
	\begin{tabular}{|c|c|c|c|c|c|c|c|c|}
		\hline
		\multicolumn{3}{|c|}{Generic Bifurcations}
		& \multicolumn{3}{c|}{$\eps<0$}
		& \multicolumn{3}{c|}{$\eps>0$} \\
		\hline
		Floquet Mult.\ & Name & Type
		& Orbits & Period & Index
		& Orbits & Period & Index \\
		\hline
		
		\multirow{4}{*}{$1$}
		& \multirow{4}{*}{Birth-death}
		& \multirow{2}{*}{Source}
		&  &  &  & $\g_H$ & 1 & 1 \\
		&  &  &  &  &  & $\g_E$ & 1 & 0 \\
		\cline{3-9}
		&  & \multirow{2}{*}{Sink}
		&  &  &  & $\g_H$ & 1 & 1 \\
		&  &  &  &  &  & $\g_E$ & 1 & 2 \\
		\hline
		
		\multirow{8}{*}{$-1$}
		& \multirow{4}{*}{\shortstack{Period\\doubling}}
		& \multirow{2}{*}{Attracting}
		& $\g^2$ & 1 & 0 & $\g^2$ & 1 & 1 \\
		&  &  &  &  &  & $\g_E$ & 2 & 0 \\
		\cline{3-9}
		&  & \multirow{2}{*}{Repelling}
		& $\g^2$ & 1 & 2 & $\g^2$ & 1 & 1 \\
		&  &  &  &  &  & $\g_E$ & 2 & 2 \\
		\cline{2-9}
		
		& \multirow{4}{*}{Materialization}
		& \multirow{2}{*}{Attracting}
		& $\g^2$ & 1 & 1 & $\g^2$ & 1 & 2 \\
		&  &  &  &  &  & $\g_H$ & 2 & 1 \\
		\cline{3-9}
		&  & \multirow{2}{*}{Repelling}
		& $\g^2$ & 1 & 1 & $\g^2$ & 1 & 0 \\
		&  &  &  &  &  & $\g_H$ & 2 & 1 \\
		\hline
		
		\multirow{2}{*}{\shortstack{$\exp(2\pi i l/k)$\\$k=3,4$, $(l,k)=1$}}
		& \multirow{2}{*}{Phantom Kiss}
		& \multirow{2}{*}{$-$}
		& $\g^k$ & 1 & 0 & $\g^k$ & 1 & 2 \\
		&  &  & $\g_H^-$ & $k$ & 1 & $\g_H^+$ & $k$ & 1 \\
		\hline
		
		\multirow{6}{*}{\shortstack{$\exp(2\pi i l/k)$\\$k\geq 4$, $(l,k)=1$}}
		& \multirow{6}{*}{Emission}
		& \multirow{3}{*}{Attracting}
		& $\g^k$ & 1 & 0 & $\g^k$ & 1 & 2 \\
		&  &  &  &  &  & $\g_H$ & $k$ & 1 \\
		&  &  &  &  &  & $\g_E$ & $k$ & 0 \\
		\cline{3-9}
		&  & \multirow{3}{*}{Repelling}
		& $\g^k$ & 1 & 2 & $\g^k$ & 1 & 0 \\
		&  &  &  &  &  & $\g_H$ & $k$ & 1 \\
		&  &  &  &  &  & $\g_E$ & $k$ & 2 \\
		\hline
		
	\end{tabular}
	\caption{Classification of generic bifurcations.}
	\label{Table - Bifurcation}
\end{table}

\Cref{Table - Bifurcation} is a summary of the generic bifurcation phenomena of Hamiltonian systems.
In this table, the period means the period of the periodic points of the return map on the Poincar\'{e} section; in general, there must be a defect in the return time, so the period of the periodic orbit corresponding to the $k$-periodic points is not exactly $k$, though it converges to $k$.
The index means the Morse index of the periodic points as critical points of the unwrapped generating Hamiltonian $G_\eps$ of $\Psi^k_\eps$, which is closely related to the Conley-Zehnder indices of the corresponding periodic orbit.
As mentioned in the classification procedure, we split each case (except the phantom kiss) into two types depending on the Morse index, which will result in different Floer chain complexes in the later sections.

Since the conditions for a periodic point to be an extremal, transitional, or $k$-bifurcation point are open, we can expect that these are generic conditions.
Meyer verified this property in his 1970 paper.

\begin{thm}[\cite{Meyer_70}]\label{Theorem - Generic bifurcation}
    Let $\mathcal{G}$ be the set of 1-parameter families of symplectomorphisms on a surface $\Sigma$ such that every periodic point is either an elliptic, hyperbolic, extremal, transitional, or $k$-bifurcation point.
    Then $\mathcal{G}$ is a residual set in the space of all 1-parameter families of $\Sigma$.
\end{thm}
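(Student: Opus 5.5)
This genericity result is a transversality statement, and I would prove it with the Thom transversality theorem (in its multijet, parametric form) applied to the space of one-parameter families $\{\Psi_\eps\}$ of symplectomorphisms of $\Sigma$. The idea is to encode each degeneracy as a closed subset of jet space, stratified by codimension, and to show that a residual set of families avoids every stratum of codimension greater than $2$. The two dimensions here are the phase-space dimension of the periodic point and the parameter $\eps$.

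Fix a period $m$ and work with the graph of $\Psi_\eps^m$, or equivalently with a local generating function of $\Psi^m_\eps$ as in \Cref{Lemma - General Generating Hamiltonian}. Then $m$-periodic points are zeros of a section depending on $(p,\eps)\in\Sigma\times\R$. I would list the bad sets in the $r$-jet space of generating functions at a point:
\begin{enumerate}
    \item Multiplier $1$ with $\alpha=0$ or $\beta=0$ or $b=0$ in the birth-death normal form.
    \item Multiplier $-1$ with one of $\alpha,\beta,a$ vanishing.
    \item Multiplier a primitive $k$-th root of unity with one of the nondegeneracy conditions of a $k$-bifurcation point failing.
    \item Multiplier $\pm1$ occurring simultaneously with a degenerate Jordan form (the zero matrix shear).
\end{enumerate}
The condition that the multiplier equal a prescribed root of unity is already codimension $1$ on the trace of $d\Psi$. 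The fixed point equation contributes codimension $2$. Each failing normal-form coefficient adds one more. So every bad set is a finite union of submanifolds of codimension at least $4$ in the jet space over the $3$-dimensional base $\Sigma\times\R$. These coefficients are polynomial functions of the jet after the Birkhoff normalization of \Cref{Theorem - BNF}, and a truncation to finite order suffices by \Cref{Corollary - Unwrapped Generating Hamiltonian}. The conditions involving $\eps$-derivatives, such as $a\neq0$ and $b\neq0$, use the $\eps$-direction of the jet.

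The key step is that the space of families is rich enough for the Thom argument. I would show that the evaluation map to the jets of $\Psi^m_\eps$ at a periodic point is a submersion. This is done by composing $\Psi_\eps$ with $\exp(X_{h_\eps})$, where $h_\eps$ is localized near one point of the orbit away from its other iterates. This lets one realize arbitrary jet perturbations of $\Psi^m$. The parametric transversality theorem then gives, for each $m$ and each compact piece, an open dense set of families whose multijet avoids the bad strata. Intersecting over $m\in\N$ and an exhaustion of $\Sigma$ gives a countable intersection, hence a residual set $\mathcal{G}$. Every $m$-periodic point of a family in $\mathcal{G}$ then has a multiplier off the unit-root/$\pm1$ loci, which makes it elliptic or hyperbolic. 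Otherwise its normal-form coefficients are nonzero, which makes it extremal, transitional, or a $k$-bifurcation point.

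The main obstacle is the set of periodic points of high period $m$ accumulating near a fixed point, together with the $k=4$ case and the roots of unity of every order. Along a generic family, the multiplier passes through infinitely many rational rotations. For each $k$ the stratum must be handled separately, and the codimension count must be uniform. The delicate point is the submersion argument near points of period $m$ that are close to points of lower period. There I would follow Meyer and treat each $(m,k)$ separately on open sets avoiding lower-period points, where the localized perturbation is possible. I would then argue that points of lower period are handled by their own stratum, so the countable intersection still covers everything.
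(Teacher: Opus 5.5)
The paper gives no proof of this statement. It is quoted from \cite{Meyer_70} as background, so the only route in the text is the citation. Your outline is the standard transversality argument for genericity results of this kind: generating functions, a jet-space stratification of degenerate periodic points, localized Hamiltonian perturbations that make the evaluation map a submersion, and a countable intersection. It is sound in outline. Compared with simply citing, it shows why exactly the coefficients $\alpha,\beta,a,b$ are the generic nondegeneracy conditions. Each of their vanishings costs one more codimension than the $3$-dimensional base $\Sigma\times\R$ can absorb.

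Three points should be tightened.

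First, the threshold in your opening paragraph is misstated. Once the fixed-point equation is counted, the base is $\Sigma\times\R$, of dimension $3$, so the strata to be avoided are those of codimension greater than $3$. Your later count (codimension at least $4$) is the right one.

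Second, the strata must be indexed by the \emph{minimal} period, and this should be built into their definition rather than used implicitly at the end. At a point whose minimal period is a proper divisor of $m$, the jet of $\Psi^m_\eps$ is an iterate and cannot be perturbed freely, so the submersion fails there. Your item (1) is then genuinely hit by generic families. A transitional fixed point of $\Psi_\eps$ is a fixed point of $\Psi^2_\eps$ with multiplier $1$. By the $\Z_2$-symmetry of its generating function, that function has neither a cubic term nor a linear unfolding term, so $\beta=b=0$. Such points occur at isolated parameter values in a generic family.

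Third, the openness half needs closed conditions. Instead of ``open sets avoiding lower-period points'', work on compact sets of $(p,\eps)$ with a uniform separation $\min_{0<j<m}d(\Psi^j_\eps(p),p)\ge 1/n$. This separation is exactly what makes the localized perturbation possible. For each fixed $k$, take the closure of its bad set; this is where your item (4), $d\Psi^m_\eps(p)=\pm\Id$, enters. Do this separately for each $k$, because the union over all $k$ is not closed. Only then intersect over $m$, $k$, $n$ and an exhaustion of $\Sigma\times\R$.
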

This theorem establishes the classification of generic bifurcations of periodic points, which can also be applied to the case of periodic orbits.

Moreover, we discuss about the positive gradient flowlines which connect the critical points of $G_\eps$.
The structure is described in Figures \ref{Figure - birth-death}, \ref{Figure - Period doubling}, \ref{Figure - Materialization}, \ref{Figure - Phantom kiss} and \ref{Figure - Emission} with the level sets of $G_\eps$, and we prove that these are all such flowlines.
\begin{prop}[Structure of Gradient Trajectories]\label{Proposition - Morse Trajectories}
    For every sufficiently small $\eps\neq 0$, positive gradient flowlines connecting critical points of $G_\eps$ with Morse index difference 1 are precisely those depicted in Figures \ref{Figure - birth-death}, \ref{Figure - Period doubling}, \ref{Figure - Materialization}, \ref{Figure - Phantom kiss} and \ref{Figure - Emission}. In particular, the gradient flow of $G_\eps$ is Morse--Smale.
\end{prop}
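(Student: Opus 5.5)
We work with the truncated normal forms $G_\eps$ of \Cref{Section - Classification of bifurcations} and treat each bifurcation type in turn. The first step is to rescale so that the dependence on $\eps$ disappears at leading order. For the birth-death case we set $x=|\eps|^{1/2}X$, $y=|\eps|^{1/2}Y$. For period doubling and materialization we use $x=|\eps|^{3/4}X$, $y=|\eps|^{1/2}Y$ after the symplectic rescaling of the shear factor. For the $k$-bifurcations we set $\rho=|\eps|^{1/(4-2)}R$, or $\rho=|\eps| R$ when $k=3$. After dividing by the appropriate power of $|\eps|$, the function $G_\eps$ becomes $\hat G_\pm+O(|\eps|^{\delta})$ in $C^2$ on a fixed compact region, where $\hat G_\pm$ is an explicit $\eps$-independent model polynomial, for example $\frac{\alpha}{2}X^2+\frac{\beta}{3}Y^3\pm bY$. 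Gradient flowlines are only reparametrized by this rescaling, and the metric is conformally rescaled, which does not change unparametrized trajectories on a surface. Hence it suffices to study the model functions $\hat G_\pm$ and then invoke structural stability.

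Next we analyse each model in the plane. The critical points and their Morse indices are already given by the Hessian computations leading to \Cref{Theorem - FPbd,Theorem - FPpd,Theorem - FPpk,Theorem - FPem}. The models for $k\ge3$ are $\Z_k$-symmetric, and those for $k=2$ are $\Z_2\times\Z_2$-symmetric (by $x\mapsto -x$ and $y\mapsto -y$ at leading order). We therefore locate the index-difference-one flowlines through invariant lines and level-set arguments.

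The saddle separatrices lying on symmetry axes are invariant rays. In the birth-death model the axis $X=0$ is invariant because $\partial_X\hat G=\alpha X$. For $k\ge3$ the rays $\vp=j\pi/k$ are invariant because $\partial_\vp\hat G\propto\sin k\vp$ vanishes there. Along these rays the flow is one-dimensional and monotone between consecutive critical points, which gives the depicted connections. To see that no other connections exist, we use the fact that each saddle has exactly four separatrices. Two of them lie on the invariant ray, and we must determine where the remaining two go. We bound them using sublevel or superlevel sets together with the sign of $\partial_\rho \hat G$ on circles, which shows that these separatrices either escape the neighbourhood or reach the relevant extremum.

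Index-difference-one connections between two saddles are excluded in two ways. Either the saddles lie at different critical values, and the flowline would have to leave an invariant sector, or they lie at equal values, in which case no connection can occur since $\hat G$ is strictly monotone along nonconstant flowlines. The latter case arises in the phantom kiss, where all $p^j_\pm$ share one value. Transversality for index difference one in two dimensions amounts to the absence of saddle-to-saddle connections, so this also establishes that the flow is Morse--Smale for the model.

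Finally, the models $\hat G_\pm$ are Morse--Smale on a compact region whose boundary the gradient flow crosses transversally. Such a region can be taken to be a large sublevel disk, or an isolating block around the critical set. Morse--Smale gradient flows are structurally stable under $C^1$-small perturbations of the vector field, so the connection graph persists for $G_\eps$ with $|\eps|$ small. I expect the main obstacle to be the degenerate $k=4$ case and, more generally, the control of the non-axial separatrices. For $k=4$ the dominance condition $|\alpha|\gtrless|\beta|$ must be used to show that the off-axis separatrices of the saddles go to the origin or escape, rather than meeting another saddle. If the level-set argument alone proves insufficient, I would first try an explicit Lyapunov-type function in the angle variable, namely $\vp$ itself, which is monotone inside each sector between adjacent invariant rays.
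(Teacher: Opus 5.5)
Your argument rests on replacing $G_\eps$ by an $\eps$-independent Morse--Smale model $\hat G_\pm$ and then invoking structural stability. That step only works for $k=3,4$.

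\emph{Emission, $k\ge5$.} The rescaling $\rho=|\eps|^{1/2}R$ gives $|\eps|^{-2}G_\eps=\frac{\beta}{4}R^4\pm\frac{a}{2}R^2+\frac{\alpha}{k}|\eps|^{k/2-2}R^k\cos k\vp+\dots$. The term that breaks the rotational symmetry is exactly the one that vanishes in the limit. So $\hat G_\pm$ has a whole circle of critical points and is not Morse. Moreover $\partial_\vp\hat G_\pm\equiv0$, so your invariant-ray statement says nothing, and there is no Morse--Smale model to perturb from.

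\emph{Birth-death, period doubling, materialization ($k=1,2$).} No isotropic scaling balances the terms. With $x,y\sim|\eps|^{1/2}$ the shear term dominates and $|\eps|^{-1}G_\eps\to\frac{\alpha}{2}X^2$, which is again degenerate. The balancing scalings are $(x,y)\sim(|\eps|^{3/4},|\eps|^{1/2})$ for birth-death, which is what actually produces your example $\frac{\alpha}{2}X^2+\frac{\beta}{3}Y^3\pm bY$, and $(|\eps|,|\eps|^{1/2})$ for period doubling and materialization. These scalings are anisotropic, so the metric is \emph{not} conformally rescaled. The rescaled field is $(\partial_X\hat G,\,|\eps|^{1/2}\partial_Y\hat G)$, respectively $(\partial_X\hat G,\,|\eps|\,\partial_Y\hat G)$. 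This is the gradient of $\hat G$ for a metric that degenerates as $\eps\to0$, i.e.\ a slow--fast system rather than a $C^1$-small perturbation of a fixed gradient flow, so structural stability gives nothing.

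Conversely, $k=4$, which you single out as the main obstacle, is one of the cases where your scheme does work. There $\rho^4$ and $\rho^4\cos4\vp$ scale together, and the rays $\vp=m\pi/4$ remain invariant.

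The paper never passes to an $\eps\to0$ limit. It keeps the $\eps$-dependent leading terms and analyzes their flow at each fixed small $\eps$.
\begin{itemize}
\item For $k=1,2$ the leading gradient system decouples: $\dot x=\alpha x+\eps a$, $\dot y=\beta y^2+\eps b$ for birth-death, and similarly for period doubling. The relevant separatrix of the saddle therefore lies on an invariant line. It visibly converges to the extremum on one side and leaves $U$ on the other.
\item For $k\ge3$ the rays $\vp=m\pi/k$ are invariant for every $\eps$, and trajectories are confined to sectors. This works however small the angular term is compared with the radial one.
\end{itemize}

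Your middle paragraph already contains this invariant-ray argument. The repair is to run it on the leading part of $G_\eps$ at fixed $\eps$, not on a limiting $\hat G_\pm$. When absorbing higher-order terms, compare each with the leading term of the same angular type. For $k\ge5$, rotationally symmetric corrections do not affect $\partial_\vp G_\eps$ at all. The phase-dependent corrections are $O(\rho^{k+2}+|\eps|\rho^k)$, hence small relative to $\frac{\alpha}{k}\rho^k\cos k\vp$.

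One part of your proposal is worth keeping and is cleaner than the paper's version. The truncated normal form is exactly $\Z_k$-invariant, and in each case the saddles (if any) form a single $\Z_k$-orbit. They therefore share one critical value, so saddle--saddle connections are excluded for $G_\eps$ itself. This gives the Morse--Smale property with no perturbation argument.
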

\begin{proof}
Throughout, let $\eta$ denote a positive gradient trajectory $\dot\eta=\nabla G_\eps(\eta)$, along which $G_\eps$ is strictly increasing.
    Since $G_\eps$ is a strict Lyapunov function, any trajectory that remains in the isolating neighborhood $U$ for all forward (resp. backward) time has its $\omega$- (resp. $\alpha$-)limit equal to a critical point: the limit set is then nonempty, compact, connected, flow-invariant, and contained in a level set of $G_\eps$, hence a single critical point.
    A trajectory that is not trapped instead exits $U$ in finite time, and we say its limit within $U$ does not exist.
    Near each bifurcation the nonzero critical points occur at radius $\rho_*=O(\sqrt{\eps})$.
    After rescaling, the higher-order terms of $G_\eps$ are $O(\sqrt\eps)$ relative to the leading terms, so the connecting trajectories are transverse intersections of (un)stable manifolds and persist under this $C^1$-small correction.
    It therefore suffices to analyze the leading terms of $G_\eps$.

    \emph{Birth-death and period doubling} (\Cref{Figure - birth-death,Figure - Period doubling,Figure - Materialization}).
    By \Cref{Subsection - Birth-death}, the leading terms give
    \[
    \nabla G_\eps = \begin{pmatrix} \alpha x+\eps a\\ \beta y^2+\eps b \end{pmatrix},\qquad
    \Hess G_\eps(p_{H,E}) = \begin{pmatrix}  \alpha & 0\\ 0 & \mp 2\beta\sqrt{-\eps b/\beta} \end{pmatrix}.
    \]
    We treat the sink case, where $p_E$ is a maximum, so $\Ind(p_H)=1$ and $\Ind(p_E)=2$.
    The unstable manifold $W^u(p_H)$ is one-dimensional, tangent to the $y$-axis, and has two branches.
    By the leading sign of $\pp_y G_\eps=\beta y^2+\eps b$, the branch directed toward $p_E$ converges to it, while the other leaves the isolating neighborhood $U$ without meeting a critical point.
    Hence exactly one branch of $W^u(p_H)$ converges to $p_E$, giving a unique connecting trajectory.
    The source case $\Ind(p_E)=0$ is symmetric, with the stable manifold of $p_H$ in place of the unstable one.
    The period doubling and materialization cases are treated analogously.

    \emph{Phantom kiss and emission} (\Cref{Figure - Phantom kiss,Figure - Emission}).
    In polar coordinates the leading terms satisfy
    \[
    \pp_\vp G_\eps = \alpha\rho^{k-1}\sin k\vp,
    \]
    which vanishes on the radial rays $\vp = m\pi/k$.
    The gradient field is therefore tangent to these rays, so each ray is a one-dimensional invariant submanifold, and no trajectory can cross it; every trajectory is confined to a single angular sector $\{m\pi/k\le\vp\le(m+1)\pi/k\}$, which contains exactly one nonzero critical point.

    For the phantom kiss, the Hessian at the saddles $p^j_\mp$ is
    \[
    \Hess G_\eps(p^j_\mp) = \begin{pmatrix} -\eps a & 0\\ 0 & \mp 2\alpha\rho^3 \end{pmatrix},
    \]
    so $W^s(p^j_-)$ and $W^u(p^j_+)$ are one-dimensional and radial.
    Along each invariant ray containing a saddle, the radial equation is one-dimensional, and its sign is constant between the origin and the saddle. Hence there is a unique connecting trajectory between these two critical points up to translation. This verifies \Cref{Figure - Phantom kiss}.
    Note that there is no connection between two saddles because of the invariance of the ray; the relevant one-dimensional branch of the stable or unstable manifold is contained in an invariant ray.

    For the emission, the connections between $p_H^j$ and the origin are radial, exactly as above.
    In the attracting case the relevant manifolds $W^s(p_H^j)$ are instead tangent to $\pp_\vp$.
    By the angular confinement above, each of the two angular branches stays in its sector and converges to the unique elliptic point $p_E^j$ therein.
    Thus the two branches of $W^s(p_H^j)$ converge to the two adjacent elliptic points, verifying \Cref{Figure - Emission} and completing the proof.

    Finally, the positive gradient flow is Morse--Smale. Indeed, on a surface, every intersection involving an extremum is automatically transverse, since either the stable or the unstable manifold involved is two-dimensional. Thus the only possible failure of the Morse--Smale condition would be a heteroclinic trajectory between two distinct index-one saddles. The invariant-ray and sector arguments above exclude all such saddle connections.
\end{proof}

Here's a simple lemma which will be used in the following sections.
\begin{lmm}\label{Lemma - C2small}
    The unwrapped generating Hamiltonians $G_\eps$ for the birth-death, period doubling, materialization, phantom kiss and emission are $C^2$-small if we take a Poincar\'{e} section and the bifurcation parameter small enough. Precisely, for every $\delta>0$, after a linear symplectic change of coordinates, there exist a sufficiently small Poincar\'{e} section $\Sigma$ and $\eps_0>0$ such that $\|G_\eps\|_{C^2}<\delta$
for every $|\eps|<\eps_0$.
\end{lmm}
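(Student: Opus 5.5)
The plan is to exploit the fact that every $G_\eps$ under consideration has no constant or linear part at the origin when $\eps=0$, and that the $\eps$-dependent terms are of order at least one in $\rho$ with an extra factor of $\eps$. The $C^2$ norm on a small disk can then be made small by shrinking the disk and $\eps_0$. The only term that is not automatically small in $C^2$ is the quadratic shear part $\frac{\alpha}{2}x^2$ appearing for $k=1,2$. This is handled by the linear symplectic change of coordinates allowed in the statement: the anisotropic scaling $(x,y)\mapsto(\lambda x,\lambda^{-1}y)$ replaces $\frac{\alpha}{2}x^2$ by $\frac{\alpha\lambda^2}{2}x^2$, which is $C^2$-small once $\lambda$ is small.

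I will proceed case by case, using the finite-degree truncations from \Cref{Section - Classification of bifurcations}. Fix $\delta>0$.

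\emph{Step 1, the quadratic part.} For $k=1,2$, apply the scaling above so that the constant Hessian contribution $\alpha\lambda^2$ is less than $\delta/3$. For $k\ge3$ there is nothing to do, since $G_0$ has no quadratic term by \Cref{Corollary - Unwrapped Generating Hamiltonian}. After the scaling, the remaining coefficients, such as $\beta\lambda^{-3}$ and $\eps b\lambda^{-1}$, are fixed finite numbers once $\lambda$ is fixed.

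\emph{Step 2, higher-order terms.} On the disk $\{\rho\le r\}$, every homogeneous term of degree $m\ge3$ has $C^0$, $C^1$ and $C^2$ norms bounded by $C r^{m-2}\le Cr$ for $r\le1$. Choosing $r$ small makes their total contribution less than $\delta/3$. Because we work with a finite truncation, only finitely many constants $C$ are involved.

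\emph{Step 3, $\eps$-terms.} The perturbation terms $\eps(ax+by)$, $\eps\frac a2 y^2$ and $\eps\frac a2\rho^2$ have $C^2$ norm bounded by $|\eps|\,C'$ on the unit disk, where $C'$ depends on the already-fixed $\lambda$. Taking $\eps_0<\delta/(3C')$ handles them. The constant term can be normalized to $0$, since only $dG$ matters.

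The main point needing care is the order of choices, because the scaling enlarges the higher-order coefficients. I therefore fix $\lambda$ first, then $r$ (depending on $\lambda$), and then $\eps_0$ (depending on both). I also need to check that the Poincar\'e section $\Sigma$ corresponds to the disk $\{\rho\le r\}$ in the scaled coordinates. This holds because a smaller section is obtained simply by restricting, so the domain of the return map $\Psi_\eps^k$ stays inside the region where the normal-form truncation is valid.
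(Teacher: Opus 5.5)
Your proposal is correct and takes essentially the same route as the paper. In both, the anisotropic symplectic rescaling $(x,y)\mapsto(cx,y/c)$ shrinks the shear term $\frac{\alpha}{2}x^2$ for $k=1,2$, the quadratic part vanishes for $k\geq 3$, and the choices are made in the same order (scaling, then the size of $\Sigma$, then $\eps_0$) with a $\delta/3$ budget for each group of terms, including normalizing away the $\eps$-dependent constant.
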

\begin{proof}
We compute the $C^2$-norm with respect to the Euclidean metric in the chosen Darboux coordinates. After adding an $\eps$-dependent constant, we may assume that $G_\eps(0)=0$, and in all cases $dG_\eps(0)=O(\eps)$.

For the phantom kiss and emission, the quadratic part of $G_0$ vanishes. Consequently,
\[
\sup_{z\in \Sigma}|\Hess G_\eps(z)|\to  0
\]
as both $|\eps|$ and the diameter of $\Sigma$ tend to zero. Taylor's theorem then gives the conclusion.

For the birth-death, the only non-small term is the shear term $\alpha x^2/2$. For $c>0$, consider the linear symplectic transformation $\Phi_c(x,y)=(cx,y/c)$. Then,
\[
G_\eps\circ\Phi_c = \frac{c^2\alpha}{2}x^2 +\frac{\beta}{3c^3}y^3 +\eps\left(acx+\frac bc y\right) +\text{(higher-order terms)}.
\]
Given $\delta>0$, we first choose $c$ so that $c^2|\alpha|<\delta/3$. With this $c$ fixed, we shrink $\Sigma$ so that the Hessians of the cubic and higher-order terms are bounded by $\delta/3$. Finally, we decrease $\eps_0$ so that the parameter-dependent terms, as well as $dG_\eps(0)$, are bounded by $\delta/3$ for $|\eps|<\eps_0$. This proves the claim.

The period-doubling and materialization are treated by the same symplectic rescaling.
\end{proof}

\subsection{Relative Conley--Zehnder Indices}\label{Subsection - Relative CZ index}

In the preceding subsections, we determined the Morse indices of the periodic points as critical points of the unwrapped generating Hamiltonian $G_\eps$, which are summarized as the Index column in \Cref{Table - Bifurcation}.
To utilize these results in Floer theory, we must translate these Morse indices into the Conley--Zehnder indices of the corresponding periodic orbits.

Recall from \Cref{Subsection - CZ index} that the absolute Conley--Zehnder index is framing-dependent, but the relative index between two orbits sharing a common tubular neighborhood is canonical.
In our 4-dimensional setting, the Poincar\'{e} section is 2-dimensional.
Let $p$ be a critical point of $G_\eps$ and $\g_p$ be the corresponding periodic orbit. With respect to the canonical framing of the local tubular neighborhood, the Conley--Zehnder index $\mu_{CZ}(\g_p)$ is related to the Morse index $\Ind(p)$ of $G_\eps$ by the formula
    \[
    \mu_{CZ}(\g_p) = 1 - \Ind(p),
    \]
where $1$ comes from half the dimension of the section. 

Consequently, for any two bifurcating orbits $\g_{p}$ and $\g_{q}$ generated by the same bifurcation parameter $\eps$, their relative Conley--Zehnder index is given by
    \[
    \mu_{CZ}(\g_{p}, \g_{q}) = \mu_{CZ}(\g_{p}) - \mu_{CZ}(\g_{q}) = \Ind(q) - \Ind(p).
    \]

This simple algebraic relation is the key reason why the classification of generic bifurcations directly dictates the structure of the local Floer chain complex.
By reading the Morse indices off \Cref{Table - Bifurcation}, we can immediately deduce the relative gradings of the generators in our local Floer chain complex, which we will formally construct in the subsequent sections.

\section{Classification of the Bifurcations of Involutive Systems}\label{Section - Symmetric}
While a complete classification of generic bifurcations in unconstrained Hamiltonian systems was established in \Cref{Section - Classification of bifurcations}, many physically significant systems inherit additional geometric symmetries that fundamentally alter the admissible bifurcation patterns. For instance, classic models in celestial mechanics, such as the circular restricted three-body problem and Hill's lunar problem, naturally possess an underlying $\Z_2$-symmetry. The direct and retrograde periodic orbits, which frequently serve as the dynamical backbone of these systems, emerge precisely as symmetric orbits under this involution.

This geometric restriction enforces structural constraints on the corresponding unwrapped generating Hamiltonian, forcing specific coefficients to vanish. Consequently, the system cannot be treated as fully generic in the asymmetric sense; instead, it gives rise to new generic transition behaviors within the symmetric regime, most notably the pitchfork bifurcation, which would be non-generic in the absence of symmetry. While bifurcations of symmetric systems have been classically classified in foundational works such as \cite{Golubitsky_Stewart_Marsden_87} or \cite{Golubitsky_Stewart_Schaeffer_88}, we provide a refined classification tailored to our homological framework. Accordingly, in this section, we systematically classify the generic bifurcations of $\Z_2$-symmetric Hamiltonian systems by explicitly tracking the Morse indices of the unwrapped generating Hamiltonians, which allows us to compute the associated local Floer chain complexes in \Cref{Section - LFCC}.

\subsection{Involutive Systems}\label{Subsection - UGH of symmetric systems}
Let $(W,\o)$ be a 4-dimensional symplectic manifold and $H$ be a Hamiltonian. 
We say a diffeomorphism $\imath:W\to W$ is a \textbf{symplectic involution} if
\[
\imath^*\o=\o,\quad \imath^2=\Id_W,\quad \imath\neq \Id_W,
\]
and an \textbf{anti-symplectic involution} if
\[
\imath^*\o=-\o,\quad \imath^2=\Id_W.
\]
If $\imath^*H=H$, we say the Hamiltonian system is \textbf{involutive}.
We can understand this as a Hamiltonian system with $\Z_2$-symmetry, generated by $\imath$.

Let $\g$ be a periodic orbit of an involutive system $(W,\o,H,\imath)$.
There are two possibilities:
\begin{itemize}
    \item $\imath^*\g=\g$, i.e., $\g$ is $\Z_2$-symmetric.
    \item $\imath^*\g\neq \g$. In this case, $(\g,\imath^*\g)$ is a symmetric pair.
\end{itemize}
We are interested in the bifurcation of a symmetric orbit (the first case). 

Let $p\in \g$ and assume that $\imath(p)=p$.
Since $\imath^2=\Id_W$, the eigenvalues of $d_p\imath$ are $\pm1$ with the same multiplicity, and we have an eigenspace decomposition of the tangent space
\[
T_p W = E_1\oplus E_{-1}
\]
where $E_{\pm1}$ are 2-dimensional eigenspaces corresponding to the eigenvalues $\pm1$.
We consider the Poincar\'{e} section $\Sigma$ of $\g$ at $p$, which can be locally identified with a subspace $T_p\Sigma < T_p W$.
\begin{enumerate}
    \item (Symplectic involution) If $\imath$ is a symplectic involution, $\Sigma$ can be identified with either $E_1$ or $E_{-1}$. However, since $\imath_*X_H=X_H$, we have $X_H\in E_1$, and so is its symplectic conjugate. It follows that $\Sigma$ must be identified with $E_{-1}$, which imposes an extra $\Z_2$-symmetry on the return map $\Psi$ corresponding to a rotation by $\pi$.
\item (Anti-symplectic involution) If $\imath$ is an anti-symplectic involution,
then both $E_{\pm 1}$ are Lagrangian subspaces. Therefore, $T_p\Sigma$ again
admits an eigenspace decomposition $S_1\oplus S_{-1}$. In suitable symplectic
coordinates on $\Sigma$, the restriction $\rho:=\imath|_\Sigma$ is a reflection
across one of these eigen-directions. Since $\imath^*H=H$ and $\imath$ is
anti-symplectic, it reverses the Hamiltonian flow, and hence the Poincare
return map satisfies
\[
        \rho\circ \Psi\circ \rho=\Psi^{-1}.
\]
Thus the return map has a $\mathbb Z_2$-reversing symmetry, instead of an
ordinary $\mathbb Z_2$-equivariance.
\end{enumerate}

Now we derive the normal form of the unwrapped generating Hamiltonians for involutive systems.
Let an involution $\imath:W\to W$ and a 1-parameter family of Hamiltonians $H_\eps$ satisfying $\imath^*H_\eps=H_\eps$ be given.
Assume that $\g_0$ is an $\imath$-symmetric periodic orbit of $H_0$ with Floquet multiplier $\ld=\exp(2\pi i l/k)$ for some $(l,k)=1$, so that its $k$-th cover is degenerate, leading to a bifurcation.

We first consider the case of a symplectic involution.
Since the return map must be symmetric with respect to a rotation by $\pi$, so must the unwrapped generating Hamiltonian.
For the case $\ld=\exp(2\pi i l/k)$, the symmetry of $G_\eps$ is given by $\Z_k=\il R_{2\pi i l/k}\ir$, where $R_\alpha$ denotes a rotation by $\alpha$ around the origin.
We have that
\[
\il R_{2\pi il/k},R_\pi\ir = \begin{cases}
    \il R_{2\pi il/k}\ir\simeq \Z_k &\text{ if }$k$\text{ is even},\\
    \il R_{\pi i l /k}\ir \simeq \Z_{2k} &\text{ if }$k$\text{ is odd}.
\end{cases}
\]
Thus, we obtain the following proposition.
\begin{prop}\label{Proposition - Symplectic involution GH}
    Let $(W,\o,H_\eps,\imath)$ be a 1-parameter family of symplectic involutive systems, and let $\g_0$ be an $\imath$-symmetric orbit with a point $\g_0(t_0)=p$ such that $\imath(p)=p$.
    Moreover, assume that the Floquet multiplier of $\g_0$ is given by $\ld=\exp(2\pi il/k)$.
    Then, the unwrapped generating Hamiltonian $G_\eps$, which generates the $k$-th iterate of the return map $\Psi^k$, is given by
    \[
    G_\eps = \begin{cases}
        {\alpha}x^2 + \sum_{n\geq 3}P_n(x,y)\in \R_{\Z_2}[x,y]=\R[x^2,xy,y^2]&\text{ if }k=1\text{ or }2,\\
        \sum_{n\geq 3}P_n(\rho,\vp)\in \R_{\Z_{2k}}[\rho,\vp]=\R[\rho^2,\rho^{2k}\cos(2k\vp),\rho^{2k}\sin(2k\vp)]&\text{ if }k\geq 3\text{ is odd},\\
        \sum_{n\geq 3}P_n(\rho,\vp)\in \R_{\Z_k}[\rho,\vp]=\R[\rho^2,\rho^k\cos(k\vp),\rho^k\sin(k\vp)]&\text{ if }k\geq4\text{ is even}.
    \end{cases}
    \]
    Here, $\R_G[x,y]$ denotes the ring of $G$-symmetric polynomials.
\end{prop}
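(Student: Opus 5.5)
The plan is to combine the invariance of $G_\eps$ under the deck group from \Cref{Corollary - Unwrapped Generating Hamiltonian} with the extra rotation by $\pi$ coming from the symplectic involution, and then compute the invariant ring of the group they generate. The argument has four steps.

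\emph{Step 1: the return map commutes with $R_\pi$.} Since $\imath$ is symplectic, fixes $p$, and preserves $H_\eps$, it maps the flow to itself. As shown above, $T_p\Sigma$ is identified with $E_{-1}$, so we may take $\Sigma$ to be $\imath$-invariant with $\imath|_\Sigma=R_\pi$. Then $R_\pi\circ\Psi_\eps=\Psi_\eps\circ R_\pi$, and hence $\Psi_\eps^k$ also commutes with $R_\pi$.

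\emph{Step 2: make the normal form equivariant.} The coordinate changes used to reach \Cref{Theorem - BNF} and the rotating frame can be chosen $R_\pi$-equivariantly. One averages each homological correction over $\{\Id,R_\pi\}$; this is possible because $R_\pi=-\Id$ is linear, symplectic, and commutes with the semisimple part $A$ and with every rotation. The generating Hamiltonian of an equivariant map is then invariant up to a constant, and since $R_\pi$ fixes the origin the constant vanishes. I expect this to be the main obstacle: one must check that each normalizing step (Floquet--Lyapunov, the Birkhoff homological equation, and the lift to $\tilde W_{k,l}$) preserves $R_\pi$-symmetry, and that uniqueness of the generating Hamiltonian up to constants holds at the level of formal series. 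The rotating frame $w=e^{-At}z$ commutes with $R_\pi$, so the unwrapped $G_\eps$ is invariant under both $\Z_k=\langle R_{2\pi l/k}\rangle$ and $R_\pi$.

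\emph{Step 3: identify the symmetry group.} The group generated by $R_{2\pi l/k}$ and $R_\pi$ is cyclic of order $\operatorname{lcm}(k,2)$. This gives $\Z_k$ when $k$ is even, since $R_\pi=R_{2\pi l/k}^{\,m}$ for some $m$ because $l$ is invertible mod $k$. It gives $\Z_{2k}$ when $k$ is odd, as stated before the proposition.

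\emph{Step 4: compute the invariant polynomials.} Apply the monomial computation preceding \Cref{Corollary - ROU case}: $z^a\bar z^b$ is $\Z_m$-invariant exactly when $m\mid a-b$. This gives the generators $\rho^2,\rho^m\cos m\vp,\rho^m\sin m\vp$ with $m=2k$ for odd $k$ and $m=k$ for even $k\ge4$.

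For $k=1,2$ the group is $\Z_2$, so $a-b$ must be even and the invariant ring is $\R[x^2,xy,y^2]$. The quadratic part is the shear generator from \Cref{Corollary - Unwrapped Generating Hamiltonian}, which after a linear symplectic change is $\alpha x^2$. Invariance also kills all odd-degree terms, in particular the cubic ones, so the higher-order part is a sum of $P_n$ with $n\ge3$ (only even $n$ actually occur). For $k\ge3$ the quadratic part $G_0$ vanishes by \Cref{Corollary - Unwrapped Generating Hamiltonian}, so the series starts in degree $\ge3$ (in fact $\ge4$, since $\rho^2$ is the only quadratic invariant).
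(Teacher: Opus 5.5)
Your proposal is correct and follows the same route as the paper. The $R_\pi$-equivariance of the return map makes $G_\eps$ invariant under $\langle R_{2\pi l/k},R_\pi\rangle\cong\Z_{\mathrm{lcm}(k,2)}$, and the invariant ring is then read off from the monomial criterion; your Step 2 supplies the equivariant-normalization justification that the paper compresses into the single assertion that ``so must the unwrapped generating Hamiltonian.'' As in the paper's own statement, your degree bounds ($n\ge 3$, and the absence of further quadratic terms) should be read at $\eps=0$, since for $\eps\neq 0$ an invariant quadratic term such as $\eps\frac{a}{2}\rho^2$ or $\eps\frac{a}{2}y^2$ is present.
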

In particular, we directly see that the generic bifurcation behaviors of even covers are the same as in the case without any symmetry. This is because the $\Z_2$-symmetry of the system is already implicitly imposed by the time symmetry.

Now we consider systems with an anti-symplectic involution. As mentioned above, in the anti-symplectic case, the relevant normal forms are reversible rather than equivariant. The generating Hamiltonian of a reversible symplectic map can be chosen to be invariant under the reversing reflection. Indeed, if $G_\eps\circ\rho=G_\eps$ and $\rho$ is anti-symplectic, then the Hamiltonian flow $Fl_t$ of $G_\eps$ satisfies
\[
        \rho\circ Fl_t\circ \rho
        =
        Fl_{-t}.
\]
For $k=1$ or $2$, we distinguish between the cases where $\rho(x)=x$ or
$\rho(x)=-x$.
\begin{enumerate}
    \item ($\rho(x)=x$) In this case, $\rho(y)=-y$, so $G_\eps$ can only contain
    even-degree terms in $y$.
    \item ($\rho(x)=-x$) Similarly, $\rho(y)=y$, so $G_\eps$ can only contain
    even-degree terms in $x$.
\end{enumerate}
For $k\geq 3$, it is more convenient to use complex coordinates.
The reflection can be represented by $z\mapsto \bar{z}$ after an appropriate rotation, and $G_\eps$ must be invariant under this action.
The $\Z_k$-symmetric polynomials in complex coordinates are generated by
$|z|^2$, $\mathrm{Re}(z^k)$, and $\mathrm{Im}(z^k)$,
and since $\mathrm{Im}(\bar{z}^k)=-\mathrm{Im}(z^k)$, only $|z|^2=\rho^2$ and $\mathrm{Re}(z^k)=\rho^k\cos(k\vp)$ survive as generators. In particular, these are $D_k$-symmetric polynomials. To summarize, we have the following proposition.
\begin{prop}\label{Proposition - Anti-symplectic involution GH}
    Let $(W,\o,H_\eps,\imath)$ be a 1-parameter family of anti-symplectic involutive systems, and let $\g_0$ be an $\imath$-symmetric orbit with a point $\g_0(t_0)=p$ such that $\imath(p)=p$.
    Moreover, assume that the Floquet multiplier of $\g_0$ is given by $\ld=\exp(2\pi il/k)$.
    Then, the unwrapped generating Hamiltonian $G_\eps$, which generates the $k$-th iterate of the return map $\Psi^k$, is given by
    \[
    G_\eps = \begin{cases}
        {\alpha}x^2 + \sum_{n\geq 3}P_n(x,y)\in \R[x^2,y]\text{ or }\R[x,y^2]&\text{ if }k=1,\\
        {\alpha}x^2 + \sum_{n\geq 3}P_n(x,y)\in \R[x^2,y^2]&\text{ if }k=2,\\
        \sum_{n\geq 3}P_n(\rho,\vp)\in \R[\rho^2,\rho^{k}\cos(k\vp)]&\text{ if }k\geq 3.
    \end{cases}
    \]
\end{prop}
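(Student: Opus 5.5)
We prove \Cref{Proposition - Anti-symplectic involution GH} by following the route used for \Cref{Proposition - Symplectic involution GH}. We start from the unwrapped generating Hamiltonian of \Cref{Corollary - Unwrapped Generating Hamiltonian}, which already gives the $\Z_k$-symmetric form, and then impose the extra constraint coming from the reversing symmetry $\rho\circ\Psi\circ\rho=\Psi^{-1}$.

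\emph{Step 1: transfer the reversing symmetry to $\Psi^k$.} Since $\rho^2=\Id$, iterating gives $\rho\circ\Psi^k\circ\rho=\Psi^{-k}$. We must also check that the rotating-frame change $w=e^{-At}z$ of \Cref{Subsection - Unwrapped generating Hamiltonian} is compatible with $\rho$. Conjugating by a reflection sends the rotation $e^{At}$ to $e^{-At}$. Combined with time reversal, which sends $t\mapsto -t$, this gives back $e^{At}$. Hence $\rho$ commutes with the frame change up to the deck action, and it descends to an anti-symplectic reflection of the unwrapped picture, still reversing $\Psi^k$.

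\emph{Step 2: make the generating Hamiltonian $\rho$-invariant.} We construct the Birkhoff normal form (\Cref{Theorem - BNF}) order by order. At each order the normalizing generating function can be replaced by its $\rho$-average, $\tfrac12(F+\rho^*F)$ with the appropriate sign for an anti-symplectic map. Then the normalizing transformations commute with $\rho$, and the resulting autonomous $G_\eps$ generates a map $\Psi^k$ whose reversal $\Psi^{-k}$ is generated by $-G_\eps$. The identity $\rho\circ Fl^{G}_t\circ\rho=Fl^{G\circ\rho}_{-t}$, valid for anti-symplectic $\rho$, then yields $Fl^{G\circ\rho}_1=Fl^{G}_1$. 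At the level of formal power series, uniqueness of the normalized generator (with the quadratic part fixed) gives $G_\eps\circ\rho=G_\eps$ up to a constant. We expect this uniqueness to be the main obstacle. In the degenerate cases $k\le2$ the quadratic part is a shear, so the kernel of $\mathrm{ad}_{G_0}$ is large, and the normal form is not unique. Our first attempt would therefore be to perform each normalizing step with $\rho$-equivariant homological solutions, so that invariance is built in rather than deduced afterwards.

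\emph{Step 3: read off the invariant ring.} For $k=1$, a reflection of $\Sigma$ is $(x,y)\mapsto(x,-y)$ or $(-x,y)$, so invariance forces $G\in\R[x,y^2]$ or $\R[x^2,y]$. The shear term $\alpha x^2$ lies in both rings, and we relabel the two cases to match the statement. For $k=2$, we intersect this with the $\Z_2$-rotation invariance $\R[x^2,xy,y^2]$, which kills $xy$ and leaves $\R[x^2,y^2]$. For $k\ge3$, we rotate so that $\rho(z)=\bar z$ and intersect with $\R[|z|^2,\mathrm{Re}\,z^k,\mathrm{Im}\,z^k]$. Since $\mathrm{Im}\,z^k$ is odd under conjugation, while $|z|^2$ and $\mathrm{Re}\,z^k$ are even, a polynomial $P(|z|^2,\mathrm{Re}\,z^k,\mathrm{Im}\,z^k)$ is invariant exactly when it is even in the third variable. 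The relation $(\mathrm{Im}\,z^k)^2=|z|^{2k}-(\mathrm{Re}\,z^k)^2$ then reduces such a polynomial to $\R[\rho^2,\rho^k\cos k\vp]$. Finally, the quadratic part vanishes for $k\ge3$, as in \Cref{Corollary - Unwrapped Generating Hamiltonian}, which gives the stated form.
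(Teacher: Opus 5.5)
Your proposal follows the paper's route: make the generator of $\Psi^k$ invariant under the reversing reflection, then intersect with the $\Z_k$-invariant ring. Your Step 3 matches the paper's computation and slightly tightens it via $(\mathrm{Im}\,z^k)^2=|z|^{2k}-(\mathrm{Re}\,z^k)^2$, and your Step 2 supplies an argument for the invariance that the paper only asserts (the paper checks just the converse, that a $\rho$-invariant $G_\eps$ has a reversible flow). The uniqueness you flag is not a real obstacle: a formal map with unipotent linear part has a unique formal logarithm among formal vector fields with nilpotent linear part, and $X_{G\circ\rho}=-\rho_*X_G$ again has nilpotent linear part, so $Fl^{G\circ\rho}_1=Fl^{G}_1$ forces $G\circ\rho=G$ for every $k$, including the shear cases (the constant vanishes since $\rho(0)=0$); the large kernel of $\mathrm{ad}_{G_0}$ reflects non-uniqueness of the normalizing coordinates, not of the generator of a fixed map.
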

Note that the bifurcation behavior for $k\geq 2$ essentially does not change in this case, considering that we can take $\rho^k\cos(k\vp)$ as the leading term of $G_\eps$ after an appropriate rotation.

\begin{remark}\rm
The actual effect of the anti-symplectic involution on the multiple cover bifurcation is that the positions of the critical points of $G_\eps$ on the Poincar\'{e} section become highly rigid; they are fixed on the rays $\vp=n\pi/k$, not only for the leading term, but for every term in the reflection-invariant normal form. This property significantly reduces the effort required to locate periodic orbits in practice. However, a detailed analysis of this phenomenon is beyond the scope of this paper.
\end{remark}

\subsection{Pitchfork}\label{Subsection - Pitchfork}
As mentioned in \Cref{Subsection - UGH of symmetric systems}, we only need to examine the odd covers for a symplectic involution, and the simple orbit for an anti-symplectic involution to observe a new phenomenon.
We begin with the single cover case.
Let $(W,\o,H_\eps,\imath)$ be a 1-parameter family of symplectic involutive systems, and let $\g_0$ be a periodic orbit of $H_0$ with Floquet multiplier $1$.
From \Cref{Proposition - Symplectic involution GH}, we have the unwrapped generating Hamiltonian $G_\eps$ with leading terms
\[
G_\eps = \frac{\alpha}{2}x^2 + \frac{\beta}{4}y^4 + \eps \frac{a}{2}y^2.
\]
Assuming $0$ is a transitional fixed point of $G_\eps$, this case is exactly the same as the period doubling or materialization cases described in Figures \ref{Figure - Period doubling} and \ref{Figure - Materialization}, including the changes in stability types and Morse indices.
The difference appears when we relate the critical points back to the periodic orbits.
Now, $G_\eps$ generates $\Psi_\eps$ instead of its double cover, so one critical point corresponds to one periodic orbit with an approximate period equal to that of $\g_0$.
Therefore, we have the following generic phenomenon.

\begin{thm}[Pitchfork, Symplectic]
    \label{Theorem - Symplectic pitchfork}
    Let $(W,\o,H_\eps,\imath)$ be a 1-parameter family of symplectic involutive Hamiltonian systems, and let $\g$ be a periodic orbit of $H_0$ with Floquet multiplier $\ld=1$.
    Assume that $\g$ intersects the fixed locus of $\imath$, and the intersection is a transitional fixed point of the return map $\Psi_\eps$.
    Then, there exists an orbit cylinder $\g_\eps$ centered at $\g$, and depending on an appropriate choice of the bifurcation parameter, exactly one of the following happens:
    \begin{enumerate}
        \item $\g_\eps$ is elliptic if $\eps<0$ and hyperbolic if $\eps>0$.
              There exist two families of elliptic periodic orbits $\g_{1,2}(\eps)$ of $H_\eps$, both parametrized by $\eps>0$, which converge to $\g$ as $\eps\to0$.
        \item $\g_\eps$ is hyperbolic if $\eps<0$ and elliptic if $\eps>0$.
              There exist two families of hyperbolic periodic orbits $\g_{1,2}(\eps)$ of $H_\eps$, both parametrized by $\eps>0$, which converge to $\g$ as $\eps\to0$.
    \end{enumerate}
\end{thm}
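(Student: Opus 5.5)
The plan is to reduce the statement to the analysis already carried out for transitional fixed points in \Cref{Subsection - Period doubling}, and then to reinterpret the resulting critical points as simple periodic orbits rather than as a $2$-periodic orbit of the return map.

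\emph{Step 1: symmetry of the section.} Since $\g$ meets $\mathrm{Fix}(\imath)$, we place the Poincar\'e section at $p=\g(t_0)$ with $\imath(p)=p$. By the discussion in \Cref{Subsection - UGH of symmetric systems}, $T_p\Sigma$ is identified with $E_{-1}$, so $\imath|_\Sigma$ acts as rotation by $\pi$. Because $\imath^*H_\eps=H_\eps$ for every $\eps$, the symmetry commutes with $\Psi_\eps$. \Cref{Proposition - Symplectic involution GH} with $k=1$ then gives $G_\eps\in\R[x^2,xy,y^2]$, and $G_\eps$ generates $\Psi_\eps$ itself.

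\emph{Step 2: normal form and critical points.} The origin is fixed by the symmetry, so it persists as a critical point $p_\eps$ for all $\eps$. We need this even though the multiplier is $1$, and it is exactly where the symmetry is used: $\nabla G_\eps$ is odd. Under the transitional hypothesis, the leading terms are $\frac{\alpha}{2}x^2+\frac{\beta}{4}y^4+\eps\frac a2y^2$, and the computation preceding \Cref{Theorem - FPpd} applies verbatim. The origin has Hessian $\mathrm{diag}(\alpha,\eps a)$, so its stability type changes at $\eps=0$. The additional critical points $p_{1,2}=(0,\pm\sqrt{-\eps a/\beta})+\dots$ exist for exactly one sign of $\eps$, which we orient to be $\eps>0$. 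Their Hessian is $\mathrm{diag}(\alpha,-\eps a)$, so they have the stability type that the origin had for $\eps<0$. The implicit function theorem, applied after rescaling $y=\sqrt{\eps}\,\tilde y$ as in the proof of \Cref{Proposition - Morse Trajectories}, extends these leading-order solutions to genuine critical points of the full $G_\eps$. Splitting according to the sign of $\alpha\cdot\eps a$ for $\eps<0$ yields the two alternatives (1) and (2).

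\emph{Step 3: translation to orbits.} This is the step that differs from \Cref{Theorem - POpd}. Since $G_\eps$ generates $\Psi_\eps$ rather than $\Psi_\eps^2$, each critical point is a genuine fixed point of $\Psi_\eps$. Moreover, $p_1$ and $p_2=\imath(p_1)$ are interchanged by the rotation by $\pi$, not by $\Psi_\eps$. Hence they give two distinct periodic orbits $\g_{1,2}(\eps)$, which form an $\imath$-symmetric pair with period close to that of $\g$. These orbits converge to $\g$ as $p_{1,2}\to0$. The origin gives the orbit cylinder $\g_\eps$, and the fact that it persists through the degeneracy again follows from the symmetry rather than from \Cref{Lemma - Orbit Cylinder}.

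\emph{Main obstacle.} The delicate point is justifying that the degenerate orbit $\g$ has a continuation, together with the genuineness and distinctness of $p_{1,2}$, beyond the truncated normal form. I would handle this by working within $\imath$-equivariant maps. Fixed points of the equivariant map near $0$ form an invariant set, and the origin lies in it for every $\eps$. For the pair, one solves $\partial_yG_\eps/y=0$ on the $\Z_2$-quotient, where the equation becomes nondegenerate in $y^2$.
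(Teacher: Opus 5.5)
Your proposal is correct and follows the paper's argument. The $\Z_2$-symmetry forces $G_\eps\in\R[x^2,xy,y^2]$, so the critical-point and Hessian analysis is identical to the period-doubling/materialization case. The only change is that $G_\eps$ now generates $\Psi_\eps$ itself, so $p_1$ and $p_2$ give two distinct simple periodic orbits. Your use of the symmetry to justify that the origin persists as a fixed point is a useful addition, since \Cref{Lemma - Orbit Cylinder} does not apply when $\ld=1$ and the paper asserts this without comment.
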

We call this phenomenon a \textbf{pitchfork bifurcation}, and there are $4$ possible changes in the Morse indices.
The first case of \Cref{Theorem - Symplectic pitchfork} is called \textbf{subcritical}, which corresponds to the period doubling, and the second case is called \textbf{supercritical}, corresponds to the materialization. We also distinguish between the \textbf{attracting} and \textbf{repelling} pitchforks, depending on whether the positive gradient flowlines are directed toward or away from the origin.

Now we consider anti-symplectic involutive systems.
According to \Cref{Proposition - Anti-symplectic involution GH}, we have two possible forms for the generating Hamiltonian of $\Psi_\eps$, which are
\[
\begin{aligned}
        G_\eps(x,y) &= \frac{\alpha}{2}x^2 + \frac{\beta}{3}y^3 + \eps \left(\frac{a}{2}x^2 + by\right)\in \R[x^2,y],\\
        G_\eps(x,y) &=\frac{\alpha}{2}x^2 + \frac{\beta}{4}y^4 + \eps \left(ax + \frac{b}{2}y^2\right)\in \R[x,y^2].
\end{aligned}	
\]
Note that the first case does not change the bifurcation significantly, since the additional effect is a change in the coefficient of $\alpha$ by $O(\eps)$, which cannot alter the sign for small $\eps$.
The second case is again essentially the same as period doubling, which leads to the pitchfork bifurcation.

\begin{thm}[Pitchfork, Anti-symplectic]
    Let $(W,\o,H_\eps,\imath)$ be a 1-parameter family of anti-symplectic involutive Hamiltonian systems, and let $\g$ be a periodic orbit of $H_0$ with Floquet multiplier $\ld=1$.
    Assume that $\g$ intersects the fixed locus of $\imath$, and the intersection is an extremal fixed point or a transitional fixed point of the return map $\Psi_\eps$.
    Then, exactly one of the following happens, depending on the shear direction of the return map and the fixed locus of $\imath$:
    \begin{enumerate}
        \item A pitchfork bifurcation occurs at $\g_0$.
        \item A birth-death bifurcation occurs at $\g_0$.
    \end{enumerate}
\end{thm}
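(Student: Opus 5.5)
We prove the statement by reducing it to the two normal forms in \Cref{Proposition - Anti-symplectic involution GH} for $k=1$ and then rerunning the analyses of \Cref{Subsection - Birth-death} and \Cref{Subsection - Period doubling}. The restriction $\rho=\imath|_\Sigma$ is a reflection across one eigen-direction of $d_p\imath$. The shear direction of the linear part of $\Psi_0$ is the $x$-direction, in which $G_0$ has the nondegenerate term $\frac{\alpha}{2}x^2$. The dichotomy in the statement is then the dichotomy $\rho(x)=x$ versus $\rho(x)=-x$: either the fixed line of $\rho$ is transverse to the shear direction or it coincides with it.

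\emph{Case $\rho(x)=-x$.} Here $G_\eps\in\R[x,y^2]$, and the leading terms are
\[
G_\eps=\tfrac{\alpha}{2}x^2+\tfrac{\beta}{4}y^4+\eps\bigl(ax+\tfrac b2 y^2\bigr).
\]
We may not assume $\beta=0$: a cubic $y^3$ term is forbidden, so the first nontrivial $y$-term is quartic, and genericity (the transitional condition) gives $\alpha,\beta,b\neq0$. The linear term $\eps a x$ is removed by the symplectic translation $x\mapsto x-\eps a/\alpha$, which shifts the constant and $O(\eps^2)$ terms, and by higher-order corrections that preserve the structure in $y^2$. This is precisely the normal form of \Cref{Subsection - Period doubling}, with $y\mapsto -y$ symmetry. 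Solving $\nabla G_\eps=0$ gives the origin together with $y=\pm\sqrt{-\eps b/\beta}$ for one sign of $\eps$. The Hessian computation is identical, so we obtain the same index changes as in \Cref{Theorem - FPpd}. Since $G_\eps$ generates $\Psi_\eps$ itself rather than a double cover, the two new critical points are two distinct fixed points. They correspond to two periodic orbits $\g_{1,2}(\eps)$ exchanged by $\imath$, exactly as in \Cref{Theorem - Symplectic pitchfork}. This is a pitchfork.

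\emph{Case $\rho(x)=x$.} Here $G_\eps\in\R[x^2,y]$, and the leading terms are
\[
G_\eps=\tfrac{\alpha}{2}x^2+\tfrac{\beta}{3}y^3+\eps\bigl(\tfrac a2 x^2+by\bigr).
\]
The term $\eps\frac a2x^2$ only replaces $\alpha$ by $\alpha+\eps a$, which keeps its sign for small $\eps$. The result is therefore the extremal normal form of \Cref{Subsection - Birth-death} with $a=0$ there, and solving $\nabla G_\eps=0$ yields the pair $p_E,p_H$ of \Cref{Theorem - FPbd}. This is a birth-death.

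\emph{Remaining steps and main obstacle.} In each case we must rule out the other behaviour: the case determines which of the extremal or transitional hypotheses can hold generically, so exactly one alternative occurs. We must also justify truncating to leading terms, which uses the implicit function theorem and the rescaling argument in the proof of \Cref{Proposition - Morse Trajectories}. The main obstacle is making the first reduction rigorous. We need to show that a reversible return map with multiplier $1$ admits a reflection-invariant autonomous generating Hamiltonian whose shear direction is aligned with an eigen-direction of $\rho$, so that one of the two normal forms applies. We would first verify that the shear line of $d\Psi_0$ must be $\rho$-invariant: the identity $\rho\, d\Psi\,\rho=d\Psi^{-1}$ maps the $1$-eigenline to itself. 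Then, for each of the two eigenlines of $\rho$, we would carry out the Birkhoff normalization by $\rho$-equivariant symplectic changes of coordinates, using \Cref{Corollary - Shear case} together with averaging over $\{\Id,\rho\}$.
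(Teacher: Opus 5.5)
Your proposal is correct and follows the paper's own argument. \Cref{Proposition - Anti-symplectic involution GH} with $k=1$ gives two cases: the $\R[x^2,y]$ normal form, where $\eps\frac a2 x^2$ only perturbs $\alpha$ and the birth-death analysis applies, and the $\R[x,y^2]$ normal form, which reduces to the period-doubling computation but now yields two genuine fixed points, i.e.\ a pitchfork. Your translation removing $\eps a x$ and your observation that $\rho$ preserves the $1$-eigenline of $d\Psi_0$ just make explicit what the paper leaves implicit.

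The only slip is that your labels are swapped. Invariance under $\rho(x,y)=(-x,y)$ forces even powers of $x$, so $\R[x,y^2]$ (the pitchfork case) corresponds to $\rho(x)=x$, $\rho(y)=-y$. This matches the paper's convention and your own ``$y\mapsto -y$ symmetry'' remark, and it does not affect the argument.
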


The level curves and gradient trajectory structures for the pitchfork are the same as one described in Figures \ref{Figure - Period doubling} and \ref{Figure - Materialization}.

\subsection{Double Emission}\label{Subsection - Double Emission}
Now we consider the case of a symplectic involutive system with a periodic orbit $\g_0$ having a Floquet multiplier $\ld=\exp(2\pi il/k)$ where $k\geq 3$ is odd.
The leading terms of the unwrapped generating Hamiltonian for $\Psi^k_\eps$ are given by
\[
G_\eps = \frac{\alpha}{2k}\rho^{2k}\cos(2k\vp) + \frac{\beta}{4}\rho^4 + \eps \frac{a}{2}\rho^2.
\]
We assume that the origin is a $2k$-bifurcation point.
This yields the same configuration of critical points as the $2k$-emission, since $2k\geq 6$.
The critical points are given by
\[
\rho^2 = -\frac{\eps a}{\beta},\quad \vp_j = \frac{j\pi}{2k}\text{ for }j=0,1,\ldots,4k-1.
\]
There are $2k$ saddles and $2k$ minima or maxima, depending on the signs of the coefficients.
The return map $\Psi_\eps$ identifies critical points with an angular difference of $2\pi/k$, which groups a $k$-tuple of critical points into a single periodic orbit.
Therefore, we have $2$ hyperbolic periodic orbits and $2$ elliptic periodic orbits emerging at $\eps=0$.
To summarize, we have the following.

\begin{thm}[Double Emission]
    Let $(W,\o,H_\eps,\imath)$ be a 1-parameter family of symplectic involutive Hamiltonian systems, and let $\g$ be a periodic orbit of $H_0$ with Floquet multiplier $\ld=\exp(2\pi i l/k)$ where $(k,l)=1$ and $k\geq 3$ is odd.
    Assume that $\g$ intersects the fixed locus of $\imath$, and the intersection is a $2k$-bifurcation point of the return map $\Psi^k_\eps$.

    Then, with an appropriate choice of the bifurcation parameter, there exist two families of hyperbolic periodic orbits $\g_{H,i}(\eps)$ and two families of elliptic periodic orbits $\g_{E,i}(\eps)$ of $H_\eps$ where $i=1,2$, all parametrized by $\eps>0$, which converge to $\g^k$ as $\eps\to0$.
\end{thm}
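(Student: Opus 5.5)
The argument follows the pattern of the emission analysis in \Cref{Subsection - k bifurcation}. The only new ingredient is the extra $\Z_2$-symmetry, which changes how critical points are grouped into orbits.

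First, by \Cref{Proposition - Symplectic involution GH} with $k\geq3$ odd, the unwrapped generating Hamiltonian of $\Psi_\eps^k$ is $\Z_{2k}$-symmetric, with leading terms
\[
G_\eps=\frac{\alpha}{2k}\rho^{2k}\cos(2k\vp)+\frac{\beta}{4}\rho^4+\eps\frac a2\rho^2 .
\]
Since $2k\geq6$, this has exactly the form treated for the $k$-bifurcation with $k$ replaced by $2k$. I will invoke \Cref{Theorem - FPem} verbatim, or redo its computation. Solving $\pp_\rho G_\eps=0$ and $\pp_\vp G_\eps=0$ gives $\rho^2=-\eps a/\beta$ and $\vp_j=j\pi/(2k)$ for $j=0,\dots,4k-1$. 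These critical points exist only for the sign of $\eps$ with $-\eps a\beta>0$, which we orient to be $\eps>0$. The same Hessian computation shows that the points with $\cos(2k\vp_j)$ of one sign are $2k$ saddles, and the other $2k$ are all maxima or all minima, according to the sign of $a$. The nondegeneracy assumptions, namely that the origin is a $2k$-bifurcation point, let the implicit function theorem extend these leading-order solutions to genuine critical points of the full normal form, as in \Cref{Section - Classification of bifurcations}.

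Second, I translate critical points into orbits. A nonzero critical point of $G_\eps$ is a $k$-periodic point of $\Psi_\eps$, and on the section $\Psi_\eps$ acts (to leading order) as rotation by $2\pi l/k$. Since $(l,k)=1$, the orbit of a critical point under $\langle R_{2\pi l/k}\rangle\cong\Z_k$ consists of the $k$ points whose angles differ by multiples of $2\pi/k$. Those angles are $\vp_j$ with $j$ shifted by multiples of $4$. So the $2k$ saddles, which sit at indices $j$ of one fixed parity, split into $2k/k=2$ classes, and likewise the $2k$ extrema. Here I must check that the points within a class really are permuted by $\Psi_\eps$ and not merely by the symmetry of $G_\eps$. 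This holds because the $\Z_k$-symmetry of the unwrapped Hamiltonian comes precisely from the deck action of $\mathrm{cov}_{k,l}$, i.e. from $\Psi_\eps$. It also matters that the extra rotation $R_\pi$ from $\imath$ is not a power of $R_{2\pi l/k}$, since $k$ is odd. That is exactly why we get two distinct orbits in each class rather than one.

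Third, each class gives a periodic orbit of $H_\eps$ whose period is close to $k$ times that of $\g$, by flowing out of the Poincaré section. This produces $\g_{H,1},\g_{H,2}$ and $\g_{E,1},\g_{E,2}$, which are hyperbolic and elliptic respectively, since the stability type is read off from the Hessian sign. Convergence to $\g^k$ as $\eps\to0$ follows from $\rho=O(\sqrt\eps)$. The pair $\g_{H,1},\g_{H,2}$ is interchanged by $\imath$, because $R_\pi$ maps $\vp_j$ to $\vp_{j+2k}$ and $j+2k\not\equiv j\pmod 4$ when $k$ is odd; the same holds for the elliptic pair.

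The main obstacle I expect is the grouping step: checking carefully that $\vp_j$ and $\vp_{j+2k}$ lie in different $\Psi_\eps$-classes. For this I would verify that $j\mapsto j+4l$ modulo $4k$ generates exactly the residues $\equiv j \pmod 4$, using $\gcd(l,k)=1$ and $k$ odd.
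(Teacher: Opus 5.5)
Your proposal is correct and follows the paper's own argument: you identify the $\Z_{2k}$-symmetric generating Hamiltonian of $\Psi_\eps^k$ with the $2k$-emission normal form, which gives $2k$ saddles and $2k$ extrema at $\rho^2=-\eps a/\beta$, $\vp_j=j\pi/(2k)$, and then use that $\Psi_\eps$ groups these critical points into $k$-tuples differing by angle $2\pi/k$, yielding two hyperbolic and two elliptic orbits. Your explicit check that $j\mapsto j+4l$ generates exactly the residue class of $j$ modulo $4$ spells out the grouping step that the paper states in one line, and your remark that $\imath$ (acting as $j\mapsto j+2k$) swaps the two orbits in each pair goes slightly beyond the paper.
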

We call this a \textbf{double $k$-emission}.
As in \Cref{Subsection - k bifurcation}, there are two possible changes in the Morse index: if the origin is a minimum for $\eps<0$, it becomes a maximum for $\eps>0$, and the intersections of $\g_{E,i}$ are all minima, and vice versa.
We call the former \textbf{attracting} and the latter \textbf{repelling}.
The level sets and gradient-flow structure of the double emission are the same as those depicted in \Cref{Figure - Emission}. The difference is that, in the emission bifurcation, all elliptic critical points correspond to a single periodic orbit, as do all hyperbolic critical points, whereas in the double-emission bifurcation, adjacent elliptic critical points belong to distinct periodic orbits.

\subsection{Summary of $\Z_2$-symmetric Bifurcations}
\begin{table}[htbp]
	\renewcommand{\arraystretch}{1.5} 
	\centering
	\small
	\begin{tabular}{|c|c|c|c|c|c|c|c|c|}
		\hline
		\multicolumn{3}{|c|}{Generic Bifurcations}
		& \multicolumn{3}{c|}{$\eps<0$}
		& \multicolumn{3}{c|}{$\eps>0$} \\
		\hline
		Floquet Mult.\ & Name & Type
		& Orbits & Period & Index
		& Orbits & Period & Index \\
		\hline
		
		\multirow{8}{*}{$1$}
		& \multirow{4}{*}{\shortstack{Subcritical\\Pitchfork}}
		& \multirow{2}{*}{Attracting}
		& $\g$ & 1 & 0 & $\g$ & 1 & 1 \\
		&  &  &  &  &  & $\g_1,\g_2$ & 1 & 0 \\
		\cline{3-9}
		&  & \multirow{2}{*}{Repelling}
			& $\g$ & 1 & 2 & $\g$ & 1 & 1 \\
		&  &  &  &  &  & $\g_1,\g_2$ & 1 & 2 \\
		\cline{2-9}
		
		& \multirow{4}{*}{\shortstack{Supercritical\\Pitchfork}}
		& \multirow{2}{*}{Attracting}
		& $\g$ & 1 & 1 & $\g$ & 1 & 2 \\
		&  &  &  &  &  & $\g_1,\g_2$ & 1 & 1 \\
		\cline{3-9}
		&  & \multirow{2}{*}{Repelling}
			& $\g$ & 1 & 1 & $\g$ & 1 & 0 \\
		&  &  &  &  &  & $\g_1,\g_2$ & 1 & 1 \\
		\cline{2-9}
		\hline

		\multirow{6}{*}{\shortstack{$\exp(2\pi i l/k)$\\$k\geq 3$ odd,\\$(l,k)=1$}}
		& \multirow{6}{*}{Double Emission}
		& \multirow{3}{*}{Attracting}
		& $\g^k$ & 1 & 0 & $\g^k$ & 1 & 2 \\
		&  &  &  &  &  & $\g_{H,1},\g_{H,2}$ & $k$ & 1 \\
		&  &  &  &  &  & $\g_{E,1},\g_{E,2}$ & $k$ & 0 \\
		\cline{3-9}
		&  & \multirow{3}{*}{Repelling}
		& $\g^k$ & 1 & 2 & $\g^k$ & 1 & 0 \\
		&  &  &  &  &  & $\g_{H,1},\g_{H,2}$ & $k$ & 1 \\
		&  &  &  &  &  & $\g_{E,1},\g_{E,2}$ & $k$ & 2 \\
		\hline
		
	\end{tabular}
	\caption{Classification of generic bifurcations of involutive systems.}
	\label{Table - Symmetric Bifurcation}
\end{table}

Here, we summarize the classification of symmetric bifurcations as shown in \Cref{Table - Symmetric Bifurcation}.
Note that the pitchfork bifurcation appears for both symplectic and anti-symplectic involutive systems, while the double emission appears only for symplectic involutive systems.
Moreover, for the even covers, the classification follows the general case in \Cref{Table - Bifurcation}.
Since the same generating Hamiltonian is essentially used, the proof for the resulting gradient flow structures and Morse--Smale property is identical to that in \Cref{Proposition - Morse Trajectories}.

As in the previous section, we note $C^2$-smallness of the unwrapped generating Hamiltonians.
\begin{lmm}\label{Lemma - C2smallsymmetric}
    The unwrapped generating Hamiltonians $G_\eps$ for the pitchfork and double emission are $C^2$-small if we take a Poincar\'{e} section and the bifurcation parameter small enough.
\end{lmm}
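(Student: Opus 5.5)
The plan is to mirror the proof of \Cref{Lemma - C2small}, since the normal forms for the pitchfork and double emission, given by \Cref{Proposition - Symplectic involution GH} and \Cref{Proposition - Anti-symplectic involution GH}, are of the same shape as those already handled. We use the Euclidean $C^2$-norm in the chosen Darboux coordinates on $\Sigma$. After subtracting an $\eps$-dependent constant we may assume $G_\eps(0)=0$. In every case $dG_\eps(0)=O(\eps)$: in the symmetric cases with $\Z_2$- or $\Z_{2k}$-invariance the origin is in fact a critical point for all $\eps$, and in the anti-symplectic $\R[x,y^2]$ case there is only a linear term $\eps a x$. Once $\|\Hess G_\eps\|$ is uniformly small on a small convex section $\Sigma$ containing the origin, Taylor's theorem gives $\|G_\eps\|_{C^2}<\delta$.

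\emph{Double emission ($k\geq3$ odd).} Here the quadratic part of $G_0$ vanishes, since $G_0\in\R[\rho^2,\rho^{2k}\cos 2k\vp,\rho^{2k}\sin 2k\vp]$ has no degree-two term. The leading terms are $\frac{\beta}{4}\rho^4$ and $\frac{\alpha}{2k}\rho^{2k}\cos 2k\vp$, and the only $\eps$-dependent quadratic term is $\eps\frac a2\rho^2$, whose Hessian is $O(\eps)$. Hence $\sup_\Sigma|\Hess G_\eps|\to0$ as $|\eps|$ and $\mathrm{diam}\,\Sigma$ tend to zero, exactly as for the emission.

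\emph{Pitchfork ($k=1$).} The only non-small term is the shear $\frac{\alpha}{2}x^2$. We apply the linear symplectic rescaling $\Phi_c(x,y)=(cx,y/c)$. For the symplectic involution the form is $\frac{c^2\alpha}{2}x^2+\frac{\beta}{4c^4}y^4+\eps\frac{a}{2c^2}y^2+\cdots$. For the anti-symplectic $\R[x,y^2]$ form it is $\frac{c^2\alpha}{2}x^2+\frac{\beta}{4c^4}y^4+\eps(acx+\frac{b}{2c^2}y^2)+\cdots$. The anti-symplectic $\R[x^2,y]$ form is a birth-death case and is already covered by \Cref{Lemma - C2small}.

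The order of choices is as follows. First choose $c$ with $c^2|\alpha|<\delta/3$. With $c$ fixed, shrink $\Sigma$ until the Hessians of all cubic and higher terms are below $\delta/3$; here "higher" means higher order in the rescaled coordinates. Finally choose $\eps_0$ so that the $\eps$-dependent terms and $dG_\eps(0)$ are below $\delta/3$. One point to check is that $\Phi_c$ commutes with the relevant symmetries, namely $(x,y)\mapsto(-x,-y)$ and the reflections $x\mapsto\pm x$, $y\mapsto\pm y$. It does, since $\Phi_c$ is diagonal, so the rescaled Hamiltonian stays in the same invariant ring.

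The only mildly delicate step is the order of choices. The factor $c^{-4}$ in front of the quartic term grows as $c\to0$, so $\Sigma$ must be shrunk after $c$ is fixed. On a ball of radius $r$ the quartic Hessian is $O(c^{-4}r^2)$, which can be made small by taking $r$ small. This is identical to the birth-death argument, so the lemma follows.
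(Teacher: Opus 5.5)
Your proposal is correct and takes the same route as the paper, whose proof just says the argument is analogous to \Cref{Lemma - C2small}. You treat the double emission like the emission, using the vanishing quadratic part of $G_0$. You treat the pitchfork like the birth-death and period-doubling cases, via the rescaling $\Phi_c$ with the same order of choices: first $c$, then $\Sigma$, then $\eps_0$.
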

\begin{proof}
    The proof is analogous to \Cref{Lemma - C2small}.
\end{proof}

\begin{remark}\label{Remark - more symmetry}\rm
In principle, Hamiltonian systems possessing a general $\Z_m$-symmetry can be treated in an entirely analogous manner by imposing the corresponding geometric symmetry on the unwrapped generating Hamiltonian. For example, if one wishes to study the bifurcation of the $k$-th cover of a symmetric orbit in the H\'{e}non--Heiles system, which naturally exhibits a $\Z_3$-symmetry, it suffices to analyze the critical points of an unwrapped generating Hamiltonian composed of $\Z_{\mathrm{lcm}(3,k)}$-symmetric polynomials. 

However, as the order of the symmetry group increases, the algebraic analysis becomes more intricate. In particular, when $\gcd(m,k) \neq 1$, the interplay between the symmetry of the phase space and the multiple-cover twist introduces substantial complications.
Consequently, a comprehensive investigation of such higher-order symmetric bifurcations lies beyond the scope of the present paper and is deferred to future research.
\end{remark}

\section{Local Floer Homology}\label{Section - LFH}
	In this paper, we will use the non-degenerate autonomous Hamiltonians and the Morse--Bott chain complex introduced in \cite{Bourgeois_Oancea_09}.
	Fix a (possibly degenerate) autonomous Hamiltonian $H_0:M\rightarrow \R$ and an isolated non-constant 1-periodic orbit $\g_0:S^1\rightarrow M$. Let $U$ be a tubular neighborhood of $\g_0$ containing no other periodic orbits of $H_0$. We will now follow \cite[Section 3.2]{Ginzburg_Gurel_10} to construct the local Floer homology near $\gamma_0$.
    
	\subsection{Chain Complex}\label{Subsection - CC of LFH}
    First, we will define the generators and their index of $CF_*^{\loc}(\gamma_0,H)$. Take any non-degenerate autonomous Hamiltonian $H:M\rightarrow \R$ such that $H-H_0$ is supported in $U$ and is $C^2$-small. Define the set of unparametrized $1$-periodic orbits of $H$ lying in $U$
    \begin{equation*}
        \mathcal{P}(H|_U)=\{\zeta:S^1\rightarrow U\,|\,\partial_t\zeta=X_H(\zeta)\}/\sim,
    \end{equation*}
    where $\zeta_1\sim \zeta_2$ if and only if $\zeta_1(t)=\zeta_2(t+\tau)$ for some $\tau\in S^1$. For each $\g\in\mathcal{P}(H|_U)$, we define the set of parametrizations of $\g$ as
    \[S_\g=\{\zeta:S^1\rightarrow U\,|\,\zeta \text{ represents }\g\}.\]
    We can identify $S_\g$ with its image via the natural evaluation map
    \begin{align*}
        S_\g &\rightarrow \im \g\subset U\\
        \zeta&\mapsto \zeta(0),
    \end{align*}
    thus, each $S_\g$ is diffeomorphic to $S^1$. Next, take a generic Morse function $f_\g:S_\g\rightarrow\R$ with exactly one maximum and one minimum point for each $\g\in \mathcal{P}(H|_U)$. We then define the local Floer chain complex near $\gamma_0$ as
    \begin{equation*}
        CF_*^{\loc}(\gamma_0,H)=\bigoplus_{\g\in \mathcal{P}(H|_U)} \Z_2\il\gm, \gM\ir,
    \end{equation*}
where $\gm$ and $\gM$ correspond to the minimum and maximum points of $f_\g$. Now, let $\mathcal{T}$ be a trivialization of $TU$. We can define the Conley-Zehnder index $\mu_{CZ}^\mathcal{T}(\g)$ of $\g$ with respect to the trivialization $\mathcal{T}$. Then, we assign $\gM$ and $\gm$ the following indices:
    \begin{equation*}
            \begin{cases}
                \mu_{CZ}^\fT(\gM)=\mu_{CZ}^\fT(\g)+1\\
                \mu_{CZ}^\fT(\gm)=\mu_{CZ}^\fT(\g)
            \end{cases}
    \end{equation*}

	\subsection{Floer Cascades of Morse--Bott Type}\label{Subsection - MBCC}
Now we will define the differential $\partial$ of the chain complex. Fix a generic almost complex structure $J$. For $\g_\pm\in \mathcal{P}(H|_U)$, which might be multiple covers, we define the moduli space $\widehat{\mathcal{M}}_0(\g_-,\g_+)$ consisting of cylinders $u:\R\times S^1 \rightarrow U$ such that
    \begin{enumerate}[label=(\roman*)]
        \item $\partial_su+J(\partial_tu-X_H(u))=0$,
        \item $\lim_{s\rightarrow-\infty}u(s,t)\in S_{\g_-}$,
        \item $\lim_{s\rightarrow\infty}u(s,t)\in S_{\g_+}$.
    \end{enumerate}
    Unless $\g_-=\g_+$, there is a free $\R$-action on $\widehat{\mathcal{M}}_0(\g_-,\g_+)$ given by $s_0\cdot u(s,t)=u(s+s_0,t)$. We define the moduli space of Floer cylinders connecting $\g_-$ and $\g_+$ as
    \begin{equation*}
    \mathcal{M}_0(\g_-,\g_+)=\widehat{\mathcal{M}}_0(\g_-,\g_+)/\R.
    \end{equation*}
    Next, let $\bar{\g}_+\in \crit(f_{\g_+})$ (that is, $\bar{\g}_+$ is either $\gM^+$ or $\gm^+$) and $\bar{\g}_-\in \crit(f_{\g_-})$. We define the moduli space $\widehat{\mathcal{M}}(\bar{\g}_-,\bar{\g}_+)$ of \textbf{parametrized Floer cascades} consisting of tuples
    \begin{equation*}
        \mathbf{u}=(c_0,u_1,c_1, u_2, \cdots, u_m, c_m)
    \end{equation*}
	such that
\begin{enumerate}[label=(\roman*)]
        \item For some periodic orbits $\g_-=\g_0, \g_1, \ldots, \g_m=\g_+\in \mathcal{P}(H|_U)$, $u_i$ is a Floer cylinder connecting $\g_{i-1}$ and $\g_i$; that is, $u_i \in \mathcal{M}_0(\g_{i-1},\g_i)$.
        \item For some real numbers $r_1<r_2<\cdots<r_m$, the maps $c_i:[r_i,r_{i+1}]\rightarrow S_{\g_i}$ satisfy:
        \begin{itemize}
            \item $c_i$ is a Morse trajectory of $f_{\g_i}$; that is, $c_i'=\nabla f_{\g_i}\circ c_i$,
            \item $c_i(r_i)=\lim_{s\rightarrow\infty}u_i(s,t)$, i.e., $c_i(r_i)(0)=\lim_{s\rightarrow\infty}u_i(s,0)$,
            \item $c_i(r_{i+1})=\lim_{s\rightarrow-\infty}u_{i+1}(s,t)$, i.e., $c_i(r_{i+1})(0)=\lim_{s\rightarrow-\infty}u_{i+1}(s,0)$,
        \end{itemize}
        for each $1\le i\le m-1$.
        \item $c_0:(-\infty,r_1)\rightarrow S_{\g_-}$ satisfies:
        \begin{itemize}
            \item $c_0$ is a Morse trajectory of $f_{\g_-}$; that is, $c_0'=\nabla f_{\g_-}\circ c_0$,
            \item $c_0(r_1)=\lim_{s\rightarrow-\infty}u_1(s,t)$, i.e., $c_0(r_1)(0)=\lim_{s\rightarrow-\infty}u_1(s,0)$,
            \item $\lim_{r\rightarrow-\infty}c_0(r)=\bar{\g}_-$, i.e., $\lim_{r\rightarrow-\infty}c_0(r)(0)=\bar{\g}_-(0)$.
        \end{itemize}
        \item $c_m:(r_m,\infty)\rightarrow S_{\g_+}$ satisfies:
        \begin{itemize}
            \item $c_m$ is a Morse trajectory of $f_{\g_+}$; that is, $c_m'=\nabla f_{\g_+}\circ c_m$,
            \item $c_m(r_m)=\lim_{s\rightarrow\infty}u_m(s,t)$, i.e., $c_m(r_m)(0)=\lim_{s\rightarrow\infty}u_m(s,0)$,
            \item $\lim_{r\rightarrow\infty}c_m(r)=\bar{\g}_+$, i.e., $\lim_{r\rightarrow\infty}c_m(r)(0)=\bar{\g}_+(0)$.
        \end{itemize}
    \end{enumerate}
Unless $\bar{\g}_+=\bar{\g}_-$, there is a free $\R$-action defined on $\widehat{\mathcal{M}}(\bar{\g}_-,\bar{\g}_+)$ by shifting all $r_i$ by a constant, i.e., $r_0\cdot c_i(r)=c_i(r+r_0)$.
    We define the moduli space $\mathcal{M}(\bar{\g}_-, \bar{\g}_+)$ of \textbf{Floer cascades} as
    \begin{equation*}
        \mathcal{M}(\bar{\g}_-, \bar{\g}_+)=\widehat{\mathcal{M}}(\bar{\g}_-,\bar{\g}_+)/\R.
    \end{equation*}
An example of Floer cascade is illustrated in \Cref{Figure - Cascades}.

    \begin{figure}[htbp]
  \begin{subfigure}[b]{0.45\textwidth}
    \centering
    \includegraphics[width=\textwidth]{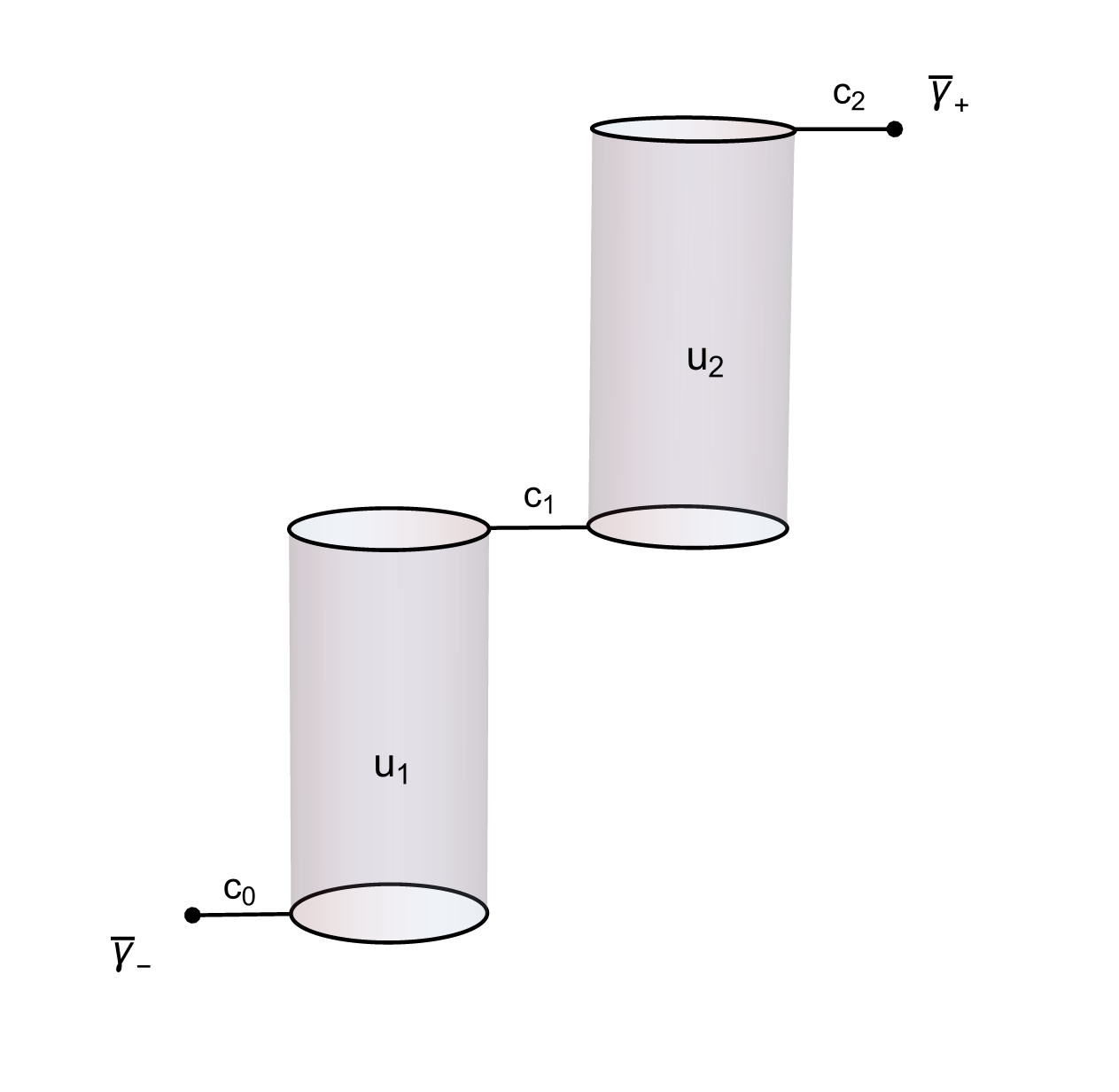}
    \caption{Floer cascade with $m=2$}
  \end{subfigure}
  \hspace{0.03\textwidth}
  \begin{subfigure}[b]{0.4\textwidth}
    \centering
    \includegraphics[width=\textwidth]{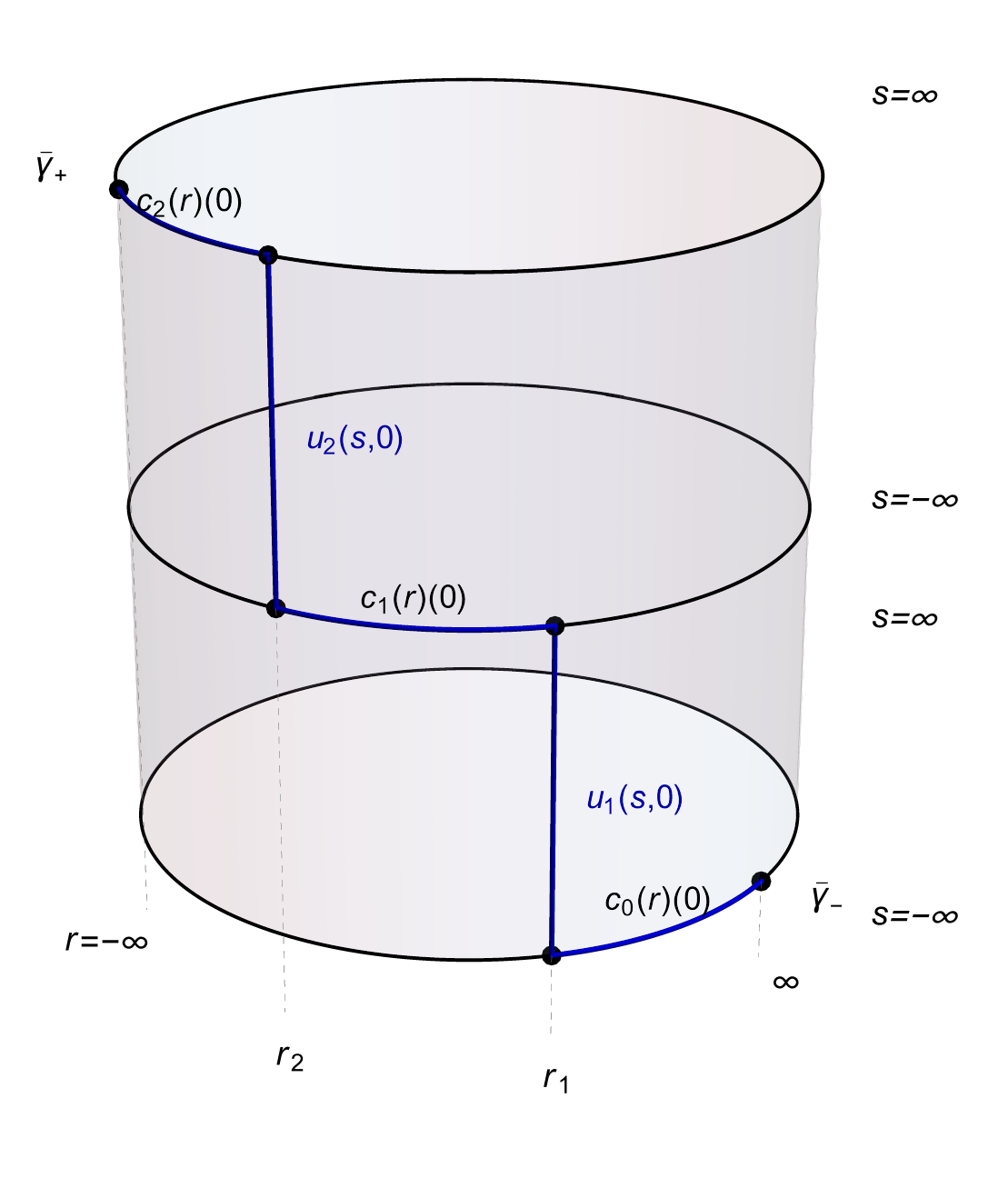}
    \caption{Equivalent expression}
  \end{subfigure}
  \caption{Floer Cascade and its equivalent expression via evaluating at $t=0$}
    \label{Figure - Cascades}
  \end{figure}
According to \cite{Bourgeois_Oancea_09}, this moduli space is a smooth manifold of dimension $\mu(\bar{\g}_+)-\mu(\bar{\g}_-)-1$, and it is compact when $\mu(\bar{\g}_+)-\mu(\bar{\g}_-)-1=0$. In addition, the assertions in \cite[Section 3.2]{Ginzburg_10} guarantee that the Floer cylinders, and thus the Floer cascades between orbits in $U$, lie entirely in $U$. Therefore, we can define the differential on the local Floer chain complex as follows:
    \begin{align*}
        \partial:CF_*^\loc(\gamma_0,H)&\rightarrow CF_{*-1}^\loc(\gamma_0,H)\\
        \bar{\g}_+&\mapsto\bigoplus_{\bar{\g}_-}\#_2\mathcal{M}(\bar{\g}_-,\bar{\g}_+)\bar{\g}_-.
    \end{align*}
Furthermore, since the broken Floer cascades also lie entirely in $U$, we have $\partial^2=0$, and we can define the local Floer homology of $H$ at $\gamma_0$ by
    \begin{equation*}
        HF_*^{\loc}(\gamma_0,H)=H_*(CF_*^{\loc}(\gamma_0,H)).
    \end{equation*}

As in \cite[Section 3.2]{Ginzburg_10}, a continuation argument shows that $HF_*^{\loc}(\gamma_0,H)$ is invariant under the choice of $H$ and $J$. In particular, if $H_0$ is non-degenerate, $\gm_0$ and $\gM_0$ are the only generators of the chain complex $CF_*^{\loc}(\gamma_0,H_0)$, and there are exactly two Floer cascades between them, which correspond to the Morse trajectories of $f_{\g_0}$.
	Therefore, we have
    \begin{equation*}
        HF_*^{\loc}(\gamma_0,H)\cong HF_*^{\loc}(\gamma_0,H_0)=\Z_2\langle\gm_0,\gM_0\rangle.
    \end{equation*}
Additionally, for a 1-parameter family of Hamiltonians $H_\eps$, suppose $H_0$ is degenerate and a bifurcation occurs at $\gamma_0$. Then, there exists a sufficiently small $\eps'>0$ such that $H_{\pm\eps'}$ is non-degenerate, and thus
    \begin{equation*}
        HF_*^{\loc}(\gamma_0,H_{-\eps'})\cong HF_*^{\loc}(\gamma_0,H_{\eps'}).
    \end{equation*}
    
\subsection{Counting Floer Cascades}
In order to describe the Floer chain complex explicitly, counting specific type of Floer cascades is necessary. To count such Floer cascades, we need the following theorem.
        \begin{thm}\label{Thm - Cylinder Counting}
        Let $\g_\pm$ be two simple non-constant $1/k_\pm$-periodic orbits, so that $\g_\pm^{k_\pm}$ are 1-periodic orbits. Also, let $J$ be a generic almost complex structure.
        Moreover, assume that
        \begin{equation*}
            \mu_{CZ}^\fT(\gamma^{k_+}_+)-\mu_{CZ}^\fT(\gamma^{k_-}_-)=1.
        \end{equation*}
        Then, the quotient $\mathcal{M}_0(\g^{k_-}_-,\g^{k_+}_+)/S^1$ is a zero-dimensional $\Z_{(k_+,k_-)}$-orbifold, and we have
        \begin{align*}
            \#_2\mathcal{M}(\gM_-^{k_-},\gM_+^{k_+})&\equiv k_-\cdot\#(\mathcal{M}_0(\g^{k_-}_-,\g^{k_+}_+)/S^1)\pmod 2\\
            \#_2\mathcal{M}(\gm_-^{k_-},\gm_+^{k_+})&\equiv k_+\cdot\#(\mathcal{M}_0(\g^{k_-}_-,\g^{k_+}_+)/S^1)\pmod 2,
        \end{align*}
        where $(k_+,k_-)$ is the greatest common divisor of $k_+$ and $k_-$, and the right hand side is an orbifold count.
    \end{thm}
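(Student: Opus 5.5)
We count cascades with exactly one Floer cylinder and no intermediate orbit. Since the relative index is one, any cascade from $\bar\g_-$ to $\bar\g_+$ of total index difference one has $m=1$: it is a Morse flow segment on $S_{\g_-^{k_-}}$, one Floer cylinder, and a Morse flow segment on $S_{\g_+^{k_+}}$. Other splittings are excluded by the index count of \cite{Bourgeois_Oancea_09}. The moduli space $\widehat{\mathcal M}_0(\g_-^{k_-},\g_+^{k_+})/\R$ is a one-dimensional manifold for generic $J$. Because $H$ is autonomous, it carries the $S^1$-action $u(s,t)\mapsto u(s,t+\theta)$.

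\textbf{Step 1: the orbifold structure.} We first compute the stabilizer of $u$ under the $S^1$-action. If $u(s,t+\theta)=u(s,t)$ for all $s,t$, then the asymptotic loops are invariant under rotation by $\theta$. Since $\g_\pm$ are simple, the loop $\g_\pm^{k_\pm}$ is invariant exactly under rotations in $\frac1{k_\pm}\Z/\Z$. Hence $\theta\in\frac1{k_+}\Z\cap\frac1{k_-}\Z=\frac1{(k_+,k_-)}\Z$ modulo $\Z$. By unique continuation, the stabilizer of $u$ is a subgroup of $\Z_{(k_+,k_-)}$. Consequently each component of $\mathcal M_0$ is a circle, and $\mathcal M_0/S^1$ is a finite zero-dimensional orbifold with isotropy dividing $(k_+,k_-)$. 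For the orbifold count we weight each orbit by $1/|\mathrm{Stab}|$. Compactness of this set follows from the absence of breaking and bubbling, as assumed.

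\textbf{Step 2: evaluation and counting.} Consider the evaluation maps $\ev_\pm:\mathcal M_0\to S_{\g_\pm^{k_\pm}}\cong S^1$, $u\mapsto\lim_{s\to\pm\infty}u(s,0)$. Both are $S^1$-equivariant, where $\theta$ acts on $S_{\g^{k}}$ by time shift. Identifying $S_{\g^k}$ with the image circle of $\g$, the shift by $\theta$ moves the point $\g^k(0)$ by the fraction $k\theta$ of the circle. Hence, on a single $S^1$-orbit of $u$ with stabilizer of order $d$, the map $\ev_\pm$ is a covering of degree $k_\pm/d$ onto the circle.

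For $\#_2\mathcal M(\gM_-,\gM_+)$, the Morse flow of $f$ on $S_{\g_+}$ must end at the maximum, so that segment is constant. The flow on $S_{\g_-}$ must start at the maximum, so $\ev_-(u)$ can be any point except the minimum, and the descending flow from $\gM_-$ reaches it. Each such point is hit by exactly two half-trajectories; instead count correctly: a cascade is determined by $u$ with $\ev_+(u)=\gM_+$ and $\ev_-(u)$ lying in the unstable manifold of $\gM_-$, which is the circle minus the minimum. Thus the cascades in $\mathcal M(\gM_-,\gM_+)$ are exactly the points of $\ev_+^{-1}(\gM_+)$. On each orbit with stabilizer of order $d$ there are $k_+/d$ such points.

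This gives $k_+$ rather than the claimed $k_-$, so the roles of the two ends must be rechecked against the grading conventions. The index assignment $\mu(\gM)=\mu(\g)+1$ and the direction of flow decide which end is free. I would fix the convention so that for maximum-to-maximum cascades the constraint sits on the $-$ end, giving $\ev_-^{-1}(\text{pt})$ with $k_-/d$ points per orbit. Summing over orbits then yields $\sum_{\text{orbits}}k_-/d=k_-\cdot\#(\mathcal M_0/S^1)$ as an orbifold count. The minimum-to-minimum case is symmetric and gives $k_+$.

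One also needs genericity: the Morse functions $f$ must be chosen so that their critical points are regular values of $\ev_\pm$. This holds because $\ev_\pm$ are submersions on the circle orbits.

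\textbf{Main obstacle.} The delicate step is Step 1 together with regularity. Generic $J$ is not automatically regular for multiply covered cylinders, although here the cylinders are not multiple covers of other cylinders; only their asymptotics are multiply covered. So I would use $S^1$-invariant generic $J$ and argue that transversality holds for the somewhere-injective cylinders. I would also verify carefully that the degree of $\ev_\pm$ on an orbit is exactly $k_\pm/d$. This requires checking that the $S^1$-action on $S_{\g^k}$ has kernel $\frac1k\Z/\Z$, which follows from the simplicity of $\g$.
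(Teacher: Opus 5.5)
Your overall route matches the paper's. $\mathcal{M}_0(\g_-^{k_-},\g_+^{k_+})$ is a compact one-dimensional manifold, and each component is a single orbit of the $t$-shift action with isotropy $\Z_d$, $d\mid(k_+,k_-)$. The invariance of the asymptotic loops already gives this bound, so unique continuation is not needed. The orbifold count is $\sum 1/d$. On each component the evaluation at the $\pm$ end is a covering of degree $k_\pm/d$, which is the paper's list of shifts $(\tau_0+j/k_-)\cdot u$. The transversality you flag as the main obstacle is simply quoted in the paper from Bourgeois--Oancea's Proposition~3.5.

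The step you have not established is which end carries the constraint. This is not a convention you may fix to match the answer.
\begin{itemize}
\item In the paper's definition of cascades the Morse segments satisfy $c_i'=\nabla f_{\g_i}\circ c_i$, i.e.\ they follow the \emph{ascending} flow.
\item The grading forces this. With $\mu(\gM)=\mu(\g)+1$ and $\dim\mathcal{M}(\bar\g_-,\bar\g_+)=\mu(\bar\g_+)-\mu(\bar\g_-)-1$, the two Morse trajectories on a single $S_\g$ must run from $\gm$ at $r=-\infty$ to $\gM$ at $r=+\infty$.
\item Hence the maximum attracts. A segment $c_0$ with $\lim_{r\to-\infty}c_0(r)=\gM_-$ is constant, so the constraint is $\ev_-(u)=\gM_-$. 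The segment $c_1$ ending at $\gM_+$ exists and is unique whenever $\ev_+(u)\neq\gm_+$.
\item Dually, in $\mathcal{M}(\gm_-,\gm_+)$ the segment converging to $\gm_+$ as $r\to+\infty$ is constant, and the constraint is $\ev_+(u)=\gm_+$.
\end{itemize}
Your first computation used the descending flow, which is precisely why it produced $k_+$.

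Relatedly, you do not need the critical points to be regular values of $\ev_\pm$; that is automatic, since $\ev_\pm$ restricted to a component is a covering map. The genericity actually needed is twofold. The finitely many points $\ev_+(u)$ with $\ev_-(u)=\gM_-$ must avoid $\gm_+$. The finitely many points $\ev_-(u)$ with $\ev_+(u)=\gm_+$ must avoid $\gM_-$. The paper uses this implicitly as well.
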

    \begin{proof}
First, by \cite[Proposition 3.5]{Bourgeois_Oancea_09}, $\mathcal{M}_0(\g^{k_-}_-,\g^{k_+}_+)$ is a $1$-dimensional manifold for generic almost complex structure $J$. Therefore, bubbling does not occur since it is codimension 2 phenomenon. On the other hand, suppose these cylinders break through another orbit $\delta$. Then $\mu_{CZ}({\delta})$ must be equal to either $\mu_{CZ}(\g^{k_+}_+)$ or $\mu_{CZ}(\g^{k_-}_-)$. Since Floer cylinders between orbits of the same index can only be constant cylinders, $\delta$ must be either $\g^{k_\pm}_\pm$, which is a contradiction. Therefore, breaking also does not occur, meaning $\mathcal{M}_0(\g^{k_-}_-,\g^{k_+}_+)$ is a closed $1$-dimensional manifold. Consequently, each connected component of $\mathcal{M}_0(\g^{k_-}_-,\g^{k_+}_+)$ is diffeomorphic to $S^1$. Now, we define an $S^1$-action on $\mathcal{M}_0(\g^{k_-}_-,\g^{k_+}_+)$ by
        \begin{equation*}
            \tau\cdot u (s,t)= u(s,t+\tau).
        \end{equation*}
This action acts as a rotation on each connected component $\mathcal{M}^i$ for $i \in I$. Let $K_i$ be the kernel of this action restricted to $\mathcal{M}^i$. Then, considering the asymptotics, we have
        \begin{equation*}
            K_i\leq \Z_{(k_+,k_-)} < S^1.
        \end{equation*}
Now, write $K_i\cong\Z_{k_i}$ for some $k_i|(k_+,k_-)$. Then, the quotient $\mathcal{M}_0(\g^{k_-}_-,\g^{k_+}_+)/S^1$ is a zero-dimensional $\Z_{(k_+,k_-)}$-orbifold with orbifold count
        \begin{equation*}
            \#(\mathcal{M}_0(\g^{k_-}_-,\g^{k_+}_+)/S^1)=\sum_{i\in I}\frac{1}{k_i}.
        \end{equation*}
        
Now, suppose $\mathbf{u}\in\mathcal{M}(\gM^{k_-}_-,\gM^{k_+}_+)$ consists of more than one Floer cylinder. Then, at least one of the Floer cylinders is contained in $\mathcal{M}_0(\g_i,\g_{i+1})$ such that $\mu_{CZ}(\gM_{i+1})-\mu_{CZ}(\gM_i)\le 0$, which is nonempty only if $\g_i=\g_{i+1}$. Moreover, $\mathcal{M}_0(\g_i,\g_i)$ contains only constant cylinders, which gives a contradiction. Therefore, we have
        \begin{equation*}
            \mathbf{u}=(c_0,u_1,c_1)
        \end{equation*}
        for any $\mathbf{u}$.
        Moreover, since $c_0$ is a Morse trajectory of $f_{\g_-}$ with 
        \begin{equation*}
            \lim_{r\rightarrow-\infty} c_0(r)=\gM_-=\max f_{\g_-},
        \end{equation*}
        $c_0$ is a constant trajectory, and thus we have
        \begin{equation}\label{asympcond}
            \lim_{s\rightarrow-\infty}u_1(s,0)=\gM_-(0).
        \end{equation}
        Furthermore, for any $u_1$ satisfying \cref{asympcond}, there is a unique Morse trajectory $c_1$ starting from
        \begin{equation*}
            \lim_{s\rightarrow\infty}u_1(s,t)\notin \crit f_{\g_+},
        \end{equation*}
and it satisfies $\lim_{r\rightarrow\infty}c_1(r)=\gM_+$. Therefore, we only need to count $u_1\in \mathcal{M}_0(\g^{k_-}_-,\g^{k_+}_+)$ satisfying \cref{asympcond}.
        Now, suppose $u\in\mathcal{M}^i$. Then, there exists $\tau_0\in S^1$ such that $\tau_0\cdot u$ satisfies \cref{asympcond}. Since $\lim_{s\rightarrow-\infty}u_1(s,t)$ is invariant under the action of $1/k_-$,
        \begin{equation*}
            \left(\tau_0+\frac{1}{k_-}\right)\cdot u, \,\,  \left(\tau_0+\frac{2}{k_-}\right)\cdot u, \,\, \ldots,\,\,\left(\tau_0+\frac{k_-/k_i}{k_-}\right)\cdot u=\left(\tau_0+\frac{1}{k_i}\right)\cdot u
        \end{equation*}
are $k_-/k_i$ distinct Floer cylinders in $\mathcal{M}^i$ satisfying \cref{asympcond}. Collecting all these, we obtain
        \begin{equation*}
            \#_2\mathcal{M}(\gM^{k_-}_-,\gM^{k_+}_+)\equiv \sum_{i\in I}\frac{k_-}{k_i}=k_-\cdot\#(\mathcal{M}_0(\g^{k_-}_-,\g^{k_+}_+)/S^1)\pmod 2.
        \end{equation*}
In the same way, for $\mathbf{u}\in\mathcal{M}(\gm^{k_-}_-,\gm^{k_+}_+)$, we also have $\mathbf{u}=(c_0,u_1,c_1)$, with a constant Morse trajectory $c_1$, and a unique $c_0$ followed by $u_1$. Therefore, we only need to count $u_1\in \mathcal{M}_0(\g^{k_-}_-,\g^{k_+}_+)$ satisfying
        \begin{equation*}
            \lim_{s\rightarrow\infty}u_1(s,0)=\gm_+(0).
        \end{equation*}
Similarly, there are $k_+/k_i$ distinct Floer cylinders in $\mathcal{M}^i$ satisfying the above condition, so we obtain
        \begin{equation*}
            \#_2\mathcal{M}(\gm^{k_-}_-,\gm^{k_+}_+)\equiv \sum_{i\in I}\frac{k_+}{k_i}=k_+\cdot\#(\mathcal{M}_0(\g^{k_-}_-,\g^{k_+}_+)/S^1)\pmod 2.\qedhere
        \end{equation*}
The case of a cylinder connecting the doubly and triply covered orbits is illustrated in \Cref{Figure - CascadeCounting}.
    \end{proof}
\begin{rmk}
    \rm If $(k_+,k_-)=1$ in the above theorem, $\mathcal{M}_0(\g^{k_-}_-,\g^{k_+}_+)/S^1$ becomes a manifold, and $\#(\mathcal{M}_0(\g^{k_-}_-,\g^{k_+}_+)/S^1)$ would just be its count. Also, the above theorem continues to hold for more general settings, such as whole symplectic manifold and capped orbits. 
\end{rmk}

Let $\g_\pm$ be periodic orbits, and let $u$ be a Floer cylinder between $\g_\pm$.
For a point $a \in \im(\g_\mp)$, we define the \textbf{evaluation map} as a set-valued map given by
\[
\ev^\pm(a;u) = \left\{b\in\im(\g_\pm)\,:\,\text{for }t_0\text{ such that }a=\lim_{s\to\mp\infty}u(s,t_0),\,b=\lim_{s\to \pm\infty}u(s,t_0) \right\}\subset \im(\g_\pm),
\]
double signs in same order. If $\g_\mp$ is a simple orbit, then $\ev^\pm(a;u)$ is a singleton. However, if $\g_\mp$ is a $k$-fold cover of a simple orbit, $\ev^\pm(a;u)$ may consist of multiple distinct points, since there can be multiple values of $t_0$ satisfying $\lim_{s\to\mp\infty}u(s,t_0)=a$ as in the proof of \Cref{Thm - Cylinder Counting}; see \Cref{Figure - CascadeCounting} for examples. Furthermore, note that this map is well-defined whenever a connecting Floer cylinder exists, even if the relative Conley--Zehnder index satisfies $\mu_{CZ}(\g_+,\g_-)\geq 2$.

\begin{figure}[htbp]
  \begin{subfigure}[b]{0.45\textwidth}
    \centering
    \includegraphics[width=\textwidth]{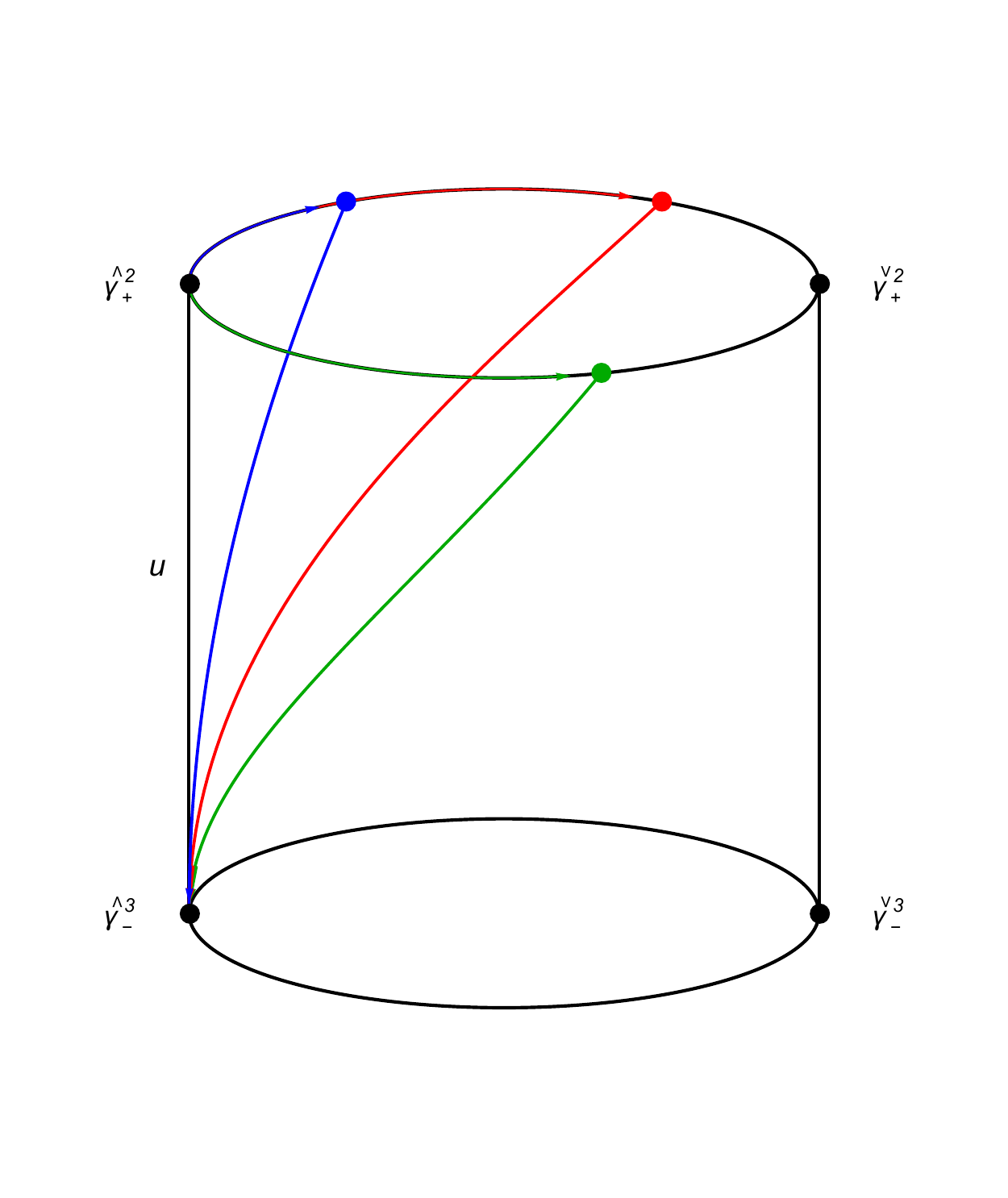}
    \caption{Cascades through $\ev^+(\gM_-^3;u)$}
  \end{subfigure}
  \hspace{0.07\textwidth}
  \begin{subfigure}[b]{0.45\textwidth}
    \centering
    \includegraphics[width=\textwidth]{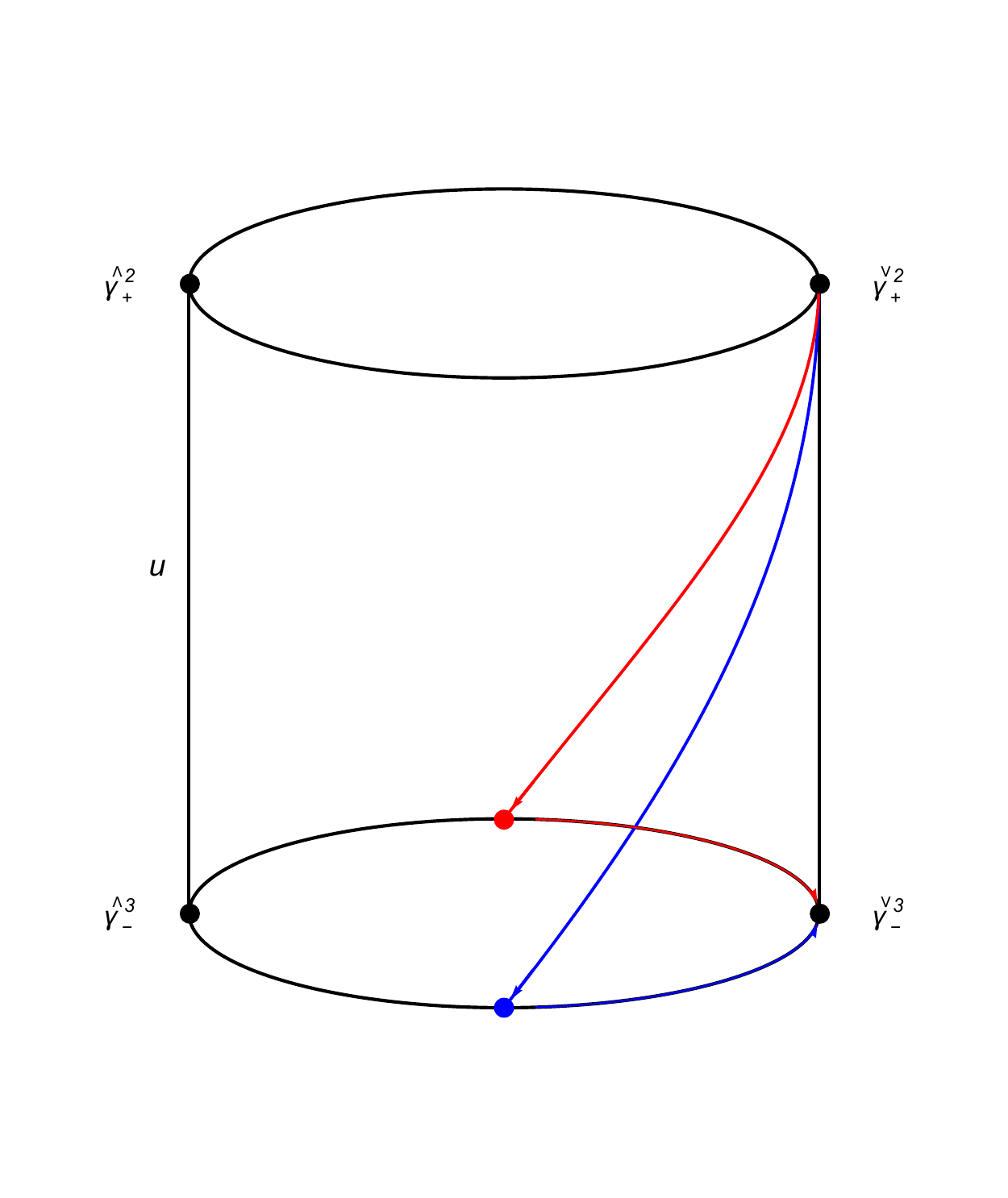}
    \caption{Cascades through $\ev^-(\gm_+^2;u)$}
  \end{subfigure}
  \caption{Floer cascades containing $u$ between $2$-cover and $3$-cover orbits}
    \label{Figure - CascadeCounting}
  \end{figure}

\section{Construction of the Floer Cylinders}\label{Section - cylinder}

\subsection{Model Hamiltonians}\label{Subsection - Model}
In this section, we construct a model Hamiltonian for each bifurcation type.
We first define the model of the tubular neighborhood of the orbit as
    $$
    W = I_r\times S^1_\theta \times D^2_{z=x+iy},
    $$
where $I$ is a small interval containing $0$, $S^1$ is regarded as $\R/\Z$, and we equip it with a split symplectic form
    $$
    \o =dr \w d\theta + dx \w dy.
    $$
We take a function $h=h(r)$ such that $h'(0)=1$ and $h''(0)>0$.
For a coprime pair $(k,l)$, we define a covering space
\[
\tilde{W}_{k,l}:=I\times \R/k\Z\times D^2= I\times \R\times D^2 / (r,\theta,z)\sim (r,\theta\text{ mod }1,e^{2\pi il\theta/k}z),
\]
equipped with a covering map
    $$
    \begin{aligned}
    \mathrm{cov}_{k,l}:\tilde{W}_{k,l}&\to W\\
    (r,\theta,z)&\mapsto (r,\theta\text{ mod }1, e^{2\pi i lm/k}z)\quad\text{if }m\leq \theta <m+1.
    \end{aligned}
    $$
This is exactly the same as the one mentioned in \Cref{Subsection - Unwrapped generating Hamiltonian}.
Let a coprime pair $(k,l)$ be fixed, and let $G_\eps$ be the unwrapped generating Hamiltonian of the bifurcation of the $k$-th cover of a periodic orbit with Floquet multiplier $\exp (2\pi i l/k)$.
We assume that $G_\eps$ is truncated at a finite degree.
We define a \textbf{model Hamiltonian} $H_\eps=H_{\eps;k,l}$ first on $\tilde{W}_{k,l}$ by
    $$
    \tilde{H}_{\eps;k,l}(r,\theta,x,y) = kh(r) +  G_\eps (x,y).
    $$
Note that $G_\eps$ is $\Z_k$-symmetric, so $\tilde{H}_{\eps;k,l}$ is equivariant under the deck transformation and descends to a well-defined Hamiltonian $H_{\eps;k,l}$ on $W$.

\begin{lmm}\label{Lemma - Critical points and Periodic orbits}
    For a sufficiently small $D^2$ and $\eps$, there exists a 1-1 correspondence between the critical points of $G_\eps$ and the 1-periodic orbits of $\tilde{H}_\eps$.
    The correspondence is given by
        $$
        \begin{aligned}
            \crit(G_\eps)&\to \mathcal{P}(\tilde{H}_\eps)\\
            p&\mapsto \g_p(t) = (0,kt,p).
        \end{aligned}
        $$
\end{lmm}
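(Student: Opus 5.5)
Since $\tilde H_\eps=kh(r)+G_\eps(x,y)$ is a sum of a function of $(r,\theta)$ and a function of $z=(x,y)$, and $\o$ is split, the Hamiltonian vector field splits as
\[
X_{\tilde H_\eps}=k h'(r)\,\pp_\theta + X_{G_\eps}(x,y),
\]
with no $\pp_r$-component, because $\tilde H_\eps$ does not depend on $\theta$. The proof will integrate the two factors separately.

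First, the flow preserves $r$, so $r(t)=r_0$ is constant along any trajectory. Then $\theta(t)=\theta_0+kh'(r_0)t$, and independently $z(t)=Fl^{G_\eps}_t(z_0)$.

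Next, a $1$-periodic orbit on $\tilde W_{k,l}$ must close in each factor. Before doing this, I will check the identification in the definition of $\tilde W_{k,l}$: as written, it is $\theta\mapsto\theta$ mod $1$ twisted, and is meant to realize $\theta\in\R/k\Z$. I will work on the product $I\times\R/k\Z\times D^2$, which is how the paper intends the unwrapped picture, with the twist absorbed into the rotating frame. Closing in the $\theta$-factor then requires $kh'(r_0)\in k\Z$, i.e.\ $h'(r_0)\in\Z$. Since $h'(0)=1$ and $h''(0)>0$, after shrinking $I$ the map $h'$ is a diffeomorphism of $I$ onto a small interval around $1$, so the only solution is $r_0=0$. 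Closing in the $z$-factor requires $z_0$ to be a $1$-periodic point of the flow of $X_{G_\eps}$ in $D^2$.

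The key step, and the main obstacle, is to show that for small $D^2$ and $\eps$ the only $1$-periodic orbits of $X_{G_\eps}$ in $D^2$ are its constant orbits, that is, the critical points of $G_\eps$. Here I will invoke \Cref{Lemma - C2small} and \Cref{Lemma - C2smallsymmetric}: after a linear symplectic change of coordinates, $\|G_\eps\|_{C^2}<\delta$ for all $|\eps|<\eps_0$. I will then use the standard fact that a Hamiltonian on a convex planar domain with $\|\Hess G\|<2\pi$ has no non-constant periodic orbits of period $\le1$. The argument is a Yorke/Wirtinger-type estimate. For a non-constant $1$-periodic $z(t)$, set $v=\dot z=J\nabla G(z)$. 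Then $\dot v=J\Hess G(z)\,v$, so $\|\dot v\|_{L^2}\le\delta\|v\|_{L^2}$. Since $v$ has mean zero over a period, Wirtinger's inequality gives $\|\dot v\|_{L^2}\ge2\pi\|v\|_{L^2}$, which forces $v\equiv0$ when $\delta<2\pi$.

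Two points need care. The first is that the linear rescaling is symplectic, so it preserves periodic orbits and the critical-point correspondence. The second is that orbits must stay in $D^2$; this holds because $G_\eps$ is a finite truncation and the estimate is applied in the rescaled section. A further point is that in the $\R/k\Z$ picture the period-$1$ condition in $z$ is really period $1$ for the unwrapped $G_\eps$. This is consistent because $G_\eps$ generates $\Psi^k$ in the rotating frame, and the twist $e^{2\pi il\theta/k}$ acts by the deck $\Z_k$-symmetry, which fixes the critical set setwise. Consequently each $z_0$ is a critical point $p$, and the orbit is $\g_p(t)=(0,kt,p)$.

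Conversely, each $p\in\crit(G_\eps)$ gives the orbit $(0,kt,p)$ directly from the formula for $X_{\tilde H_\eps}$, because $h'(0)=1$. Distinct critical points give distinct orbits, so the map is a bijection.
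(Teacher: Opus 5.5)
Your proposal is correct and follows the paper's proof. You split $X_{\tilde H_\eps}=kh'(r)\pp_\theta+X_{G_\eps}$, use conservation of $r$ and the $\theta$-closing condition to force $r_0=0$, and use $C^2$-smallness of $G_\eps$ to reduce the transverse part to constant orbits at critical points. You also fill in details the paper only asserts: the Wirtinger estimate for the $C^2$-small step, the fact that closing only forces $h'(r_0)\in\Z$ (so $r_0=0$ needs $I$ small), compatibility with the symplectic rescaling of \Cref{Lemma - C2small}, and the converse direction.
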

\begin{proof}
The Hamiltonian equations for $\tilde{H}_\eps$ are given by
    \[
    \begin{pmatrix}
        \dot{r}\\
        \dot{\theta}\\
        \dot{x}\\
        \dot{y}
    \end{pmatrix}
    =
    \begin{pmatrix}
        -\pp_\theta\tilde{H}_\eps\\
        \pp_r\tilde{H}_\eps\\
        -\pp_y\tilde{H}_\eps\\
        \pp_x\tilde{H}_\eps
    \end{pmatrix}
    =\begin{pmatrix}
        0\\
        kh'(r)\\
        -\pp_y G_\eps\\
        \pp_x G_\eps
    \end{pmatrix}.
    \]
We see that $r(t)=r_0$ is constant along the orbit, and so is $h'(r)=h'(r_0)$. For the orbit to be 1-periodic in $\tilde{W}_{k,l}$, $\theta$ must increase by exactly $k$ over $t \in [0,1]$, which requires $h'(r_0)=1$. Since $h'(0)=1$ and $h''(0) \neq 0$, this implies $r_0=0$ is constantly $0$ along the orbit.

The transverse part of the orbit corresponds to a 1-periodic orbit of the Hamiltonian flow of $G_\eps$. Since we take the neighborhood to be small enough so that $G_\eps$ is $C^2$-small, its only 1-periodic orbits are constant orbits, which correspond exactly to the critical points of $G_\eps$.
\end{proof}

\begin{lmm}[Model Hamiltonian]\label{Lemma - Model Hamiltonian}
Let $H_\eps=H_{\eps;k,l}$ be the model Hamiltonian on a sufficiently small $D^2$, constructed with an unwrapped generating Hamiltonian $G_\eps$ of a certain bifurcation type. Then:
\begin{enumerate}
    \item $H_0$ has only one periodic orbit $\g_0(t)=(0,kt,0)$ with period $1/k$.
    \item As $\eps$ varies, $\g_0$ experiences the bifurcation corresponding to $G_\eps$.
\end{enumerate}
\end{lmm}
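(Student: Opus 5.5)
The plan is to read both claims off \Cref{Lemma - Critical points and Periodic orbits}, applied on the cover $\tilde W_{k,l}$, and then push the result down along $\mathrm{cov}_{k,l}$.

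For (1), at $\eps=0$ the origin is the unique critical point of $G_0$ in a sufficiently small disc. For birth-death, period doubling and materialization this is visible from the leading terms: $\alpha x^2/2+\beta y^3/3$ and $\alpha x^2/2+\beta y^4/4$ have isolated critical points at $0$. For phantom kiss and emission it follows from the computations in \Cref{Subsection - k bifurcation} with $\eps=0$, where the nonzero critical points sit at radius $O(\eps)$ or $O(\sqrt{\eps})$. Isolatedness persists under the higher-order truncation terms once the disc is small. By \Cref{Lemma - Critical points and Periodic orbits}, the only $1$-periodic orbit of $\tilde H_0$ is then $\tilde\g(t)=(0,kt,0)$. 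I would apply the same argument with the period replaced by $1/j$ for $j\mid k$ to rule out orbits of $H_0$ in $W$ of other periods that could lift. The equation $\dot\theta=kh'(r)$ forces $h'(r_0)=j/k$ or similar. Shrinking $I$ so that $h'$ stays close to $1$ excludes every value except $h'=1$, that is $r_0=0$. The transverse flow of the $C^2$-small $G_0$ (\Cref{Lemma - C2small}, \Cref{Lemma - C2smallsymmetric}) has no nonconstant short periodic orbits. Projecting, $\mathrm{cov}_{k,l}(0,kt,0)=(0,kt \bmod 1,0)$ traverses the core circle $k$ times in unit time, so it is the $k$-fold cover of the simple orbit $\g_0(t)=(0,kt,0)$ of period $1/k$.

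For (2), I will compute the linearized return map of $\g_0$ on the section $\{\theta=0\}\times D^2$ inside the energy level. On $W$, one pass of $\theta$ from $0$ to $1$ takes time $1/(kh'(r))$, during which $z$ flows by $X_{G_\eps}$ for that time. The gluing $z\mapsto e^{2\pi il/k}z$ then applies. So the return map is
\[
\Psi_\eps=R_{2\pi l/k}\circ Fl^{G_\eps}_{1/k}
\]
up to the small energy correction in $r$. By the $\Z_k$-equivariance of $G_\eps$, $R$ commutes with $Fl^{G_\eps}$, which gives
\[
\Psi_\eps^k=R_{2\pi l}\circ Fl^{G_\eps}_1=Fl^{G_\eps}_1 .
\]
Hence $G_\eps$ is exactly an unwrapped generating Hamiltonian of $\Psi_\eps^k$. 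Its Floquet multiplier at $\eps=0$ is $e^{2\pi il/k}$, because $dG_0$ vanishes to second order (or is a shear for $k=1,2$). The $k$-periodic points of $\Psi_\eps$ are then the critical points of $G_\eps$, with the indices and stability types tabulated in \Cref{Table - Bifurcation,Table - Symmetric Bifurcation}. This is precisely the bifurcation attached to $G_\eps$ in \Cref{Section - Classification of bifurcations,Section - Symmetric}. The orbits through $\Z_k$-orbits of critical points come from \Cref{Lemma - Critical points and Periodic orbits} on the cover.

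The main obstacle is the energy constraint. On a fixed level $\tilde H_\eps=c$, varying $z$ changes $r$ and hence $h'(r)$, so the return time is not exactly $1/k$. The true return map is therefore $Fl^{G_\eps}_{\tau(z)}$ rather than $Fl^{G_\eps}_1$. To handle this, I would first try to show that $\tau(z)=1+O(G_\eps)$. The map is then generated by $\Phi(G_\eps)$ for a function $\Phi$ with $\Phi'(0)=1$, obtained by inverting $kh(r)=c-G_\eps$. Such a $\Phi(G_\eps)$ has the same critical points and Morse indices as $G_\eps$, because $\Phi$ is monotone near $0$, so the bifurcation type is unchanged.
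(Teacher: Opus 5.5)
Your argument is correct and rests on the same idea as the paper, but it is much more explicit. The paper proves the lemma in one line: $G_\eps$ was constructed as a generating Hamiltonian of the $k$-th return map, and away from the core orbit the periodic orbits of $\tilde H_{\eps;k,l}$ correspond $k$-to-$1$ to those of $H_{\eps;k,l}$ via $\mathrm{cov}_{k,l}$.

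Your part (1) is that lifting argument written out. Your part (2) replaces the appeal to the construction by an actual computation of the model's return map, $\Psi_\eps=R_{2\pi l/k}\circ Fl^{G_\eps}_{1/k}$ up to the return-time correction. $\Z_k$-equivariance then gives $\Psi_\eps^k=Fl^{G_\eps}_{\tau}=Fl^{\Phi(G_\eps)}_1$. This is a genuine check, which the paper skips, that the model realizes the prescribed bifurcation on a fixed energy level. There the return time is not constant, a defect the paper only mentions in passing after \Cref{Table - Bifurcation}.

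There are three places to tighten.

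First, the $\Phi$-trick is exact, and the reason is not an estimate $\tau=1+O(G_\eps)$. The splitting $\tilde H_\eps=kh(r)+G_\eps$ makes $r$, and hence $\tau$, a function of $G_\eps(z)$ alone. This quantity is conserved by $X_{G_\eps}$, so $Fl^{G_\eps}_{\tau(z)}(z)=Fl^{\Phi(G_\eps)}_1(z)$ with $\Phi'=\tau$. Then $\Hess\,\Phi(G_\eps)(p)=\Phi'(G_\eps(p))\Hess G_\eps(p)$ at critical points, which is what preserves the Morse indices.

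Second, the hedge ``$h'(r_0)=j/k$ or similar'' should be made precise. An orbit of period $1/j$ along which $\theta$ advances by $m$ has $h'(r_0)=mj/k$. Since $|mj/k-1|\ge 1/k$ unless $mj=k$, shrinking $I$ so that $|h'-1|<1/k$ forces $mj=k$ and hence $r_0=0$.

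Third, the model does have other periodic orbits, for example $(r_0,kh'(r_0)t,0)$ with $r_0\neq0$. So statement (1) must be read, as you implicitly do, as uniqueness among orbits of period $1/k$.
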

\begin{proof}
This is a direct consequence of the construction of $G_\eps$ as a generating Hamiltonian of $k$-th return map of the corresponding bifurcation type. Note that there exists a $k$-to-$1$ correspondence between the periodic orbits of $\tilde{H}_{\eps;k,l}$ and the periodic orbits of $H_{\eps;,k,l}$ except the core orbit, given by the covering map.
\end{proof}
Now we rigorously verify the relative Conley--Zehnder indices for the periodic orbits of our model Hamiltonians, which were anticipated in \Cref{Subsection - Relative CZ index}.

\begin{prop}\label{Proposition - Relative CZ Index}
    Let $H_{\eps}$ be the model Hamiltonian on the base space $W$.
    For any two critical points $p$ and $q$ of the unwrapped generating Hamiltonian $G_\eps$, let $\g_p$ and $\g_q$ be the corresponding periodic orbits of $H_\eps$. 
    Then, their relative Conley--Zehnder index is given by
        \[
        \mu_{CZ}(\g_p, \g_q) = \Ind(q) - \Ind(p),
        \]
    where $\Ind$ denotes the Morse index as a critical point of $G_\eps$.
\end{prop}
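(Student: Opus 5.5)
We plan to compute the Conley--Zehnder indices directly on the cover $\tilde W_{k,l}$, using one framing common to all orbits. By \Cref{Lemma - Critical points and Periodic orbits}, every $1$-periodic orbit of $\tilde H_\eps$ has the form $\g_p(t)=(0,kt,p)$ with $p\in\crit(G_\eps)$. Its projection under $\mathrm{cov}_{k,l}$ is the corresponding orbit of $H_\eps$ (counted with its multiplicity), and the linearized flows agree under the local diffeomorphism $\mathrm{cov}_{k,l}$.

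\emph{Step 1: the framing.} On $\tilde W_{k,l}=I\times\R/k\Z\times D^2$, take the constant framing $\fT$ given by the coordinate vectors $\pp_r,\pp_\theta,\pp_x,\pp_y$. Equivalently, take $\pp_x,\pp_y$ for the transverse bundle $\xi$, which splits as $\mathrm{span}\{\pp_x,\pp_y\}$ along each $\g_p$ because $r\equiv0$. These orbits all lie in one tubular neighborhood, and $\fT$ is a single global trivialization over it. This is exactly the common framing required for the relative index in \Cref{Subsection - CZ index}. Pushing $\fT$ down to $W$ gives a framing along the downstairs orbits; it differs from a framing on $W$ only by the fixed rotation $e^{2\pi il\theta/k}$, which is the same for every $p$. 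Any resulting shift in the absolute index is therefore uniform and cancels in $\mu_{CZ}(\g_p,\g_q)$.

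\emph{Step 2: splitting the linearized flow.} Since $\tilde H_\eps=kh(r)+G_\eps(x,y)$ is a sum of functions of $(r,\theta)$ and of $(x,y)$, the linearized flow along $\g_p$ splits. In the $(r,\theta)$ block it is the shear $\begin{pmatrix}1&0\\ kh''(0)t&1\end{pmatrix}$, which is independent of $p$. In the transverse block it is $\exp(tJ\,\Hess G_\eps(p))$. For the index of the orbit we restrict to $\xi$, so only the transverse block matters, with the $(r,\theta)$-direction degenerate as usual for autonomous orbits. Thus $\mu_{CZ}^\fT(\g_p)=\mu_{RS}\bigl(t\mapsto\exp(tJ S_p)\bigr)_{t\in[0,1]}$, where $S_p=\Hess G_\eps(p)$.

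\emph{Step 3: the small-Hessian formula.} By \Cref{Lemma - C2small} and \Cref{Lemma - C2smallsymmetric}, we may assume $\|S_p\|<2\pi$. For a nondegenerate symmetric $S$ with $\|S\|<2\pi$, the standard normalization of Robbin--Salamon gives $\mu_{RS}(\exp(tJS))=\tfrac12\operatorname{sign}(S)=1-\Ind(p)$ in dimension two, up to the sign convention fixed in \Cref{Subsection - Relative CZ index}. One justification is that the path $e^{tJS}$, $t\in[0,1]$, has no crossings for $t\in(0,1]$ when $\|S\|<2\pi$, so only the initial crossing form $S$ contributes, with weight $\tfrac12\operatorname{sign}$. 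Nondegeneracy of each $p$ for $\eps\ne0$ is given by the Hessian computations of \Cref{Section - Classification of bifurcations,Section - Symmetric}. Taking differences yields $\mu_{CZ}(\g_p,\g_q)=\Ind(q)-\Ind(p)$.

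\emph{Main obstacle.} The delicate point is the identification of framings in Step 1. We must check that the constant framing upstairs and the rotating framing downstairs give the same \emph{relative} index for orbits of different multiplicity, for example the core orbit $\g_\eps^k$ versus a simple $k$-periodic orbit $\g_H$. Both lifts are closed loops in $\tilde W_{k,l}$ homotopic to $\theta\mapsto(0,\theta,0)$ through the contractible disk factor. Hence a single framing on $\tilde W_{k,l}$ is homotopic along these loops, and the relative index is framing-independent, as in \Cref{Subsection - CZ index}. A secondary check is that the sign convention for $\mu_{RS}$ matches $\mu_{CZ}=1-\Ind$ rather than $\Ind-1$. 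We would verify this with the model $G=\tfrac{c}{2}|z|^2$ with small $c>0$, where $\Ind=0$.
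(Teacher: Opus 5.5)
Your proposal is correct and follows the same route as the paper. You pass to $\tilde W_{k,l}$, where the linearized flows agree with those downstairs, and use a framing adapted to the splitting $\tilde H_\eps = kh(r)+G_\eps$, so that only the $p$-independent longitudinal part and the transverse path $e^{tJ\Hess G_\eps(p)}$ remain; the $C^2$-small normalization then gives $1-\Ind(p)$. You also check explicitly that the constant framing upstairs differs from a pulled-back global framing of $W$ by the same $l$-fold twist along every lifted orbit, including the core orbit $\g_\eps^k$, which spells out what the paper only asserts as ``the relative index is independent of this choice.''
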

\begin{proof}
    Let $\fT$ be a trivialization of the symplectic subbundle $\xi$ over $W$ along the central axis $\{r=0\}$, and let $\tilde{\fT} = (\mathrm{cov}_{k,l})^*\fT$ be its pullback 
    to $\tilde{W}_{k,l}$.
    Since $\mathrm{cov}_{k,l}$ is a local symplectomorphism intertwining the two Hamiltonian flows, and since each orbit $\g_p$ is the image of its lift $\tilde{\g}_p$ under $\mathrm{cov}_{k,l}$ with the same period, the linearized flow along $\tilde{\g}_p$ expressed in $\tilde{\fT}$ and the linearized flow 
    along $\g_p$ expressed in $\fT$ are the same path in $Sp(2)$. Hence
    \[
    \mu_{CZ}^{\tilde{\fT}}(\tilde{\g}_p) = \mu_{CZ}^{\fT}(\g_p)
    \]
    for every critical point $p$ of $G_\eps$, and it suffices to compute the relative index in the covering space.

    On $\tilde{W}_{k,l}$, the Hamiltonian is decoupled as $\tilde{H}_{\eps;k,l} = kh(r) + G_\eps(x,y)$, and we may choose $\tilde{\fT}$ compatible with the splitting, and the relative index is independent of this choice.
    The linearized flow along each $\tilde{\g}_p$ then splits into a longitudinal part, governed by $kh(r)$, and a transverse part, governed by the linearized flow of the autonomous Hamiltonian $G_\eps$ at $p$.
    The longitudinal path is the same for all the orbits $\tilde{\g}_p$, so its index contribution cancels in the relative index. For the transverse part, since $G_\eps$ is $C^2$-small, the contribution is $1-\Ind(p)$ as in \Cref{Subsection - Relative CZ index}. Therefore
    \[
    \mu_{CZ}(\g_p,\g_q) 
    = \mu_{CZ}^{\tilde{\fT}}(\tilde{\g}_p) 
      - \mu_{CZ}^{\tilde{\fT}}(\tilde{\g}_q)
    = \Ind(q) - \Ind(p).\qedhere
    \]
    \end{proof}

\subsubsection*{Relation between a general Hamiltonian}

The model Hamiltonians introduced above should be understood as Floer-theoretic normal forms of the local bifurcation germ, rather than as literal representatives of arbitrary Hamiltonian systems. We explain this explicitly.

Let $\bar{H}_\eps$ be a generic one-parameter family of Hamiltonians defined on a tubular neighborhood $W$ of a periodic orbit $\g_0$, and suppose that $\bar{H}_\eps$ undergoes one of the bifurcations considered in this paper at $\eps=0$. Let $\bar{\Psi}_\eps$ denote the corresponding Poincare return map, and let $k$ be the covering number relevant for the bifurcation. We write $H_\eps$ for the model Hamiltonian constructed above from the finite degree truncation of the unwrapped generating Hamiltonian of $\bar{\Psi}_\eps^k$.

We first explain which part of a general Hamiltonian flow is seen by the model Hamiltonian. After choosing a Poincare section and a framing along $\g_0$, the flow of $\bar{H}_\eps$ can be decomposed into three pieces:
\[
        \text{(the part encoded by the return map)}
        \,+\,
        \text{(a transverse rotation)}
        \,+\,
        \text{(a Hamiltonian loop)}.
\]
More precisely, the transverse linearized flow along $\g_0$ may contain a $2\pi N$-rotation in the chosen framing. This rotational part is not an additional bifurcation datum, and it can be discarded by an appropriate choice of framing.
This choice changes absolute Conley--Zehnder indices by the usual framing shift
\[
        \mu_{CZ}^{\fT'}(\g)
        =
        \mu_{CZ}^{\fT}(\g) + 2N,
\]
with the sign depending on the convention for the orientation of the loop. Since the same rotation acts on all nearby orbits, this shift cancels in all relative Conley--Zehnder indices used below.

After this rotational part is recorded in the framing, we consider the Hamiltonian loop, of which the time 1-flow is an identity. This loop part can be removed by Floer-theoretic naturality; see \cite{Seidel_97,Ginzburg_Gurel_10}. If $\psi_\eps^t$ denotes this loop, then the time-dependent change of variables
\[
        u(s,t) \mapsto (\psi_\eps^t)^{-1}(u(s,t))
\]
identifies the corresponding Floer equations, at the cost of replacing the almost complex structure $J_t$ by the pulled-back almost complex structure
\[
        \widetilde J_t
        =
        (d\psi_\eps^t)^{-1}
        \circ J_t \circ
        d\psi_\eps^t .
\]
Thus the Hamiltonian loop part can be recorded at the choice of a regular almost complex structure, which is irrelevant up to chain homotopy. 

It remains to discuss the finite degree truncation. Let $\bar{G}_\eps$ be the unwrapped generating Hamiltonian of $\bar\Psi_\eps^k$, and let $G_\eps$ be its truncation to a degree $n$ chosen larger than the determinacy degree, say $n\geq k+2$ for $k$-th cover bifurcation. The model Hamiltonian $H_\eps$ is constructed from $G_\eps$, so the return map of
$H_\eps$ agrees with $\bar{\Psi}_\eps$ only up to order $n$. The discrepancy beyond degree $n$ is not a Hamiltonian loop, and is not absorbed by the framing.

The point is that the bifurcation types considered here are finite-jet phenomena. This is a standard argument in the analysis of bifurcations, see \cite{Meyer_70,Abraham_Marsden_78,Arnold_89,Meyer_Hall_Offin_13}. After shrinking the isolating neighborhood if necessary, the finite jet $G_\eps$ determines the local data used in our calculation: the number of nearby periodic orbits, their relative positions and Morse indices as critical points of the unwrapped generating Hamiltonian, stability types and the resulting relative Conley--Zehnder indices. The higher-order remainder can change the literal chain-level representative for a particular Hamiltonian family, but as long as no additional periodic orbit enters or leaves the chosen isolating neighborhood, it is related to the model by the standard isolated continuation argument for local Floer homology.

Consequently, the computations in \Cref{Section - LFCC} should be read as explicit computations for a normal form representative. We fix the model Hamiltonian $H_\eps$, the split almost complex structure adapted to it, and Morse functions on the periodic orbits with exactly one maximum and one minimum. With these choices, the relevant Floer cylinders are the gradient revolutions constructed below, which will be proven in \Cref{Prop - Uniqueness} and the local Floer chain complex can be counted explicitly. Different Hamiltonian families with the same return map, or different auxiliary data, may give different chain complexes, but they represent the same local Floer homology under isolated continuation in the sense of \cite{Usher_Zhang_16}. The displayed complexes capture the essential chain-level mechanism of the bifurcation in this representative.

\subsection{Gradient Revolution}\label{Subsection - Gradient Revolution}
In this section, we construct a Floer cylinder directly from the gradient flow of the unwrapped generating Hamiltonian.
First, let $G_\eps$ be the unwrapped generating Hamiltonian of a fixed bifurcation type and $\tilde{H}_{\eps;k,l}$ be model Hamiltonian defined on a covering space $\tilde{W}_{k,l}$.
Let $p_\pm$ be critical points of $G_\eps$, and $\g_\pm$ be periodic orbits corresponding to $p_\pm$.
Suppose there is a negative gradient trajectory $\eta(s)$ on $D^2$ from $p_+$ to $p_-$, satisfying $\lim_{s\to\pm\infty}\eta(s)=p_\pm$ and $\pp_s \eta =-\nabla G_\eps(\eta)$.
By the index relation, if $\Ind(p_+)=\Ind(p_-)-1$, then \Cref{Proposition - Relative CZ Index} guarantees that the relative Conley--Zehnder index is $\mu_{CZ}(\tilde{\g}_{+})=\mu_{CZ}(\tilde{\g}_{-})+1$.

We equip $\tilde{W}_{k,l}$ with an $\omega$-compatible almost complex structure $J$ such that its restriction to the transverse section $D^2$ coincides with the standard complex structure $J_0$.
With respect to the standard Euclidean metric on $D^2$, our sign convention for the Hamiltonian vector field yields
    \[
    J_0 X_{G_\eps} =-\nabla G_\eps. 
    \]
We define the \textbf{gradient revolution} from $\tilde{\g}_{+}$ to $\tilde{\g}_{-}$ as the map $\tilde{u}: \R \times \R/\Z \to \tilde{W}_{k,l}$ given by
    \[
    \tilde{u}(s,t) = (0,kt,\eta(s)).
    \]

Now, we will define an almost complex structure $J$ on $\tilde{W}_{k,l}$ as follows. Define $J_\Sigma$ be the standard almost complex structure on $\Sigma$, and $J_R$ be the almost complex structure on $R=\R \times S^1$ such that $J(\partial_r)=\partial_\theta$. Now, define
\begin{equation}\label{J}
    J=J_R\oplus J_\Sigma.
\end{equation}
Then, clearly, $J$ is $\mathrm{cov}_{k,l}$-equivariant and $\o$-compatible.

\begin{prop}\label{Proposition - Gradient revolution in cover}
    The gradient revolution $\tilde{u}(s,t)$ is a Floer cylinder in $\tilde{W}_{k,l}$ satisfying \[\lim_{s\to \pm\infty}\tilde{u}(s,t)=\tilde{\g}_\pm(t).\]
\end{prop}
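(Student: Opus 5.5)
The proof is a direct computation: substitute $\tilde u(s,t)=(0,kt,\eta(s))$ into the Floer equation $\pp_s u+J(\pp_t u-X_{\tilde H}(u))=0$ and verify it componentwise, using the splitting $J=J_R\oplus J_\Sigma$ from \eqref{J} and the decoupled form $\tilde H_{\eps;k,l}=kh(r)+G_\eps(x,y)$. I will then read off the asymptotics from those of $\eta$.

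First, the Hamiltonian vector field. From the computation in the proof of \Cref{Lemma - Critical points and Periodic orbits}, at a point $(0,\theta,z)$ we have
\[
X_{\tilde H}=kh'(0)\,\pp_\theta+X_{G_\eps}(z)=k\,\pp_\theta+X_{G_\eps}(z),
\]
since $h'(0)=1$ and $\pp_\theta\tilde H=0$ gives no $\pp_r$-component.

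Second, the derivatives of $\tilde u$. We have $\pp_s\tilde u=(0,0,\pp_s\eta)=(0,0,-\nabla G_\eps(\eta))$ and $\pp_t\tilde u=(0,k,0)=k\,\pp_\theta$. Hence
\[
\pp_t\tilde u-X_{\tilde H}(\tilde u)=-X_{G_\eps}(\eta(s)),
\]
which is purely transverse. The $R$-component of the Floer equation therefore reads $0+J_R(0)=0$. The $\Sigma$-component reads
\[
-\nabla G_\eps(\eta)-J_0X_{G_\eps}(\eta)=-\nabla G_\eps(\eta)+\nabla G_\eps(\eta)=0,
\]
by the stated identity $J_0X_{G_\eps}=-\nabla G_\eps$.

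The only point needing care is this sign convention. With $i_{X_H}\o=-dH$ and $\o=dx\w dy$, we get $X_G=(-\pp_yG,\pp_xG)$. Taking $J_0\pp_x=\pp_y$ gives $J_0X_G=(-\pp_xG,-\pp_yG)=-\nabla G$, consistent with the paper. The rest is bookkeeping.

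Finally, the asymptotics and well-definedness. Since $\eta(s)\to p_\pm$ as $s\to\pm\infty$, we have $\tilde u(s,t)\to(0,kt,p_\pm)=\tilde\g_\pm(t)$, uniformly in $t$, by the correspondence of \Cref{Lemma - Critical points and Periodic orbits}. The map $\tilde u$ is well defined on $\R\times\R/\Z$ because $\theta=kt$ closes up in $\R/k\Z$ as $t$ goes from $0$ to $1$. Convergence of a gradient trajectory to a nondegenerate critical point is exponential, so $\tilde u$ has finite energy and is a genuine Floer cylinder in $\tilde W_{k,l}$.
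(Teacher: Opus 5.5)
Your proposal is correct and is essentially the paper's proof. You substitute $\tilde u(s,t)=(0,kt,\eta(s))$ into the Floer equation, use $X_{\tilde H_\eps}=k\pp_\theta+X_{G_\eps}$ along $r=0$ (since $h'(0)=1$) so that the $\pp_\theta$-terms cancel, conclude from $J_0X_{G_\eps}=-\nabla G_\eps$, and read off the asymptotics from $\eta$; your extra checks of the sign convention, of $\theta=kt$ closing up in $\R/k\Z$, and of finite energy are correct details that the paper leaves implicit.
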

\begin{proof}
    We need to verify that $\tilde{u}(s,t)$ satisfies the Floer equation $\bar{\pp}_J u = 0$. 
    By evaluating the derivatives along the cylinder, we note that the longitudinal component is exactly $\theta(t) = kt$, yielding $\pp_t \tilde{u} = k\pp_\theta$.
    Recall that the model Hamiltonian is $\tilde{H}_{\eps;k,l} = kh(r) + G_\eps(u,v)$. Since $r=0$ and $h'(0)=1$, its Hamiltonian vector field evaluated along the cylinder is $X_{\tilde{H}_\eps} = k\pp_\theta + X_{G_\eps}$.
    Substituting these into the Floer equation gives
        \[
        \begin{aligned}
            \bar{\pp}_J \tilde{u} &= \pp_s \tilde{u} + J(\pp_t \tilde{u} - X_{\tilde{H}_\eps}) \\
            &= \pp_s \eta + J\big(k\pp_\theta - (k\pp_\theta + X_{G_\eps})\big) \\
            &= -\nabla G_\eps - J_0 X_{G_\eps}= 0.
        \end{aligned}
        \]
    The asymptotic boundary conditions follow from the limits of the Morse trajectory $\eta(s)$.
\end{proof}

\begin{prop}\label{Propotision - Gradient Revolution}
    The gradient revolution $\tilde{u}$ canonically descends to a Floer cylinder $u$ in the base space $W$, connecting the corresponding periodic orbits $\g_{+}$ and $\g_{-}$.
\end{prop}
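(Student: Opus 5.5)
We set $u := \mathrm{cov}_{k,l}\circ\tilde u$ and check that it is a Floer cylinder in $W$ with the correct asymptotics. The plan has three steps, each relying on a property of the covering already recorded above.

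\emph{Step 1: the Floer equation.} The covering $\mathrm{cov}_{k,l}$ is a local symplectomorphism for the split form $\o = dr\w d\theta + dx\w dy$, since the deck rotation $z\mapsto e^{2\pi i l m/k}z$ preserves $dx\w dy$. By construction $\tilde H_{\eps;k,l} = H_{\eps;k,l}\circ \mathrm{cov}_{k,l}$. This uses the $\Z_k$-symmetry of $G_\eps$ from \Cref{Corollary - Unwrapped Generating Hamiltonian}, together with the fact that $kh(r)$ is deck-invariant. Hence $d\,\mathrm{cov}_{k,l}$ maps $X_{\tilde H}$ to $X_H$. The almost complex structure $J = J_R\oplus J_\Sigma$ of \eqref{J} is $\mathrm{cov}_{k,l}$-equivariant, because the standard $J_0$ commutes with rotations. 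So $J$ descends to $W$, and $d\,\mathrm{cov}_{k,l}$ intertwines the two $J$'s. Applying $d\,\mathrm{cov}_{k,l}$ to the identity of \Cref{Proposition - Gradient revolution in cover} then gives $\pp_s u + J(\pp_t u - X_H(u)) = 0$.

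\emph{Step 2: well-definedness on $\R\times\R/\Z$.} In $\tilde W_{k,l}$ we have $\tilde u(s,t+1) = \tilde u(s,t)$, since $\theta = kt$ is read modulo $k$. Therefore $u$ is also $1$-periodic in $t$. A finer point is that, after projecting, $u$ may have additional symmetry: $u(s,t+1/k)$ equals $u(s,t)$ composed with a deck shift. This concerns multiplicity, not well-definedness, and it is exactly the phenomenon that \Cref{Thm - Cylinder Counting} accounts for.

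\emph{Step 3: asymptotics.} The limits $\lim_{s\to\pm\infty}\tilde u(s,t) = \tilde\g_\pm(t) = (0,kt,p_\pm)$ project to $\mathrm{cov}_{k,l}(\tilde\g_\pm)$. By \Cref{Lemma - Critical points and Periodic orbits} and \Cref{Lemma - Model Hamiltonian}, these are the $1$-periodic orbits $\g_\pm$ of $H_\eps$ corresponding to $p_\pm$. When $p_\pm\neq 0$, the projection is a $k$-fold cover of a simple $1/k$-periodic orbit, which is the orbit whose $\Z_k$-orbit of critical points consists of $p_\pm$ and its rotations. When $p_\pm = 0$, it is $\g_0^k$. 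We will also note that $u$ lies in $U$, because $\eta$ stays in $D^2$. Finally, $u$ is independent of choices up to the deck action: $\mathrm{cov}_{k,l}\circ \tilde u$ equals $\mathrm{cov}_{k,l}\circ(g\cdot\tilde u)$ for every deck transformation $g$. This is what makes the descent canonical.

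\emph{Main obstacle.} The delicate point is Step 3 in the case where the periodic orbits are multiply covered. There one must check carefully that the asymptotic parametrized orbits of $u$ are exactly the $k$-fold covered orbits $\g_\pm^{k}$, with the parametrizations in $S_{\g_\pm}$ induced by $t\mapsto\tilde\g_\pm(t)$. This amounts to tracking the identification $(r,\theta,z)\sim(r,\theta \bmod 1, e^{2\pi i l\theta/k}z)$ along $\theta = kt$. We will do this by writing $\tilde\g_\pm(t)$ explicitly in $W$ as $(0,kt \bmod 1, e^{2\pi i l\lfloor kt\rfloor/k}p_\pm)$, after the appropriate rotating-frame reparametrization, and comparing it with the orbit of $H_\eps$ through $p_\pm$.
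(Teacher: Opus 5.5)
Your Step 1 is the paper's argument: $\mathrm{cov}_{k,l}$ is a local symplectomorphism, $\tilde H_{\eps;k,l}=H_{\eps;k,l}\circ\mathrm{cov}_{k,l}$, and $J=J_R\oplus J_\Sigma$ is deck-equivariant, so $\mathrm{cov}_{k,l}\circ\tilde u$ solves the Floer equation. Step 2 and the basic asymptotic statement are also fine. The problem is the multiplicity bookkeeping in Step 3, which is false as stated.

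For $k\geq2$, a nonzero critical point $p$ of $G_\eps$ is not fixed by the rotation $R$ by $2\pi l/k$. Hence $\mathrm{cov}_{k,l}\circ\tilde\g_p$ does \emph{not} close up at $t=1/k$. Your own formula gives $(0,0,p)$ at $t=0$ and $(0,0,e^{2\pi i l/k}p)\neq(0,0,p)$ at $t=1/k$, and since $(l,k)=1$ the first return is at $t=1$. So for $p\neq0$ the asymptote is a \emph{simple} $1$-periodic orbit of $H_\eps$; all $k$ lifts $\tilde\g_{R^jp}$ project onto it, up to time shifts in $\frac1k\Z$. Only the core $p=0$ gives the $k$-fold cover $\g_0^k$. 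This is the $k$-to-$1$ correspondence away from the core stated in \Cref{Lemma - Model Hamiltonian}. The paper relies on it later: for instance, $\pp(\gm_-^k)=k\,\gm_{H,-}$ in \Cref{Theorem - FCPK} comes from \Cref{Thm - Cylinder Counting} with multiplicity $1$ at the lower end $\g_{H,-}$ and $k$ at the upper end (the core).

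The same error affects your Step 2 remark and your ``main obstacle''. A non-constant $\eta$ joins two distinct critical points, so at most one end of $u$ is the core. Hence $\Z_{(k_+,k_-)}$ is trivial and $u$ has no extra $t$-symmetry. What the shift $t\mapsto t+1/k$ actually does is identify $u_\eta(s,t+1/k)$ with $u_{R\eta}(s,t)$, the descent of the gradient revolution over the rotated trajectory $R\eta$. Thus the $k$ rotated copies of $\eta$ give a single $S^1$-family of cylinders in $W$. Note also that deck transformations act on $\tilde W_{k,l}$, not on $W$, so ``$u$ composed with a deck shift'' has no meaning.

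For the proposition itself nothing delicate remains. The limits of $u$ are $\mathrm{cov}_{k,l}\circ\tilde\g_\pm$, which are $\g_\pm$ by definition: they are the orbits of \Cref{Lemma - Critical points and Periodic orbits} pushed down by $\mathrm{cov}_{k,l}$, exactly as in the paper's one-line argument. So you should drop the claim that both asymptotes are $k$-fold covers. Carried into \Cref{Thm - Cylinder Counting}, it would give wrong multiplicities $k_\pm$ and wrong isotropy, and hence wrong coefficients in the differentials.
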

\begin{proof}
    The covering map $\mathrm{cov}_{k,l}: \tilde{W}_{k,l} \to W$ is locally a symplectomorphism.
    Since $J$ is $\mathrm{cov}_{k,l}$-equivariant on $\tilde{W}_{k,l}$, it descends to a well-defined $\omega$-compatible almost complex structure on $W$.
    Because the projection of a $J$-holomorphic curve under a locally holomorphic and symplectic covering map remains a holomorphic curve, the projected map $u = \mathrm{cov}_{k,l} \circ \tilde{u}$ automatically satisfies the Floer equation for the base Hamiltonian $H_\eps$ on $W$.
    Furthermore, since $\mathrm{cov}_{k,l}$ maps the lifted orbits $\tilde{\g}_{\pm}$ to the base orbits $\g_{\pm}$, the asymptotic boundary conditions are preserved.
\end{proof}

    \subsection{Uniqueness and Regularity}\label{Subsection - Uniqueness}

    Having explicitly constructed the gradient revolutions, we must ensure that they are regular, and that they are the only Floer cylinders contributing to the Floer differential. In this subsection, we establish this regularity and uniqueness properties. This step is crucial, as it guarantees that no unexpected trajectories exist, thereby allowing us to completely determine the local Floer chain complex strictly via Floer cascade counting.
    
    \begin{prop}\label{Prop - Uniqueness}
        For $(W,H_\eps)$ defined in \Cref{Subsection - Model,Subsection - Gradient Revolution}, there are no other Floer cylinders of $H_\eps$ other than gradient revolutions.
    \end{prop}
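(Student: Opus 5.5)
We work in the covering space $\tilde W_{k,l}$: since $\mathrm{cov}_{k,l}$ is a covering and $J$ is equivariant, every Floer cylinder $u$ in $W$ between orbits of $H_\eps$ lifts to a Floer cylinder $\tilde u$ of $\tilde H_\eps=kh(r)+G_\eps$ in $\tilde W_{k,l}$ (after choosing a lift of one asymptotic point; the lifted cylinder is still parametrized by $\R\times\R/\Z$ because both asymptotic orbits lift to closed $1$-periodic orbits $\tilde\g_p(t)=(0,kt,p)$ by \Cref{Lemma - Critical points and Periodic orbits}). So it suffices to show that every Floer cylinder of $\tilde H_\eps$ with respect to the split $J=J_R\oplus J_\Sigma$ in \eqref{J} is a gradient revolution. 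Write $\tilde u=(v,w)$ with $v=(r,\theta):\R\times S^1\to I\times\R/k\Z$ and $w:\R\times S^1\to D^2$. Because both $\omega$, $J$ and $\tilde H_\eps$ split, the Floer equation decouples into
\[
\pp_s v+J_R(\pp_t v-kh'(r)\pp_\theta)=0,\qquad \pp_s w+J_0(\pp_t w-X_{G_\eps}(w))=0 .
\]

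First I would handle the $R$-component. Its asymptotics are $(0,kt)$ at both ends, so $v$ is a Floer cylinder of the autonomous Hamiltonian $kh(r)$ on the annulus connecting the same orbit to itself. Its energy equals the action difference, which is zero, hence $\pp_s v\equiv 0$ and $v(s,t)=(0,kt+c)$; the asymptotic condition forces $c=0$ up to the $S^1$-reparametrization already built into the moduli space. Alternatively, one can argue via the maximum principle on $r$ combined with the action identity; I would use the energy argument since the two actions coincide exactly.

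The main step is the $D^2$-component: $w$ is a Floer cylinder for the $C^2$-small autonomous Hamiltonian $G_\eps$ (\Cref{Lemma - C2small,Lemma - C2smallsymmetric}) on a disk, connecting two critical points. Here I would invoke the standard Salamon--Zehnder / Floer result that for a $C^2$-small autonomous Hamiltonian and a $t$-independent $J$, all finite-energy Floer trajectories with small energy are $t$-independent, hence are negative gradient lines $\pp_s w=-\nabla G_\eps(w)$. Concretely: set $\xi=\pp_t w$; differentiating the Floer equation in $t$ shows $\xi$ lies in the kernel of the linearized operator $D_w=\pp_s+J_0\pp_t+S(s,t)$ with $\|S\|\le\|\Hess G_\eps\|$ small, and since $\xi$ decays at both ends (the limits are constant), the spectral gap of $J_0\pp_t$ on $L^2(S^1)$ restricted to nonconstant Fourier modes, together with a uniform bound keeping $w$ in the small disk, gives $\xi\equiv 0$ after controlling the mean-zero part; the constant Fourier mode of $\xi$ vanishes since $\int_{S^1}\pp_t w\,dt=0$. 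This is the obstacle I expect: one must ensure the $C^2$ bound on $G_\eps$ is strictly below $2\pi$ (the first nonzero eigenvalue) uniformly in $\eps$, and confirm $w$ stays in the region where this bound holds, which follows from the local confinement result of \cite[Section 3.2]{Ginzburg_10} for the isolating neighborhood.

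Combining, $\tilde u(s,t)=(0,kt,\eta(s))$ with $\eta$ a negative gradient trajectory of $G_\eps$, i.e. a gradient revolution; projecting by $\mathrm{cov}_{k,l}$ gives the claim in $W$ by \Cref{Propotision - Gradient Revolution}. Regularity would then follow in the same splitting: the linearized operator decomposes into the $R$-part (surjective by the explicit annulus computation) and the $D^2$-part, whose surjectivity after the $t$-independence reduction is equivalent to Morse--Smale transversality, granted by \Cref{Proposition - Morse Trajectories}.
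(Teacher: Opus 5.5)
Your proposal is correct and follows the same route as the paper. You lift to $\tilde W_{k,l}$ using the winding/$\pi_1$ condition, split the Floer equation via $J=J_R\oplus J_\Sigma$ and $\tilde H_\eps=kh+G_\eps$, show the annulus component is the constant cylinder $(0,kt)$ up to the rotation $t\mapsto t+\tau$, and use $C^2$-smallness of $G_\eps$ to force the disk component to be a negative gradient line; your energy identity for the $R$-factor and your spectral-gap argument for $\partial_t w$ justify two steps that the paper only asserts.
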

    \begin{proof}
        Let $\g_\pm$ be 1-periodic orbits of $H_\eps$ in $W$ with winding number $k$, i.e. $[\g_\pm]=k\in \Z\cong \pi_1(W)$, and $u:\R\times S^1\rightarrow W$ satisfies the Floer equation
        \begin{equation*}
            \partial_su+J(\partial_tu-X_{H_\eps})=0,
        \end{equation*}
        and asymptotic to $\g_\pm$. Since
        $(\mathrm{cov}_{k,l})_*(\pi_1(W_{k,l}))= k\Z\subset \pi_1(W)$, there exists a lift $\tilde{u}:\R\times S^1\rightarrow \tilde{W}_{k,l}$ of $u$, whose asymptotes $\tilde{\g}_\pm$ are also lifts of $\g_\pm$. Also, $\tilde{u}$ satisfies the Floer equation
        \begin{equation*}
            \partial_s\tilde{u}+J(\partial_t\tilde{u}-X_{\tilde{H}_\eps})=0.
        \end{equation*}
         Now, define projection maps
    \begin{equation*}
        \pr_1:\tilde{W}_{k,l}\rightarrow R, \qquad \pr_2:\tilde{W}_{k,l}\rightarrow \Sigma.
    \end{equation*}
    Then, since $J=J_R\oplus J_\Sigma$ and $X_{\tilde{H}_\eps}=X_{kh}+X_{G_\eps}$, the projections
        \begin{equation*}
            u_1=\pr_1\circ \tilde{u}, \qquad u_2=\pr_2\circ \tilde{u}
        \end{equation*}
        of $\tilde{u}$ satisfy Floer equations
        \begin{equation*}
            \partial_su_1+J_R(\partial_tu_1-X_{kh})=0, \qquad\partial_su_2+J_\Sigma(\partial_tu_2-X_{G_\eps})=0,
        \end{equation*}
        and their asymptotic conditions are
        \begin{align*}
            \lim_{s\rightarrow \pm\infty}u_1&=\pr_1\circ \g_\pm=\{0\}\times S^1,\\
            \lim_{s\rightarrow \pm\infty}u_2&=\pr_2\circ \g_\pm=p_\pm,
        \end{align*}
        where $p_\pm$ are critical points of $G_\eps$ corresponding to $\g_\pm$. Therefore, $u_1$ should be a constant Floer cylinder
        \begin{equation*}
            u_1(s,t)=(0,kt).
        \end{equation*}
        By \Cref{Lemma - C2small,Lemma - C2smallsymmetric}, $G_\eps$ is $C^2$-small for each case if we choose the bifurcation parameter and the Poincar\'{e} section small enough.
        Therefore, $u_2$ is a negative gradient flow line of $G_\eps$ between $p_\pm$, namely,
        \begin{equation*}
            u_2(s,t)=\eta(s)
        \end{equation*}
        for some $\eta:\R\rightarrow D^2$ such that
        \begin{equation*}
            \lim_{s\rightarrow \pm\infty}\eta(s)=p_\pm, \qquad \partial_s\eta=-\nabla G_\eps(\eta).
        \end{equation*}
        Therefore, we have
        \begin{equation*}
            \tilde{u}(s,t)=(0,kt,\eta(s)),
        \end{equation*}
        which concludes $\tilde{u}$ is a gradient revolution.
    \end{proof}

    \begin{prop}[Regularity of the gradient revolution]\label{Proposition - Regularity of revolution}
    Let $J$ be as in (\ref{J}), and let $\eta(s)$ be a non-constant gradient flow of $G_\eps$. Then the gradient revolution $\tilde u(s,t)=(0,kt,\eta(s))$ is regular, i.e., the linearized operator $D_{\tilde u}$ is surjective.
\end{prop}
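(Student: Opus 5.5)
Since $J=J_R\oplus J_\Sigma$ and $\tilde H_{\eps}=kh(r)+G_\eps$ is split, I would first decompose the linearized operator into two independent pieces,
\[
D_{\tilde u}=D_1\oplus D_2,\qquad D_1\xi_1=\pp_s\xi_1+J_R(\pp_t\xi_1-\nabla_{\xi_1}X_{kh}),\qquad D_2\xi_2=\pp_s\xi_2+J_0(\pp_t\xi_2-\nabla_{\xi_2}X_{G_\eps}).
\]
Here $\xi_1$ is a section of $u_1^*TR$ along the trivial cylinder $u_1(s,t)=(0,kt)$, and $\xi_2:\R\times S^1\to\C$. Both act on the usual weighted Sobolev spaces appropriate to the Morse--Bott setting of \cite{Bourgeois_Oancea_09}. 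Surjectivity of $D_{\tilde u}$ is equivalent to surjectivity of $D_1$ and of $D_2$, so I would treat them separately.

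\textbf{The $R$-factor.} The operator $D_1$ is $s$-independent. It is the linearization at the trivial cylinder over the Morse--Bott circle $\{r=0\}\times S^1$. In the frame $(\pp_r,\pp_\theta)$, the linearized Hamiltonian vector field is $\xi_1\mapsto (0,kh''(0)\xi_1^r)$, which is a constant shear. So $D_1=\pp_s+J_R\pp_t-S$ for a constant symmetric degenerate $S$ with one-dimensional kernel, coming from the Morse--Bott direction $\pp_\theta$. Using Fourier expansion in $t$, the modes with nonzero frequency are invertible. The zero mode reduces to the ODE $\dot\xi=A\xi$ with $A$ nilpotent. With the Morse--Bott weights (small exponential weights, plus the tangent space of $S_{\g}$ added at both ends), this is Fredholm of index $1$. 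It is surjective because its formal adjoint has no decaying kernel. This is the standard regularity of trivial cylinders over Morse--Bott orbits, and I would cite \cite{Bourgeois_Oancea_09} for it.

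\textbf{The $\Sigma$-factor.} This is the main step. By \Cref{Lemma - C2small} and \Cref{Lemma - C2smallsymmetric}, $G_\eps$ is $C^2$-small, so I would apply the Salamon--Zehnder argument for $t$-independent solutions of a $C^2$-small autonomous Hamiltonian. Write $\xi_2=\sum_{m}\xi^{(m)}(s)e^{2\pi imt}$. The operator $D_2$ commutes with $t$-translation because its coefficients $\Hess G_\eps(\eta(s))$ are $t$-independent. Hence it splits into Fourier modes:
\[
\xi\mapsto\pp_s\xi+2\pi m J_0\xi+\Hess G_\eps(\eta(s))\xi .
\]
For $m\neq0$, I would show each mode is an isomorphism with uniform bounds. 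The $L^2$ estimate $\|(\pp_s+2\pi mJ_0)\xi\|\ge 2\pi|m|\|\xi\|$ beats the perturbation, because $\|\Hess G_\eps\|<\delta<2\pi$. The $m=0$ mode is exactly the linearized negative gradient operator $\pp_s+\Hess G_\eps(\eta)$ along $\eta$. This operator is onto precisely when $W^u(p_+)\pitchfork W^s(p_-)$, which holds because the gradient flow is Morse--Smale by \Cref{Proposition - Morse Trajectories}. For critical points of the same index the connection is constant, and that case is excluded by the hypothesis that $\eta$ is non-constant.

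I expect the main obstacle to be the zero Fourier mode combined with the Morse--Bott weighted setup. The index count from the two factors must be checked to give $\dim=\mu_{CZ}$ difference $+1$ before quotienting. This check confirms that nothing is lost in passing from the split operator to the full operator. Finally, regularity descends to $W$ via \Cref{Propotision - Gradient Revolution}, since $\mathrm{cov}_{k,l}$ is a local isomorphism of all the data.
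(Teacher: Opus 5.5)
Your proof is correct and has the same skeleton as the paper's proof. You split $D_{\tilde u}=D^R_{\tilde u}\oplus D^\Sigma_{\tilde u}$ along $J=J_R\oplus J_\Sigma$ and prove surjectivity factor by factor.

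The real difference is the $R$-factor. The paper does no analysis there: it quotes Diogo--Lisi for Fredholm index $1$ and adjusted Chern number $0$, then invokes Wendl's automatic transversality. You instead use $t$-invariance and check each Fourier mode by hand. This has the advantage of showing that the only inputs are $h''(0)>0$ and a weight $\delta<kh''(0)$.

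One correction: the zero mode is not a nilpotent ODE. Since $\pp_t$ kills it, it is $\pp_s+\mathrm{diag}(kh''(0),0)$ in the frame $(\pp_r,\pp_\theta)$. The nilpotent matrix $J_R S$ is the shear that governs the kernel of the asymptotic operator in $t$, not the $s$-equation. The correct form actually simplifies your argument:
\begin{itemize}
\item the $r$-component $\pp_s+kh''(0)$ is invertible on the weighted spaces;
\item the $\theta$-component $\pp_s$ has a one-dimensional cokernel on decaying weights, and this cokernel is hit by $D^R_{\tilde u}(\beta\,\pp_\theta)=\beta'\pp_\theta$ because $\int\beta'=1$.
\end{itemize}

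For the $\Sigma$-factor, the paper cites Audin--Damian \S10.1 together with the Morse--Smale property. Your Fourier argument is exactly the content of that reference: nonzero modes are invertible because $\|\Hess G_\eps\|<2\pi$, and the zero mode is the linearized gradient operator. So the two proofs agree here, and yours is self-contained. For the bound on the nonzero modes, write the symbol as $2\pi i m J_0$. This is Hermitian with eigenvalues $\pm 2\pi m$, so the cross term integrates to zero.

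Your closing index count is only a consistency check and is not needed for surjectivity.
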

\begin{proof}
   
    Considering the proof of \Cref{Prop - Uniqueness}, $u_1=\pr_1\circ \tilde{u}$ is a Floer cylinder of $h:R\rightarrow\R$ whose both asymptotes are $\{0\}\times S^1$(which should be a constant cylinder), and $u_2=\pr_2\circ \tilde{u}$ is a Floer cylinder of $G_\eps:\Sigma\rightarrow\R$ which is precisely a Morse trajectory $\eta$ of $G_\eps$. Now define a smooth cutoff function
        \begin{equation*}
            \beta:\R\rightarrow [0,1], \qquad \beta'\ge0,\ \beta(0)=0,\ \beta(1)=1,\ \beta(s)=0\text{ if }s\leq 0,\ \beta(s)=1\text{ if }s\geq 1.
        \end{equation*}
        Trivialize $\tilde{u}^*TW$ with the basis $(\tilde{u}^*\partial_r,\tilde{u}^*\partial_\theta,\tilde{u}^*\partial_x,\tilde{u}^*\partial_y)$, and define the Banach space $W_{\theta}^{k,p,\delta}(\mathbb{R}\times S^1, \mathbb{R}^4)$ consisting of maps $\xi:\mathbb{R}\times S^1 \rightarrow\mathbb{R}^4$ such that the map
		\begin{equation*}
			(s,t)\mapsto e^{\delta\beta(s)s-\delta(1-\beta(s))s}(\xi(s,t)-c\beta(s)\tilde{u}^*\partial_\theta-d(1-\beta(s))\tilde{u}^*\partial_\theta)
		\end{equation*}
		belongs to $W^{k,p}(\mathbb{R}\times S^1, \mathbb{R}^4)$ for some $c,d\in\mathbb{R}$. Similarly, define the space $L^{p,\delta}(\mathbb{R}\times S^1, \mathbb{R}^4)$ consisting of $\eta$ such that the map
		\begin{equation*}
			(s,t)\mapsto e^{\delta\beta(s)s-\delta(1-\beta(s))s}\xi(s,t)
		\end{equation*}
        belongs to $L^p(\mathbb{R}\times S^1,\mathbb{R}^4)$. Under the above trivialization, $D_{\tilde{u}}$ is of the form
        \begin{align*}
            D_{\tilde{u}}:W_\theta^{1,p,\delta}(\mathbb{R}\times S^1, \mathbb{R}^4)&\rightarrow L^{p,\delta}(\mathbb{R}\times S^1, \mathbb{R}^4)\\
				\xi&\mapsto\partial_s\xi+J\partial_t\xi+
                \begin{pmatrix}
                    \begin{matrix}
					kh''_s(v) &0\\
					0&0
				\end{matrix} &0\\
                0 &\Hess G_\eps
                \end{pmatrix}\xi.
        \end{align*}
    Since $J=J_R\oplus J_\Sigma$ respects the splitting $T\tilde W_{k,l}=\langle\pp_r,\pp_\theta\rangle\oplus\langle\pp_u,\pp_v\rangle$ along $\tilde u$, the linearized operator can be written as
    \[
        D_{\tilde u} = D^{R}_{\tilde u}\oplus D^{\mathrm{\Sigma}}_{\tilde u},
    \]
    where
    \begin{align*}
        D^{R}_{\tilde u}:W_\theta^{1,p,\delta}(\mathbb{R}\times S^1, \mathbb{R}^2)&\rightarrow L^{p,\delta}(\mathbb{R}\times S^1, \mathbb{R}^2)\\
        \xi&\mapsto\partial_s\xi+J_{R}\partial_t\xi+
                \begin{pmatrix}
					kh''_s(v) &0\\
					0&0
				\end{pmatrix}\xi
    \end{align*}
    is a linearized operator of $u_1$, and
    \begin{align*}
        D^{\Sigma}_{\tilde u}:W^{1,p}(\mathbb{R}\times S^1, \mathbb{R}^2)&\rightarrow L^{p}(\mathbb{R}\times S^1, \mathbb{R}^2)\\
        \xi&\mapsto\partial_s\xi+J_{\Sigma}\partial_t\xi+
                \Hess G_\eps \cdot\xi
    \end{align*}
    is a linearized operator of $u_2$. Therefore, it suffices to show that $D^{R}_{\tilde u}$ and $D^{\Sigma}_{\tilde u}$ are surjective.
    \begin{enumerate}[label=(\roman*), topsep=0pt, leftmargin=15pt]
        \item (Surjectivity of $D^{R}_{\tilde u}$) 
        By \cite[Section 5.2]{Diogo_Lisi_19}, for sufficiently small $\delta$, $D^{R}_{\tilde u}$ is Fredholm of index 1, and its adjusted Chern number is $0$. Therefore, by the automatic transversality (\cite[Proposition 2.2]{Wendl_10b}), $D^{R}_{\tilde u}$ is surjective.
        \item (Surjectivity of $D^{\Sigma}_{\tilde u}$.) By \Cref{Proposition - Morse Trajectories}, $G_\eps$ is Morse-Smale. Therefore, by \cite[Section 10.1]{Audin_Damien_14}, $D^{\Sigma}_{\tilde u}$ is surjective.\qedhere
    \end{enumerate}
\end{proof}
	\section{Local Floer Chain Complex Mutation}\label{Section - LFCC}

   In this section, we carry out the main computations of the paper. As in the previous section, all Floer chain complexes are taken over the coefficient ring $\mathbb Z_2$. Thus all coefficients appearing in the differentials are understood modulo $2$.

Nevertheless, we will often write coefficients such as $2$, $k$, or $c(k)$ explicitly in the formulas for the differential. These coefficients record the actual number of Floer cascades obtained from \Cref{Thm - Cylinder Counting} and from the subsequent finite cascade-counting arguments. The differential itself is the $\Z_2$-linear differential obtained by reducing these counts modulo $2$. Thus, for example, a term such as $2\g$ vanishes in the chain complex, while a term such as $k\g$ contributes according to the parity of $k$.
    
    For each of the five generic bifurcations and the two $\Z_2$-symmetric involutive bifurcations, we determine the local Floer chain complex of the corresponding model Hamiltonian and describe how it mutates across the degenerate value.
    Throughout, the integer $k$ records the order of the resonance, i.e., the Floquet multiplier $\ld$ is a primitive $k$-th root of unity, so that it is the $k$-th cover of the \emph{core orbit} --- the orbit corresponds to the origin of $\Sigma$ --- that becomes degenerate and undergoes the bifurcation. The differentials follow from the cascade counting theorem of \Cref{Section - LFH} together with a finite combinatorial analysis of the Floer cascades, and the associated Morse--Bott spectral sequence computes the resulting local Floer homology, confirming the expected invariance across the bifurcation.

    To avoid pathological situations, we impose the following assumptions on the Morse functions for each orbit. Let $\g_0, \g_1, \g_2$ be periodic orbits of an autonomous Hamiltonian with relative Conley--Zehnder indices $\mu_{CZ}(\g_1,\g_0)=1$ and $\mu_{CZ}(\g_2,\g_0)=2$, and assume that there exists a Floer cylinder $u_i$ from $\g_i$ to $\g_0$ for $i=1,2$. 
    After fixing Morse functions on $\g_1$ and $\g_2$, we require the Morse function on $\g_0$ to satisfy:
    \begin{enumerate}
    \item $\gM_0\notin\ev^-(\gm_1;u_1)$ and $\gm_0\notin\ev^-(\gM_1;u_1)$.
    \item $\gM_0\notin\ev^-(\gm_2;u_2)$.
    \end{enumerate}
    The first condition can be satisfied by choosing a Morse function on $\g_0$ whose maximum and minimum avoid a finite set of points, which is a generic choice. 
    The second condition is more subtle because the moduli space of Floer cylinders from $\g_2$ to $\g_0$ is 1-dimensional. Consequently, for a 1-parameter family of cylinders $u_\sigma$, the image $\ev(\gm_2;u_\sigma)$ is generally a 1-dimensional subset of $\g_0$. 
    However, this issue does not arise in our explicit setup, as will be briefly discussed in the proof of \Cref{Theorem - FCEM}.
    
	\subsection{Birth-death Bifurcation}\label{Subsection - LFCC of BD}
	
	We consider the model of \Cref{Section - cylinder} with $k=1$; the single-cover case, in which the core orbit itself becomes degenerate and bifurcates. As discussed in \Cref{Theorem - PObd}, the resulting bifurcation of $H_\eps$ is the birth-death: there is no $1$-periodic orbit for $\eps<0$, while for $\eps>0$ there are two $1$-periodic orbits $\g_0$ and $\g_1$ with $\mu_{CZ}(\g_1,\g_0)=1$.
	We denote the elliptic orbit among $\g_{0,1}$ by $\g_E$ and the hyperbolic one by $\g_H$.
	By \Cref{Theorem - PObd} the two cases are distinguished by which orbit is elliptic.
    For the source-type the elliptic orbit is the upper orbit $\g_1$, while for the sink-type it is the lower orbit $\g_0$.
    As in \cref{Subsection - CC of LFH}, we have
    \[
    \mu_{CZ}^\fT(\gM)=\mu_{CZ}^\fT(\g)+1,\quad \mu_{CZ}^\fT(\gm)=\mu_{CZ}^\fT(\g)
    \]
    for any periodic orbit $\g$.
		\begin{thm}[Floer complex for birth-death]\label{Theorem - FCBD}
			Let $H_\eps$ be the model Hamiltonian of the birth-death bifurcation.
            Then,
			\[
					CF_*(W,H_\eps)=\left\{\begin{array}{cc}
						0&\text{ if }\eps<0,\\
						\Z_2\il\gm_E,\gM_E,\gm_H,\gM_H\ir&\text{ if }\eps>0.
						\end{array}\right.
			\]
            \begin{enumerate}
                \item (Source) The elliptic orbit is the upper orbit. Relative to $\gm_H$, the degrees are
			\[
			\mu_{CZ}(\gM_H,\gm_H)=\mu_{CZ}(\gm_E,\gm_H)=1,\quad 
                \mu_{CZ}(\gM_E,\gm_H)=2,
			\]
            and the differential is given by
            \[
            \pp(\gM_E)=\gM_H,\quad \pp(\gm_E)=\gm_H.
            \]
            \item (Sink) The elliptic orbit is the lower orbit. Relative to $\gm_E$, the degrees are
			\[
            \mu_{CZ}(\gM_E,\gm_E)=\mu_{CZ}(\gm_H,\gm_E)=1,\quad 
                \mu_{CZ}(\gM_H,\gm_E)=2,
			\]
            and the differential is given by
            \[
            \pp(\gM_H)=\gM_E,\quad \pp(\gm_H)=\gm_E.
            \]
            \end{enumerate}
			In both cases $HF_*(W,H_\eps)=0$ for $\eps\neq0$.
		\end{thm}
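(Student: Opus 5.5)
Fix the model Hamiltonian $H_\eps=H_{\eps;1,1}$ built from $G_\eps=\tfrac{\alpha}{2}x^2+\tfrac{\beta}{3}y^3+\eps(ax+by)$, the split almost complex structure \eqref{J}, and Morse functions on each orbit with one maximum and one minimum. The argument has three steps: identify the generators, identify all Floer cylinders, and count cascades.

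\emph{Generators and gradings.} By \Cref{Lemma - Critical points and Periodic orbits} with $k=1$, the $1$-periodic orbits in $W$ correspond bijectively to the critical points of $G_\eps$. By \Cref{Theorem - FPbd} there are none for $\eps<0$, so the complex is zero and $HF_*=0$ on that side. For $\eps>0$ there are exactly two critical points, $p_E$ and $p_H$, so the generators are $\gm_E,\gM_E,\gm_H,\gM_H$. By \Cref{Proposition - Relative CZ Index}, $\mu_{CZ}(\g_p,\g_q)=\Ind(q)-\Ind(p)$. In the source case $\Ind(p_E)=0$ and $\Ind(p_H)=1$, which gives $\mu_{CZ}(\g_E,\g_H)=1$. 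In the sink case $\Ind(p_E)=2$, which gives $\mu_{CZ}(\g_H,\g_E)=1$. Adding the shifts $\mu_{CZ}(\gM)=\mu_{CZ}(\g)+1$ and $\mu_{CZ}(\gm)=\mu_{CZ}(\g)$ yields the stated degrees.

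\emph{Floer cylinders.} By \Cref{Prop - Uniqueness}, every Floer cylinder is a gradient revolution, and these are regular by \Cref{Proposition - Regularity of revolution}. By \Cref{Proposition - Morse Trajectories}, there is exactly one gradient trajectory between $p_H$ and $p_E$. So $\mathcal{M}_0$ between the upper and lower orbit consists of a single $S^1$-family $\{u(s,t+\tau)\}$. Since $k_\pm=1$, the $S^1$-action is free and $\#(\mathcal{M}_0/S^1)=1$.

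\emph{Cascade counts.} Write $\g_+$ for the upper orbit and $\g_-$ for the lower one: $(\g_+,\g_-)=(\g_E,\g_H)$ for the source and $(\g_H,\g_E)$ for the sink. Each differential is obtained as follows.
\begin{enumerate}
\item Applying \Cref{Thm - Cylinder Counting} with $k_+=k_-=1$ gives $\#_2\mathcal{M}(\gM_-,\gM_+)=1$ and $\#_2\mathcal{M}(\gm_-,\gm_+)=1$. These are the terms $\pp\gM_+=\gM_-$ and $\pp\gm_+=\gm_-$.
\item Between $\gM_+$ and $\gm_+$, and between $\gM_-$ and $\gm_-$, the only cascades are the two Morse trajectories on the circle, so these contributions cancel mod $2$.
\item The pairs $(\gm_+,\gM_-)$ have relative index $0$, so they contribute nothing.
\item The remaining component is $\mathcal{M}(\gm_-,\gM_+)$, of index difference $2$. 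It does not appear in $\pp$, because $\pp$ only counts index-difference-$1$ cascades.
\end{enumerate}
The differential is therefore exactly as stated. Since it is an isomorphism from the span of $\{\gM_+,\gm_+\}$ onto the span of $\{\gM_-,\gm_-\}$, the homology vanishes for $\eps>0$ as well, consistent with invariance across the bifurcation.

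The main point needing care is the genericity assumption (1) on the Morse functions in the pairs $(\gM_+,\gm_-)$ and $(\gm_+,\gM_-)$. With $k_\pm=1$ the evaluation maps $\ev^\pm$ are single points, so it suffices to choose the critical points of $f_{\g_-}$ away from $\ev^-(\gm_+;u)$ and $\ev^-(\gM_+;u)$. This excludes spurious cascades landing exactly on a critical point, and everything else is a direct application of the earlier results.
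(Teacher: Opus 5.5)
Your proposal is correct and takes essentially the same approach as the paper: you read the generators off the critical points of $G_\eps$, get the gradings from \Cref{Proposition - Relative CZ Index}, use the unique gradient revolution between $\g_E$ and $\g_H$, and apply \Cref{Thm - Cylinder Counting} with $k_\pm=1$ to obtain the differential. Your write-up also spells out bookkeeping the paper's terse proof leaves implicit: the cancellation of the two Morse trajectories on each circle, the exclusion of the index-$0$ and index-$2$ pairs, and the generic placement of the Morse critical points.
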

		\begin{proof}
			Since $\mu_{CZ}(\g_1,\g_0)=1$, the gradient revolution of \Cref{Subsection - Gradient Revolution} determines a unique cylinder between $\g_E$ and $\g_H$.
            The relative Conley--Zehnder indices follow from the Morse indices via \Cref{Proposition - Relative CZ Index}, together with the splitting $\mu_{CZ}^\fT(\gM)=\mu_{CZ}^\fT(\g)+1$, $\mu_{CZ}^\fT(\gm)=\mu_{CZ}^\fT(\g)$ above.
			The Floer cascade counting of \Cref{Thm - Cylinder Counting} then gives the differential, and the homology vanishes.
            The Morse--Bott spectral sequence computing the homology is described in \Cref{Figure - SSBD}.
		\end{proof}

      \begin{figure}[b]
  \begin{subfigure}[b]{0.45\textwidth}
    \centering
    \includegraphics[width=\textwidth]{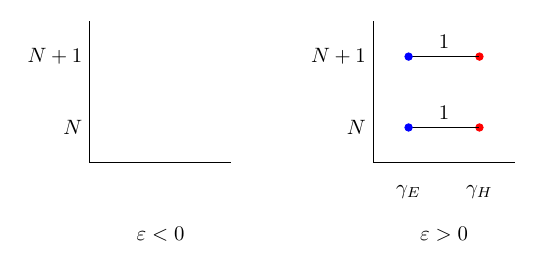}
    \caption{Source birth-death}
  \end{subfigure}
  \hspace{0.05\textwidth}
    \begin{subfigure}[b]{0.45\textwidth}
    \centering
    \includegraphics[width=\textwidth]{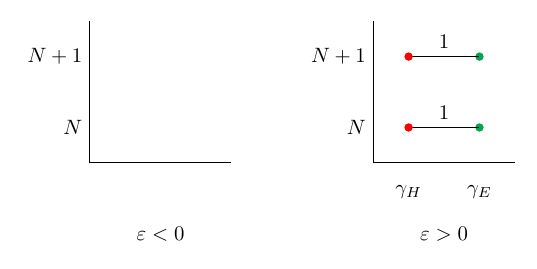}
    \caption{Sink birth-death}
  \end{subfigure}
  \caption{Morse--Bott spectral sequences for birth-death. As mentioned in \Cref{Note - Colors}, the colors of dots indicate the Morse index of corresponding critical points, which gives the relative Conley--Zehnder index of the periodic orbit.}
  \label{Figure - SSBD}
  \end{figure}
    
	\subsection{Period Doubling and Materialization}\label{Subsection - LFCC of PD}
We consider the model of \Cref{Section - cylinder} with $k=2$; the double-cover case, in which the second cover of the core orbit becomes degenerate and bifurcates.
	As discussed in \Cref{Theorem - POpd}, the resulting bifurcation of $H_\eps$ is the period doubling or the materialization.
	We follow the discussion of \Cref{Subsection - LFCC of BD}.
	To avoid ambiguity, we denote by $\g$ the core orbit, which exists before and after the bifurcation, and distinguish it by $\g_-$ for $\eps<0$ and $\g_+$ for $\eps>0$; as a double cover, it is written $\g^2$.
	The new orbit born at $\eps>0$ is denoted $\g_{new}$, which is elliptic ($\g_{new}=\g_E$) for period doubling and hyperbolic ($\g_{new}=\g_H$) for materialization.
	Independently of this, each type is attracting or repelling according to whether $\g_{new}$ lies below or above $\g^2$ in the relative grading, as in \Cref{Theorem - POpd}.
    
	\begin{thm}[Floer complex for period doubling and materialization]\label{Theorem - FCPD}
		Let $H_\eps$ be the model Hamiltonian of the period doubling or materialization bifurcation.
		Then,
			\[
			CF_*(W,H_\eps) = \begin{cases}
				\Z_2\il\gm_-^2,\gM_-^2\ir&\text{ if }\eps<0,\\
				\Z_2\il\gm_+^2,\gM_+^2,\gm_{new},\gM_{new}\ir&\text{ if }\eps>0,
			\end{cases}
			\]
        where $\g_{new}=\g_E$ is elliptic for period doubling and $\g_{new}=\g_H$ is hyperbolic for materialization.
        \begin{enumerate}
            \item (Attracting) Relative to $\gm_-^2$, the degrees are
            \[
            \begin{aligned}
            \mu_{CZ}(\gm_+^2,\gm_-^2)&=-1,\quad \mu_{CZ}(\gM_+^2,\gm_-^2)=0,\\
            \mu_{CZ}(\gm_{new},\gm_-^2)&=0,\quad \mu_{CZ}(\gM_{new},\gm_-^2)=1,
            \end{aligned}
			\]
            and the differential is
            \[
			\pp(\gM_{new})=2\gM_+^2=0,\quad \pp(\gm_{new})=\gm_+^2.
			\]
            \item (Repelling) Relative to $\gm_-^2$, the degrees are
            \[
            \begin{aligned}
            \mu_{CZ}(\gm_+^2,\gm_-^2)&=1,\quad \mu_{CZ}(\gM_+^2,\gm_-^2)=2,\\
            \mu_{CZ}(\gm_{new},\gm_-^2)&=0,\quad \mu_{CZ}(\gM_{new},\gm_-^2)=1,
            \end{aligned}
			\]
            and the differential is
            \[
			\pp(\gM_+^2)=\gM_{new},\quad \pp(\gm_+^2)=2\gm_{new}=0.
			\]
        \end{enumerate}
        In either case the homology is
            \[
            HF_*(W,H_\eps)=\begin{cases}
                \Z_2&\text{ if }*=0\text{ or }1\text{ (relative to }\gm_-^2),\\
                0&\text{ otherwise,}
                \end{cases}
            \]
        for any $\eps\neq0$; that is, $HF_*$ is supported in the two consecutive degrees $0$ and $1$ relative to $\gm^2_-$.
	\end{thm}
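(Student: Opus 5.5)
The proof reduces to counting gradient revolutions and applying \Cref{Thm - Cylinder Counting}, following the birth-death case. The generators come from \Cref{Lemma - Critical points and Periodic orbits} and \Cref{Lemma - Model Hamiltonian} with $k=2$. For $\eps<0$ the only critical point of $G_\eps$ is the origin, which gives the orbit $\g_-^2$. For $\eps>0$ the origin gives $\g_+^2$, and the two points $p_{1,2}$, which are exchanged by the deck transformation, give a single simple $1$-periodic orbit $\g_{new}$ of $H_\eps$.

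\textbf{Gradings.} I read these off \Cref{Table - Bifurcation} using \Cref{Proposition - Relative CZ Index} and the rule $\mu(\gM)=\mu(\g)+1$. The comparison between $\eps<0$ and $\eps>0$ uses the common framing along the orbit cylinder. In that framing the core orbit's index jumps exactly by the change of its Morse index, and $\g_{new}$ carries the Morse index the origin had for $\eps<0$. For example, in the attracting period-doubling case the index is $0$ before and $1$ after, so $\mu(\g_+^2)=\mu(\g_-^2)-1$ and $\mu(\g_{new})=\mu(\g_-^2)$. This yields the listed degrees, and the same bookkeeping handles materialization. As a check on the statement itself, the attracting materialization row of the table (index $1\to2$, new orbit index $1$) matches these degrees.

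\textbf{Floer cylinders.} By \Cref{Prop - Uniqueness} and \Cref{Proposition - Regularity of revolution}, every Floer cylinder is a regular gradient revolution. By \Cref{Proposition - Morse Trajectories} there are exactly two gradient lines, joining the origin to $p_1$ and to $p_2$, and they are swapped by the $\Z_2$ deck action. Hence in $W$ they project to a single $S^1$-family of cylinders between $\g_{new}$ and $\g_+^2$.

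\textbf{Cylinder count.} This is the step I expect to need the most care. In $\tilde W_{2,l}$, the orbit $\tilde\g_{new}$ is $(0,2t,p_1)$, whose $2$-fold cover in $W$ must be reinterpreted as a simple orbit. So I would apply \Cref{Thm - Cylinder Counting} with $\g_{new}$ simple ($k=1$) and $\g_+$ as the simple core orbit with $k=2$. Then $\gcd(k_+,k_-)=1$, and the quotient $\mathcal M_0/S^1$ is one point. The direction of the cylinder depends on which orbit has the higher index: $\g_{new}$ is above in the attracting case and below in the repelling case.
\begin{itemize}
\item In the attracting case the $\gM$-count is $k_-=2$ and the $\gm$-count is $k_+=1$. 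This gives $\pp\gM_{new}=2\gM_+^2=0$ and $\pp\gm_{new}=\gm_+^2$.
\item In the repelling case the roles reverse. This gives $\pp\gM_+^2=\gM_{new}$ and $\pp\gm_+^2=2\gm_{new}=0$.
\end{itemize}

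\textbf{Remaining differentials.} The cross terms $\gM\to\gm$ between different orbits vanish. Such a cascade would have index difference $0$, or index difference $2$ in the opposite direction, and genericity condition (1) excludes the remaining degenerate configurations. The Morse trajectories on each orbit cancel in pairs mod $2$. Finally I compute homology directly. For $\eps<0$ it is $\Z_2\langle\gm_-^2,\gM_-^2\rangle$ in degrees $0,1$. For $\eps>0$ the single nonzero differential cancels one pair of generators, leaving two generators in relative degrees $0$ and $1$, consistent with invariance.
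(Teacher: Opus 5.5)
Your proposal is correct and follows essentially the same route as the paper. The two gradient lines on the $y$-axis are swapped by the deck transformation, so they give a single $S^1$-family of gradient revolutions between $\g^2$ and $\g_{new}$. The cascade counting theorem, with $k=2$ for the core orbit and $k=1$ for the simple orbit $\g_{new}$, then gives the coefficients $2$ and $1$, and the homology follows directly. Your version just spells out the bookkeeping the paper's brief proof leaves implicit: the values of $k_\pm$, the identification of the two lifts, and the degree reasons why no other entries of the differential survive.
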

	\begin{proof}
		The gradient revolution of \Cref{Subsection - Gradient Revolution} determines a unique cylinder between $\g^2$ and $\g_{new}$, and the differential follows from \Cref{Thm - Cylinder Counting}.
        The computation of the homology is then straightforward, and the Morse--Bott spectral sequence computing it is described in \Cref{Figure - SSPD}.
        Writing the surviving generators in ascending order of degree, for $\eps>0$ we have
        \[
        HF_*(W,H_\eps)=\begin{cases}
            \Z_2\il \gM_+^2,\gM_{new} \ir&\text{ for attracting},\\
            \Z_2\il \gm_{new},\gm_+^2\ir&\text{ for repelling}.
        \end{cases}
        \]
	\end{proof}

      \begin{figure}[b]
  \begin{subfigure}[b]{0.45\textwidth}
    \centering
    \includegraphics[width=\textwidth]{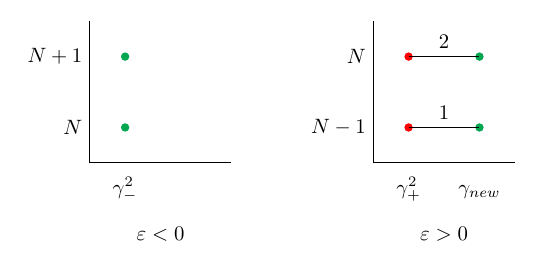}
    \caption{Attracting period doubling}
  \end{subfigure}
  \hspace{0.05\textwidth}
    \begin{subfigure}[b]{0.45\textwidth}
    \centering
    \includegraphics[width=\textwidth]{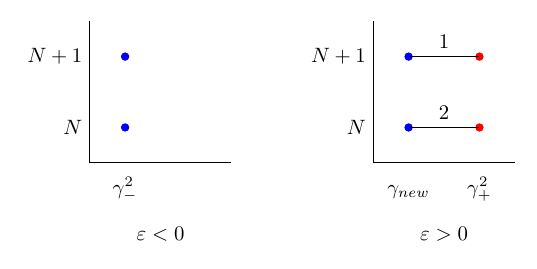}
    \caption{Repelling period doubling}
  \end{subfigure}
  \caption{Morse--Bott spectral sequences for period doubling. Materialization is the same with the opposite stability types.}
  \label{Figure - SSPD}
  \end{figure}

	\subsection{Phantom Kiss}\label{Subsection - LFCC of PK}
	We consider the model of \Cref{Section - cylinder} with $k=3$ or $4$; the case in which the $3$-rd or $4$-th cover of the core orbit becomes degenerate and bifurcates. As discussed in \Cref{Theorem - POpk}, the resulting bifurcation of $H_\eps$ is the phantom kiss, and we follow the discussion of \Cref{Subsection - LFCC of BD}. For the phantom kiss, a hyperbolic orbit is present on both sides of the bifurcation. We write $\g_{H,\pm}$ for the hyperbolic orbit and $\g^k_\pm$ denotes the $k$-th cover of the core orbit at $\eps\gtrless0$.

 	\begin{thm}[Floer complex for phantom kiss]\label{Theorem - FCPK}
		Let $H_\eps$ be the model Hamiltonian of the $k$-phantom kiss bifurcation, where $k=3$ or $4$.
		Then,
			\[
			CF_*(W_k,H_\eps) = \begin{cases}
				\Z_2\il\gm_-^k,\gM_-^k,\gm_{H,-},\gM_{H,-}\ir&\text{ if }\eps<0,\\
				\Z_2\il\gm_+^k,\gM_+^k,\gm_{H,+},\gM_{H,+}\ir&\text{ if }\eps>0.
			\end{cases}
			\]
        By an appropriate choice of the bifurcation parameter $\eps$, the degrees relative to $\gm^k_-$ are given by
            \[
            \begin{aligned}
                \eps<0:\quad&\mu_{CZ}(\gm_{H,-},\gm^k_-)=-1,\quad \mu_{CZ}(\gM_{H,-},\gm^k_-)=0,\\
                \eps>0:\quad&\mu_{CZ}(\gm^k_+,\gm^k_-)=-2,\quad \mu_{CZ}(\gM^k_+,\gm^k_-)=-1,\\
                &\mu_{CZ}(\gm_{H,+},\gm^k_-)=-1,\quad \mu_{CZ}(\gM_{H,+},\gm^k_-)=0,
            \end{aligned}
            \]
        and the differentials are given by
            \[
            \begin{aligned}
                \eps<0:&\quad \pp(\gM_-^k)=\gM_{H,-},\quad \pp(\gm_-^k)=k\,\gm_{H,-},\\
                \eps>0:&\quad \pp(\gM_{H,+})=k\,\gM^k_+,\quad \pp(\gm_{H,+})=\gm^k_+,
            \end{aligned}
            \]
        which results in
            \[
            \begin{aligned}
                k=3:&\quad HF_*(W,H_\eps)=0,\\
                k=4:&\quad HF_*(W,H_\eps)=\begin{cases}
                \Z_2&\text{ if }*=-1\text{ or }0\ (\text{relative to }\gm_-^k),\\
                0&\text{ otherwise,}
                \end{cases}
            \end{aligned}
            \]
        for any $\eps\neq0$.
	\end{thm}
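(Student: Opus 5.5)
The plan follows the template of \Cref{Theorem - FCBD,Theorem - FCPD}: read off generators and gradings from the critical points of $G_\eps$, identify all Floer cylinders with gradient revolutions, and count cascades with \Cref{Thm - Cylinder Counting}. For generators, \Cref{Lemma - Critical points and Periodic orbits} and \Cref{Lemma - Model Hamiltonian} give that for $\eps\neq0$ the $1$-periodic orbits of $H_\eps$ in $W$ are the $k$-th cover $\g^k_\pm$ of the core orbit, coming from the origin, and a single orbit $\g_{H,\pm}$, coming from the $k$ saddles $p^j_\pm$, which are permuted cyclically by the deck group $\Z_k$ and so project to one simple $1$-periodic orbit ($k_+$ or $k_-$ equal to $1$ for it). Choosing the direction of $\eps$ so that the origin passes from a minimum to a maximum (as in \Cref{Theorem - FPpk} for $k=3$, and analogously for $k=4$ with $|\alpha|>|\beta|$), \Cref{Proposition - Relative CZ Index} gives $\mu_{CZ}(\g_{H,-},\g^k_-)=-1$, $\mu_{CZ}(\g^k_+,\g^k_-)=-2$ and $\mu_{CZ}(\g_{H,+},\g^k_-)=-1$. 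The degrees of $\gm,\gM$ then follow from the Morse--Bott shift.

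For the differential, \Cref{Proposition - Morse Trajectories} shows that for $\eps<0$ the only gradient trajectories are the $k$ radial ones joining the origin to $p^j_-$, and similarly for $\eps>0$, with no saddle--saddle connections because of the invariant rays. By \Cref{Prop - Uniqueness} and \Cref{Proposition - Regularity of revolution}, the corresponding gradient revolutions are the only cylinders, and they are regular. I would next compute the orbifold count $\#(\mathcal{M}_0/S^1)$. In the cover $\tilde W_{k,l}$ there are $k$ revolutions $(0,kt,\eta_j(s))$. In $W$ they are related by the deck action, which in $t$ is the shift $t\mapsto t+1/k$, so they form a single $S^1$-orbit. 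Since one asymptote is the $k$-fold core cover and the other is simple, $\gcd=1$ and the count is $1$.

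\Cref{Thm - Cylinder Counting} then gives coefficient $k_-$ on the $\gM$-to-$\gM$ term and $k_+$ on the $\gm$-to-$\gm$ term. For $\eps<0$, the core $\g^k_-$ is the upper orbit $\g_+$ in the theorem's notation, with $k_+=k$, and the lower orbit $\g_{H,-}$ is simple. Thus $\pp\gM^k_-=\gM_{H,-}$ and $\pp\gm^k_-=k\,\gm_{H,-}$. For $\eps>0$ the roles swap: the upper orbit is $\g_{H,+}$, simple, and the lower is $\g^k_+$ with $k_-=k$, so $\pp\gM_{H,+}=k\,\gM^k_+$ and $\pp\gm_{H,+}=\gm^k_+$. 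I also need to exclude mixed terms such as $\gM\mapsto\gm$ of degree difference two, and terms of degree difference one that do not fit the pattern. These have the wrong degree, except cascades whose Morse segments run along a circle. The genericity condition (1) imposed at the start of \Cref{Section - LFCC} removes those, and one-step cascades ending at a minimum from a maximum have degree difference two, so they are not counted.

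The homology is then immediate mod $2$. For $k=3$ both maps are isomorphisms and the homology vanishes. For $k=4$ the terms $4\,\gm_{H,-}$ and $4\,\gM^4_+$ vanish, leaving classes in degrees $-1$ and $0$ relative to $\gm^k_-$. As a check, these agree on the two sides of $\eps=0$, as invariance requires. The step I expect to be the main obstacle is the orbit identification in $W$. I must verify that the $k$ lifted revolutions are indeed one $S^1$-orbit with trivial stabilizer, that is, that the deck action on $\tilde W_{k,l}$ rotates the section by $2\pi l/k$ while shifting $\theta$ by $1$. I must also check that this matches the correct asymptotic parametrization on each side, so that the factor $k$ lands on $\gm$ for $\eps<0$ and on $\gM$ for $\eps>0$, not the reverse.
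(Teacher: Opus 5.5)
Your proposal is correct and follows essentially the paper's route. The paper's proof just notes that there is a unique Floer cylinder, up to $S^1$-rotation, between $\g^k_\pm$ and $\g_{H,\pm}$, and applies \Cref{Thm - Cylinder Counting} with the multiply covered core orbit as the upper orbit for $\eps<0$ and the lower orbit for $\eps>0$, exactly as you do; your orbifold-count and grading checks fill in details the paper leaves implicit. One small correction: the Morse-only cascades from $\gM$ to $\gm$ on a single orbit are not removed by genericity condition (1) --- they are the two trajectories on the circle and cancel mod $2$ --- whereas condition (1) serves to keep the cylinder endpoints off the critical points so that the counting theorem applies.
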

    \begin{proof}
        There exists a unique Floer cylinder between $\g^k_\pm$ and $\g_{H,\pm}$, and the differential follows from \Cref{Thm - Cylinder Counting}.
        Hence $HF_*=0$ for $k=3$, while for $k=4$ the Morse--Bott spectral sequence of \Cref{Figure - SSPK} leaves
        \[
        HF_*(W,H_\eps)=\begin{cases}
            \Z_2\il \gm_{H,-},\gm^4_-\ir&\text{ if }\eps<0,\\
            \Z_2\il \gM^4_+,\gM_{H,+}\ir&\text{ if }\eps>0,
        \end{cases}
        \]
        where in each case the generators are listed in ascending order of degree.
    \end{proof}

      \begin{figure}[b]
    \centering
    \includegraphics[width=0.45\textwidth]{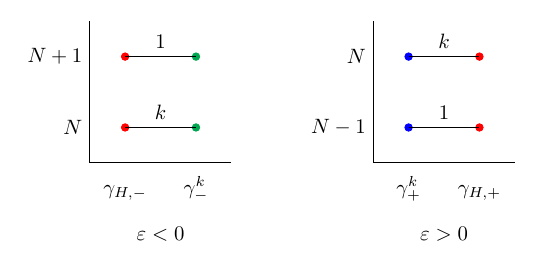}
  \caption{Morse--Bott spectral sequences for phantom kiss}
  \label{Figure - SSPK}
  \end{figure}
    
	\subsection{Emission}\label{Subsection - LFCC of EM}
			We consider the model of \Cref{Section - cylinder} with $k\geq4$; the case in which the $k$-th cover of the core orbit becomes degenerate and bifurcates. Note that in the case $k=4$, both the 4-phantom kiss and 4-emission can happen, depending on the coefficients of the normal form; see the discussion at the end of \Cref{Subsection - k bifurcation}. As discussed in \Cref{Theorem - POem}, the resulting bifurcation of $H_\eps$ is the emission, and we follow the discussion of \Cref{Subsection - LFCC of BD}. At $\eps>0$, the emission produces both a hyperbolic orbit $\g_H$ and an elliptic orbit $\g_E$, while $\g^k_\pm$ denotes the $k$-th cover of the core orbit at $\eps\gtrless0$. As in the period doubling and materialization cases, each emission is attracting or repelling.

	\begin{thm}[Floer complex for emission]\label{Theorem - FCEM}
		Let $H_\eps$ be the model Hamiltonian of the $k$-emission bifurcation, where $k\geq 4$.
		Then,
			\[
			CF_*(W,H_\eps) = \begin{cases}
				\Z_2\il\gm^k_-,\gM^k_-\ir&\text{ if }\eps<0,\\
                \Z_2\il\gm^k_+,\gM^k_+,\gm_H,\gM_H,\gm_E,\gM_E\ir&\text{ if }\eps>0.
			\end{cases}
		\]
        \begin{enumerate}
            \item (Attracting) The degrees relative to $\gm_-^k$ are
            \[
            \begin{aligned}
            \mu_{CZ}(\gm^k_+,\gm_-^k)&=-2,\quad \mu_{CZ}(\gM^k_+,\gm_-^k)=-1,\\
            \mu_{CZ}(\gm_H,\gm_-^k)&=-1,\quad \mu_{CZ}(\gM_H,\gm_-^k)=0,\\
            \mu_{CZ}(\gm_E,\gm_-^k)&=0,\quad \mu_{CZ}(\gM_E,\gm_-^k)=1,
            \end{aligned}
			\]
            and the differential is
            \[
			\begin{aligned}
		      \pp(\gM_E)&= 2\gM_H,\quad \pp(\gm_E)=2\gm_H+c(k)\gM^k_+,\\
            \pp(\gM_H)&=k\gM^k_+,\quad \pp(\gm_H)=\gm^k_+.
			\end{aligned}
			\]
            Here $c(k)$ is a constant depending on the choice of the Morse function, determined in \Cref{Lemma - emission count} by
            \[
            c(k)\equiv n\ \text{ or }\ k-n \pmod{2}.
            \]
            where $n$ is an integer such that $nl\equiv 1 \pmod{k}$, where the Floquet multiplier of $\g_0$ is given by $e^{2\pi il/k}$.
            In particular, $c(k)$ is odd whenever $k$ is even.
            \item (Repelling) The degrees relative to $\gm_-^k$ are
            \[
            \begin{aligned}
            \mu_{CZ}(\gm_E,\gm_-^k)&=0,\quad \mu_{CZ}(\gM_E,\gm_-^k)=1,\\
            \mu_{CZ}(\gm_H,\gm_-^k)&=1,\quad \mu_{CZ}(\gM_H,\gm_-^k)=2,\\
            \mu_{CZ}(\gm^k_+,\gm_-^k)&=2,\quad \mu_{CZ}(\gM^k_+,\gm_-^k)=3,
            \end{aligned}
			\]
            and the differential is
            \[
			\begin{aligned}
		      \pp(\gM^k_+)&= \gM_H,\quad \pp(\gm^k_+)=k\gm_H+c(k)\gM_E,\\
            \pp(\gM_H)&=2\gM_E,\quad \pp(\gm_H)=2\gm_E,
			\end{aligned}
			\]
            with $c(k)$ as in the attracting case.
        \end{enumerate}
        In either case the homology is
            \[
            HF_*(W,H_\eps)=\begin{cases}
                \Z_2&\text{ if }*=0\text{ or }1\ (\text{relative to }\gm_-^k),\\
                0&\text{ otherwise,}
                \end{cases}
            \]
        for any $\eps\neq0$.
	\end{thm}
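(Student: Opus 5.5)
The plan is to assemble the complex generator by generator from the results of \Cref{Section - cylinder}, fix the gradings from the Morse indices, and then determine the differential in two stages. Index-one terms come from \Cref{Thm - Cylinder Counting} applied to gradient revolutions. The single index-two term $c(k)$ needs a separate combinatorial count of two-step cascades. I treat the attracting case in detail; the repelling case is obtained by reversing the gradient flow of $G_\eps$, i.e.\ replacing $G_\eps$ by $-G_\eps$, which exchanges the roles of maxima and minima of $G_\eps$ and reverses the relative grading.

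\emph{Generators and gradings.} By \Cref{Lemma - Critical points and Periodic orbits} and \Cref{Lemma - Model Hamiltonian}, the $1$-periodic orbits of $H_\eps$ in $W$ are as follows. For $\eps<0$ there is only $\g_-^k$. For $\eps>0$ there are $\g_+^k$, together with the $k$ saddles $p_H^j$ and the $k$ elliptic points $p_E^j$. Each of these two $k$-tuples is a single deck-group orbit, so each descends to one simple $1$-periodic orbit, $\g_H$ and $\g_E$ respectively. In the attracting case $\Ind(p_\eps)=0$ for $\eps<0$, $\Ind(p_\eps)=2$ for $\eps>0$, $\Ind(p_H^j)=1$ and $\Ind(p_E^j)=0$ (\Cref{Theorem - FPem}, \Cref{Table - Bifurcation}). \Cref{Proposition - Relative CZ Index} and the rule $\mu_{CZ}(\gM)=\mu_{CZ}(\g)+1$ then give the stated degrees. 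For the comparison between $\eps<0$ and $\eps>0$ I use the common framing along the core orbit, as in \Cref{Proposition - Relative CZ Index}.

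\emph{Index-one differentials.} By \Cref{Prop - Uniqueness} and \Cref{Proposition - Regularity of revolution}, every Floer cylinder is a regular gradient revolution. By \Cref{Proposition - Morse Trajectories}, the relevant index-one gradient lines are the following:
\begin{itemize}
\item a single radial segment from the origin to each $p_H^j$;
\item two angular branches from each $p_H^j$, going to the two adjacent $p_E^{j},p_E^{j+1}$.
\end{itemize}
Lifting to $\tilde W_{k,l}$ and quotienting by the deck group and the $S^1$-action, I expect the following orbifold counts, to be fed into \Cref{Thm - Cylinder Counting}.
\begin{itemize}
\item Cylinders between $\g_H$ and $\g^k_+$: the $k$ radial lines form a single deck orbit, so $\#(\mathcal M_0/S^1)=1$. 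Here the multiplicities are $1$ on the $\g_H$ side and $k$ on the core side, and the gcd is $1$. The theorem then gives $\partial\gM_H=k\gM_+^k$ and $\partial\gm_H=\gm_+^k$.
\item Cylinders between $\g_E$ and $\g_H$: both ends are simple orbits, and the $2k$ angular branches descend to two cylinders. This gives the coefficients $2\gM_H$ and $2\gm_H$.
\end{itemize}
The ordering conventions of the paper (positive gradient flow drawn as the Floer differential, lower-index orbit at $s\to-\infty$) have to be matched carefully when deciding which of $k_\pm$ multiplies the max-to-max count. I would check this against the phantom kiss case, \Cref{Theorem - FCPK}, which has the same radial structure.

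\emph{The index-two term $c(k)$ (main obstacle).} The only possible degree-two coefficient is $\partial\gm_E\ni c(k)\gM_+^k$. Cascades from $\gm_E$ to $\gM_+^k$ are of two kinds.
\begin{itemize}
\item Two-step cascades: an $E\to H$ revolution, followed by a Morse flow segment on $S_{\g_H}$, followed by an $H\to$core revolution.
\item One-step cascades through the one-parameter family of $E\to$core revolutions, namely the $2$-dimensional sectorial gradient flow from $p_E^j$ into the origin.
\end{itemize}
Evaluated at $t=0$, a revolution in the lifted picture sits at the angular position of its critical point. So the evaluation sets $\ev^\pm$ are explicit subsets of the circles $\im\g_H$ and $\im\g_+^k$, determined by how the deck transformation $z\mapsto e^{2\pi i l/k}z$ permutes the rays $\vp_j$.

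For the one-parameter family of $E\to$core revolutions, the sweep $\ev^+(\gm_E;u_\sigma)$ along the core is an explicit arc between the two adjacent radial rays. I would use this to verify assumption~(2), $\gM_0\notin\ev^-(\gm_2;u_2)$, for a suitable choice of $f_{\g_+^k}$; the same explicit arc then gives the count of one-step cascades through this family. For the two-step cascades, I place the critical points of $f_{\g_H}$ and of $f_{\g_+^k}$, and count which Morse segments on $\g_H$ (from the endpoint of an $E\to H$ cylinder) are admissible: either they reach a point from which an $H\to$core cylinder evaluates into $\gM_+^k$, or they stay on the correct side of the maximum. The count reduces to counting the steps $j\mapsto j+n$ (with $nl\equiv1 \pmod k$) needed to go from the sector of $p_E^j$ to the ray through the chosen maximum. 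This yields $n$ or $k-n$ according to the side on which the maximum lies, and I would package this count as \Cref{Lemma - emission count}. When $k$ is even, $l$ and hence $n$ are odd, so $n\equiv k-n\pmod 2$ and $c(k)$ is odd independently of that choice.

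\emph{Homology.} Finally I check $\partial^2=0$ mod $2$ and compute the homology, organizing the computation by the Morse--Bott spectral sequence. In the attracting case, $\gm_+^k$ is killed by $\gm_H$. If $k$ is even, $\gM_H$ is a cycle whose class survives, and $\gm_E$ kills $\gM_+^k$ because $c(k)$ is odd. If $k$ is odd, $\gM_H$ kills $\gM_+^k$, and $\gm_E$, corrected if necessary by $c(k)\gM_H$, is a surviving cycle. The remaining generator $\gM_E$ is a cycle in both cases. Either way two classes survive, one in degree $0$ and one in degree $1$, matching $HF_*^\loc(\g_-^k)=\Z_2\il\gm_-^k,\gM_-^k\ir$ as the invariance of local Floer homology requires.
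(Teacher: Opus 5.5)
Your outline follows the paper's proof in structure:
\begin{itemize}
\item the gradings come from \Cref{Proposition - Relative CZ Index};
\item the index-one terms come from \Cref{Thm - Cylinder Counting}, applied to the two revolutions $\g_E\to\g_H$ and to the single revolution between $\g_H$ and $\g^k_+$ (multiplicities $1$ and $k$);
\item $c(k)$ is a count of two-step cascades through $\g_H$, matching the evaluation points $\g_H(0),\g_H(n/k)$ of $\gm_E$ against the $k$ evaluation points of $\gM^k_+$, after the one-step cascades through the $E\to$core family have been removed by a choice of $f_{\g^k_+}$.
\end{itemize}
Your homology computation is also correct. However, the step that removes the one-step cascades rests on a false description of that family.

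The family $u_\sigma$ does not sweep an arc on the core orbit. In the cover, every member is $\tilde u_\sigma(s,t)=(0,kt,\eta_\sigma(s))$, and its core end is $(0,kt,0)$ for every $\sigma$. The angle at which $\eta_\sigma$ enters the origin of $D^2$ is invisible on the core, which sits at $z=0$. So if $\gm_E=\g_E(t_0)$, then $\ev^-(\gm_E;u_\sigma)=\{\g^k_+(t_0+j/k)\}_j=\{\g^k_+(t_0)\}$ for every $\sigma$. This constancy is exactly what the paper uses. Assumption~(2) is not a generic condition for a one-dimensional family, but here it is met by the open dense choice $\gM^k_+\neq\g^k_+(t_0)$. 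With that choice there are no one-step cascades, and $c(k)$ is purely the two-step count of \Cref{Lemma - emission count}.

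The arc picture is not merely imprecise; it is inconsistent. Suppose the evaluation really swept an arc transversally. Placing $\gM^k_+$ inside it would then produce one rigid one-step cascade and shift $c(k)$ by one. For even $k$ this gives $c(k)\equiv n+1\equiv 0$. Then $\gm_E$, $\gM_H$, $\gM^k_+$ and $\gM_E$ would all survive, giving homology of rank four and contradicting invariance. So replace the ``explicit arc'' by the single point $\g^k_+(t_0)$.

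Two smaller points.
\begin{itemize}
\item The dichotomy $n$ versus $k-n$ is governed by where the critical points of $f_{\g_H}$ lie relative to $\g_H(0)$ and $\g_H(n/k)$, not by a ray through a maximum of $f_{\g^k_+}$. Concretely, the parity is the number of the $k$ equally spaced points $\ev^+(\gM^k_+;u_3)$ on the arc between $\g_H(0)$ and $\g_H(n/k)$ that avoids the maximum of $f_{\g_H}$.
\item In the repelling case, passing to $-G_\eps$ reverses the revolutions but not the Morse flows on the $S_\g$. So the two-step count must be redone with Morse segments running from the evaluation points of $\gm^k_+$ to those of $\gM_E$; this is the paper's $c_r(k)$, which has the same parity.
\end{itemize}
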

    \begin{proof}
        We first treat the attracting case for $\eps>0$. There are two gradient revolutions from $\g_E$ to $\g_H$, denoted by $u_1$ and $u_2$. These yield the relation $\pp\gM_E=2\gM_H$ and contribute $2\gm_H$ to $\pp\gm_E$. Also, there is a single gradient revolution from $\g_H$ to $\g^k_+$, denoted by $u_3$, which gives $\pp\gM_H=k\gM^k_+$ and $\pp\gm_H=\gm^k_+$.

        Moreover, there exists a 1-parameter family of gradient revolutions from $\g_E$ to $\g^k_+$, say $u_\sigma$. Let $\g_E(t_0)=\gm_E$. By construction, we have 
\[
\ev^-(\gm_E;u_\sigma)=\left\{\g^k_+(t_0),\g^k_+(t_0+1/k),\ldots,\g^k_+(t_0+(k-1)/k)\right\} = \left\{\g^k_+(t_0)\right\}
\]
for any $\sigma$, which implies that the image reduces to a single point.
(Note that in this case, we parametrized $\g^k$ as 1-periodic orbit, so $\g^k(0)=\g^k(j/k)$ for any integer $j$.)
We choose a Morse function such that $\gM^k_+$ avoids this point, ensuring that the genericity assumption stated at the beginning of this section is satisfied. In particular, with this choice, there are no Floer cascades of length 1 consisting solely of Floer cylinders between autonomous orbits with an index difference of 2 from $\gm_E$ to $\gM^k_+$.
        
It remains to compute the coefficient of $\gM^k_+$ in $\pp\gm_E$, which is determined by counting Floer cascades formed by the composition of:
\begin{enumerate}
    \item the Floer cylinders $u_1$ and $u_2$ from $\g_E$ to $\g_H$,
    \item Morse trajectories on $\g_H$ from $\ev^-(\gm_E;u_1)$ and $\ev^-(\gm_E;u_2)$ to $\ev^+(\gM^k_+;u_3)$, and
    \item the Floer cylinder $u_3$ from $\g_H$ to $\g^k_+$.
\end{enumerate}
This reduces to a count of Morse trajectories on $\g_H$. Assume that $\gm_E = \g_E(0)$ and $\gM^k_+=\g^k_+(t_0)$. Then we have
        \[
        \ev^-(\gm_E;u_1) = \g_H(0),\quad        \ev^-(\gm_E;u_2) = \g_H(n/k),
        \]
        where $n$ is an integer such that $nl\equiv 1 \pmod{k}$, which must be coprime to $k$. We also have
        \[
        \ev^+(\gM^k_+;u_3) = \left\{\g_H(t_0),\g_H(t_0+ 1/k),\ldots \g_H(t_0+(k-1)/k)\right\},
        \]
        where $t_0\neq  j/k$ for any integer $j$. The resulting count is $c(k)$, as computed in the first case of \Cref{Lemma - emission count}.
        The Morse--Bott spectral sequence computing the homology is described in \Cref{Figure - SSEM}, and the surviving generators for $\eps>0$ are
        \[
        HF_*(W,H_\eps)=\begin{cases}
            \Z_2\il \gM_H,\gM_E \ir&\text{ if }k\text{ is even},\\
            \Z_2\il \gm_E,\gM_E\ir\ \text{ or }\ \Z_2\il (\gm_E+\gM_H),\gM_E\ir&\text{ if }k\text{ is odd},
        \end{cases}
        \]
        listed in ascending order of degree.

        The repelling case is analogous, with the Morse trajectories on $\g_H$ now running from $\ev(\gm^k_+)$ to $\ev^{-1}(\gM_E)$; this corresponds to the second case of \Cref{Lemma - emission count}. The surviving generators for $\eps>0$ are
        \[
        HF_*(W,H_\eps)=\begin{cases}
            \Z_2\il \gm_E,\gm_H \ir&\text{ if }k\text{ is even},\\
            \Z_2\il \gm_E,\gM_E\ir&\text{ if }k\text{ is odd},
        \end{cases}
        \]
        again in ascending order of degree.
    \end{proof}

          \begin{figure}[b]
  \begin{subfigure}[b]{0.45\textwidth}
    \centering
    \includegraphics[width=\textwidth]{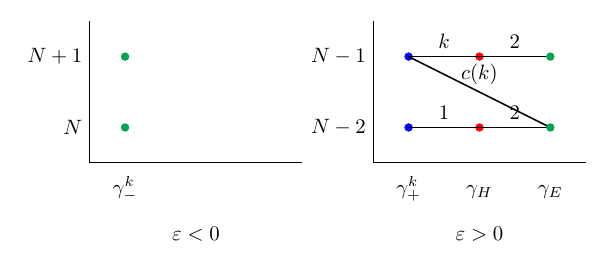}
    \caption{Attracting emission}
  \end{subfigure}
  \hspace{0.05\textwidth}
    \begin{subfigure}[b]{0.45\textwidth}
    \centering
    \includegraphics[width=\textwidth]{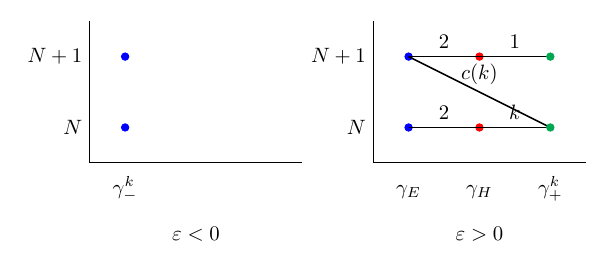}
    \caption{Repelling emission}
  \end{subfigure}
  \caption{Morse--Bott spectral sequences for emission}
  \label{Figure - SSEM}
  \end{figure}

 \begin{lmm}\label{Lemma - emission count}
        Let a circle be given by $S^1=\R/2k\Z$. Let there be $k+2$ points
            \[\begin{aligned}
                b_1&=0,\,b_2=2,\ldots,b_k=2(k-1),\\
                g_1&=1,\,g_2=2n-1
            \end{aligned}\]
        on the circle, where $(k,n)=1$. Take two points $M$ and $m$ in generic position, that is, $M,m\neq b_i,g_j$ for any $i,j$.
        We say an oriented arc on the circle is \textbf{admissible} if it is a sub-arc of one of the two arcs from $M$ to $m$.
        Define
            \[
            \begin{aligned}
            c_a(k) &= \#\{\text{admissible arc }\,a:\,a\text{ connects }g_i\text{ to }b_j\text{ for some }i,j\},\\ 
            c_r(k)&= \#\{\text{admissible arc }\,a:\,a\text{ connects }b_i\text{ to }g_j\text{ for some }i,j\}.
            \end{aligned}
            \]
        Then
            \[
            c_a(k)\equiv c_r(k)\equiv n\ \text{ or }\ k-n \pmod 2.
            \]
        In particular, both $c_a(k)$ and $c_r(k)$ are odd whenever $k$ is even.
    \end{lmm}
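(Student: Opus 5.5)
The plan is to turn both counts into sums of "number of $b$-points between two marked points" and then track their parity as $M$ and $m$ move around the circle.

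\textbf{Reformulation.} Removing $M$ and $m$ cuts the circle $S^1=\R/2k\Z$ into two open arcs $A_1,A_2$, each oriented from $M$ to $m$. An admissible arc from $g_i$ to $b_j$ exists exactly when $g_i$ and $b_j$ lie on the same $A_\nu$ and $g_i$ precedes $b_j$. Moreover, for each such pair there is exactly one admissible arc, namely the sub-arc of $A_\nu$ between them. Hence
\[
c_a(k)=\sum_{i=1,2}N^{+}(g_i),\qquad c_r(k)=\sum_{i=1,2}N^{-}(g_i).
\]
Here $N^{+}(g_i)$ is the number of $b$-points on the segment of the circle from $g_i$ to $m$ that avoids $M$. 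Likewise $N^{-}(g_i)$ is the number of $b$-points on the segment from $M$ to $g_i$ that avoids $m$. Since the $b$'s are exactly the even residues, these are counts of even integers in explicit intervals.

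\textbf{Dependence on $M$ and $m$.} The step I expect to be the real content is to show that each parity depends on $(M,m)$ only up to adding $k$. I would check how $N^{\pm}(g_i)$ changes when $M$ or $m$ crosses one of the $k+2$ marked points.
\begin{itemize}
\item Moving $m$ across a $b$-point changes $N^{+}(g_i)$ by $\pm1$ for every $g_i$ on the affected side. I must verify that both $g$'s are affected together, or else pair this with the corresponding change of the other arc. This is where I expect to have to be careful.
\item Moving $M$ across a $b$-point does not change $N^{+}$.
\item Moving $M$ across $g_i$ replaces $N^{+}(g_i)$ by its complement $k-N^{+}(g_i)$, up to the endpoint convention. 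This changes the parity by $k$.
\item Moving $m$ across $g_i$ leaves $N^{+}(g_i)$ unchanged, since near the crossing it equals $0$ on both sides.
\end{itemize}
The same analysis with the roles of $M$ and $m$ exchanged handles $c_r$. The conclusion should be that, modulo $2$, $c_a$ takes at most two values, which differ by $k$. Thus if one position yields $n$, the other yields $k-n$, which is exactly the ambiguity in the statement.

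\textbf{Base computation.} I would then evaluate in one convenient generic configuration, for instance with $m$ just after $g_1=1$ and $M$ chosen close to $m$ on the appropriate side. Then one of the $N^{+}(g_i)$ is $0$, and the other counts the even integers in an interval between $2n-1$ and $1$. That count is $n$ or $k-n$, up to a boundary correction that I must get exactly right, since a naive count gives $k-n+1$. Pinning down the endpoint convention is the main obstacle, and I would test it on $k=4$, $n=1$ by direct enumeration.

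\textbf{Relating $c_r$ to $c_a$.} For $c_r$, I would note that $c_a+c_r$ equals the total number of $b$'s lying on the arcs $A_\nu$ that contain some $g_i$, counted once per $g_i$. This sum has parity $0$ or $k$ by the same crossing analysis, which gives $c_r\equiv c_a$ modulo the allowed ambiguity.

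\textbf{Even $k$.} Since $(k,n)=1$, $n$ is odd when $k$ is even, and then $k-n$ is odd as well. So both counts are odd whenever $k$ is even.
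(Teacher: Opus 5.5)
Your reformulation $c_a=\sum_i N^+(g_i)$, $c_r=\sum_i N^-(g_i)$ and your wall-crossing analysis are correct. They give a genuinely different route from the paper. The paper cuts the circle at $M$ and does a three-case analysis on the position of $m$ relative to $g_1,g_2$. That reduces both $c_a$ and $c_r$ modulo $2$ to the number of $b_i$ on the arc between $g_1$ and $g_2$ avoiding $M$, or $k$ minus that number. The point you were worried about is harmless. When $m$ crosses a $b$-point, that point moves from the last slot of one arc to the last slot of the other. So $c_a$ changes by $\#\{g_i\in A_2\}-\#\{g_i\in A_1\}$, which is even because there are exactly two $g$'s. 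Hence, modulo $2$, $c_a$ changes only when $M$ crosses some $g_i$, and then by $k$. The same holds for $c_r$ with $M$ and $m$ exchanged.

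The real gap is the base computation, and no endpoint convention can close it. The $k+2$ points are distinct and $M,m$ are generic, so every count is exact, and your value $k-n+1$ is simply right. Combined with your crossing analysis, this gives $c_a\equiv c_r\equiv n-1$ or $k-n+1 \pmod 2$ for the points as printed. For even $k$, $n$ is odd, so both values are even and the lemma as stated is false. For instance, take $k=4$, $n=3$, $g$'s at $1,5$, $M=7.5$, $m=3.5$. The two arcs from $M$ to $m$ meet the marked points in the orders $0,1,2$ and $6,5,4$, so $c_a=c_r=2$. Your proposed test case $n=1$ fails as well: it has $g_1=g_2$, so $c_a=2N^+(g_1)$ is even.

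The paper's own proof makes the same off-by-one slip in its last sentence. The arc between $g_1=1$ and $g_2=2n-1$ contains $n-1$ or $k-n+1$ of the $b_i$, not $n$ or $k-n$. The correct input is $g_2=2n+1$. This is what the emission computation actually produces: the points $\mathrm{ev}^-(\gm_E;u_1)=\g_H(0)$ and $\mathrm{ev}^-(\gm_E;u_2)=\g_H(n/k)$ lie $2n$ apart after rescaling $\mathbb{R}/\mathbb{Z}$ to $\mathbb{R}/2k\mathbb{Z}$. It is also what makes $c(k)$ odd for even $k$, which invariance of local Floer homology in the emission theorem forces. With $g_2=2n+1$, your base configuration ($m$ just after $g_1$, $M$ just after $m$) gives exactly $c_a=k-n$. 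Your crossing argument, together with the observation $c_a+c_r\equiv 0$ or $k \pmod 2$, then proves the corrected lemma.
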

        \begin{proof}
        We cut the circle at the point $M$, which results in the interval $(0,2k)$.
        We denote the coordinates $t(p)$ in this interval by
            \[
            t(M)=0,\quad t(g_i)=\alpha_i,\quad t(b_i)=\beta_i,\quad t(m) = \delta,
            \]
        and write $\alpha_+=\max(\alpha_1,\alpha_2)$ and $\alpha_-=\min(\alpha_1,\alpha_2)$.
        The two admissible arcs are the intervals $(0,\delta)$ and $(\delta,2k)$.
        We first compute $c_a(k)$, dividing into cases.
        \begin{enumerate}
            \item ($\alpha_-<\alpha_+<\delta$) Only $(0,\delta)$ contributes, and
                \[
                \begin{aligned}
                    c_a(k) &= \#\{b_i\,:\,\alpha_-<\beta_i<\delta\} + \#\{b_i\,:\,\alpha_+<\beta_i<\delta\}\\
                    &=\#\{b_i\,:\,\alpha_-<\beta_i<\alpha_+\} + 2\#\{b_i\,:\,\alpha_+<\beta_i<\delta\}.
                \end{aligned}
                \]
            \item ($\delta<\alpha_-<\alpha_+$) This case is analogous:
            \[
                \begin{aligned}
                    c_a(k) &= \#\{b_i\,:\,\delta<\beta_i<\alpha_-\} + \#\{b_i\,:\,\delta<\beta_i<\alpha_+\}\\
                    &=2\#\{b_i\,:\,\delta<\beta_i<\alpha_-\} + \#\{b_i\,:\,\alpha_-<\beta_i<\alpha_+\}.
                \end{aligned}
                \]
            \item ($\alpha_-<\delta<\alpha_+$) In this case,
            \[
                \begin{aligned}
                    c_a(k) &= \#\{b_i\,:\,\alpha_-<\beta_i<\delta\} + \#\{b_i\,:\,\delta<\beta_i<\alpha_+\}\\
                    &=\#\{b_i\,:\,\alpha_-<\beta_i<\alpha_+\}.
                \end{aligned}
                \]
        \end{enumerate}
        These cases are illustrated in \Cref{Figure - Emission count}.
        In every case, we have
            \[
            c_a(k) \equiv \#\{b_i\,:\,\alpha_-<\beta_i<\alpha_+\}\pmod 2.
            \]
        The arc $(\alpha_-,\alpha_+)$ contains $n$ or $k-n$ of the points $b_i$, depending on the position of $m$, which gives the claim for $c_a(k)$.

        The same argument applies to $c_r(k)$. With the same notation, we again divide into cases.
        \begin{enumerate}
            \item ($\alpha_-<\alpha_+<\delta$) Only $(0,\delta)$ contributes:
            \[
            \begin{aligned}
                c_r(k)&=\#\{b_i\,:\,\beta_i<\alpha_-\}+\#\{b_i\,:\,\beta_i<\alpha_+\}\\
                &=2\#\{b_i\,:\,\beta_i<\alpha_-\}+\#\{b_i\,:\,\alpha_-<\beta_i<\alpha_+\}.
            \end{aligned}            
            \]
            \item ($\delta<\alpha_-<\alpha_+$) Here $(\delta,2k)$ contributes:
            \[
            \begin{aligned}
                c_r(k)&=\#\{b_i\,:\,\alpha_-<\beta_i\}+\#\{b_i\,:\,\alpha_+<\beta_i\}\\
                &=\#\{b_i\,:\,\alpha_-<\beta_i<\alpha_+\}+2\#\{b_i\,:\,\alpha_+<\beta_i\}.
            \end{aligned}            
            \]
            \item ($\alpha_-<\delta<\alpha_+$) In this case,
            \[
            \begin{aligned}
                c_r(k)&=\#\{b_i\,:\,\beta_i<\alpha_-\}+\#\{b_i\,:\,\alpha_+<\beta_i\}\\
                &=k - \#\{b_i\,:\,\alpha_-<\beta_i<\alpha_+\}.
            \end{aligned}
            \]
        \end{enumerate}
        Again $\#\{b_i\,:\,\alpha_-<\beta_i<\alpha_+\}$ determines the parity, which is $n$ or $k-n$ modulo $2$, completing the proof.
    \end{proof}

\begin{figure}[htbp]
  \begin{subfigure}[b]{0.3\textwidth}
    \centering
    \includegraphics[width=\textwidth]{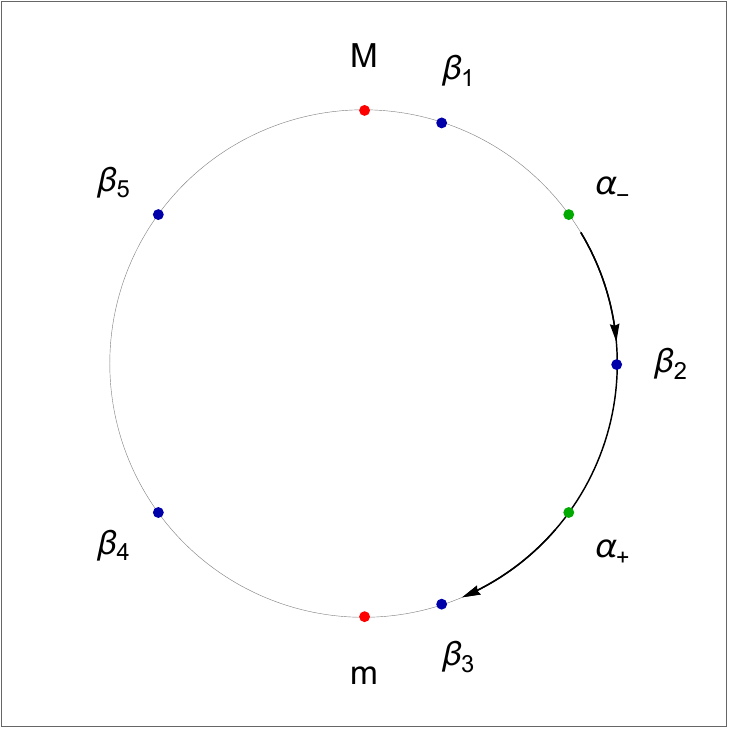}
    \caption{Case 1}
  \end{subfigure}
  \hspace{0.03\textwidth}
  \begin{subfigure}[b]{0.3\textwidth}
    \centering
    \includegraphics[width=\textwidth]{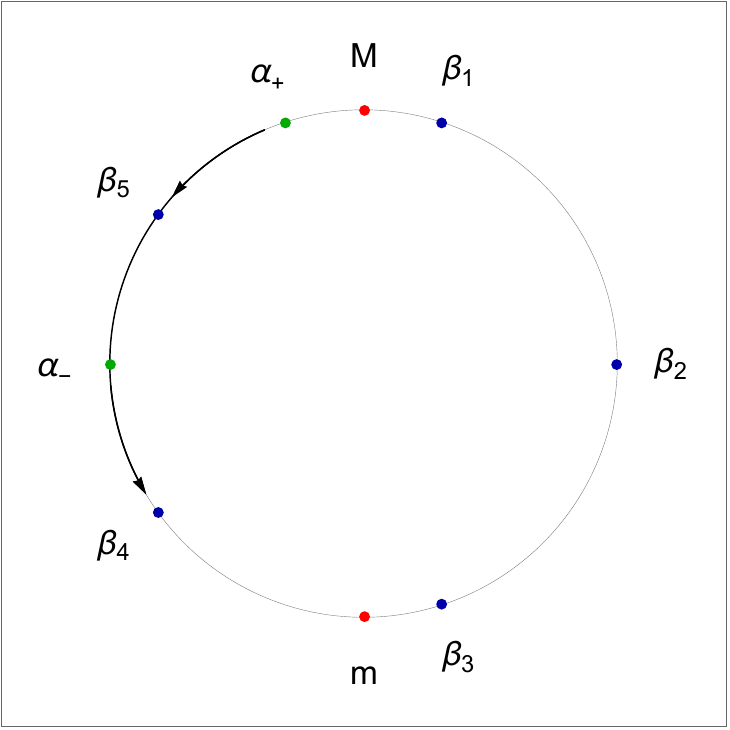}
    \caption{Case 2}
  \end{subfigure}
    \hspace{0.03\textwidth}
  \begin{subfigure}[b]{0.3\textwidth}
    \centering
    \includegraphics[width=\textwidth]{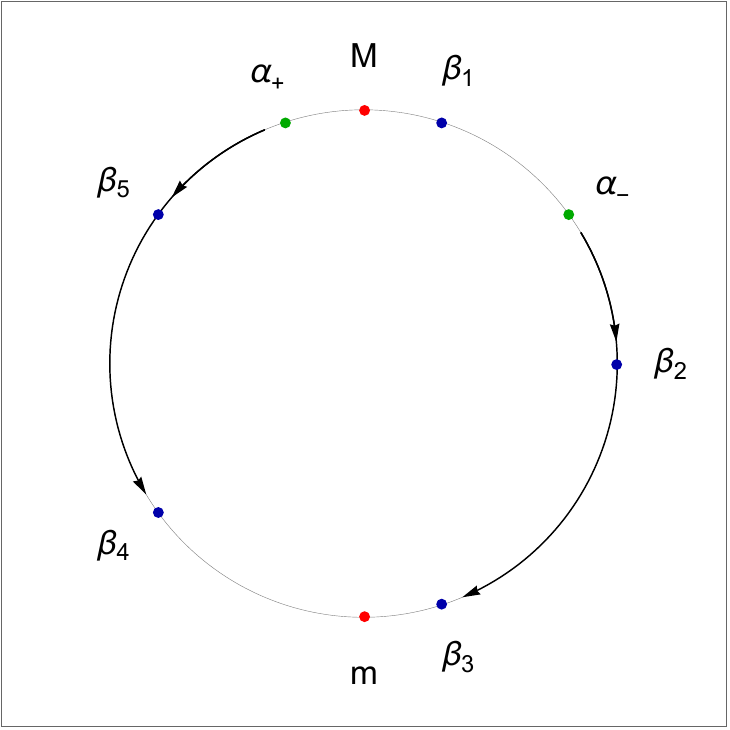}
    \caption{Case 3}
  \end{subfigure}
  \caption{Three cases in \Cref{Lemma - emission count}, for the attracting $5$-emission with $l=1$.}
    \label{Figure - Emission count}
  \end{figure}

\subsection{Pitchfork}\label{Subsection - LFCC of PF}

We consider the model of \Cref{Section - cylinder} with $k=1$ in the presence of a $\mathbb{Z}_2$-symmetry; the single-cover case, in which the core orbit itself becomes degenerate and bifurcates symmetrically. As discussed in \Cref{Subsection - Pitchfork}, the resulting bifurcation of $H_\eps$ is the pitchfork, and we follow the discussion of \Cref{Subsection - LFCC of BD}. The core orbit splits into three: the original core orbit $\g$, which we distinguish by $\g_\pm$ according to the sign of $\eps$, together with two new orbits $\g_1$ and $\g_2$ born at $\eps>0$. By analyzing the relative Conley--Zehnder indices and counting the rigid gradient revolutions connecting these orbits, we obtain the following Floer complex.

\begin{thm}[Floer complex for Pitchfork]\label{Theorem - FCPF}
    Let $H_\eps$ be the model Hamiltonian of the pitchfork bifurcation.
    Then,
    \[
            CF_*(W,H_\eps)=\left\{\begin{array}{cc}
                \Z_2\il \gm_-,\gM_- \ir&\text{ if }\eps<0,\\
                \Z_2\il \gm_+,\gM_+,\gm_1,\gM_1,\gm_2,\gM_2\ir&\text{ if }\eps>0.
                \end{array}\right.
    \]
    \begin{enumerate}
        \item (Attracting) Relative to $\gm_-$, the degrees are
        \[
        \begin{aligned}
        \mu_{CZ}(\gm_+,\gm_-)&=-1,\quad \mu_{CZ}(\gM_+,\gm_-)=0,\\
        \mu_{CZ}(\gm_1,\gm_-)&=\mu_{CZ}(\gm_2,\gm_-)=0,\\
        \mu_{CZ}(\gM_1,\gm_-)&=\mu_{CZ}(\gM_2,\gm_-)=1,
        \end{aligned}
        \]
        and the differential is given by
        \[
        \pp(\gm_1)=\pp(\gm_2)=\gm_+,\quad \pp(\gM_1)=\pp(\gM_2)=\gM_+.
        \]
        
        \item (Repelling) Relative to $\gm_-$, the degrees are
        \[
        \begin{aligned}
        \mu_{CZ}(\gm_+,\gm_-)&=1,\quad \mu_{CZ}(\gM_+,\gm_-)=2,\\
        \mu_{CZ}(\gm_1,\gm_-)&=\mu_{CZ}(\gm_2,\gm_-)=0,\\
        \mu_{CZ}(\gM_1,\gm_-)&=\mu_{CZ}(\gM_2,\gm_-)=1,
        \end{aligned}
        \]
        and the differential is given by
        \[
        \pp(\gm_+)=\gm_1+\gm_2,\quad \pp(\gM_+)=\gM_1+\gM_2.
        \]
    \end{enumerate}
    In either case, the homology is
    \[
    HF_*(W,H_\eps)=\begin{cases}
        \Z_2&\text{ if }*=0\text{ or }1\text{ (relative to }\gm_-),\\
        0&\text{ otherwise.}
        \end{cases}
    \]
\end{thm}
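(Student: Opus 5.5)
The plan is to mirror the proofs of \Cref{Theorem - FCBD} and \Cref{Theorem - FCPD}, now with $k=1$, so every orbit is simple and all cover multiplicities in \Cref{Thm - Cylinder Counting} equal $1$.

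\emph{Generators and gradings.} By \Cref{Lemma - Critical points and Periodic orbits}, the model Hamiltonian on $W=\tilde W_{1,l}$ has its $1$-periodic orbits in bijection with the critical points of $G_\eps$. For $\eps<0$ this is only the origin, giving $\g_-$. For $\eps>0$ it is the origin together with the two points $p_{1,2}=(0,\pm\sqrt{-\eps a/\beta})$, giving $\g_+,\g_1,\g_2$. Since $k=1$, $p_1$ and $p_2$ are not identified, so they give two distinct orbits. Each orbit contributes $\gm$ and $\gM$ by \Cref{Subsection - CC of LFH}. The relative indices come from \Cref{Proposition - Relative CZ Index}, using the Morse indices in \Cref{Table - Symmetric Bifurcation}. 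In the attracting case the Morse index of the origin goes from $0$ to $1$ and $\Ind(p_{1,2})=0$. So $\g_+$ sits one below $\g_-$ and $\g_{1,2}$ sit level with $\g_-$, which reproduces the listed degrees. The repelling case (index $2\to1$, $\Ind(p_{1,2})=2$) is identical with the signs reversed. These are the same gradings as in \Cref{Theorem - FCPD}, since the generating Hamiltonian is the same.

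\emph{Differential.} By \Cref{Prop - Uniqueness}, \Cref{Proposition - Regularity of revolution} and \Cref{Proposition - Morse Trajectories}, the only Floer cylinders are gradient revolutions over the gradient trajectories of $G_\eps$, and these are regular. Between critical points of index difference one, the trajectories are exactly the two branches of the one-dimensional (un)stable manifold of the origin along the $y$-axis. One branch ends at $p_1$ and the other at $p_2$, with no connection between $p_1$ and $p_2$. Hence there is exactly one cylinder $u_i$ joining $\g_i$ and $\g_+$ for each $i=1,2$.

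Apply \Cref{Thm - Cylinder Counting} with $k_+=k_-=1$. Each moduli space $\mathcal{M}_0/S^1$ is a single point, so each $u_i$ contributes coefficient $1$ both to the max--max count and to the min--min count. In the attracting case this gives $\pp\gm_i=\gm_+$ and $\pp\gM_i=\gM_+$. In the repelling case the arrows reverse, giving $\pp\gm_+=\gm_1+\gm_2$ and $\pp\gM_+=\gM_1+\gM_2$.

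It remains to rule out other differentials. All remaining pairs either have the wrong index difference, or are $\gm\to\gM$-type pairs on distinct orbits of index difference $1$ between mixed generators. For the latter, the genericity condition (1) imposed at the start of \Cref{Section - LFCC} forces their counts to vanish, by the same argument as in the proof of \Cref{Thm - Cylinder Counting}: the cascade must consist of a single cylinder with constant Morse ends. In particular, $\gm_1$ and $\gM_+$, which share a grading level, are not connected.

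\emph{Homology.} For $\eps<0$ the homology is $\Z_2\il\gm_-,\gM_-\ir$ in degrees $0,1$. For $\eps>0$, attracting: the differential kills $\gm_+$ and $\gM_+$, and its kernel is spanned by $\gm_1+\gm_2$ and $\gM_1+\gM_2$, which survive in degrees $0,1$. Repelling: the image of $\pp$ is spanned by $\gm_1+\gm_2$ and $\gM_1+\gM_2$, and $\gm_1,\gM_1$ represent the classes in degrees $0,1$. This agrees with the invariance of local Floer homology under continuation.

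The only delicate step is the completeness of the gradient-trajectory count, in particular excluding any $p_1$--$p_2$ connection. This follows because the origin's one-dimensional manifold lies on the invariant $y$-axis and $p_1,p_2$ have equal index, so \Cref{Proposition - Morse Trajectories} suffices.
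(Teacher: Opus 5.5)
Your proposal is correct and takes essentially the same route as the paper. It reads the pitchfork complex off the period-doubling picture with $k=1$, uses the unique gradient revolution joining the core orbit to each of $\g_1,\g_2$ together with \Cref{Thm - Cylinder Counting} for $k_\pm=1$, and then computes homology. One small correction to your bookkeeping of the remaining terms: the mixed pairs of degree difference one between distinct orbits are $(\gm_1,\gM_2)$ and $(\gm_2,\gM_1)$, and they vanish simply because no non-constant cylinder joins $\g_1$ and $\g_2$, not because of genericity condition (1), which only concerns mixed core-to-$\g_i$ pairs of degree difference $0$ or $2$ that never enter $\pp$; same-orbit terms cancel mod $2$ via the two Morse trajectories on each circle.
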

\begin{proof}
    The structures of the Morse trajectories and Floer cylinders are the same as in the period doubling case; there is one gradient revolution connecting $\g_+$ and $\g_1$, and similarly for $\g_+$ and $\g_2$.
    The differential structure can be directly derived.
    The computation of the homology is then straightforward, and the Morse--Bott spectral sequence computing it is described in \Cref{Figure - SSPF}.
    Writing the surviving generators in ascending order of degree, for $\eps>0$ we have
    \[
    HF_*(W,H_\eps)=\begin{cases}
        \Z_2\il (\gm_1+\gm_2),(\gM_1+\gM_2) \ir&\text{ for attracting},\\
        \Z_2\il \gm_1,\gM_1\ir = \Z_2\il \gm_2,\gM_2\ir&\text{ for repelling},
    \end{cases}
    \]
    since $[\gm_1] = [\gm_1]+[\gm_1+\gm_2] = [\gm_2]$ in the homology.
\end{proof}

\begin{figure}[htbp]
    \begin{subfigure}[b]{0.45\textwidth}
    \centering
    \includegraphics[width=\textwidth]{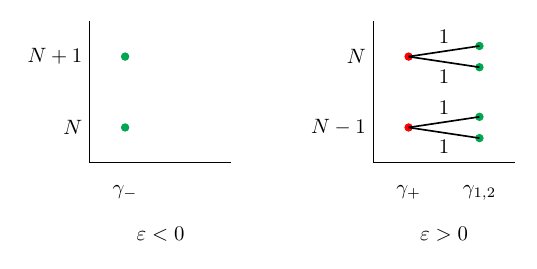}
    \caption{Attracting pitchfork}
    \end{subfigure}
    \hspace{0.05\textwidth}
    \begin{subfigure}[b]{0.45\textwidth}
    \centering
    \includegraphics[width=\textwidth]{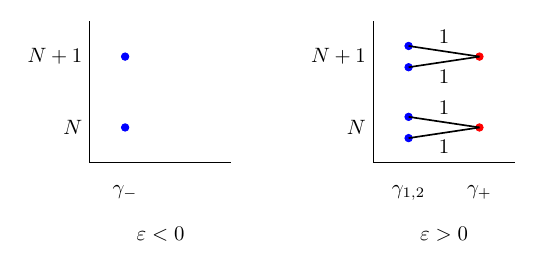}
    \caption{Repelling pitchfork}
    \end{subfigure}
    \caption{Morse--Bott spectral sequences for the subcritical pitchfork. The supercritical case is the same with the opposite stability types.}
    \label{Figure - SSPF}
\end{figure}

\subsection{Double Emission}\label{Subsection - LFCC of DE}

Finally, we consider the model of \Cref{Section - cylinder} with odd $k\geq3$ in the presence of a $\mathbb{Z}_2$-symmetry; the case in which the $k$-th cover of the core orbit becomes degenerate and bifurcates symmetrically. As discussed in \Cref{Subsection - Double Emission}, the resulting bifurcation of $H_\eps$ is the double emission, and we follow the discussion of \Cref{Subsection - LFCC of BD}. The symmetry of the unwrapped generating Hamiltonian yields a core orbit $\g^k_\pm$, accompanied by two hyperbolic orbits $\g_{H,1}$, $\g_{H,2}$, and two elliptic periodic orbits $\g_{E,1}$, $\g_{E,2}$. The differential of this complex is determined not only by the direct gradient revolutions but also by the Floer cascades connecting these branches. The resulting chain complex is described in the following theorem.

\begin{thm}[Floer complex for Double Emission]\label{Theorem - FCDE}			
    Let $H_\eps$ be the model Hamiltonian of the double $k$-emission bifurcation for odd $k\geq 3$.
    Then,
    \[
            CF_*(W,H_\eps)=\left\{\begin{array}{cc}
                \Z_2\il \gm^k_-,\gM^k_- \ir&\text{ if }\eps<0,\\
                \Z_2\il \gm^k_+,\gM^k_+,\gm_{H,1},\gM_{H,1},\gm_{H,2},\gM_{H,2},\gm_{E,1},\gM_{E,1},\gm_{E,2},\gM_{E,2} \ir&\text{ if }\eps>0.
                \end{array}\right.
    \]
    \begin{enumerate}
        \item (Attracting) Relative to $\gm_-^k$, the degrees are
    \[
    \begin{aligned}
    \mu_{CZ}(\gm^k_+,\gm_-^k)&=-2,\quad \mu_{CZ}(\gM^k_+,\gm_-^k)=-1,\\
    \mu_{CZ}(\gm_{H,i},\gm_-^k)&=-1,\quad \mu_{CZ}(\gM_{H,i},\gm_-^k)=0,\\
    \mu_{CZ}(\gm_{E,i},\gm_-^k)&=0,\quad \mu_{CZ}(\gM_{E,i},\gm_-^k)=1,
    \end{aligned}
    \]
    and the differential is given by
    \[
    \begin{aligned}
        \pp(\gM_{E,i})&= \gM_{H,1}+\gM_{H,2},\\
        \pp(\gm_{E,i})&=\gm_{H,1}+\gm_{H,2}+d_i\gM^k_+,\\
    \pp(\gM_{H,i})&=k\gM^k_+,\quad \pp(\gm_{H,i})=\gm^k_+,
    \end{aligned}
    \]
    where $d_i$ is a constant depending on the choice of Morse functions.
    
    \item (Repelling) The degrees relative to $\gm_-^k$ are
    \[
    \begin{aligned}
    \mu_{CZ}(\gm_{E,i},\gm_-^k)&=0,\quad \mu_{CZ}(\gM_{E,i},\gm_-^k)=1,\\
    \mu_{CZ}(\gm_{H,i},\gm_-^k)&=1,\quad \mu_{CZ}(\gM_{H,i},\gm_-^k)=2,\\
    \mu_{CZ}(\gm^k_+,\gm_-^k)&=2,\quad \mu_{CZ}(\gM^k_+,\gm_-^k)=3,
    \end{aligned}
    \]
    and the differential is
    \[
    \begin{aligned}
        \pp(\gM^k_+)&= \gM_{H,1}+\gM_{H,2}\\
        \pp(\gm^k_+)&=k(\gm_{H,1}+\gm_{H,2})+d_1\gM_{E,1}+d_2 \gM_{E,2},\\
    \pp(\gM_{H,i})&=\gM_{E,1}+\gM_{E,2},\quad \pp(\gm_{H,i})=\gm_{E,1}+\gm_{E,2}.
    \end{aligned}
    \]
    \end{enumerate}
    In either case, the homology is
    \[
    HF_*(W,H_\eps)=\begin{cases}
        \Z_2&\text{ if }*=0\text{ or }1\text{ (relative to }\gm_-^k),\\
        0&\text{ otherwise.}
        \end{cases}
    \]
\end{thm}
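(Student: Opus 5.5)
The plan follows the emission proof (\Cref{Theorem - FCEM}) and uses the model of \Cref{Section - cylinder} with $G_\eps$ the $\Z_{2k}$-symmetric double-emission generating Hamiltonian.

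\textbf{Generators and degrees.} By \Cref{Lemma - Critical points and Periodic orbits} and \Cref{Lemma - Model Hamiltonian}, the generators correspond to the critical points of $G_\eps$. For $\eps>0$ there are $2k$ saddles and $2k$ extrema on the circle $\rho^2=-\eps a/\beta$, and the deck rotation by $2\pi l/k$ groups them into $\g_{H,1},\g_{H,2},\g_{E,1},\g_{E,2}$. The degrees then come from \Cref{Proposition - Relative CZ Index} together with $\mu(\gM)=\mu(\g)+1$, using the Morse indices in \Cref{Table - Symmetric Bifurcation}.

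\textbf{Index-one cylinders.} By \Cref{Prop - Uniqueness} and \Cref{Proposition - Regularity of revolution}, all Floer cylinders are regular gradient revolutions, so it suffices to read off the planar gradient trajectories from \Cref{Proposition - Morse Trajectories} in the lifted $4k$-gon. In the attracting case, each saddle has exactly one radial connection to the origin. Each saddle also has two angular branches of its stable manifold, which go to the two adjacent extrema. Adjacent extrema belong to distinct orbits $\g_{E,1},\g_{E,2}$, because the deck group identifies critical points differing by $2\pi/k$ and $k$ is odd.

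After quotienting by the deck group, we get:
\begin{enumerate}
\item exactly one cylinder between $\g_{E,i}$ and $\g_{H,j}$ for each pair $(i,j)$;
\item exactly one cylinder between $\g_{H,i}$ and $\g^k_+$.
\end{enumerate}
Applying \Cref{Thm - Cylinder Counting} with $(k_-,k_+)$ equal to $(k,k)$ or $(k,1)$ gives coefficient $1$ for the $E$--$H$ terms. For the $H$--core terms it gives coefficient $k$ on $\gM$ and $1$ on $\gm$ in the attracting case; in the repelling case the multiplicities are reversed, since $k_\pm$ are swapped. This yields every term of the differential except the index-two coefficients $d_i$.

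\textbf{Index-two coefficients.} The coefficient $d_i$ of $\gM^k_+$ in $\pp\gm_{E,i}$ (attracting case), or of $\gM_{E,i}$ in $\pp\gm^k_+$ (repelling case), is handled as in the emission proof. First, the one-parameter family of direct revolutions from $\g_{E,i}$ to $\g^k_+$ sweeps out only a single evaluation point $\ev^-(\gm_{E,i};u_\sigma)$, because the core orbit is a $k$-fold cover. We choose the Morse function on $\g^k_+$ so that $\gM^k_+$ avoids that point, which kills all single-cylinder contributions.

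Next, we count two-step cascades. These run $\g_{E,i}\to\g_{H,j}$, then along a Morse trajectory on $\g_{H,j}$, then $\g_{H,j}\to\g^k_+$. For each $j$ the count reduces to counting admissible arcs on the circle $\g_{H,j}$, as in \Cref{Lemma - emission count}, with one incoming point $g$ coming from the unique cylinder and $k$ outgoing points $b$. Summing over $j=1,2$ defines $d_i$. We do not need its parity, since the statement leaves $d_i$ as a Morse-function-dependent constant.

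\textbf{Homology.} Here $k$ is odd. The differential sends $\gm_{H,i}\mapsto\gm^k_+$ and $\gM_{H,i}\mapsto\gM^k_+$, and $\pp$ of $E$-generators hits $\gm_{H,1}+\gm_{H,2}$, possibly plus $d_i\gM^k_+$. A direct rank computation shows that for either value of $d_i$ the homology is $\Z_2$ in degrees $0$ and $1$ relative to $\gm_-^k$. This must agree with the $\eps<0$ side, where the complex is just $\Z_2\il\gm^k_-,\gM^k_-\ir$ with zero differential, as invariance predicts. The repelling case is the same with the cylinder directions reversed, using the second case of the arc count.

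\textbf{Main obstacle.} The hardest step is bookkeeping: correctly identifying which lifted critical points belong to which orbit, and hence the evaluation points on $\g_{H,j}$ needed for the cascade count. We also need to confirm that the genericity condition on $\gM^k_+$ can be met simultaneously for $i=1,2$, which holds because each condition excludes only a finite set of points.
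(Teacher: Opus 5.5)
Your proposal is correct and follows the paper's proof essentially step by step. The shared steps are:
\begin{itemize}
\item one gradient revolution per pair $(\g_{E,i},\g_{H,j})$ and per pair $(\g_{H,j},\g^k_+)$;
\item \Cref{Thm - Cylinder Counting} for the index-one terms;
\item a choice of Morse function on the core that removes the one-parameter family of direct cylinders;
\item reduction of $d_i$ to arc counts on $\g_{H,1},\g_{H,2}$.
\end{itemize}
Your explicit rank check of the homology is a useful addition. One small slip: $\g_{E,i}$ and $\g_{H,j}$ are simple $1$-periodic orbits in $W$, so the $E$--$H$ terms correspond to $(k_-,k_+)=(1,1)$, not $(k,k)$, and this gives coefficient $1$ directly.
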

\begin{proof}
    We first consider the attracting case.
    There exist:
    \begin{enumerate}
        \item one gradient revolution from $\g_{E,i}$ to $\g_{H,j}$ for each $i,j=1,2$, say $u_{ij}$, and
        \item one gradient revolution from $\g_{H,j}$ to $\g_+^k$ for each $j=1,2$, say $v_j$,
        \item 1-parameter family of gradient revolutions from $\g_{E,i}$ to $\g_+^k$, which can be ignored under a choice of a generic Morse function as in the proof of \Cref{Theorem - FCEM}.
    \end{enumerate}
    This determines all differentials, except for the coefficients of $\gM_+^k$ in $\pp(\gm_{E,i})$, denoted by $d_i$.
    As in \Cref{Theorem - FCEM}, computing $d_i$ reduces to the sum over $j$ of the count of Morse trajectories from $\ev^-(\gm_{E,i};u_{ij})$ to $\ev^+(\gM_+^k;v_j)$ on $\g_{H,j}$, which depends on the choice of the Morse function on each periodic orbit.
    In contrast with the generic emission (\Cref{Theorem - FCEM}), here the two hyperbolic orbits carry independent Morse data. No parity constraint on $d_1+d_2$ arises: this is the same ambiguity as that of the parity of $c(k)$ for odd $k$ in \Cref{Lemma - emission count}, and since the double emission occurs only for odd $k$, it is also inevitable here.
    
    The homology is independent of these choices, but the generators depend on the parity of $d_1+d_2$.
    We have $\pp(\gm_{E,1}+\gm_{E,2})=0$ if $d_1+d_2\equiv0\pmod{2}$, and $\pp(\gm_{E,1}+\gm_{E,2}+\gM_{H,1})=0$ if $d_1+d_2 \equiv 1 \pmod{2}$. Thus,
    \[
        HF_*(W,H_\eps) =\begin{cases}
            \Z_2\il (\gm_{E,1}+\gm_{E,2}),(\gM_{E,1}+\gM_{E,2}) \ir&\text{ if }d_1+d_2\equiv 0\pmod{2},\\
        \Z_2\il (\gm_{E,1}+\gm_{E,2}+\gM_{H,i}),(\gM_{E,1}+\gM_{E,2}) \ir&\text{ if }d_1+d_2\equiv 1\pmod{2}.
        \end{cases}  \]
    
    The argument for the repelling case is analogous. In this case, $\pp(\gm_+^k)\neq 0$ for any values of $d_1,d_2$, so the resulting generators do not depend on their parities.
    The resulting homology can be represented by
    \[
    \begin{aligned}
        HF_*(W,H_\eps) &= \Z_2\il \gm_{E,1}, \gM_{E,1} \ir,
    \end{aligned}        
    \]
    where $[\gm_{E,1}]=[\gm_{E,2}]$, $[\gM_{E,1}]=[\gM_{E,2}]$ and $[\gm_{H,1}+\gm_{H,2}]=(d_1+d_2)[\gM_{E,1}]$ in homology.
    The results are described in \Cref{Figure - SSDE} in terms of Morse--Bott spectral sequences.
\end{proof}

\begin{figure}[htbp]
    \begin{subfigure}[b]{0.45\textwidth}
    \centering
    \includegraphics[width=\textwidth]{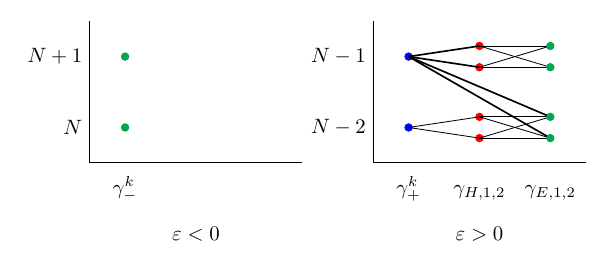}
    \caption{Attracting double emission}
    \end{subfigure}
    \hspace{0.05\textwidth}
    \begin{subfigure}[b]{0.45\textwidth}
    \centering
    \includegraphics[width=\textwidth]{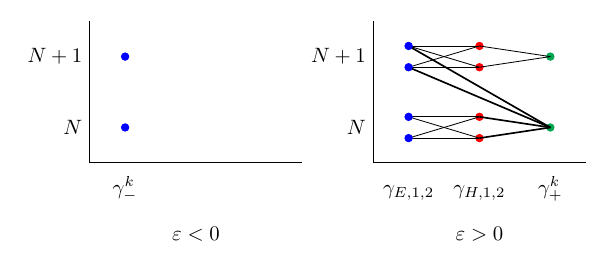}
    \caption{Repelling double emission}
    \end{subfigure}
    \caption{Morse--Bott spectral sequences for the double emission bifurcation. Thin lines represent $1$, while thick lines have nontrivial coefficients as described in \Cref{Theorem - FCDE}.}
    \label{Figure - SSDE}
\end{figure}

\section{Concluding Remarks and Future Directions}

Based on our explicit model Hamiltonians, we have explicitly computed the mutations of the local Floer chain complexes across all five generic Hamiltonian bifurcations, as well as two additional bifurcations arising in $\mathbb{Z}_2$-symmetric involutive Hamiltonian systems.

Let $K_\eps$ be a general 1-parameter family of Hamiltonians undergoing a bifurcation at $\g_0\in \mathcal{P}(K_0)$. As explained in \Cref{Subsection - Model}, after restricting to a sufficiently small isolating neighborhood of $\g_0$, the local bifurcation data of $K_\eps$ can be represented by a model Hamiltonian $H_\eps$ which is sufficiently $C^2$-close to $K_\eps$. Therefore, by the invariance discussed in
\Cref{Subsection - MBCC}, we obtain an isomorphism
\begin{equation*}
    HF_*^\loc(\g_0,K_\eps)\cong HF_*^\loc(\g_0,H_\eps).
\end{equation*}
Thus the local Floer homology obtained in \Cref{Section - LFCC} applies to arbitrary generic Hamiltonian bifurcations of the corresponding type. At the chain level, however, one should distinguish the model complexes from the complexes associated with an arbitrary Hamiltonian family. The main difficulty in describing $CF_*^\loc(\g_0,K_\eps)$ directly is that the bifurcated orbits of $K_\eps$ need not lie on a common energy hypersurface, so the Floer cylinders between them are no longer represented by gradient revolutions. In future work, we plan to develop a local Rabinowitz Floer theoretic framework to describe these chain complexes more intrinsically for arbitrary Hamiltonian families. We expect that the resulting local Rabinowitz Floer homology is isomorphic to the local Floer homology considered here, and that this perspective will provide a more flexible way to compute local Floer homology or related invariants near bifurcations.

The framework developed here opens up several promising avenues for future research on Hamiltonian bifurcations from a Floer-theoretic viewpoint. First, as noted in \Cref{Remark - more symmetry}, this machinery extends naturally to four-dimensional systems with general discrete symmetries, such as $\Z_m$-symmetry. By imposing these higher-order symmetries on the unwrapped generating Hamiltonians constructed in \Cref{Section - Classification of bifurcations}, one can directly derive the structure of the corresponding local Floer chain complexes.

Second, our theory is expected to generalize to higher-dimensional systems. As Meyer observed \cite{Meyer_70}, a generic one-parameter bifurcation typically involves only a single pair of Floquet multipliers reaching a root of unity, so that the essential dynamics, once restricted to the degenerate directions, is analogous to the four-dimensional cases studied here. A more substantial direction is that of multi-parameter bifurcations, where a richer variety of phenomena can occur, such as Krein collisions. Pursuing this will require a correspondingly more delicate treatment of the return maps and their generating Hamiltonians.

Finally, our framework can be used to investigate symmetry-breaking phenomena, such as the imperfect pitchfork bifurcation or the unfolding of the double emission. Such transitions are of significant dynamical interest and have been observed, for instance, in the triple cover of the vertical collision orbit during the passage from the Hill's lunar problem to the circular restricted three-body problem \cite{Joung_Koh_vanKoert_26}.

	\bibliographystyle{amsalpha}
	\bibliography{globalbib.bib}

@book {Golubitsky_Stewart_Schaeffer_88,
    AUTHOR = {Golubitsky, Martin and Stewart, Ian and Schaeffer, David G.},
     TITLE = {Singularities and groups in bifurcation theory. {V}ol. {II}},
    SERIES = {Applied Mathematical Sciences},
    VOLUME = {69},
 PUBLISHER = {Springer-Verlag, New York},
      YEAR = {1988},
     PAGES = {xvi+533},
      ISBN = {0-387-96652-8},
   MRCLASS = {58E07 (34C99 35B32 58C27)},
  MRNUMBER = {950168},
MRREVIEWER = {Norman\ Dancer},
       DOI = {10.1007/978-1-4612-4574-2},
       URL = {https://doi.org/10.1007/978-1-4612-4574-2},
}

@misc{Aydin_Batkhin_25,
      title={Studying network of symmetric periodic orbit families of the Hill problem via symplectic invariants}, 
      author={Cengiz Aydin and Alexander Batkhin},
      year={2025},
      eprint={2410.21245},
      archivePrefix={arXiv},
      primaryClass={math.DS},
      url={https://arxiv.org/abs/2410.21245}, 
}

@article{Joung_Koh_vanKoert_26,
  author = {Joung, Chankyu and Koh, Dayung and van Koert, Otto},
  title = {Bifurcations of highly inclined near halo orbits using Moser regularization},
  journal = {Celestial Mechanics and Dynamical Astronomy},
  year = {2026},
  volume = {138},
  number = {4},
  pages = {31},
  doi = {10.1007/s10569-026-10306-1},
  url = {https://doi.org/10.1007/s10569-026-10306-1},
  isbn = {1572-9478}
}

@book {Arnold_89,
	AUTHOR = {Arnol\`d, V. I.},
	TITLE = {Mathematical methods of classical mechanics},
	SERIES = {Graduate Texts in Mathematics},
	VOLUME = {60},
	NOTE = {Translated from the 1974 Russian original by K. Vogtmann and
	A. Weinstein,
	Corrected reprint of the second (1989) edition},
	PUBLISHER = {Springer-Verlag, New York},
	YEAR = {[1989?]},
	PAGES = {xvi+516},
	ISBN = {0-387-96890-3},
	MRCLASS = {70-02 (58F05 58Fxx 70Hxx)},
	MRNUMBER = {1345386},
}

@article {Floer_89,
	AUTHOR = {Floer, Andreas},
	TITLE = {Symplectic fixed points and holomorphic spheres},
	JOURNAL = {Comm. Math. Phys.},
	FJOURNAL = {Communications in Mathematical Physics},
	VOLUME = {120},
	YEAR = {1989},
	NUMBER = {4},
	PAGES = {575--611},
	ISSN = {0010-3616,1432-0916},
	MRCLASS = {58F05 (70H05)},
	MRNUMBER = {987770},
	MRREVIEWER = {Yong-Geun\ Oh},
	URL = {http://projecteuclid.org/euclid.cmp/1104177909},
}

@book{Meyer_Hall_Offin_13,
	title={Introduction to Hamiltonian Dynamical Systems and the N-Body Problem},
	author={Meyer, K. and Hall, G. and Offin, D.},
	isbn={9781475740738},
	lccn={91021064},
	series={Applied Mathematical Sciences},
	url={https://books.google.co.kr/books?id=KITdBwAAQBAJ},
	year={2013},
	publisher={Springer New York}
}

@article {Robbin_Salamon_93,
	AUTHOR = {Robbin, Joel and Salamon, Dietmar},
	TITLE = {The {M}aslov index for paths},
	JOURNAL = {Topology},
	FJOURNAL = {Topology. An International Journal of Mathematics},
	VOLUME = {32},
	YEAR = {1993},
	NUMBER = {4},
	PAGES = {827--844},
	ISSN = {0040-9383},
	MRCLASS = {58F05 (58E05)},
	MRNUMBER = {1241874},
	MRREVIEWER = {J.\ S.\ Joel},
	DOI = {10.1016/0040-9383(93)90052-W},
	URL = {https://doi.org/10.1016/0040-9383(93)90052-W},
}

@book {Audin_Damien_14,
	AUTHOR = {Audin, Mich\`ele and Damian, Mihai},
	TITLE = {Morse theory and {F}loer homology},
	SERIES = {Universitext},
	NOTE = {Translated from the 2010 French original by Reinie Ern\'e},
	PUBLISHER = {Springer, London; EDP Sciences, Les Ulis},
	YEAR = {2014},
	PAGES = {xiv+596},
	ISBN = {978-1-4471-5495-2; 978-1-4471-5496-9; 978-2-7598-0704-8},
	MRCLASS = {53-02 (53D40 58E05)},
	MRNUMBER = {3155456},
	MRREVIEWER = {Sonja\ Hohloch},
	DOI = {10.1007/978-1-4471-5496-9},
	URL = {https://doi.org/10.1007/978-1-4471-5496-9},
}

@article{Bourgeois_Oancea_09,
    author = {Fr{\'e}d{\'e}ric Bourgeois and Alexandru Oancea},
    title = {{Symplectic homology, autonomous Hamiltonians, and Morse-Bott moduli spaces}},
    volume = {146},
    journal = {Duke Mathematical Journal},
    number = {1},
    publisher = {Duke University Press},
    pages = {71 -- 174},
    year = {2009},
    doi = {10.1215/00127094-2008-062},
    URL = {https://doi.org/10.1215/00127094-2008-062}
}

@misc{Aydin_Frauenfelder_vanKoert_Koh_Moreno_24,
	author = {Aydin, Cengiz and Frauenfelder, Urs and van Koert, Otto and Koh, Dayoung and Moreno, Agustin},
	title = {Symplectic geometry and space mission design},
	eprint = {2308.03391},
	archivePrefix = {arXiv},
	primaryClass = {math.SG},
	year = {2024},
}

@article {Seidel_97,
    AUTHOR = {Seidel, P.},
     TITLE = {{$\pi_1$} of symplectic automorphism groups and invertibles in
              quantum homology rings},
   JOURNAL = {Geom. Funct. Anal.},
  FJOURNAL = {Geometric and Functional Analysis},
    VOLUME = {7},
      YEAR = {1997},
    NUMBER = {6},
     PAGES = {1046--1095},
      ISSN = {1016-443X,1420-8970},
   MRCLASS = {57R57 (58D99 58F05)},
  MRNUMBER = {1487754},
MRREVIEWER = {Bernd\ Siebert},
       DOI = {10.1007/s000390050037},
       URL = {https://doi.org/10.1007/s000390050037},
}

@article {Ginzburg_10,
    AUTHOR = {Ginzburg, Viktor L.},
     TITLE = {The {C}onley conjecture},
   JOURNAL = {Ann. of Math. (2)},
  FJOURNAL = {Annals of Mathematics. Second Series},
    VOLUME = {172},
      YEAR = {2010},
    NUMBER = {2},
     PAGES = {1127--1180},
      ISSN = {0003-486X,1939-8980},
   MRCLASS = {53D40 (37J05 53D35)},
  MRNUMBER = {2680488},
MRREVIEWER = {Hai-Long\ Her},
       DOI = {10.4007/annals.2010.172.1129},
       URL = {https://doi.org/10.4007/annals.2010.172.1129},
}

@article {Usher_Zhang_16,
    AUTHOR = {Usher, Michael and Zhang, Jun},
     TITLE = {Persistent homology and {F}loer-{N}ovikov theory},
   JOURNAL = {Geom. Topol.},
  FJOURNAL = {Geometry \& Topology},
    VOLUME = {20},
      YEAR = {2016},
    NUMBER = {6},
     PAGES = {3333--3430},
      ISSN = {1465-3060,1364-0380},
   MRCLASS = {53D40 (55U15)},
  MRNUMBER = {3590354},
MRREVIEWER = {Sonja\ Hohloch},
       DOI = {10.2140/gt.2016.20.3333},
       URL = {https://doi.org/10.2140/gt.2016.20.3333},
}

@article{Meyer_70,
	ISSN = {00029947},
	URL = {http://www.jstor.org/stable/1995662},
	author = {K. R. Meyer},
	journal = {Transactions of the American Mathematical Society},
	number = {1},
	pages = {95--107},
	publisher = {American Mathematical Society},
	title = {Generic Bifurcation of Periodic Points},
	urldate = {2025-11-24},
	volume = {149},
	year = {1970}
}

@book {Abraham_Marsden_78,
	AUTHOR = {Abraham, Ralph and Marsden, Jerrold E.},
	TITLE = {Foundations of mechanics},
	EDITION = {Second},
	NOTE = {With the assistance of Tudor Ra\c tiu and Richard Cushman},
	PUBLISHER = {Benjamin/Cummings Publishing Co., Inc., Advanced Book Program,
	Reading, MA},
	YEAR = {1978},
	PAGES = {xxii+m-xvi+806},
	ISBN = {0-8053-0102-X},
	MRCLASS = {58Fxx (70-02 70F07)},
	MRNUMBER = {515141},
	MRREVIEWER = {D.\ L.\ Rod},
}

@article{Golubitsky_Stewart_Marsden_87,
	title = {Generic bifurcation of Hamiltonian systems with symmetry},
	journal = {Physica D: Nonlinear Phenomena},
	volume = {24},
	number = {1},
	pages = {391-405},
	year = {1987},
	issn = {0167-2789},
	doi = {https://doi.org/10.1016/0167-2789(87)90087-X},
	url = {https://www.sciencedirect.com/science/article/pii/016727898790087X},
	author = {Martin Golubitsky and Ian Stewart and Jerrold Marsden}
}

@article{Ginzburg_Gurel_10,
author = {Viktor L. Ginzburg and Başak Z. G{\"u}rel},
title = {{Local Floer homology and the action gap}},
volume = {8},
journal = {Journal of Symplectic Geometry},
number = {3},
publisher = {International Press of Boston},
pages = {323 -- 357},
year = {2010},
}

@article{Diogo_Lisi_19,
  title={Morse--Bott split symplectic homology},
  author={Diogo, Lu{\'\i}s and Lisi, Samuel T},
  journal={Journal of Fixed Point Theory and Applications},
  volume={21},
  number={3},
  pages={77},
  year={2019},
  publisher={Springer}
}

@article{Wendl_10b,
  title={Automatic transversality and orbifolds of punctured holomorphic curves in dimension four},
  author={Wendl, Chris},
  journal={Commentarii Mathematici Helvetici},
  volume={85},
  number={2},
  pages={347--407},
  year={2010}
}
	
\end{document}